\newcommand{\eop}{\nopagebreak\hspace*{\fill}$\Box$\smallskip}
\newtheorem{theorem}{Theorem}[section]
\newtheorem{corollary}[theorem]{Corollary}
\newtheorem{lemma}[theorem]{Lemma}
\newtheorem{proposition}[theorem]{Proposition}
\theoremstyle{definition}
\newtheorem{definition}[theorem]{Definition}
\newtheorem{remark}[theorem]{Remark}
\numberwithin{equation}{section}
\newcommand{\R}{\mathbb{R}}
\newcommand{\N}{\mathbb{N}}
\newcommand{\eps}{\varepsilon}
\newcommand{\mres}{\mathbin{\vrule height 1.6ex depth 0pt width
0.13ex\vrule height 0.13ex depth 0pt width 1.3ex}}
\newcommand{\EEE}{\color{black}} 
\newcommand{\RRR}{\color{black}}
\numberwithin{equation}{section}
\begin{document}

\title[$\Gamma$-convergence  for free-discontinuity problems in linear elasticity]{$\Gamma$-convergence  for free-discontinuity problems in linear elasticity: Homogenization and relaxation}

\subjclass[2010]{ 49J45, 49Q20, 70G75, 74Q05,  74R10.} 

% Please provide minimum  5 keywords.
 \keywords{variational fracture, free-discontinuity problems, functions of bounded deformation, $\Gamma$-convergence, homogenization, relaxation}

\author{Manuel Friedrich}
\address[Manuel Friedrich]{Applied Mathematics,  
Universit\"{a}t M\"{u}nster, Einsteinstr. 62, D-48149 M\"{u}nster, Germany}
\email{manuel.friedrich@uni-muenster.de}
\urladdr{https://www.uni-muenster.de/AMM/Friedrich/index.shtml}

\author{Matteo Perugini}
\address[Matteo Perugini]{Applied Mathematics,  
Universit\"{a}t M\"{u}nster, Einsteinstr. 62, D-48149 M\"{u}nster, Germany}
\email{matteo.perugini@uni-muenster.de}
%\urladdr{https://www.uni-muenster.de/AMM/Friedrich/index.shtml}

\author{Francesco Solombrino}
\address[Francesco Solombrino]{Dip. Mat. Appl. ``Renato Caccioppoli'', Univ. Napoli ``Federico II'', Via Cintia, Monte S. Angelo
80126 Napoli, Italy}
\email{francesco.solombrino@unina.it}
\urladdr{http://www.docenti.unina.it/francesco.solombrino}

\begin{abstract}

We analyze the $\Gamma$-convergence of sequences of free-discontinuity functionals  arising  in the modeling of linear elastic solids with surface discontinuities, including phenomena as fracture, damage,   or material voids. We prove compactness with respect to $\Gamma$-convergence and represent the $\Gamma$-limit in an integral form defined on the space of generalized special functions of bounded deformation ($GSBD^p$). We identify the integrands in terms of asymptotic cell formulas and prove a non-interaction property between bulk and surface contributions. Eventually, we investigate sequences of corresponding boundary value problems and show convergence of minimum values and minimizers. In particular, our techniques allow to characterize relaxations of functionals on $GSBD^p$, and cover the classical case of periodic homogenization.   
\end{abstract}
\maketitle

\section{Introduction}\label{sec: introduction}
This paper deals with the $\Gamma$-convergence  of  sequences of free-discontinuity functionals  $(\mathcal{E}_n)_n$  of
the form 
\begin{align}\label{eq: intro basic energy}
\mathcal{E}_n(u)  = \int_{\Omega} f_n\big(x, e(u)(x) \big) \, {\rm d}x +  \int_{J_u \cap \Omega} g_n\big(x,  u^+(x), u^-(x),  \nu_u(x)\big) \, {\rm d} \mathcal{H}^{d-1} (x), 
\end{align}
 where $\Omega \subset \R^d$ denotes the reference configuration,  $e(u)$ is  the symmetric part of the gradient of a {\it vector-valued} displacement $u\colon \Omega \to \mathbb{R}^d$,  and  $J_u$ denotes the set of discontinuities of $u$, oriented by a normal vector $\nu_u$ with  one-sided traces  $u^+$ and $u^-$.   (By   $\mathcal{H}^{d-1}$ we indicate the $(d-1)$-dimensional Hausdorff measure.)  Such functionals are  prototypes for  many variational models of fracture mechanics in a small strain setting. In this framework,  the bulk  density $f_n$ depending only  on the linearized stress tensor  accounts for elastic bulk terms for the unfractured region of the body,  while the surface  integrand  $g_n$  represents the energy spent to produce a crack.  Usually, $g_n$ is assumed to be  bounded and accounts for  both  brittle \cite{Griffith:1921} and cohesive \cite{barenblatt} fracture, where in the latter case $g_n$ depends explicitly on the crack opening $[u]:= u^+-u^-$.

Minimization  problems for \eqref{eq: intro basic energy} are usually complemented with Dirichlet data. Their  well-posedness in the space of \emph{generalized functions of bounded deformation} ($GSBD$) \cite{DM} has been a challenging task in very recent years,  see \cite{Flav, Iu3, Crismale, CrismaleCalcVar, newvito, Conti-Focardi-Iurlano:19, FPM}.  We also refer to  \cite{Almi, Caroccia, Chambolle-Conti-Francfort:2014, Crismale2, Scilla, Friedrich:15-2, FriedrichSolombrino, Bernd} for some recent applications.   In the present paper, we are interested in the effective behavior of a sequence of functionals $(\mathcal{E}_n)_n$ and corresponding minimization problems. The parameter $n$ may have different meanings: it may account for a regularization of the energy, represent  the size of a microstructure, or model the different mechanical responses of a composite material in  each of its components.  Identifying   the limit of $\mathcal{E}_n$  in the sense of $\Gamma$-convergence \cite{Braides-Defranceschi:98, DalMaso:93} arises as a natural problem  with various possible applications:    let us mention, for instance, the case of homogenization, i.e.,   $f_n(x, \xi)=f(x/\eps_n, \xi)$ and $g_n(x, \zeta_1, \zeta_2, \nu)=g\left(x/\eps_n, \zeta_1-\zeta_2, \nu\right)$ for $f$ and $g$ being $1$-periodic in the first variable. Here,   the limiting functional corresponds to the effective energy of the homogenized material.

\textbf{State-of-the-art:} 
Due to both its theoretical interest and its relevance for applications, the $\Gamma$-convergence analysis of free-discontinuity problems has been the subject of many contributions over the last three decades. Most of the attention, however, has been focused  on  the related but different context of functionals of the form
\begin{align}\label{eq: intro bv energy}
\mathcal{E}^{BV}_n(u)    = \int_{\Omega} f_n\big(x, \nabla u(x) \big) \, {\rm d}x +  \int_{J_u \cap \Omega} g_n\big(x,   [u](x),  \nu_u(x)\big) \, {\rm d} \mathcal{H}^{d-1} (x) 
\end{align} 
 involving the \emph{full deformation gradient} $\nabla u$.  (With a slight abuse of notation, we still use the notation  $g_n$ although the density  depends on $u^+$ and $u^-$ only in terms of $[u]= u^+-u^-$.)  The first result in this direction is the seminal work \cite{Braides-Defranceschi-Vitali} addressing the case of periodic homogenization. By assuming a linear growth of $g_n$ in $[u]$ along with  $p$-growth assumptions on $f_n$, the authors derive a limiting homogenized functional with $x$-independent densities    $f_{\hom}$ and $g_{\hom}$ on   the natural energy space of \emph{special functions of bounded variation} ($SBV$) \cite[Section 4]{Ambrosio-Fusco-Pallara:2000}. Unfortunately, the growth assumptions on $g_n$ do not comply with standard models for brittle and cohesive fracture where surface densities are assumed to be bounded.
 
  The result has been extended in  \cite{GP} to the non-periodic case and more natural growth conditions. There, however, the authors study a slightly simplified setting motivated by applications to quasistatic crack growth:   first, competitors for \eqref{eq: intro bv energy} are {\it scalar-valued}. Secondly, the surface energy densities $g_n$ are independent of the  jump height $[u]$. The scalar nature of the problem allows to use truncation techniques  (at least for bounded  Dirichlet data)  such that $SBV$ endowed with the $L^1$-topology  is still a natural setting for the $\Gamma$-convergence result.  In both contributions \cite{Braides-Defranceschi-Vitali, GP},  a remarkable property is that the limiting densities $f_{\hom}$ and $g_{\hom}$ are completely determined by $f$ and $g$, respectively. This  {\it non-interaction} of the bulk and the surface part of the energy is due to the  exponent $p>1$ in the growth assumption for $f_n$. In contrast, for $p=1$, it is indeed known by other examples that interaction effects in the limit are possible,  see, e.g., \cite{BBBF, BDM, Sto lavoro GSBV3}.

A comprehensive treatment of the $\Gamma$-convergence analysis for functionals of the form \eqref{eq: intro bv energy}  has been achieved  recently in the paper  \cite{Sto lavoro GSBV}, which includes the  vector-valued setting, assumes no periodicity, and complies with weaker coercivity conditions.  More precisely, the densities are assumed to satisfy 
\begin{equation}\label{eq: intro assumption}
\alpha  |\xi|^p\le f_n(x, \xi)\le  \beta  (1+|\xi|^p), \quad  \quad \quad  \alpha \le g_n(x, \zeta, \nu)\le \beta (1+|\zeta|)
\end{equation}
for $x \in \Omega$, $\xi \in \R^{d \times d}$, $\zeta \in \R^d$, and $\nu \in \R^d$ with $|\nu|=1$,  where
  $0<\alpha\le \beta$  are  positive constants. Due to the weaker growth assumptions on $g_n$ compared to \cite{Braides-Defranceschi-Vitali} and the vector-valued nature, the model is more relevant for applications in fracture mechanics. This, however, comes at the expense of considering a weaker  functional setting. Indeed, compactness of competitors with bounded energy can now only be expected  with respect to the convergence in measure in  the larger space of {\it generalized} special functions of bounded variation $GSBV$ \cite[Section 4]{Ambrosio-Fusco-Pallara:2000}.  Eventually, based on a general compactness result in  $GSBV^p$ (the subspace of $GSBV$ of functions with $p$-integrable gradient),  \cite{Manuel} complements the $\Gamma$-convergence analysis in \cite{Sto lavoro GSBV} by investigating corresponding  boundary value problems and showing   convergence of minimizers.

The authors in  \cite{Sto lavoro GSBV} make use of an abstract viewpoint: they first show that, under assumptions \eqref{eq: intro assumption}, the $\Gamma$-limit of \eqref{eq: intro bv energy} can still be represented as an integral functional on $GSBV^p$. To this aim, they use the localization method for $\Gamma$-convergence and the global method for relaxation developed in \cite{BFLM, BDM}.  This method compares asymptotic Dirichlet problems on small balls with different boundary data depending on the local properties of the functions, and provides a characterization of the energy densities in terms of cell formulas.   A technical difficulty lies in the fact that the procedure needs linear growth of $g_n$ in $[u]$ which is not available due to \eqref{eq: intro assumption}. This can be overcome, however, by means of a \emph{perturbation trick}:  a small perturbation of the functionals, depending on the jump
 height,  is considered, which  can be represented as an integral functional on $SBV^p$.   Then, by letting the perturbation parameter vanish and by truncating functions suitably, the representation  is extended to $GSBV^p$. Similar truncation techniques are also employed for the localization method in connection with the fundamental estimate \cite[Proposition~3.1]{Braides-Defranceschi-Vitali} to pass from the $L^p$-topology to the topology of measure convergence.  We already remark that such a tool is not available  for   functionals of the form \eqref{eq: intro basic energy}. In fact,  given  a control only on the symmetrized gradient, it is in general not possible to use smooth truncations to decrease the energy up to a small error.
 
After establishing the abstract representation result,  the authors in \cite{Sto lavoro GSBV} show that the bulk and surface parts of the energy do not interact in the limit.   This allows to derive the results of \cite{Braides-Defranceschi-Vitali, GP} as a corollary of their approach.  For further discussion below, we point out that a key step for the identification of the surface density relies on the $BV$-coarea formula to approximate   $GSBV^p$ functions  by piecewise constant functions.  A similar method has been used for the characterization of  lower semicontinuity in $SBV$ \cite{Ambrosio:90, Ambrosio:90-2}, for the so-called \emph{jump transfer} \cite[Theorem 2.1]{Francfort-Larsen:2003}, and it  is also at the core of the compactness result \cite{Manuel} needed to  treat boundary value problems.  This is a delicate issue for functionals of the form \eqref{eq: intro basic energy} since this technique is not available if only symmetrized gradients are controlled.

 \textbf{The present paper:} Summarizing, a rather complete picture of $\Gamma$-convergence for functionals given in \eqref{eq: intro bv energy} has been developed over the last years, extending also to the case of stochastic homogenization \cite{Sto lavoro GSBV2}. By way of contrast, the understanding of the counterpart  in the linearized setting is scarce.  This leads us to the purpose of our paper, which  exactly  aims at extending the results in \cite{Sto lavoro GSBV, Manuel}   to the more general framework of free-discontinuity functionals of the form \eqref{eq: intro basic energy}. 

Our first main result (Theorem \ref{th: gamma}) provides a  general  compactness and representation result for  $\Gamma$-limits  of sequences $(\mathcal{E}_n)_n$  on the space $GSBD^p$ (the subspace of $GSBD$ with $e(u) \in L^p$), endowed  with the topology of measure convergence. More precisely,  we  assume that the energy densities $f_n$ and $g_n$ satisfy
\[
\alpha | (\xi^T + \xi)/2 |^p \le f_n(x,\xi) \le  \beta (1+ |(\xi^T + \xi)/2 |^p), \quad  \quad
\alpha \le g_n(x,a,b,\nu) \le \beta 
\]
for $x \in \Omega$, $\xi \in \R^{d \times d}$, $a,b \in \R^d$, and $\nu \in \R^d$ with $|\nu|=1$, where $p>1$ and $0 < \alpha \le \beta$. We prove that, up to a subsequence, the functionals $\mathcal{E}_n$ converge to  a $\Gamma$-limit $\mathcal{E}$ which is still an integral functional of the form  \eqref{eq: intro basic energy}. 
 
Our second main result (Theorem \ref{thm: main thm d=2})  deals with the identification of the $\Gamma$-limit $\mathcal{E}$ by analyzing the relation of the densities $(f_n)_n$ and $(g_n)_n$ with the densities $f_\infty$ and $g_\infty$ of $\mathcal{E}$. In particular,    we investigate  under which conditions  bulk and surface effects decouple in the limit.  For the bulk density, we obtain 
\begin{align}\label{eq: asy-cell}
f_\infty(x,\xi) = \limsup_{\rho \to 0^+} \limsup_{n \to \infty} \frac{1}{\rho^d}  \int_{Q_{\rho}(x)} f_n\big(x, e(u)(x) \big) \, {\rm d}x,   
\end{align}
where we denote by $Q_\rho(x)$ the cube centered at $x$ with sidelength $\rho$, and the infimum is taken  among all functions $v\in W^{1,p}(Q_{\rho}(x);\R^d)$ satisfying $v(y) = \xi y$ near $\partial Q_\rho(x)$. 
% These minimization problems clearly only involve the bulk densities $f_n$.
When it comes to the surface energy instead, we consider some additional restrictions: we focus on the case where $g_n(x, \nu)$ is independent of the traces at  the  jump, and for $d>2$ we further assume that  $g_n=h$ is independent of $n$. We prove that  
\begin{align}\label{eq: asy-cell2}
g_\infty(x,\xi) = \limsup_{\rho \to 0^+} \limsup_{n \to \infty} \frac{1}{\rho^{d-1}}  \int_{Q^\nu_{\rho}(x)} g_n\big(x, \nu_u(x) \big) \, {\rm d}\mathcal{H}^{d-1},   
\end{align}
where  $Q^\nu_\rho(x)$ is a cube of  sidelength $\rho$ oriented by $\nu$, and the infimum is taken over all \emph{piecewise rigid functions} $v \in PR(Q^\nu_\rho\RRR(x))\EEE$ which near  $\partial Q^\nu_\rho(x)$ agree with the \emph{jump function}
\[
 \bar u_{x,\nu}(y) = \begin{cases}  0 & \text{if } (y-x) \cdot \nu \ge   0,\\  (1,0,\ldots,0)  & \text{if }  (y-x) \cdot \nu < 0. \end{cases}  
\] 
Here, $PR$ is the subset of $GSBD^p$ consisting of functions $u$ with  $e(u)\equiv 0$. (Due to independence of $g_n$ on the jump  height,  we point out that equivalently piecewise constant functions could be considered,  i.e., $u$ satisfies $\nabla u \equiv 0$.)  The reason for considering $d=2$ for general sequences $(g_n)_n$  lies in a technique for  approximating $GSBD^p$ functions by piecewise rigid functions which is only available in the planar setting.  As a corollary, again restricted to $d=2$, we are also able to deduce a (periodic) homogenization result (Corollary \ref{cor: hom}).

 In general dimensions, we can treat the case of constant sequences $g_n = h$. Here, it turns out that the limiting density $g_\infty$  is the so-called $BV$-elliptic envelope introduced in \cite{AmbrosioBraides2} as a condition for lower semincontinuity of functionals defined on partitions. As $h$ is independent of the jump height, $g_\infty$ also coincides with the $BD$-elliptic envelope introduced in \cite{FPM} in the more general context of variational problems in spaces of Caccioppoli-affine functions. As a special case (Corollary \ref{cor: relax}),  we deduce in any space dimensions that the relaxation with respect to measure convergence of integral functionals on $GSBD^p$ of the form
\[
\int_\Omega  f(x, e(u)(x))\,\mathrm{d}x+ \int_{J_u\cap \Omega} g(x, \nu_u(x))\,\mathrm{d} \mathcal{H}^{d-1}
\]
has the same structure with densities $\bar{f}$ and $\bar{g}$,  where $\bar{f}$ denotes the quasiconvex envelope of $f$ and $\bar{g}$ is the $BV$-elliptic envelope of $g$.  In particular, it is simply given by the superposition of the relaxation of the bulk energy in $W^{1,p}$ and of the surface energy in the space of piecewise constant functions.

In our  third main result, we eventually incorporate Dirichlet boundary data  (Proposition \ref{lemma: gamma bdy}), and show the convergence  of  (almost) minimizers for a sequence  $(\mathcal{E}_n)_n$ with the given conditions  to minimizers of $\mathcal{E}$ (Theorem \ref{th: Gamma existence}).

\textbf{Proof techniques and challenges:} In the sequel, we highlight some of the proof techniques focusing on the additional challenges with respect to models \eqref{eq: intro bv energy} in $GSBV^p$. For the compactness of $\Gamma$-convergence (Theorem \ref{th: gamma}),   we specify the localization technique already used in \cite{Sto lavoro GSBV} to the setting at hand. The  key ingredient  is a construction for  \emph{joining} two functions $u,v \in GSBD^p(\Omega)$, which is usually called the \emph{fundamental estimate} (Proposition \ref{lemma: fundamental estimate}). In doing this, one must ensure that the energy spent in  a  transition layer is small, when the two  functions  are close in the considered topology. Typically, this is achieved by means of a cut-off construction of the form $w := u \varphi + (1 - \varphi) v$ for some smooth $\varphi$ with $0 \le \varphi \le 1$. This, however, requires $L^p$-integrability of the functions $u$ and $v$, which is not a priori given in our context. In contrast to the $GSBV^p$ setting, this cannot be recovered with truncation arguments, and  we need a  considerably more involved  strategy to overcome this issue.

The main novel tool is a \emph{Korn-type inequality for functions with small jump}, established  recently  by {\sc Cagnetti, Chambolle, and Scardia}  \cite{FinalKorn},   which  generalizes   a two-dimensional result in \cite{Conti-Focardi-Iurlano:15} (see also \cite{Friedrich:15-3}) to arbitrary dimension.  It provides a control of the full gradient in terms of the symmetrized gradient, up to an exceptional set whose perimeter has a surface measure comparable to that of the discontinuity  set. We combine this tool with a covering technique of the transition layer by means of small cubes, which enables us to cut out an exceptional set with small \RRR volume \EEE and perimeter such that, in the residual set, the $L^p$ norm of $u-v$ is controlled in terms of $\|e(u-v)\|_{L^p}$ (up to a small rest). This finally allows  to   perform the usual cut-off construction. 
Concerning the representation of the limit, we profit of a very recent integral representation result proved in \cite[Theorem 2.1]{Crismale-Friedrich-Solombrino},  tailored for energies of the form \eqref{eq: intro basic energy}.

For  the identification of the limiting densities $f_\infty$ and $g_\infty$ (Theorem  \ref{thm: main thm d=2}), the essential point is to show that the minimization  in the asymptotic cell formulas \eqref{eq: asy-cell}--\eqref{eq: asy-cell2}  can  indeed  be restricted from $GSBD^p$  to Sobolev and piecewise constant functions, respectively. For the bulk density, this is achieved by using the Korn inequality for functions with small jump set \cite{FinalKorn}  to approximate $GSBD^p$ functions with asymptotically vanishing jump set by Sobolev functions. Afterwards, we can follow the lines of the $SBV$ proof \cite{Sto lavoro GSBV, GP}, in particular involving truncation methods to obtain a sequence of equiintegrable Sobolev functions, see Lemma \ref{lem: our Larsen}. For the surface density instead,  the challenge lies in approximating a sequence  in  $GSBD^p$  with vanishing symmetrized gradient  by  a sequence of  characteristic functions of sets with finite perimeter, see Lemma \ref{lemma: partition}.  The nonavailability of the \emph{coarea formula} in our setting  makes this a very delicate task which can be overcome by the application of a \emph{piecewise Korn-Poincar\'e inequality}, see Proposition \ref{th: kornpoin-sharp} and \cite{FriedrichSolombrino}, which has been derived only for $d=2$.  For the relaxation result in general space dimensions, this technique is not at our disposal, and we use directly a recent lower semicontinuity result for surface integrals in $GSBD^p$ \cite{FPM} for so-called  \emph{symmetric jointly convex functions}, see Definition~\ref{def: symm-conv}.

Finally, the  extension to this case of Dirichlet boundary conditions (Proposition \ref{lemma: gamma bdy} and Theorem~\ref{th: Gamma existence}) is not straightforward because of two main reasons.  First, the construction of  recovery sequences complying with the given data requires the usage of our novel fundamental estimate (Proposition \ref{lemma: fundamental estimate}) and  a recent extension result  \cite{Matteo}. Secondly,   according to the  compactness  results of \cite{Crismale, newvito, FriedrichSolombrino},  sequences with equibounded energy   in the Dirichlet setting converge in a slightly weaker sense compared to convergence in measure.

Summarizing, we believe that the present paper gives a  thorough analysis   of the $\Gamma$-convergence and relaxation problem for free-discontinuity problems in  linearized elasticity.  We  fix a convenient setting, develop a number of technical tools (all arising from very recent advances in the topic), and illustrate the main issues to be overcome in order to  remove the restrictions on the surface density  that we need to impose in Theorem \ref{thm: main thm d=2}.  This will  hopefully be the object of forthcoming achievements and contributions to the problem.

 The paper is organized as follows. In Section \ref{sec: main} we introduce the model and state our three main results. In Section \ref{section: preliminaries}, we collect basic properties of the function space $GSBD^p$, and we recall  integral representation formulas for functionals defined on Sobolev functions and piecewise constant functions. Section~\ref{sec: compactness} is devoted to the proof of Theorem \ref{th: gamma}, in particular we formulate and prove a fundamental estimate in $GSBD^p$ (Proposition \ref{lemma: fundamental estimate}). In Section \ref{sec: apprix}, we present two approximation results of $GSBD^p$ functions by Sobolev and characteristic functions, respectively. These are fundamental ingredients for the proof of Theorem~\ref{thm: main thm d=2} in Section \ref{sec: identi}, but also of independent interest. In Section \ref{sec: mini-proof}, we finally address the minimization problems  with Dirichlet boundary data.

\section{Setting of the problem and main results}\label{sec: main}

 In this section we present our main  results.  We start with some basic notation. In the sequel, $\Omega \subset \R^d$ always denotes an open set. Let $\mathcal{A}(\Omega)$ be the  family of open  and bounded  subsets of $\Omega$. We write $\chi_A$ for the characteristic function of any  $A\subset \R^d$, which is $1$ on $A$ and $0$ otherwise.  If $A$ is a set of finite perimeter, we denote its essential  boundary by $\partial^* E$, see \cite[Definition 3.60]{Ambrosio-Fusco-Pallara:2000}. For two sets $A,B \subset \R^d$, we denote by $A \triangle B$ their symmetric difference and by  ${\rm dist}(A,B)$   their Hausdorff distance.   Moreover, we write $B \subset \subset A$ if $\overline{B} \subset A$. 
 
  For $x$, $y\in  \R^d  $, we use the notation $x\cdot y$ for the scalar product and $|x|$ for the  Euclidean  norm.   Moreover, we  set     $\mathbb{S}^{d-1}:=\{x \in \R^d  \colon |x|=1\}$. We denote  by $\R^{d \times d}$   the set of $d\times d$ matrices. By $\R^{d\times d}_{\rm sym}$  and  $\R^{d\times d}_{\rm skew}$ we indicate  the subsets of symmetric and skew-symmetric matrices, respectively.  In particular, for $\xi \in \R^{d\times d}$, we define ${\rm sym}(\xi) = (\xi + \xi^{\rm T})/2$.  The Frobenius norm of a matrix $\xi$ is denoted by $|\xi|$.   
  
    For every $x\in \R^d  $ and $\rho>0$ we indicate by $B_\rho(x) \subset   \R^d  $ the open ball with center $x$ and radius $\rho$. Additionally, for $\nu \in \mathbb{S}^{d-1}$, we denote by  $Q^\nu_\rho(x)$ the cube centered at $x$ with sidelength $\rho$ and two faces normal to $\nu$. For $x = 0$, we simply write $Q^\nu_\rho$. We let $e_1 = (1,0,\ldots,0) \in \R^d$ and set $Q_\rho(x) = Q^{e_1}_\rho(x)$ for all $x \in \R^d$ and $\rho>0$.  We denote by $\mathcal{L}^d$ and $\mathcal{H}^k$ the $d$-dimensional Lebesgue measure and the $k$-dimensional Hausdorff measure, respectively.  We set $\R_+ = [0,+\infty)$.

For definition and properties of the space $GSBD^p(\Omega)$,  $1 < p < \infty$, we refer the reader to \cite{DM}. Some relevant properties are collected in Subsection \ref{sec: GSBD} below. In particular,  the approximate gradient is denoted by $\nabla u$ (it is well-defined, see  Lemma \ref{lemma: approx-grad})  and the (approximate) jump set is denoted by $J_u$  with corresponding normal $\nu_u$ and one-sided limits $u^+$ and $u^-$.  We also define $e(u) = \frac{1}{2} (  \nabla u + (\nabla u)^{\mathrm{T}})$.

\subsection{$\Gamma$-convergence for free-discontinuity problems in $GSBD^p$}

Given $0 < \alpha \le \beta <+ \infty$, let $f\colon \Omega \times \R^{d \times d} \to [0,+\infty)$ be a  Charath\'eodory  function such that for a.e.\ $x\in \Omega$ and all  $\xi \in \R^{d \times  d}$  we have
\begin{align}\label{eq: general bound}
\alpha | {\rm sym}(\xi)  |^p \le f(x,\xi) \le  \beta (1+ | {\rm sym}(\xi)  |^p). 
\end{align}
Moreover, let $g \colon \Omega  \times \R^d \times \R^d \times  \mathbb{S}^{d-1} \to [0,+\infty)$ be a Borel function satisfying 
\begin{align}\label{eq: general bound2}
\alpha \le g(x,a,b,\nu) \le \beta 
\end{align}
for $\mathcal{H}^{d-1}$-a.e.\ $x \in \Omega$ and  for all  $a,b \in \R^d$,   $\nu \in \mathbb{S}^{d-1}$. In what follows, we consider energy functionals $\mathcal{E}\colon GSBD^p(\Omega) \times \mathcal{A}(\Omega) \to [0,+\infty)$ of the form 
\begin{align}\label{eq: basic energy}
\mathcal{E}(u,A) = \int_{A} f\big(x, e(u)(x) \big) \, {\rm d}x +  \int_{J_u \cap A} g\big(x,  u^+(x), u^-(x),  \nu_u(x)\big) \, {\rm d} \mathcal{H}^{d-1} (x) 
\end{align} 
for each $u \in GSBD^p(\Omega)$ and each $A \in \mathcal{A}(\Omega)$. In this subsection, we present a general $\Gamma$-convergence result for functionals of this form.  (For an exhaustive treatment of $\Gamma$-convergence we refer the reader to \cite{Braides-Defranceschi:98, DalMaso:93}.) To formulate the result, we introduce some further notation:   for every $u \in GSBD^p(\Omega)$ and $A \in \mathcal{A}(\Omega)$ we define 
\begin{align}\label{eq: general minimization} 
\mathbf{m}_{ \mathcal{E}}(u,A) = \inf_{v \in GSBD^p(\Omega)} \  \big\{ \mathcal{E}(v,A)\colon \ v = u \ \text{ in a neighborhood of } \partial A \big\}.
\end{align}
For $x_0 \in \Omega$, $u_0 \in \R^d$, and $\xi \in  \mathbb{R}^{d \times d}  $ we introduce the functions  $\ell_{x_0,u_0,\xi}\colon \R^d \to \R^d$ by 
\begin{align}\label{eq: basic affine function}
\ell_{x_0,u_0,\xi}(x) =  u_0 + \xi (x-x_0). 
\end{align}
Moreover, for $x_0 \in \Omega$, $a,b \in \R^d$, and  $\nu \in \mathbb{S}^{d-1}$ we introduce  $u_{x_0,a,b,\nu} \colon \R^d \to \R^d$ by 
\begin{align}\label{eq: jump competitor}
u_{x_0,a,b,\nu}(x) = \begin{cases}  a & \text{if } (x-x_0) \cdot \nu  \ge   0,\\ b & \text{if }  (x-x_0) \cdot \nu < 0. \end{cases} 
\end{align}

We now proceed with our first main result.

\begin{theorem}[$\Gamma$-convergence]\label{th: gamma}
 Let $\Omega \subset \R^d$ be open.  Let $(f_n)_n$ and $(g_n)_n$ be sequences of functions satisfying \eqref{eq: general bound} and \eqref{eq: general bound2}, respectively.  Let $\mathcal{E}_n \colon GSBD^p(\Omega) \times \mathcal{A}(\Omega) \to [0,+\infty)$ be the corresponding sequence of functionals given in \eqref{eq: basic energy}.  Then, there exists $\mathcal{E}\colon  GSBD^p(\Omega)\times \mathcal{A}(\Omega) \to [0,+\infty)$ and a subsequence (not relabeled) such that
$$\mathcal{E}(\cdot,A) =\Gamma\text{-}\lim_{n \to \infty} \mathcal{E}_n(\cdot,A) \ \ \ \ \text{with respect to convergence in measure on $A$} $$
for all $A \in  \mathcal{A}(\Omega) $. Moreover, for every $u\in GSBD^p(\Omega)$ and  $A\in \mathcal{A}(\Omega)$ we have that
\begin{align}\label{eq: representation}
\mathcal{E}(u,A)= \int_A f_{\infty}\big(x,u(x),   \nabla u(x)   \big)\, {\rm d}x +\int_{J_u\cap A}g_{\infty}(x,u^+(x), u^-(x),\nu_u(x))\, {\rm d}\mathcal{H}^{d-1}(x),
\end{align}
where $f_\infty$ is given by  
\begin{align}\label{eq: f^epsilon_infty}
f_{\infty}(x_0,u_0,\xi):= \limsup_{\rho\to 0^+}\frac{\mathbf{m}_{\mathcal{E}}(\ell_{x_0,u_0,\xi},Q_{\rho}(x_0))}{\rho^d},
\end{align}
for all $x_0 \in \Omega$, $u_0 \in \R^d$, $\xi \in \mathbb{R}^{d \times d}$, and $g_\infty$ is given by  
\begin{align}\label{eq: g^epsilon_infty}
 g_{\infty}(x_0,a,b,\nu):= \limsup_{\rho\to 0^+}\frac{\mathbf{m}_{\mathcal{E}}(u_{x,a,b,\nu},Q^\nu_{\rho}(x_0))}{\rho^{d-1}}
\end{align}
for all $  x_0  \in \Omega$,  $a,b \in \R^d$, and $\nu \in \mathbb{S}^{d-1}$. 
\end{theorem}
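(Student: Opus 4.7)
The plan is to follow the classical localization method for $\Gamma$-convergence in the spirit of Dal Maso--Modica, specialized to the $GSBD^p$ setting, and then to invoke the abstract integral representation theorem of \cite{Crismale-Friedrich-Solombrino} in order to pass from the abstract $\Gamma$-limit to the integral form \eqref{eq: representation} with densities characterized by \eqref{eq: f^epsilon_infty}--\eqref{eq: g^epsilon_infty}. Throughout, convergence is understood in measure on each $A\in\mathcal{A}(\Omega)$, which is the metrizable topology on $GSBD^p(\Omega)$ compatible with the compactness results recalled in Section~\ref{section: preliminaries}.

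First I would extract a $\Gamma$-convergent subsequence. Fix a countable basis $\mathcal{R}$ of $\mathcal{A}(\Omega)$, e.g.\ finite unions of open cubes with rational vertices contained in $\Omega$. By the general compactness of $\Gamma$-convergence for the separable metrizable topology of convergence in measure, a diagonal argument yields a subsequence (not relabeled) along which the $\Gamma$-lower limit $\mathcal{E}'(\cdot,A)$ and the $\Gamma$-upper limit $\mathcal{E}''(\cdot,A)$ exist on every $A\in\mathcal{R}$. For $A\in\mathcal{A}(\Omega)$ one then defines
\[
\mathcal{E}(u,A):=\sup\bigl\{\mathcal{E}'(u,A')\ :\ A'\in\mathcal{R},\ A'\subset\subset A\bigr\},
\]
i.e.\ the inner regular envelope of $\mathcal{E}'(u,\cdot)$. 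The core task is to show $\mathcal{E}=\mathcal{E}'=\mathcal{E}''$ (as inner regular set functions) on all of $\mathcal{A}(\Omega)$ and that $\mathcal{E}(u,\cdot)$ is the trace of a Borel measure on $\Omega$.

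The main obstacle, as in any localization proof, is the verification of the subadditivity $\mathcal{E}''(u,A\cup B)\le \mathcal{E}''(u,A')+\mathcal{E}''(u,B)$ for $A'\subset\subset A$, and this is exactly where the novel fundamental estimate in $GSBD^p$ (Proposition~\ref{lemma: fundamental estimate}) is crucial: given two recovery sequences $u_n\to u$ on $A$ and $v_n\to u$ on $B$, this estimate produces a joined sequence $w_n\to u$ on $A\cup B$ whose energy on the transition layer is controlled by an arbitrarily small quantity plus the energies of $u_n,v_n$ on overlaps. The usual cut-off $u_n\varphi+(1-\varphi)v_n$ would require $L^p$-integrability that is unavailable here, but the Korn-type inequality for functions with small jump of \cite{FinalKorn} combined with a covering of the transition strip by small cubes delivers the required control on $\|u_n-v_n\|_{L^p}$ outside an exceptional set of small volume and perimeter. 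Once subadditivity is in hand, superadditivity is elementary from the measure-like structure of $\mathcal{E}_n(\cdot,A)$, and the De Giorgi--Letta criterion upgrades $\mathcal{E}(u,\cdot)$ to a Borel measure. This coincidence of inner regular envelopes identifies $\mathcal{E}=\Gamma\text{-}\lim \mathcal{E}_n(\cdot,A)$ on every open $A\in\mathcal{A}(\Omega)$.

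It remains to exhibit $\mathcal{E}$ in the integral form \eqref{eq: representation} and to identify the densities. From \eqref{eq: general bound}--\eqref{eq: general bound2} one inherits, via $\liminf$ inequalities along recovery sequences, the growth bounds
\[
\alpha\int_A |e(u)|^p\,\mathrm{d}x+\alpha\mathcal{H}^{d-1}(J_u\cap A)\le\mathcal{E}(u,A)\le\beta\int_A(1+|e(u)|^p)\,\mathrm{d}x+\beta\mathcal{H}^{d-1}(J_u\cap A),
\]
together with translation invariance in $u$ of the ``affine'' and ``jump'' types required by the representation theorem. Combined with the measure property just established, these are exactly the hypotheses of \cite[Theorem 2.1]{Crismale-Friedrich-Solombrino}, which yields the existence of Borel densities $f_\infty$ and $g_\infty$ such that \eqref{eq: representation} holds for every $u\in GSBD^p(\Omega)$ and $A\in\mathcal{A}(\Omega)$, and which moreover supplies the cell formulas
\[
f_\infty(x_0,u_0,\xi)=\limsup_{\rho\to 0^+}\frac{\mathbf{m}_{\mathcal{E}}(\ell_{x_0,u_0,\xi},Q_\rho(x_0))}{\rho^d},\qquad g_\infty(x_0,a,b,\nu)=\limsup_{\rho\to 0^+}\frac{\mathbf{m}_{\mathcal{E}}(u_{x_0,a,b,\nu},Q^\nu_\rho(x_0))}{\rho^{d-1}}
\]
via minimization against the affine and jump competitors \eqref{eq: basic affine function}, \eqref{eq: jump competitor}. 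The proof is then complete, the only genuinely new ingredient with respect to the $GSBV^p$ case of \cite{Sto lavoro GSBV} being the $GSBD^p$ fundamental estimate and the invocation of the recent representation result tailored to the linearized framework.
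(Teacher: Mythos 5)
Your proposal is correct and follows essentially the same route as the paper: extract a $\bar\Gamma$-convergent subsequence (your diagonal argument over a countable basis is exactly how the abstract compactness theorem \cite[Theorem 16.9]{DalMaso:93} the paper invokes is proved), establish inner regularity and subadditivity via the new $GSBD^p$ fundamental estimate (Proposition~\ref{lemma: fundamental estimate}), upgrade $\mathcal{E}(u,\cdot)$ to a Borel measure by De~Giorgi--Letta, and then conclude by the integral representation theorem of \cite{Crismale-Friedrich-Solombrino}. The only cosmetic slip is the phrase about ``translation invariance'' as a hypothesis of the representation theorem: that theorem requires only locality, lower semicontinuity, the measure property, and the two-sided growth bounds, not translation invariance (which is the additional hypothesis yielding the simplified formulas of Remark~\ref{rem: invariance}), but this has no bearing on the validity of your argument.
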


 The compactness of $\Gamma$-convergence is proved  via the localization technique for $\Gamma$-convergence,  see Section \ref{sec: compactness}.   Here, the main ingredient is a novel fundamental estimate in the space $GSBD^p$, see  Proposition  \ref{lemma: fundamental estimate}. Afterwards, the representation \eqref{eq: representation} in terms of the densities $f_\infty$ and $g_\infty$  follows by the recent integral representation result \cite{Crismale-Friedrich-Solombrino}.

\begin{remark}[Invariance under rigid motions, cell formulas]\label{rem: invariance}
(i) Suppose that each $\mathcal{E}_n$ satisfies   $\mathcal{E}_n(u+a,A) = \mathcal{E}_n(u,A)$ for all affine functions  $a\colon \R^d \to \R^d$ with $e(a) = 0$ and all $A \in \mathcal{A}(\Omega)$. Then, as $\Gamma$-limit,  $\mathcal{E}$ satisfies the same property. Thus,    \cite[Remark 2.2(iii)]{Crismale-Friedrich-Solombrino} implies that $\mathcal{E}$ has the form
\begin{align}\label{eq: funci-old}
\mathcal{E}(u,A)= \int_A f_{\infty}\big(x,e(u)(x)\big)\, {\rm d}x +\int_{J_u\cap A} g_{\infty}\big(x,[u](x),\nu_u(x))\, {\rm d}\mathcal{H}^{d-1}(x),
\end{align}
where  $[u](x) := u^+(x)-u^-(x)$, and the densities $f_\infty$, $g_\infty$ are  given by  
\begin{align}\label{eq: f^epsilon_infty-new}
f_{\infty}(x_0,{\rm sym}(\xi))= \limsup_{\rho\to 0^+}\frac{\mathbf{m}_{\mathcal{E}}(\ell_{0,0,\xi},Q_{\rho}(x_0))}{\rho^d}, \quad \quad  g_{\infty}(x_0,\zeta,\nu)= \limsup_{\rho\to 0^+}\frac{\mathbf{m}_{\mathcal{E}}(u_{x,\zeta,0,\nu},Q^\nu_{\rho}(x))}{\rho^{d-1}}
\end{align}
for all $x_0 \in \Omega$,  $\xi \in \mathbb{R}^{d \times d}$,  $\zeta \in \R^d$, and $\nu \in \mathbb{S}^{d-1}$. \\
(ii) A variant of the proof shows that, in the minimization problems \eqref{eq: f^epsilon_infty}--\eqref{eq: g^epsilon_infty}, one may replace the cubes by balls $B_\rho(x_0)$ with radius $\rho$, centered at $x_0$. 
\end{remark}

\subsection{Identification of the $\Gamma$-limit: homogenization and relaxation}\label{sec: ident}

We now address the structure of the $\Gamma$-limit by showing that there is no interaction between the bulk and surface densities, i.e, $f_\infty$ is only determined by $(f_n)_n$ and $g_\infty$ is only determined by $(g_n)_n$. As applications, we discuss homogenization and relaxation results. The statements announced in this subsection are proved in Section \ref{sec: identi}. In this part, we restrict our assumptions to a more specific setting, namely to surface densities $g$ of the form $g \colon \Omega \times  \mathbb{S}^{d-1} \to [0,+\infty)$, still being Borel functions and satisfying 
\begin{align}\label{eq: general bound2-new}
\alpha \le g(x,\nu) \le \beta 
\end{align}
for $\mathcal{H}^{d-1}$-a.e.\ $x \in \Omega$ and  for all  $\nu \in \mathbb{S}^{d-1}$, where $0 < \alpha \le \beta <+ \infty$ as before. Moreover, for some parts we will further restrict our analysis to the planar setting $d=2$ and to exponents $p \ge 2$. We refer to Remark \ref{rem: assumptions} at the end of the subsection  for comments on these restrictions.

To formulate the  non-interaction between   bulk and surface density, we need to restrict  functionals $\mathcal{E}$ of the form \eqref{eq: basic energy}  to Sobolev functions  $W^{1,p}(\Omega;\R^d)$ and to \emph{piecewise rigid functions} $PR(\Omega)$, respectively. Here, we set $PR(\Omega) := \lbrace u \in GSBD^p(\Omega)\colon e(u) \equiv 0 \rbrace$, see \cite{FM}. Then, similarly to \eqref{eq: general minimization},   for every $u \in W^{1,p}(\Omega;\R^d)$ and $A \in \mathcal{A}(\Omega)$ we define 
\begin{align}\label{eq: general minimization2} 
\mathbf{m}^{1,p}_{\mathcal{E}}(u,A) = \inf_{v \in W^{1,p}(\Omega;\R^d)} \  \big\{ \mathcal{E}(v,A)\colon \ v = u \ \text{ in a neighborhood of } \partial A \big\},
\end{align}
and, for every $u \in PR(\Omega)$, we let 
\begin{align}\label{eq: general minimization3} 
\mathbf{m}^{PR}_{\mathcal{E}}(u,A) = \inf_{v \in PR(\Omega)} \  \big\{ \mathcal{E}(v,A)\colon \ v = u \ \text{ in a neighborhood of } \partial A \big\}.
\end{align}
 Due to the fact that the surface integral vanishes on $W^{1,p}$, and by the  definition of $PR(\Omega)$ as well as the upper control in \eqref{eq: general bound}, we find for all $A \in \mathcal{A}(\Omega)$ that 
\begin{align}
& \mathcal{E}(u,A) = \int_A f(x,e(u)(x))\, {\rm d}x \quad \text{for all }  u\in W^{1,p}(\Omega;\R^d),\label{eq: almost the same1} \\
& \Big|\mathcal{E}(u,A) - \int_{J_u \cap A} g(x,\nu_u(x))\,{\rm d}\mathcal{H}^{d-1}\Big| \le \beta \mathcal{L}^d(A) \quad \text{for all }  u\in PR(\Omega). \label{eq: almost the same2}
\end{align}
For convenience, we specify the notation in  \eqref{eq: basic affine function}--\eqref{eq: jump competitor}   and write
\begin{align}\label{eq: specified not}
\bar{\ell}_{\xi}= \ell_{0,0,\xi} \ \text{ for $\xi \in \R^{d \times d}$ } \quad \quad \text{ and } \quad \quad  \bar{u}_{x_0,\nu} = u_{x_0,e_1,0,\nu} \ \text{ for  $e_1= (1,0,\ldots,0)$}.
\end{align}
In particular, we have $\bar{\ell}_{\xi}(y) = \xi y$ for all $y\in \R^d$.

\begin{proposition}\label{prop: prep}
Let $\Omega \subset \R^d$ be open. Let $(f_n)_n$ and $(g_n)_n$ be sequences of functions satisfying \eqref{eq: general bound} and \eqref{eq: general bound2-new}, respectively.  Correspondingly, define $(\mathcal{E}_n)_n$ as in \eqref{eq: basic energy}.   Then, by passing to a subsequence  (not relabeled) the following holds:  for all $ x  \in \Omega$ and all $\xi \in \R^{d \times d}$  we have
\begin{align}\label{eq: bulk assumption}
\limsup_{\rho \to 0^+} \liminf_{n \to \infty} \frac{\mathbf{m}^{1,p}_{\mathcal{E}_n}(\bar{\ell}_\xi,Q_{\rho}(x))}{\rho^d} = \limsup_{\rho \to 0^+} \limsup_{n \to \infty} \frac{\mathbf{m}^{1,p}_{\mathcal{E}_n}(\bar{\ell}_\xi,Q_{\rho}(x))}{\rho^d} =: f(x,\xi) ,  
\end{align}
and it holds that $f(x,\xi) = f(x,{\rm sym}(\xi))$. For all $x \in \Omega$ and all $\nu \in \mathbb{S}^{d-1}$ we have
\begin{align}\label{eq: surface assumption}
\limsup_{\rho \to 0^+} \liminf_{n \to \infty} \frac{\mathbf{m}^{PR}_{\mathcal{E}_n}(\bar{u}_{x,\nu},Q^\nu_{\rho}(x))}{\rho^{d-1}} = \limsup_{\rho \to 0^+} \limsup_{n \to \infty} \frac{\mathbf{m}^{PR}_{\mathcal{E}_n}(\bar{u}_{x,\nu},Q^\nu_{\rho}(x))}{\rho^{d-1}}  =:g(x,\nu). 
\end{align}
\end{proposition}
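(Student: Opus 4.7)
The strategy I would follow is a diagonal subsequence extraction combined with the decoupling of bulk and surface contributions granted by \eqref{eq: almost the same1}--\eqref{eq: almost the same2}. First, I would reduce to the pure bulk/surface problem: by \eqref{eq: almost the same1}, $\mathbf{m}^{1,p}_{\mathcal{E}_n}(\bar{\ell}_\xi,Q_\rho(x))$ equals $\inf \int_{Q_\rho(x)} f_n(y,e(v))\,{\rm d}y$ over $v \in W^{1,p}$ with $v = \bar{\ell}_\xi$ near $\partial Q_\rho(x)$, a quantity depending only on $f_n$; and by \eqref{eq: almost the same2}, $\mathbf{m}^{PR}_{\mathcal{E}_n}(\bar{u}_{x,\nu},Q^\nu_\rho(x))$ equals $\inf \int_{J_v \cap Q^\nu_\rho(x)} g_n(y,\nu_v)\,{\rm d}\mathcal{H}^{d-1}$ up to an additive error bounded by $\beta \rho^d$, which is $O(\rho)$ after division by $\rho^{d-1}$. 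Testing with $v = \bar{\ell}_\xi$ and $v = \bar{u}_{x,\nu}$ themselves, and combining with the upper bounds in \eqref{eq: general bound} and \eqref{eq: general bound2-new}, yields the uniform bounds $\beta(1+|{\rm sym}(\xi)|^p)$ and $\beta + O(\rho)$ on the two normalized cell-formula quantities.

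Second, I would extract the subsequence by applying the classical $\Gamma$-convergence compactness theorem for integral functionals on $W^{1,p}(\Omega;\R^d)$ (Dal Maso, Braides--Defranceschi), specifically adapted to the symmetrized-gradient setting: Korn's inequality applied to functions with fixed affine boundary trace restores the needed equi-coercivity in $W^{1,p}$. This produces a subsequence along which $u \mapsto \int f_n(\cdot,e(u))\,{\rm d}y$ $\Gamma$-converges on $W^{1,p}$ to an integral functional whose density $f(x,\xi)$ is precisely given by the cell formula in \eqref{eq: bulk assumption}, forcing the inner $\liminf_n$ and $\limsup_n$ to agree. An analogous compactness for surface/perimeter-type functionals on piecewise rigid functions (in the spirit of Ambrosio--Braides for Caccioppoli partitions) then gives a further subsequence yielding $g(x,\nu)$ in \eqref{eq: surface assumption}. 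The symmetry identity $f(x,\xi) = f(x,{\rm sym}(\xi))$ follows immediately: setting $S = (\xi - \xi^{\rm T})/2$, the map $v \mapsto v - \bar{\ell}_S$ is a bijection between admissible competitors for $\bar{\ell}_\xi$ and for $\bar{\ell}_{{\rm sym}(\xi)}$ that preserves $e(v)$ — because $e(\bar{\ell}_S) = 0$ — and hence preserves the energy, so that the two infima coincide.

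The hard part will be propagating the equality $\liminf_n = \limsup_n$ from a countable dense collection of data to \emph{every} $(x,\xi) \in \Omega \times \R^{d\times d}$ and $(x,\nu) \in \Omega \times \mathbb{S}^{d-1}$, as literally required by the statement. A Cantor diagonal argument based on the uniform bounds above handles the dense set; the extension to arbitrary data rests on equicontinuity of the normalized cell formulas in the boundary conditions, obtained through the competitor perturbations $v \mapsto v + \bar{\ell}_{\xi' - \xi}$ (for the bulk, with a modulus of continuity controlled uniformly in $n$ and $\rho$ by the $p$-growth of $f_n$) and analogous orthogonal perturbations of the jump plane (for the surface). The dependence on $x$ is the most delicate aspect, since $f_n$ and $g_n$ are only Carath\'eodory, respectively Borel; it is managed by the outer $\limsup_{\rho \to 0^+}$, which averages local irregularities of $f_n(\cdot,\xi)$ and $g_n(\cdot,\nu)$, and ultimately by invoking the integral-representation characterization of the extracted $\Gamma$-limits.
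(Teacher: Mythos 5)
Your first two paragraphs are the paper's argument: the reduction via \eqref{eq: almost the same1}--\eqref{eq: almost the same2} to the pure bulk functional $\mathcal{F}_n$ on $W^{1,p}$ and the pure surface functional $\mathcal{G}_n$ on piecewise constant functions (cf.\ \eqref{eq: bulk part functional}, \eqref{eq: surface functional}, and the identity \eqref{eq: general minimization3-better}), followed by $\Gamma$-compactness for each of these separately (Propositions \ref{prop: bulk int repr} and \ref{prop: surface int repr}), with Korn's inequality replacing Poincar\'e to restore equi-coercivity. Your observation that $v\mapsto v-\bar{\ell}_S$ with $S=(\xi-\xi^{\rm T})/2$ gives a competitor bijection preserving $e(v)$ and boundary data is exactly the right argument for $f(x,\xi)=f(x,{\rm sym}(\xi))$.

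Your third paragraph, however, manufactures an obstacle that does not arise, and the workaround you sketch for it would genuinely fail. You worry that the equality of inner $\liminf_n$ and $\limsup_n$ is obtained only for a countable dense set of $(x,\xi)$ and must then be propagated by equicontinuity in $x$, which you rightly observe is delicate for merely Carath\'eodory integrands. But once the subsequence is extracted \emph{at the level of the functionals} (via $\bar\Gamma$-compactness, as in \cite[Theorem 16.9]{DalMaso:93}), the cell-formula equality in \eqref{eq: bulk assumption} holds for \emph{every} $(x,\xi)$ simultaneously. The mechanism is \eqref{eq: N5-1}--\eqref{eq: N5-2}: for any fixed $x,\xi,\rho$, the $\Gamma$-$\liminf$ inequality plus coercivity give
\begin{align*}
\mathbf{m}^{1,p}_{\mathcal{F}}(\bar{\ell}_{\xi},Q_{\rho}(x)) \le \sup_{\rho'<\rho}\liminf_{n}\mathbf{m}^{1,p}_{\mathcal{F}_n}(\bar{\ell}_{\xi},Q_{\rho'}(x)),
\end{align*}
while the $\Gamma$-$\limsup$ inequality plus the fundamental estimate (to reimpose the affine boundary trace) give
\begin{align*}
\limsup_{n}\mathbf{m}^{1,p}_{\mathcal{F}_n}(\bar{\ell}_{\xi},Q_{\rho}(x)) \le \mathbf{m}^{1,p}_{\mathcal{F}}(\bar{\ell}_{\xi},Q_{\rho}(x)).
\end{align*}
Combining these and passing $\rho\to 0^+$ identifies both iterated limits with the density $f_0(x,\xi)$ furnished by the global method of relaxation in \eqref{eq: bulk repr}. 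No separability or modulus-of-continuity-in-$x$ argument is required, and the analogue for the surface part goes through identically via \eqref{eq: jump energy density-new} and the fundamental estimate on $PC(\Omega)$. So you should drop the dense-set diagonalization entirely; the paragraph-2 machinery already closes the proof.
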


In the above formulas, we intend that $\rho$ is always chosen sufficiently small such that the cubes $Q_\rho(x)$ and $Q^\nu_\rho(x)$ are contained in $\Omega$.  In view of \eqref{eq: almost the same1}--\eqref{eq: almost the same2},  the density $f$ is completely determined by $(f_n)_n$, whereas $g$ is completely determined by $(g_n)_n$.    This motivates the definition of \eqref{eq: general minimization2}--\eqref{eq: general minimization3}. The proof of Proposition \ref{prop: prep} essentially relies on $\Gamma$-convergence results for Sobolev functions and piecewise constant functions, see Subsection \ref{sec: prep intrep}.

  By Theorem \ref{th: gamma} we get that up to subsequence (not relabeled), the functionals $\mathcal{E}_n(\cdot,A)$  given in \eqref{eq: basic energy}  with densities $f_n$ and $g_n$, $\Gamma$-converge with respect to the convergence in measure to a functional $\mathcal{E}(\cdot,A)$ for every $A\in\mathcal{A}(\Omega)$.  As each  $\mathcal{E}_n$ satisfies   $\mathcal{E}_n(u+a,A) = \mathcal{E}_n(u,A)$ for all affine functions $a$ with  $e(a) = 0$  and all $A \in \mathcal{A}(\Omega)$, Remark \ref{rem: invariance}(i) shows that the densities $f_\infty$ and $g_\infty$ of the $\Gamma$-limit can be represented by \eqref{eq: f^epsilon_infty-new}.  We now proceed with our second main result.  We show that the density $f_\infty$  coincides with  the function $f$ provided by Proposition \ref{prop: prep}. Hence, the surface energies are not contributing to the bulk part of the limiting functional.  We also prove the analogous property $g_\infty=g$ for the surface densities in two specific situations: (a) in the planar case $d=2$ and (b) for $d>2$, whenever  the surface densities $g_n$ are independent of $n$.

\begin{theorem}[Identification of the $\Gamma$-limit]\label{thm: main thm d=2}
Let $\Omega \subset \R^d$ be open. Let $(f_n)_n$ and $(g_n)_n$ be sequences of functions satisfying \eqref{eq: general bound} and \eqref{eq: general bound2-new}, respectively.  Suppose that \eqref{eq: bulk assumption}--\eqref{eq: surface assumption} hold, and define $f$ and $g$ accordingly. Let $f_\infty$ and $g_\infty$ be defined by  \eqref{eq: f^epsilon_infty-new}.  Then, the following holds:
\begin{enumerate}
\item[(i)]
For all $u\in GSBD^p(\Omega)$ we have that 
\begin{align}\label{eq: f_infty=f}
 f_\infty(x,e(u)(x))  = f(x,e(u)(x))\quad \textit{\emph{for $\mathcal{L}^d$-a.e.\ $x\in\Omega$}}.
\end{align}
\item[(ii)]
 If $d=2$, $p \ge 2$, we additionally  have 
\begin{align}\label{eq: g_infty=g}
g_\infty\big(x,[u](x),\nu_u(x)\big)=  g (x,\nu_u(x))\quad \textit{\emph{for  $\mathcal{H}^{1}$-a.e.\ $x\in J_u$}}
\end{align}
for all  $u\in GSBD^p(\Omega)$. In particular,  for all $A\in\mathcal{A}(\Omega)$ the functionals $\mathcal{E}_n(\cdot,A)$, given by \eqref{eq: basic energy} corresponding to $f_n$ and $g_n$,  $\Gamma$-converge with respect to the convergence in measure to $\mathcal{E}(\cdot,A)$, where $\mathcal{E}$ is given by  
 \begin{align*}
\mathcal{E}(u,A)=\int_A f\big(x,e(u)(x)\big)\,{\rm d}x + \int_{J_u\cap A} g\big(x,\nu_u(x)\big)\, {\rm d}\mathcal{H}^{d-1}(x).
\end{align*}
\item[(iii)]
Let $d \ge 2$ and assume that $g_n = h$ for all $n \in \N$, for a continuous density $h$ which satisfying  \eqref{eq: general bound2-new} such that $\nu \mapsto h(x,\nu)$ is even for all $x \in \Omega$. Then   for all $u \in GSBD^p(\Omega)$ we have 
\begin{align}\label{eq: g_infty=g-relax}
g_\infty\big(x,[u](x),\nu_u(x)\big)= \bar{h} (x,\nu_u(x))\quad \textit{\emph{for $\mathcal{H}^{d-1}$-a.e.\ $x\in J_u$}},
\end{align}
where $\bar{h}$ is given by 
\begin{align}\label{eq: envelope}
 \hspace{1.1cm} \bar{h}(x,\nu):= \inf_{v \in PR(Q^\nu_1) }  \Big\{ \int_{J_v \cap Q^{\nu}_1}  h( x, \nu_v(y)) \, {\rm d}\mathcal{H}^{d-1}(y) \colon \, v =  \bar{u}_{0,\nu} \text{ in a neighborhood of $\partial Q^\nu_1$} \Big\}.
\end{align}
In particular,  for all $A\in\mathcal{A}(\Omega)$ the functionals $\mathcal{E}_n(\cdot,A)$, given by \eqref{eq: basic energy} corresponding to $f_n$ and $g_n=h$,  $\Gamma$-converge with respect to the convergence in measure to $\mathcal{E}(\cdot,A)$, where $\mathcal{E}$ is defined by  
\begin{align*}
\mathcal{E}(u,A)=\int_A f\big(x,e(u)(x)\big)\,{\rm d}x + \int_{J_u\cap A}\bar{h}\big(x,\nu_u(x)\big)\, {\rm d}\mathcal{H}^{d-1}(x).
\end{align*}
\end{enumerate}
\end{theorem}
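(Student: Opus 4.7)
The plan is to compare the full cell formulas \eqref{eq: f^epsilon_infty-new} defining $f_\infty, g_\infty$ (where the minimization is over all of $GSBD^p$) with the restricted ones \eqref{eq: bulk assumption}--\eqref{eq: surface assumption} defining $f$ and $g$ (where the minimization runs only over $W^{1,p}$, respectively $PR$). Since $W^{1,p}(\Omega;\R^d) \subset GSBD^p(\Omega)$ and $PR(\Omega) \subset GSBD^p(\Omega)$, the trivial inclusions give the easy inequalities $f_\infty \le f$ and $g_\infty \le g$. The substance of the argument lies in the reverse inequalities, which require approximating almost-optimal $GSBD^p$ competitors in the cell formulas by Sobolev or piecewise rigid functions, respectively. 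Once the densities are identified at a fixed pair $(x_0,\xi)$, respectively $(x_0,\nu)$, one transfers the equality to $\mathcal{L}^d$-a.e.\ and $\mathcal{H}^{d-1}$-a.e.\ statements via a Besicovitch differentiation, at Lebesgue points of $e(u)$ and at approximate continuity points of $J_u$, using the integral representation of Theorem \ref{th: gamma} and Remark \ref{rem: invariance}(i). The $\Gamma$-convergence assertions at the end of (ii) and (iii) then follow from \eqref{eq: almost the same1}--\eqref{eq: almost the same2} together with the representation \eqref{eq: funci-old}.

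For part (i), fix $x_0 \in \Omega$ and $\xi \in \R^{d\times d}$. The coercivity $g_n \ge \alpha$ forces any almost-optimal sequence $v_n \in GSBD^p$ for the cell problem with boundary datum $\bar\ell_\xi$ on $Q_\rho(x_0)$ to satisfy $\mathcal{H}^{d-1}(J_{v_n}\cap Q_\rho(x_0)) = O(\rho^d)$, hence negligible compared to $\rho^{d-1}$. I apply the Korn-type inequality for functions with small jump set of Cagnetti--Chambolle--Scardia \cite{FinalKorn} to replace $v_n$, on all but a small exceptional set, by a Sobolev field. Using the equi-integrability/truncation procedure of Lemma \ref{lem: our Larsen}, the bulk contribution on this exceptional region is controlled, and the modified function is glued back to $\bar\ell_\xi$ near $\partial Q_\rho(x_0)$ via the fundamental estimate (Proposition \ref{lemma: fundamental estimate}). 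The output is an admissible $W^{1,p}$ competitor whose energy is at most $\mathbf{m}_{\mathcal{E}_n}(\bar\ell_\xi,Q_\rho(x_0)) + o(\rho^d)$, so that $\mathbf{m}^{1,p}_{\mathcal{E}_n}(\bar\ell_\xi,Q_\rho(x_0)) \le \mathbf{m}_{\mathcal{E}_n}(\bar\ell_\xi,Q_\rho(x_0)) + o(\rho^d)$. Dividing by $\rho^d$ and sending first $n\to\infty$, then $\rho \to 0$, yields $f(x_0,\xi) \le f_\infty(x_0,\mathrm{sym}(\xi))$, and the Besicovitch step gives \eqref{eq: f_infty=f}.

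Part (ii) proceeds by the analogous dual strategy. An almost-optimal sequence $v_n$ for the surface cell problem on $Q^\nu_\rho(x_0)$ with datum $\bar u_{x_0,\nu}$ satisfies $\int_{Q^\nu_\rho(x_0)} |e(v_n)|^p\,\mathrm{d}x = O(\rho^d)$, hence negligible relative to $\rho^{d-1}$. The task is now to approximate $v_n$, on all but a set of small volume, by a piecewise rigid function without creating new jumps of non-negligible surface measure. This is precisely the content of Lemma \ref{lemma: partition}, whose proof relies on the piecewise Korn--Poincar\'e inequality (Proposition \ref{th: kornpoin-sharp}, cf.\ \cite{FriedrichSolombrino}), which is presently available only for $d=2$ and $p\ge 2$. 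After adjusting the boundary values through Proposition \ref{lemma: fundamental estimate}, one obtains an admissible competitor in the definition of $g(x_0,\nu)$ with energy at most $\mathbf{m}_{\mathcal{E}_n}(\bar u_{x_0,\nu}, Q^\nu_\rho(x_0)) + o(\rho^{d-1})$. A standard blow-up and Besicovitch differentiation at $\mathcal{H}^{d-1}$-a.e.\ point of $J_u$ then upgrade the pointwise identity to \eqref{eq: g_infty=g}.

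For part (iii), since $g_n = h$ is $n$-independent and $h$ is continuous, the cell formula \eqref{eq: surface assumption} for $g$ collapses, by scaling and approximation with rescaled piecewise rigid fields, to the static envelope \eqref{eq: envelope}, so that $g(x,\nu) = \bar h(x,\nu)$. The remaining issue is to show $g_\infty = \bar h$ in arbitrary dimension without appealing to Lemma \ref{lemma: partition}. For this, I would invoke the lower semicontinuity result of \cite{FPM} for \emph{symmetric jointly convex} surface integrands (Definition \ref{def: symm-conv}): by construction $\bar h$ is the largest such integrand below $h$, and the lower semicontinuity in $GSBD^p$, with respect to convergence in measure, of the surface functional associated to $\bar h$ translates directly into $g_\infty \ge \bar h$. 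The matching upper bound is obtained by inserting explicit piecewise rigid test competitors, realizing $\bar h(x,\nu)$ up to arbitrarily small error, into the cell formula \eqref{eq: g^epsilon_infty}. I expect part (ii) to be the main obstacle: the absence of a coarea formula in $GSBD^p$ outside the plane forces the reliance on the planar piecewise Korn--Poincar\'e machinery, which is the structural reason why the identification of $g_\infty$ with $g$ for general sequences $(g_n)_n$ is restricted to $d=2$ and why part (iii) must fall back on the softer lower-semicontinuity approach.
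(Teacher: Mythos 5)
Your outline correctly identifies the easy inequalities $f_\infty \le f$, $g_\infty \le g$ via the trivial inclusions, and the right technical tools for the converse (Korn for small jump sets, Lemma \ref{lem: our Larsen}, Lemma \ref{lemma: partition}, and for (iii) the lower semicontinuity for symmetric jointly convex integrands). But the route you propose for the hard inequality --- fix $(x_0,\xi)$ or $(x_0,\nu)$, take an almost-optimal $GSBD^p$ competitor for the cell problem on $Q_\rho(x_0)$, approximate it by a Sobolev or piecewise rigid function, glue back the boundary datum, and compare $\mathbf{m}^{1,p}_{\mathcal{E}_n}$ or $\mathbf{m}^{PR}_{\mathcal{E}_n}$ with $\mathbf{m}_{\mathcal{E}_n}$ --- has a genuine gap. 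Both approximation lemmas you invoke are sequential and carry convergence-in-measure hypotheses that cell minimizers do not automatically satisfy. Lemma \ref{lem: our Larsen} needs a sequence with $\mathcal{H}^{d-1}(J_{u_n})\to 0$ \emph{and} $u_n\to u$ in measure; a single competitor at fixed $(n,\rho)$ is not such a sequence, and the equiintegrability of $|\nabla w_n|^p$ that controls the bulk contribution on the Korn exceptional set $\omega$ is not available in a one-shot argument: without it one only has $\|e(w)\|_{L^p(\omega)}^p=O(\rho^d)$, which is exactly the order one cannot afford. Worse, Lemma \ref{lemma: partition} demands convergence in measure to the specific two-valued function $u_{0,\zeta,0,\nu}$; rescaled cell competitors are only constrained to equal $\bar u_{0,\nu}$ near $\partial Q^\nu_1$, and after extraction of a subsequence (Theorem \ref{thm: Vito compactness}) they may converge to an arbitrary $PR$ function with that boundary trace, so Lemma \ref{lemma: partition} simply does not apply to them.

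The paper avoids this by never touching cell minimizers of $\mathcal{E}_n$. It takes a recovery sequence $(u_n)_n$ for the global $\mathcal{E}(u,\Omega)$, uses the Radon--Nikod\'ym derivative $f_\infty(x,e(u)(x))=\lim_\rho\mathcal{E}(u,Q_\rho(x))/\rho^d$ (resp.\ its analogue at jump points), and blows up $(u_n)_n$ at a point $x$ chosen so that: (a) $u$ is approximately differentiable at $x$, giving measure convergence of the blow-ups to $\bar\ell_{\nabla u(x)}$; (b) the weak$^*$ limit $\mu$ of $\mathcal{H}^{d-1}\mres J_{u_n}$ has vanishing $(d-1)$-density at $x$, giving $\mathcal{H}^{d-1}(J_{v_i})\to 0$; and, for (ii), (c) $x\in J_u$, giving measure convergence to $u_{0,u^+(x),u^-(x),\nu_u(x)}$. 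These are precisely the hypotheses of Lemmas \ref{lem: our Larsen} and \ref{lemma: partition}, and they hold at $\mathcal{L}^d$- resp.\ $\mathcal{H}^{d-1}$-a.e.\ $x$, which is why the identity is a.e.\ by construction and no separate Besicovitch step is needed. Comparison with $\mathbf{m}^{1,p}_{\mathcal{E}_n}$ and $\mathbf{m}^{PR}_{\mathcal{E}_n}$ then enters only at the end, by rescaling the constructed competitor back to $Q_{\rho_i}(x)$. Two minor slips: in your sketch of (ii), $\int_{Q^\nu_\rho}|e(v_n)|^p=O(\rho^{d-1})$, not $O(\rho^d)$ (the rescaled symmetric gradient still vanishes because of the extra factor $\rho^p$); and in (iii) the relevant fact is not that $\bar h$ is ``the largest symmetric jointly convex integrand below $h$'' but that $\bar h\le h$ and $\bar h(x,\cdot)$ is itself symmetric jointly convex by Corollary \ref{cor: surf}, which is what Theorem \ref{thm: GSBD LSC} requires, again applied to blow-ups of a recovery sequence rather than to cell minimizers.
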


The function $\bar{h}$  in \eqref{eq: envelope} is the so-called $BD$-elliptic envelope of $h$, introduced in \cite{FPM}. Let  us  point out that  in the present setting it  coincides with the $BV$-elliptic envelope introduced in \cite{AmbrosioBraides2}, see Remark \ref{rem: envelope} below.  It also turns out the $\bar{h}$ can be characterized via \eqref{eq: surface assumption}, see Corollary \ref{cor: surf}. 
By Proposition \ref{prop: prep} the above result implies a non-interaction between the bulk and surface energy:  $f_\infty$  and (under certain restrictions) $g_\infty$  are completely determined by $(f_n)_n$ and $(g_n)_n$, respectively.  A first application is the following homogenization result, which we state in dimension $d=2$ as it needs part (ii) of the above statement.

\begin{corollary}[Homogenization]\label{cor: hom}
Let  $p \ge 2$,  $\Omega = \R^2$, and consider $f$ and $g$ satisfying \eqref{eq: general bound} and \eqref{eq: general bound2-new}, respectively.  Suppose that for every $x \in \R^2$, $\xi \in \R^{2 \times 2}$, and $\nu \in \mathbb{S}^1$ we have that the limits
\begin{align}\label{eq: hom-lim}
f_{\rm hom}({\rm sym}(\xi)) := \lim_{r \to \infty} \frac{\mathbf{m}^{1,p}_{\mathcal{E}}(  \bar{\ell}_\xi,  Q_r(rx))}{r^2}, \quad \quad  g_{\rm hom}(\nu) := \lim_{r \to \infty} \frac{\mathbf{m}^{PR}_{\mathcal{E}}(  \bar{u}_{rx,\nu},  Q^\nu_r(rx))}{r}
\end{align} 
exist and are independent of $x$. Let $(\eps_n)_n \subset (0,\infty)$ be a sequence with $\eps_n  \to 0$, and for $n \in \N$ let 
$$f_n(x,\xi) := f(x/\eps_n,\xi), \quad \quad \quad g_n(x,\nu) = g(x/\eps_n,\nu) $$
for $x \in  \R^2  $, $\xi \in \R^{2 \times 2}$, and $\nu \in  \mathbb{S}^1$. Then, for all $A\in\mathcal{A}(\Omega)$ the functionals $\mathcal{E}_n(\cdot,A)$, given by \eqref{eq: basic energy} with densities $f_n$ and $g_n$,  $\Gamma$-converge with respect to the convergence in measure to $\mathcal{E}_{\rm hom}(\cdot,A)$, where  $\mathcal{E}_{\rm hom}$  is defined in \eqref{eq: basic energy} with densities $f_{\rm hom}$ and $g_{\rm hom}$. 
\end{corollary}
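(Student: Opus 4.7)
The argument combines Theorems \ref{th: gamma} and \ref{thm: main thm d=2} with an explicit scaling of the competitors in the asymptotic cell formulas. First, by Theorem \ref{th: gamma} and Proposition \ref{prop: prep}, up to extracting a (not relabeled) subsequence, the family $(\mathcal{E}_n)_n$ $\Gamma$-converges on every $A \in \mathcal{A}(\Omega)$, conditions \eqref{eq: bulk assumption}--\eqref{eq: surface assumption} hold and define densities $f$, $g$. Since $f_n$, $g_n$ are independent of the traces of $u$, Remark \ref{rem: invariance}(i) applies, and because $d=2$ and $p \ge 2$, Theorem \ref{thm: main thm d=2}(ii) yields the integral representation of the limit with bulk density $f(x,e(u))$ and surface density $g(x,\nu_u)$. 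It therefore suffices to prove that $f(x,\xi) = f_{\mathrm{hom}}(\mathrm{sym}(\xi))$ and $g(x,\nu) = g_{\mathrm{hom}}(\nu)$ for every $x \in \R^2$, $\xi \in \R^{2 \times 2}$ and $\nu \in \mathbb{S}^1$.

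For the bulk part, fix such $x$, $\xi$ and a sufficiently small $\rho > 0$, and set $r_n := \rho/\eps_n$, $x' := x/\rho$, so that $x/\eps_n = r_n x'$. The map $w(z) := \eps_n^{-1} v(\eps_n z)$ is a bijection between admissible competitors $v \in W^{1,p}(Q_\rho(x);\R^2)$ for $\mathbf{m}^{1,p}_{\mathcal{E}_n}(\bar\ell_\xi, Q_\rho(x))$ and admissible $w \in W^{1,p}(Q_{r_n}(r_n x');\R^2)$ for $\mathbf{m}^{1,p}_{\mathcal{E}}(\bar\ell_\xi, Q_{r_n}(r_n x'))$; indeed $w = \bar\ell_\xi$ near the rescaled boundary and $e(w)(z) = e(v)(\eps_n z)$. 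The change of variables $y = \eps_n z$, together with $f_n(y,\cdot) = f(y/\eps_n,\cdot)$, gives
\begin{equation*}
\frac{\mathbf{m}^{1,p}_{\mathcal{E}_n}(\bar\ell_\xi, Q_\rho(x))}{\rho^2} = \frac{\mathbf{m}^{1,p}_{\mathcal{E}}\bigl(\bar\ell_\xi, Q_{r_n}(r_n x')\bigr)}{r_n^2}.
\end{equation*}
As $n \to \infty$ one has $r_n \to \infty$, and the right-hand side converges to $f_{\mathrm{hom}}(\mathrm{sym}(\xi))$ by the first identity in \eqref{eq: hom-lim}. Since this value is attained both as $\liminf_n$ and $\limsup_n$ for every $\rho$, \eqref{eq: bulk assumption} yields $f(x,\xi) = f_{\mathrm{hom}}(\mathrm{sym}(\xi))$.

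The surface identification is entirely analogous: the rescaling $w(z) := v(\eps_n z)$ preserves piecewise rigidity (since $e(w)(z) = \eps_n e(v)(\eps_n z) = 0$), maps $\bar u_{x,\nu}$ to $\bar u_{r_n x',\nu}$, and, because $\mathcal{H}^1$ rescales with factor $\eps_n$ in dimension two, produces
\begin{equation*}
\frac{\mathbf{m}^{PR}_{\mathcal{E}_n}(\bar u_{x,\nu}, Q^\nu_\rho(x))}{\rho} = \frac{\mathbf{m}^{PR}_{\mathcal{E}}\bigl(\bar u_{r_n x',\nu}, Q^\nu_{r_n}(r_n x')\bigr)}{r_n},
\end{equation*}
whose limit, by the second identity in \eqref{eq: hom-lim}, equals $g_{\mathrm{hom}}(\nu)$, giving $g(x,\nu) = g_{\mathrm{hom}}(\nu)$ via \eqref{eq: surface assumption}. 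Consequently the extracted subsequence $\Gamma$-converges to $\mathcal{E}_{\mathrm{hom}}$, and a standard Urysohn-type argument---since every subsequence admits a further $\Gamma$-convergent subsequence with the same limit---promotes this to $\Gamma$-convergence of the whole sequence $(\mathcal{E}_n)_n$. The only subtle point is the careful bookkeeping of how admissible competitors transform under the rescaling $y = \eps_n z$; the core work is already done in Theorems \ref{th: gamma} and \ref{thm: main thm d=2}, and the dimensional restriction $d=2$ together with $p \ge 2$ enter only through the invocation of Theorem \ref{thm: main thm d=2}(ii).
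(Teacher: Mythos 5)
Your strategy---identifying the densities $f$ and $g$ of Proposition \ref{prop: prep} with $f_{\rm hom}$ and $g_{\rm hom}$ by rescaling the cell problems, then invoking Theorem \ref{thm: main thm d=2}(ii) together with the Urysohn/subsequence argument---is exactly what the paper intends (the paper delegates the scaling computation to \cite{Sto lavoro GSBV}). The bulk scaling is exactly correct: Sobolev competitors carry no surface energy, so with $w(z)=\eps_n^{-1}v(\eps_n z)$ and $y=\eps_n z$ one has $\mathcal{E}_n(v,Q_\rho(x))=\eps_n^2\,\mathcal{E}(w,Q_{r_n}(r_n x'))$ on the nose, and the displayed identity for $\mathbf{m}^{1,p}$ follows.

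The surface scaling, however, is not an exact identity as you write it. A piecewise rigid competitor $v\in PR(Q^\nu_\rho(x))$ still contributes a nonzero bulk term $\int_{Q^\nu_\rho(x)} f_n(y,0)\,{\rm d}y$ (only $0\le f(\cdot,0)\le\beta$ is guaranteed by \eqref{eq: general bound}), and under $y=\eps_n z$ this term picks up a factor $\eps_n^2$ while the jump integral picks up only $\eps_n$. Concretely, with $w(z)=v(\eps_n z)$,
\begin{align*}
\mathcal{E}_n(v,Q^\nu_\rho(x)) &= \eps_n^2\int_{Q^\nu_{r_n}(r_n x')} f(z,0)\,{\rm d}z + \eps_n\int_{J_w\cap Q^\nu_{r_n}(r_n x')} g(z,\nu_w)\,{\rm d}\mathcal{H}^1 \\
&= \eps_n\,\mathcal{E}(w,Q^\nu_{r_n}(r_n x')) - \eps_n(1-\eps_n)\int_{Q^\nu_{r_n}(r_n x')} f(z,0)\,{\rm d}z,
\end{align*}
so your displayed equality between $\mathbf{m}^{PR}_{\mathcal{E}_n}(\bar{u}_{x,\nu},Q^\nu_\rho(x))/\rho$ and $\mathbf{m}^{PR}_{\mathcal{E}}(\bar{u}_{r_n x',\nu},Q^\nu_{r_n}(r_n x'))/r_n$ fails unless $f(\cdot,0)\equiv 0$; after dividing by $\rho=\eps_n r_n$ the discrepancy is of order $r_n$ and grows unboundedly with $n$ for fixed $\rho$. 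To close the gap one should separate the bulk contribution, which is the \emph{same} for every $PR$ competitor: by \eqref{eq: almost the same2} and the reasoning behind \eqref{eq: general minimization3-better},
\[
\mathbf{m}^{PR}_{\mathcal{E}_n}(\bar{u}_{x,\nu},Q^\nu_\rho(x)) = \int_{Q^\nu_\rho(x)}f_n(y,0)\,{\rm d}y + \mathbf{m}^{PC}_{\mathcal{G}_n}(\bar{u}_{x,\nu},Q^\nu_\rho(x)),
\]
the first term being $\le\beta\rho^2$, and for $\mathbf{m}^{PC}_{\mathcal{G}_n}$ (the surface-only minimum) the rescaling is exact. The resulting $O(\rho)$ error then disappears under the outer $\limsup_{\rho\to 0^+}$ in \eqref{eq: surface assumption}. (Relatedly, the bulk contribution to $\mathbf{m}^{PR}_{\mathcal{E}}(\bar{u}_{rx,\nu},Q^\nu_r(rx))/r$ is of order $r$, so for the second limit in \eqref{eq: hom-lim} to be finite one must in fact read it as a surface-only cell problem, as in the $SBV$ formula of Braides--Defranceschi--Vitali---that is precisely the regime in which your identity is literally correct.)
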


In view of  \cite[Propositions 2.1 and 2.2]{Braides-Defranceschi-Vitali}, \eqref{eq: hom-lim} can always be verified,  whenever $f$ and $g$ are periodic of period $1$ with respect to the coordinates $e_1$ and $e_2$.  In general dimension, using part (iii) of Theorem \ref{thm: main thm d=2}, we obtain the following relaxation result.

\begin{corollary}[Relaxation]\label{cor: relax}
Let $\Omega \subset \R^d$ be open for $d \ge 2$. Suppose that $f$  and $g$ satisfy  \eqref{eq: general bound} and \eqref{eq: general bound2-new}, respectively. Denote by $\bar{\mathcal{E}}$ the relaxation of $\mathcal{E}$ given by   \eqref{eq: basic energy} corresponding to $f$ and $g$, i.e.,
$$\bar{\mathcal{E}}(u,A) = \inf\Big\{ \liminf_{n \to \infty} \mathcal{E}(u_n,A) \colon \, u_n \to u \text{ in measure on $A$}   \Big\} $$
for all $u\in GSBD^p(\Omega)$ and $A\in\mathcal{A}(\Omega)$. Then, $\bar{\mathcal{E}}$ is characterized by
\begin{align*}
\bar{\mathcal{E}}(u,A)=\int_A \bar{f}\big(x,e(u)(x)\big)\,{\rm d}x + \int_{J_u\cap A}\bar{g}\big(x,\nu_u(x)\big)\, {\rm d}\mathcal{H}^{d-1}(x)
\end{align*}
for all $u\in GSBD^p(\Omega)$ and $A\in\mathcal{A}(\Omega)$, where $\bar{f}$ denotes the quasiconvex envelope  (with respect to the second variable) of $f$,   and  $\bar{g}$ is   the $BD$-elliptic envelope of $g$ defined in \eqref{eq: envelope}.   
\end{corollary}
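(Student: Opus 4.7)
\textbf{Proof plan for Corollary~\ref{cor: relax}.} The core idea is to realize $\bar{\mathcal{E}}(\cdot,A)$ as the $\Gamma$-limit, with respect to convergence in measure on $A$, of the constant sequence $\mathcal{E}_n = \mathcal{E}$, and then to apply the identification results already developed. Since the sequential lower semicontinuous envelope coincides with the $\Gamma$-limit of the constant sequence (this is a standard fact in $\Gamma$-convergence, cf.~\cite{DalMaso:93}), I would first invoke Theorem~\ref{th: gamma} applied to the constant sequence $(f_n,g_n)=(f,g)$; note that no genuine extraction is needed because lower and upper $\Gamma$-limits of a constant sequence agree with the relaxation. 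Since $\mathcal{E}(\cdot,A)$ is invariant under the addition of any affine map $a$ with $e(a)=0$ (the bulk integrand sees only $e(u)$, and the surface integrand is independent of the traces), Remark~\ref{rem: invariance}(i) applies to $\bar{\mathcal{E}}$, yielding the representation \eqref{eq: funci-old} with densities $f_\infty$ and $g_\infty$ given by the cell formulas \eqref{eq: f^epsilon_infty-new}.

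Next I would identify the bulk density. For the constant sequence $\mathcal{E}_n=\mathcal{E}$, the asymptotic quantities in \eqref{eq: bulk assumption}--\eqref{eq: surface assumption} exist as honest limits by standard subadditivity/monotonicity arguments on the minimization problems $\mathbf{m}^{1,p}_{\mathcal{E}}$ and $\mathbf{m}^{PR}_{\mathcal{E}}$, so Proposition~\ref{prop: prep} produces densities $f(x,\xi)$ and $g(x,\nu)$. By Theorem~\ref{thm: main thm d=2}(i), one has $f_\infty(x,e(u)(x))=f(x,e(u)(x))$ for $\mathcal{L}^d$-a.e.\ $x$. Since $\mathcal{E}$ restricted to $W^{1,p}(\Omega;\R^d)$ is the purely bulk integral $\int f(x,e(u))\,{\rm d}x$, the cell formula in \eqref{eq: bulk assumption} is exactly the classical asymptotic Dirichlet formula for the relaxation of an integrand on $W^{1,p}$. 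The Sobolev relaxation results recalled in Section~\ref{section: preliminaries} then identify $f(x,\xi)$ with the quasiconvex envelope $\bar{f}(x,\xi)$ of $f(x,\cdot)$; because $f(x,\xi)$ factors through ${\rm sym}(\xi)$, so does $\bar{f}$, consistently with the appearance of $e(u)$ in the statement.

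For the surface density, I would appeal to Theorem~\ref{thm: main thm d=2}(iii) with $h=g$: this yields $g_\infty(x,[u](x),\nu_u(x)) = \bar{g}(x,\nu_u(x))$ with $\bar{g}$ given by \eqref{eq: envelope}, which is precisely the claim. Evenness of $g(x,\cdot)$ in $\nu$ may be assumed without loss of generality, since the integral $\int_{J_u\cap A} g(x,\nu_u)\,{\rm d}\mathcal{H}^{d-1}$ depends only on the even part $\tfrac{1}{2}(g(x,\nu)+g(x,-\nu))$ (the normal $\nu_u$ is well-defined on $J_u$ only up to sign), so replacing $g$ by its even part does not change $\mathcal{E}$ nor $\bar{g}$. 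The remaining, and arguably main, obstacle is that Theorem~\ref{thm: main thm d=2}(iii) is stated for continuous $h$, whereas here $g$ is merely Borel. I would resolve this by a monotone approximation argument: sandwich $g$ between continuous densities $g^-_k \le g \le g^+_k$ still satisfying \eqref{eq: general bound2-new} (with constants converging to the original ones), apply part (iii) to each $g^\pm_k$, and pass to the limit using the monotonicity of both the relaxation and of the envelope in \eqref{eq: envelope} with respect to the defining density. The upper bound $\bar{g}_k^+ \searrow \bar{g}$ and the matching lower bound $\bar{g}_k^- \nearrow \bar{g}$ follow by a direct recovery-sequence argument applied to the cell formula \eqref{eq: envelope}, using the boundedness of competitors in $PR(Q^\nu_1)$ guaranteed by the uniform lower bound on the densities. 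Combining the two sandwich inequalities yields $g_\infty = \bar{g}$ in full generality, and, together with the bulk identification, completes the proof.
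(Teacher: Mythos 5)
Your core route---realize $\bar{\mathcal{E}}$ as the $\Gamma$-limit of the constant sequence, apply Remark~\ref{rem: invariance}(i), identify $f_\infty$ with the quasiconvex envelope via Proposition~\ref{prop: bulk int repr} and Remark~\ref{rem: new rem}, and identify $g_\infty=\bar g$ via Theorem~\ref{thm: main thm d=2}(iii)---is exactly the paper's argument. Your remark that evenness of $g(x,\cdot)$ may be assumed without loss of generality is correct and a useful clarification: since the jump normal is determined only up to sign and the density does not see the traces, the functional, its relaxation, and the envelope in \eqref{eq: envelope} all depend only on the even part $\tfrac12\big(g(x,\nu)+g(x,-\nu)\big)$.

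The monotone approximation you propose to remove the continuity hypothesis of Theorem~\ref{thm: main thm d=2}(iii) does not work, however. To conclude $\bar g_k^+ \searrow \bar g$ from $g\le g_k^+$ and $\bar g_k^- \nearrow \bar g$ from $g_k^-\le g$, you need pointwise convergence $g_k^\pm\to g$; but a sequence of continuous functions decreasing pointwise to $g$ exists only if $g$ is upper semicontinuous, and an increasing one only if $g$ is lower semicontinuous, so both together force $g$ to be continuous and the patch is circular. A Borel density satisfying only \eqref{eq: general bound2-new} need not admit such a sandwich, and replacing it by a Lusin-type $L^1$-approximation would destroy precisely the monotone comparison of envelopes you rely on. Note also that Proposition~\ref{prop: surface int repr-relax}, imported from \cite{AmbrosioBraides2} and used inside Theorem~\ref{thm: main thm d=2}(iii), is itself stated only for continuous $h$. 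The paper's proof of Corollary~\ref{cor: relax} simply invokes Theorem~\ref{thm: main thm d=2}(iii) and thereby tacitly inherits its continuity (and evenness) hypotheses; extending the corollary to merely Borel $g$ would require a genuinely different argument, which neither the paper nor your proposal supplies.
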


\begin{remark}[Discussion on assumptions]\label{rem: assumptions} 
Theorem \ref{thm: main thm d=2}(ii) only holds in dimension $d=2$ since for the identification of the surface density we apply a piecewise Korn-Poincar\'e inequality \cite{Friedrich:15-4} which is only available in the planar setting. For similar reasons, we need to restrict ourselves to exponents $p \ge 2$. We refer to Remark \ref{rem: d,p} for more details in that direction. In the statement of  Theorem~\ref{thm: main thm d=2}(iii)  we need to assume continuity of $h$ in order to apply relaxation results for piecewise constant functions \cite{AmbrosioBraides2},  see Proposition \ref{prop: surface int repr-relax}. Eventually, the assumption that $h$ is even turns out to be instrumental to apply lower semicontinuity results in $GSBD^p$, see  Theorem \ref{thm: GSBD LSC} and Proposition \ref{prop: only nu2}  below. Let  us also mention that our strategy exploits explicitly the fact that the surface densities are of the form \eqref{eq: general bound2-new}, i.e., do not depend on $u^+(x)$ and $u^-(x)$.  
\end{remark}

\begin{remark}[Continuity of $h$ in Theorem \ref{thm: main thm d=2}(iii)]\label{rem: not-cont}
As a final remark, we record that the continuity assumption on $h$ in Theorem \ref{thm: main thm d=2}(iii) can be  slightly  altered  in the following sense: suppose that  there exists  $D \in \mathcal{A}(\Omega)$ with Lipschitz boundary such that $h$ is uniformly continuous on $D$ and  
\begin{align}\label{eq: h-inequaly}
\sup_{(x,\nu) \in D \times \mathbb{S}^{d-1}} h (x,\nu) \le  \inf_{(x,\nu) \in (\Omega \setminus  D  ) \times \mathbb{S}^{d-1}} h (x,\nu). 
\end{align}
Then \eqref{eq: g_infty=g-relax} holds for all $u \in GSBD^p(\Omega)$ with $J_u \subset \overline{D} \cap \Omega$. We refer to Subsection \ref{sec: sub4} for details.
\end{remark}

\subsection{Minimization problems for given boundary data}\label{sec: mini}

We complement the $\Gamma$-convergence results of the previous subsection by convergence results for minimizers of certain boundary value problems, as it is customary in many applications. 
We impose Dirichlet data   on  $\partial_D \Omega := \Omega' \cap \partial \Omega$, where $\Omega$ denotes a bounded Lipschitz domain and  $\Omega' \supset \Omega$ denotes another bounded Lipschitz domain    such that   also $\Omega' \setminus \overline{\Omega}$ has Lipschitz boundary.  This will be achieved by requiring $u = u^0$ on $\Omega' \setminus \overline{\Omega}$ for some datum $u^0 \in  W^{1,p}(\Omega';\R^d)$, i.e.,  we will treat the non-attainment of the boundary data (in the sense of traces) as internal jumps. To this end,  we introduce energy functionals defined on $\Omega'$. Consider sequences of densities $(f_n)_n$ and $(g_n)_n$ as in \eqref{eq: general bound} and \eqref{eq: general bound2-new}, respectively.  We define 
\begin{align}\label{eq: f ext}
f'_n(x,\xi) := \begin{cases} f_n(x,\xi) & \text{if } x \in \Omega, \\ \alpha |{\rm sym}(\xi)|^p&  \text{otherwise.} \end{cases}
\end{align}
and  
\begin{align}\label{eq: g ext}
g'_n(x,\nu) := \begin{cases} g_n(x,\nu) & \text{if } x \in   \Omega  , \\ \beta +1 &  \text{otherwise.} \end{cases}
\end{align}
We assume that, both for $\mathcal{E}_n$ and $\mathcal{E}'_n$, \eqref{eq: bulk assumption}--\eqref{eq: surface assumption} hold, which is always true up to taking a subsequence. Accordingly, we  define $f$, $g$ (for $\mathcal{E}_n$), and $f'$, $g'$ (for $\mathcal{E}'_n$).   We remark that, in this setting, one can show that  $ f'(x,\xi) =  f(x,\xi)$ for $x \in \Omega$ and $f'(x,\xi) = \alpha |{\rm sym}(\xi)|^p$ else, as well as $g'(x,\nu) = g(x,\nu)$ for $x \in \Omega$. Finally, for $x \in \partial_D \Omega$ the value of $g'(x,\nu)$ is completely determined by $(g_n)_n$, and is independent of the choice of   $\Omega'$, see \cite[Remark 4.4]{Manuel} for details.

By Theorem \ref{th: gamma} and Remark \ref{rem: invariance}(i) the functionals $\mathcal{E}_n$, with densities $f_n$ and $g_n$, $\Gamma$-converge with respect to the convergence in measure  (up to a subsequence) to a limiting functional  $\mathcal{E}$ with densities  $f_\infty$ and $g_\infty$. By the results in the previous subsection,  we know that  $f_\infty$ agrees with the function $f$, see Theorem \ref{thm: main thm d=2}(i). In the sequel, we suppose that  $g_\infty=g$.   (For instance, such a property holds in the setting of Theorem \ref{thm: main thm d=2}(ii),(iii).) In a similar fashion, the functionals $\mathcal{E}_n'$ with densities $f_n'$ and $g_n'$, $\Gamma$-converge to some  $\mathcal{E}'$.  Again, we know that the bulk density of $\mathcal{E}'$ is the function $f'$ in  \eqref{eq: bulk assumption} and we assume that the surface density is given by $g'$  in \eqref{eq: surface assumption}. As before, this  characterization can be ensured in the setting of  Theorem \ref{thm: main thm d=2}(ii) or in the setting of Theorem~\ref{thm: main thm d=2}(iii) under the assumption that $h$ is uniformly continuous on $\Omega$. For the latter case, we need to resort to  Remark \ref{rem: not-cont}  (with  $\Omega'$ in place of $\Omega$ and $D = \Omega$)   since the continuity of the density in \eqref{eq: g ext} gets lost through the extension. (Note that indeed  $J_u \subset \overline{D}  \cap \Omega' $ holds since we require $u = u^0$ on $\Omega' \setminus \overline{\Omega}$.)

We now  present  the following version of the $\Gamma$-convergence result which takes  boundary data into account.  We remark that  the statement takes a more general point of view than assuming the setting of Theorem \ref{thm: main thm d=2}(ii),(iii): the result below is true \emph{whenever} the limiting surface density $g$ is determined solely by the functions $g_n$ through the asymptotic minimization problems discussed in the previous subsection.

\begin{proposition}[$\Gamma$-convergence with boundary data]\label{lemma: gamma bdy}
Let $(f_n)_n$ and $(g_n)_n$ be sequences of functions satisfying \eqref{eq: general bound} and \eqref{eq: general bound2-new}, respectively. Consider the sequence of functionals $(\mathcal{E}'_n)_n$ with  densities $(f'_n)_n$, $(g'_n)_n$ defined as in \eqref{eq: f ext}--\eqref{eq: g ext}. Assume that, both for $\mathcal{E}_n$ and $\mathcal{E}'_n$, \eqref{eq: bulk assumption}--\eqref{eq: surface assumption} hold, and accordingly define $f$, $g$ (for $\mathcal{E}_n$), and $f'$, $g'$ (for $\mathcal{E}'_n$).  Consider the $\Gamma$-limit  $\mathcal{E}'$  of $(\mathcal{E}'_n)_n$ with densities $f'_\infty$ and $g'_\infty$, and assume that  $g'_\infty=g'$. 
Suppose that $(u^0_n)_n \subset W^{1,p}(\Omega';\R^d)$ converges strongly to $u^0$ in $W^{1,p}(\Omega';\R^d)$. Then the sequence of functionals 
$$\tilde{\mathcal{E}}'_n(u) = \begin{cases} \mathcal{E}'_n(u) & \text{if } u = u^0_n \text{ on } \Omega' \setminus \overline{\Omega}, \\ +\infty &  \text{otherwise},  \end{cases} $$ 
$\Gamma$-converges with respect to the convergence in measure  to 
$$\tilde{\mathcal{E}}'(u) = \begin{cases} \mathcal{E}'(u) & \text{if } u = u^0\text{ on } \Omega' \setminus \overline{\Omega}, \\ +\infty &  \text{otherwise.}  \end{cases}  $$
\end{proposition}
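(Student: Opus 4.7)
The liminf inequality is immediate. Given $u_n \to u$ in measure on $\Omega'$ with $u_n = u^0_n$ on $\Omega' \setminus \overline{\Omega}$, the strong convergence $u^0_n \to u^0$ in $W^{1,p}(\Omega';\R^d)$ implies convergence in measure, hence $u = u^0$ a.e.\ on $\Omega' \setminus \overline{\Omega}$, so that the constraint passes to the limit; the $\Gamma$-liminf inequality of Theorem \ref{th: gamma} applied to $\mathcal{E}'_n \to \mathcal{E}'$ then yields $\liminf_n \tilde{\mathcal{E}}'_n(u_n) \ge \mathcal{E}'(u) = \tilde{\mathcal{E}}'(u)$.

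For the limsup, fix $u \in GSBD^p(\Omega')$ with $u = u^0$ on $\Omega' \setminus \overline{\Omega}$, and let $(v_n)_n$ be a recovery sequence for the unconstrained $\Gamma$-convergence of $\mathcal{E}'_n$ to $\mathcal{E}'$, so that $v_n \to u$ in measure on $\Omega'$ and $\limsup_n \mathcal{E}'_n(v_n) \le \mathcal{E}'(u)$. The goal is to construct $(w_n)_n$ with $w_n = u^0_n$ on $\Omega' \setminus \overline{\Omega}$, still converging in measure to $u$, and satisfying $\limsup_n \mathcal{E}'_n(w_n) \le \mathcal{E}'(u)$. The plan is to glue $v_n$ and $u^0_n$ via a smooth cut-off $\varphi$ localized in a thin interior strip $S_\eta = \{x \in \Omega : \dist(x,\partial\Omega) < \eta\}$, with $\varphi = 1$ on $\Omega \setminus S_\eta$ and $\varphi = 0$ on $\Omega' \setminus \Omega$, and to set $w_n = \varphi v_n + (1-\varphi) u^0_n$. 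By construction $w_n = u^0_n$ on $\Omega' \setminus \overline{\Omega}$ and $w_n = v_n$ on $\Omega \setminus S_\eta$, so only the transition energy on $S_\eta$ has to be controlled.

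This transition energy splits into a surface part, bounded by $\beta \mathcal{H}^{d-1}(J_{v_n} \cap S_\eta)$ since $u^0_n$ is Sobolev, and a bulk part carrying an extra contribution of order $\eta^{-p} \|v_n - u^0_n\|_{L^p(S_\eta)}^p$ coming from the gradient of $\varphi$. A De Giorgi-type averaging over a finite family of disjoint sub-strips of $S_\eta$, together with the uniform bound $\sup_n \mathcal{H}^{d-1}(J_{v_n}) < \infty$ stemming from $\sup_n \mathcal{E}'_n(v_n) < \infty$, picks out a strip carrying negligible surface energy. The main obstacle is the bulk term: convergence in measure does not deliver $L^p$-closeness of $v_n - u^0_n$ on $S_\eta$. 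This is precisely the situation that the novel fundamental estimate in $GSBD^p$ (Proposition \ref{lemma: fundamental estimate}) is designed to address, combining the Korn-type inequality for functions with small jump of \cite{FinalKorn} with a careful covering argument to produce an exceptional set of small volume and perimeter outside of which $L^p$-control of $v_n - u^0_n$ is recovered; the extension result of \cite{Matteo} complements this by allowing, when needed, a modification of $v_n$ on a neighborhood of $\partial\Omega$ inside $\Omega' \setminus \overline{\Omega}$ so that its trace on $\partial\Omega$ from outside agrees with that of $u^0_n$, correctly preserving the jump of $u$ on $\partial\Omega$ in the limit. A standard diagonal argument, sending $\eta \to 0$ and letting the number of sub-strips diverge, then yields the required recovery sequence and proves the $\Gamma$-limsup inequality.
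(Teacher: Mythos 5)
Your liminf argument is correct and matches the paper. For the limsup, however, your central device — the convex combination $w_n = \varphi v_n + (1-\varphi) u^0_n$ in an interior strip $S_\eta \subset \Omega$ — cannot be made to work, and the fundamental estimate does not rescue it in the way you suggest. On $S_\eta$ one has $v_n \to u$ and $u^0_n \to u^0$ in measure, and since $u$ is unconstrained in $\Omega$ one has $u \neq u^0$ there in general, so that $v_n - u^0_n \to u - u^0 \neq 0$ in measure on $S_\eta$. Proposition \ref{lemma: fundamental estimate} only tames the transition bulk term through the functional $\Lambda$, and \eqref{eq: Lambda0} guarantees $\Lambda(z_1,z_2) \to 0$ precisely when $z_1 - z_2 \to 0$ in measure on the transition layer; that hypothesis fails for the pair $(v_n, u^0_n)$. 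Shrinking $\eta$ does not help either: the gradient of the cut-off scales like $\eta^{-1}$, so the De Giorgi averaging cannot offset the penalty coming from a difference that stays bounded away from zero.

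The paper sidesteps this by making the correction \emph{additive} rather than a convex combination: it sets $\tilde u_n = u_n - \varphi_n$, where $\varphi_n$ is obtained by applying the fundamental estimate to the pair $(0, y_n)$, with $y_n$ an extension (via \cite{Matteo}) of $u_n - u^0_n$ from $\Omega' \setminus \overline{\Omega}$ to a neighborhood of $\partial\Omega$. Since $u = u^0$ on $\Omega' \setminus \overline{\Omega}$ and the extension preserves convergence in measure, one has $y_n \to 0$ in measure, so $\Lambda(0,y_n) \to 0$ and the estimate becomes effective. Your proposal also omits the essential preliminary step \eqref{eq: good approx}, in particular property (iii), $\mathcal{H}^{d-1}(J_{u_n} \cap (\Omega'\setminus\Omega)) \to 0$, which in the paper is established by a reflection/contradiction argument exploiting the $\beta+1$ penalty in \eqref{eq: g ext}: without it, $\|e(y_n)\|_{L^p}$ and $\mathcal{H}^{d-1}(J_{y_n})$ would not be small, and neither the extension step nor the fundamental estimate would close.
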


We emphasize once more that the assumption  $g'_\infty=g'$  on the surface density covers the setting of Theorem \ref{thm: main thm d=2}(ii),(iii). However, it is  not limited to that since above we have no restriction on the dimension.  Instead, for our main result about convergence of minimizers,  we focus again  on the setting of Theorem \ref{thm: main thm d=2}(ii),(iii). 

\begin{theorem}[Convergence of minima and minimizers]\label{th: Gamma existence}
Let $(f_n)_n$ and $(g_n)_n$ be sequences of functions satisfying \eqref{eq: general bound} and \eqref{eq: general bound2-new}, respectively. Suppose either that
\begin{itemize}
\item[(i)]$d = p = 2$,
\item[(ii)]  $d\ge 2$ and $g_n = \hat{g}$ for $n \in \N$, where $\hat{g}$ denotes  a  uniformly continuous density with $\nu \mapsto \hat g(x,\nu)$ being even for all  $x \in \Omega$. 
\end{itemize} 
Consider the  sequence of functionals $(\tilde{\mathcal{E}}'_n)_n$ and the limiting energy $\tilde{\mathcal{E}}'$ given by Proposition   \ref{lemma: gamma bdy}, for boundary data $(u^0_n)_n \subset W^{1,p}(\Omega';\R^d)$ which converge strongly in $W^{1,p}(\Omega';\R^d)$ to $u^0$. Then 
\begin{align}\label{eq: eps control2}
\inf_{v \in GSBD^p(\Omega')} \tilde{\mathcal{E}}'_n(v) \  \to \  \min_{v \in GSBD^p(\Omega')} \tilde{\mathcal{E}}'(v)
\end{align}
 for $n \to \infty$. Moreover, for each sequence $(u_n)_n$ with
\begin{align}\label{eq: eps control}
\tilde{\mathcal{E}}'_n(u_n) \le \inf_{v \in GSBD^p(\Omega')} \tilde{\mathcal{E}}'_n(v) + \eps_n 
\end{align}
for a sequence $\eps_n \to 0$, there exist a subsequence (not relabeled), modifications $(y_n)_n$ satisfying  $\mathcal{L}^d(\lbrace e(y_n) \neq  e(u_n)  \rbrace) \to 0$ as $n \to \infty$, and  $u \in GSBD^p(\Omega')$ with  $y_n \to u$ in measure on $\Omega'$ such that 
$$ \lim_{n \to \infty} \tilde{\mathcal{E}}'_n(u_n) =  \tilde{\mathcal{E}}'(u) = \min_{v \in GSBD^p(\Omega')} \tilde{\mathcal{E}}'(v).$$
In case (i), we additionally have $\lim_{n \to \infty} \tilde{\mathcal{E}}'_n(y_n) = \tilde{\mathcal{E}}'(u)$, i.e., $(y_n)_n$ is a minimizing sequence converging to the minimizer $u$.
\end{theorem}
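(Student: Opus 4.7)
The plan is to combine the $\Gamma$-convergence result for Dirichlet problems (Proposition \ref{lemma: gamma bdy}) with a standard equi-coercivity argument and a compactness/modification result in $GSBD^p$. First I would verify that the assumption $g'_\infty = g'$ of Proposition \ref{lemma: gamma bdy} is satisfied in both cases (i) and (ii). In case (i), this follows directly from Theorem \ref{thm: main thm d=2}(ii) applied to the extended sequence $(\mathcal{E}'_n)_n$ on $\Omega'$. In case (ii), the extension \eqref{eq: g ext} is discontinuous across $\partial\Omega$, so Theorem \ref{thm: main thm d=2}(iii) does not apply directly; however, taking $D=\Omega$ in Remark \ref{rem: not-cont} (with $\Omega'$ in the role of the ambient domain), the inequality \eqref{eq: h-inequaly} is met since $\sup_{\Omega} \hat g \le \beta < \beta +1 = g'$ on $\Omega'\setminus\overline{\Omega}$. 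This yields $g'_\infty = g'$ on the set of competitors $u\in GSBD^p(\Omega')$ satisfying $u=u^0_n$ on $\Omega'\setminus\overline{\Omega}$, which is all we need. Hence Proposition \ref{lemma: gamma bdy} gives $\tilde{\mathcal{E}}'_n \xrightarrow{\Gamma} \tilde{\mathcal{E}}'$ with respect to convergence in measure.

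Second, I would establish equi-coercivity of almost-minimizing sequences. Since $u^0_n \in W^{1,p}(\Omega';\R^d)$ is admissible for $\tilde{\mathcal{E}}'_n$ and $u^0_n \to u^0$ strongly in $W^{1,p}$, we have $\tilde{\mathcal{E}}'_n(u^0_n) \le C$ uniformly by the upper bound in \eqref{eq: general bound}. For $(u_n)_n$ as in \eqref{eq: eps control}, the lower bounds in \eqref{eq: general bound}--\eqref{eq: general bound2-new} therefore yield a uniform control on $\|e(u_n)\|_{L^p(\Omega')}^p + \mathcal{H}^{d-1}(J_{u_n})$. Invoking the $GSBD^p$-compactness with modifications from \cite{Crismale, newvito, FriedrichSolombrino}, there exist, up to a subsequence, $y_n \in GSBD^p(\Omega')$ with $\mathcal{L}^d(\{e(y_n)\ne e(u_n)\})\to 0$, $y_n \to u$ in measure on $\Omega'$ for some $u \in GSBD^p(\Omega')$, and such that $y_n = u_n$ outside a set of vanishing Lebesgue measure. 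Combined with $u_n = u^0_n$ on $\Omega'\setminus\overline{\Omega}$ and $u^0_n \to u^0$ strongly in $W^{1,p}$, this forces $u = u^0$ on $\Omega'\setminus\overline{\Omega}$, so that $u$ is admissible for $\tilde{\mathcal{E}}'$.

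Third, I would chain the two halves of $\Gamma$-convergence with the coercivity. The $\Gamma$-liminf inequality applied along $y_n \to u$ gives $\tilde{\mathcal{E}}'(u) \le \liminf_n \tilde{\mathcal{E}}'_n(y_n)$, and the modification property (which costs only an infinitesimal amount of energy) upgrades this to $\tilde{\mathcal{E}}'(u) \le \liminf_n \tilde{\mathcal{E}}'_n(u_n)$. For the $\Gamma$-limsup direction, given any admissible $v$ for $\tilde{\mathcal{E}}'$, Proposition \ref{lemma: gamma bdy} supplies a recovery sequence $v_n$ with $\tilde{\mathcal{E}}'_n(v_n) \to \tilde{\mathcal{E}}'(v)$, so $\limsup_n \inf \tilde{\mathcal{E}}'_n \le \tilde{\mathcal{E}}'(v)$; taking the infimum over $v$ combined with the previous inequality applied to $u_n$ being almost-minimizing proves \eqref{eq: eps control2} and that the limit $u$ is a minimizer of $\tilde{\mathcal{E}}'$.

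The main obstacle, and the reason for the distinction between (i) and (ii) in the last statement, is obtaining $\tilde{\mathcal{E}}'_n(y_n) \to \tilde{\mathcal{E}}'(u)$ rather than just $\tilde{\mathcal{E}}'_n(u_n) \to \tilde{\mathcal{E}}'(u)$. In general, the modifications produced by the $GSBD^p$ compactness could remove portions of the jump set (or of the elastic energy) by a non-negligible amount, so that $\tilde{\mathcal{E}}'_n(y_n) \le \tilde{\mathcal{E}}'_n(u_n)$ without quantitative control on the loss. In the planar setting with $p=2$, the piecewise Korn--Poincar\'e inequality (Proposition \ref{th: kornpoin-sharp}) together with the sharper two-dimensional modification arguments of \cite{FriedrichSolombrino} produce $y_n$ whose energy differs from that of $u_n$ by $o(1)$; combined with the $\Gamma$-liminf on $y_n$ and the upper bound on $\inf \tilde{\mathcal{E}}'_n$, this yields $\tilde{\mathcal{E}}'_n(y_n)\to \tilde{\mathcal{E}}'(u)$ and hence $(y_n)_n$ is itself a minimizing sequence. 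For $d>2$, such a sharp quantitative modification is unavailable, and only the statement involving $u_n$ can be recovered.
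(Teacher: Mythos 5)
Your overall framework — verify $g'_\infty = g'$, use $GSBD^p$ compactness with modifications, then chain the $\Gamma$-liminf and $\Gamma$-limsup inequalities with equi-coercivity — matches the paper, and your treatment of case (i) is essentially correct. There is, however, a genuine gap in case (ii).

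In your third step you invoke ``the modification property (which costs only an infinitesimal amount of energy)'' to upgrade $\tilde{\mathcal{E}}'(u) \le \liminf_n \tilde{\mathcal{E}}'_n(y_n)$ to $\tilde{\mathcal{E}}'(u) \le \liminf_n \tilde{\mathcal{E}}'_n(u_n)$. This step is only available in case (i), where Theorem \ref{th: comp} (via \cite{FriedrichSolombrino}) supplies the energy bound \eqref{eq:good-en}, i.e.\ $\mathcal{E}'_n(y_n) \le \mathcal{E}'_n(u_n) + 1/n$. In case (ii), the modifications from \cite{newvito} have the form $y_n = u_n - \sum_j a^n_j \chi_{P_j}$ for a fixed Caccioppoli partition; this gives $e(y_n) = e(u_n)$ $\mathcal{L}^d$-a.e. (see \eqref{eq: evenbetter!}), but the jump set can grow by $\bigcup_j \partial^* P_j$, so there is no analogue of \eqref{eq:good-en}. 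Your own last paragraph recognises that the energy comparison between $y_n$ and $u_n$ fails in higher dimensions, but then states that ``only the statement involving $u_n$ can be recovered'' without explaining \emph{how} — and with the upgrade step broken, your chain never produces $\tilde{\mathcal{E}}'(u) \le \liminf_n \tilde{\mathcal{E}}'_n(u_n)$ in case (ii).

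The paper's resolution is to bypass the $\Gamma$-liminf on $y_n$ entirely in case (ii) and prove $\mathcal{E}'(u) \le \liminf_n \mathcal{E}'_n(u_n)$ by a split lower semicontinuity argument: the bulk part uses \eqref{eq: separation}(i) together with $e(y_n) = e(u_n)$; the surface part uses Lemma \ref{lemma: vito-lsc}, which in turn rests on \cite[Theorem 1.2]{newvito} for modifications and on the fact that $g'(x,\cdot)$ is even and symmetric jointly convex (Corollary \ref{cor: surf}), so that Theorem \ref{thm: GSBD LSC} applies, combined with $g' \le \hat g'$. This is the key lemma you need and have not invoked; without it, case (ii) remains unproved. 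A minor further inaccuracy: you claim the modifications satisfy $y_n = u_n$ outside a set of vanishing Lebesgue measure, which is not what Theorem \ref{th: comp} gives — the functions can differ on sets of fixed measure (they differ by piecewise rigid motions on $(P_j)_j$). What is guaranteed, and what you actually need, is that $y_n = u^0_n$ on $\Omega'\setminus\overline{\Omega}$, which comes directly from \eqref{eq:good-en0}.
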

 For the proofs of the results we refer to Section \ref{sec: mini-proof}. We point out that in case (i) we obtain a slightly stronger statement. This is due to compactness properties of $GSBD^p$ functions and the construction of certain modifications, see Theorem \ref{th: comp} below.

\section{Preliminaries}\label{section: preliminaries}

In this section, we collect basic properties of the function space $GSBD^p$ and we recall integral representation formulas for functionals defined on Sobolev functions and piecewise constant functions.

\subsection{Generalized special functions of bounded deformation}\label{sec: GSBD}

 In this subsection, we collect  fundamental properties of the function space $GSBD^p$.

\textbf{$GSBD$-functions, basic properties:}  The  space  $GSBD(\Omega)$ of \emph{generalized special functions of bounded deformation}  has been introduced  in  \cite[Definitions~4.1 and 4.2]{DM}). We recall that every $u\in GSBD(\Omega)$ has an \emph{approximate symmetric gradient} $e(u)\in L^1(\Omega;\R^{d \times d}_{\rm sym})$ and an \emph{approximate jump set} $J_u$.  For 
$x\in J_u$  there exist $u^+(x)$, $u^-(x)\in \R^d$  and $\nu_u(x)\in\mathbb{S}^{d-1}$  
 such that
\begin{equation}\label{0106172148}
\lim_{\rho \to 0}\rho^{-d}\mathcal{L}^d\big(\{y \in B_\rho(x)\colon \pm(y-x)\cdot \nu_u(x)>0\} \cap \{|u-u^\pm(x)|>\eps\}\big)=0
\end{equation}
 for every $\eps>0$, and the function  $ [u]:=u^+-u^-  \colon J_u \to \R^d$ is measurable.  For $1 < p < + \infty$, the   space $GSBD^p(\Omega)$ is  given by  
\begin{equation*}
GSBD^p(\Omega):=\{u\in GSBD(\Omega)\colon e(u)\in L^p(\Omega;\R^{d \times d}_{\rm sym}),\, \mathcal{H}^{d-1}(J_u)<\infty\}.
\end{equation*}
For $u\in GSBD^p(\Omega)$, the \emph{approximate gradient} $\nabla u$ exists $\mathcal{L}^d$-a.e.\ in $\Omega$,  see \cite[Corollary 5.2]{FinalKorn}:

\begin{lemma}[Approximate gradient]\label{lemma: approx-grad}
Let $\Omega \subset \R^d$ be   open,     let $1 < p < +\infty$, and $u \in GSBD^p(\Omega)$. Then for $\mathcal{L}^d$-a.e.\   $x_0 \in \Omega$  there exists a matrix  in $\R^{d \times d}$,   denoted by $\nabla u(x_0)$, such that
$$\lim_{\rho \to 0} \  \rho^{-d} \mathcal{L}^d\Big(\Big\{x \in B_\rho(x_0) \colon \,  \frac{|u(x) - u(x_0) - \nabla u(x_0)(x-x_0)|}{|x - x_0|}  > \eps   \Big\} \Big)  = 0 \text{ for all $\eps >0$}, $$
 and ${\rm sym}(\nabla u(x_0)) = e(u)(x_0)$, where $e(u)(x_0)$ denotes the approximate symmetric gradient.      
\end{lemma}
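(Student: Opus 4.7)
The plan is to reduce the statement to the classical existence of the approximate gradient for Sobolev functions, by invoking the Korn-type inequality for functions with small jump set of \cite{FinalKorn}, which is the only nontrivial ingredient beyond standard Lebesgue-type arguments. First, I would identify a generic point $x_0 \in \Omega$ enjoying three properties, all of which hold simultaneously $\mathcal{L}^d$-a.e.\ in $\Omega$: (a) $x_0$ is an $L^p$-Lebesgue point of $e(u)$; (b) the jump set has vanishing upper $(d-1)$-dimensional density, i.e.\ $\lim_{\rho\to 0^+} \rho^{-(d-1)} \mathcal{H}^{d-1}(J_u\cap B_\rho(x_0)) = 0$; and (c) $u$ is approximately continuous at $x_0$ in the sense of \eqref{0106172148}. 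Property (b) holds outside of $J_u$ by the standard density lower bound for countably $\mathcal{H}^{d-1}$-rectifiable sets together with $\mathcal{H}^{d-1}(J_u) < +\infty$, while (a) and (c) are classical.

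Next, I would fix such an $x_0$ and, for every sufficiently small $\rho > 0$ with $B_\rho(x_0) \subset \Omega$, apply the Korn-type inequality of \cite{FinalKorn} on $B_\rho(x_0)$. Property (b) ensures that the relative size of the jump set lies below the absolute smallness threshold required by that inequality. As an output, one obtains a skew-symmetric matrix $A_\rho \in \mathbb{R}^{d\times d}_{\rm skew}$, a translation $b_\rho \in \mathbb{R}^d$, an exceptional set $\omega_\rho \subset B_\rho(x_0)$ with $\mathcal{L}^d(\omega_\rho) = o(\rho^d)$, and a function $v_\rho \in W^{1,p}(B_\rho(x_0); \mathbb{R}^d)$ which coincides with $u - A_\rho(\cdot - x_0) - b_\rho$ on $B_\rho(x_0) \setminus \omega_\rho$ and satisfies
\[
\| \nabla v_\rho \|_{L^p(B_\rho(x_0))} \le C \| e(u) \|_{L^p(B_\rho(x_0))},
\]
with $C$ depending only on $d$ and $p$. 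Being Sobolev, $v_\rho$ admits an approximate gradient $\mathcal{L}^d$-a.e.\ on $B_\rho(x_0)$, with ${\rm sym}(\nabla v_\rho) = e(v_\rho) = e(u)$ on $B_\rho(x_0)\setminus \omega_\rho$.

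Third, I would show that the candidate matrix $\nabla u(x_0) := A_\rho + \nabla v_\rho(x_0)$ is independent of the particular ball (up to null sets) and realizes the approximate differentiability of $u$ at $x_0$. On the set $B_\sigma(x_0) \setminus \omega_\rho$, with $\sigma \le \rho$, the decomposition $u(x) = v_\rho(x) + A_\rho(x-x_0) + b_\rho$ combined with the approximate continuity (c) of $u$ and the classical approximate differentiability of $v_\rho$ at $x_0$ transfers the infinitesimal expansion from $v_\rho$ to $u$; the residual portion $B_\sigma(x_0) \cap \omega_\rho$ has density zero at $x_0$ since $\mathcal{L}^d(\omega_\rho) = o(\rho^d)$, so it contributes a negligible set in the limit $\sigma \to 0^+$. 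Taking the symmetric part, ${\rm sym}(\nabla u(x_0)) = {\rm sym}(\nabla v_\rho(x_0)) = e(u)(x_0)$, as required. A standard exhaustion of $\Omega$ by countably many balls on which the smallness threshold for the Korn inequality can be enforced then yields the conclusion for $\mathcal{L}^d$-a.e.\ $x_0 \in \Omega$.

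The main obstacle is the construction of the skew component $A_\rho$: in $GSBD^p$ only the symmetric part $e(u)$ is a priori controlled, and the skew-symmetric part of the would-be gradient is not intrinsically available. Its identification forces one to invoke a nontrivial piecewise rigidity-type statement, namely the Korn-type inequality of \cite{FinalKorn}, whose availability in arbitrary dimension is precisely what makes the argument work; in its absence, the planar argument of \cite{Conti-Focardi-Iurlano:15, Friedrich:15-3} had to be used in a dimension-restricted form. Once that tool is granted, the remaining steps are measure-theoretic manipulations on densities of the exceptional set $\omega_\rho$ and the Lebesgue point property of $e(u)$.
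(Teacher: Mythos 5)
The paper does not itself prove this lemma; it cites \cite[Corollary 5.2]{FinalKorn}, and remarks that the $p=2$ case had previously been obtained in \cite{Friedrich:15-4} via the embedding $GSBD^2(\Omega)\subset(GBV(\Omega))^d$, so there is no in-paper proof to compare against. Your high-level strategy --- reduce to approximate differentiability of Sobolev functions via the Korn inequality for functions with small jump set from \cite{FinalKorn} --- is indeed the natural one and is in the spirit of how \cite{FinalKorn} proceeds.

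There is, however, a genuine gap in Step~3. You fix $x_0$, apply the Korn inequality on $B_\rho(x_0)$, obtain the exceptional set $\omega_\rho$ with $\mathcal L^d(\omega_\rho)=o(\rho^d)$, and then claim that $\omega_\rho$ has density zero \emph{at $x_0$} because of this smallness. That does not follow: $\mathcal L^d(\omega_\rho)=o(\rho^d)$ only says that $\omega_\rho$ is a vanishing fraction of the ball $B_\rho(x_0)$; it says nothing about how $\omega_\rho$ concentrates near the specific centre $x_0$. It is perfectly possible that $\omega_\rho$ contains a tiny ball centred at $x_0$, in which case $\omega_\rho$ has density~$1$ at $x_0$ even though its total measure is $o(\rho^d)$. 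For the same reason, you cannot assert that $x_0$ is a point of approximate differentiability of $v_\rho$: that property holds for $\mathcal L^d$-a.e.\ point of $B_\rho(x_0)$, but the exceptional null set depends on $\rho$, which you chose \emph{after} fixing $x_0$.

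The fix is to reverse the quantifiers, which is what your last sentence gestures at but which the body of Step~3 does not implement. Cover a full-measure subset of $\Omega\setminus J_u$ by a \emph{countable} family of balls $(B_n)_n$ on each of which $\mathcal H^{d-1}(J_u\cap B_n)$ is so small (relative to the radius) that the Korn inequality yields $\omega_n\subset B_n$ with $\mathcal L^d(\omega_n)\le\eps\,\mathcal L^d(B_n)$, together with $v_n\in W^{1,p}(B_n;\R^d)$ and a rigid motion $a_n$ with $v_n=u-a_n$ on $B_n\setminus\omega_n$. For \emph{fixed} $n$, the set of $x_0\in B_n\setminus\omega_n$ which are simultaneously density-one points of $B_n\setminus\omega_n$ and points of approximate differentiability of $v_n$ has full measure in $B_n\setminus\omega_n$; at such $x_0$, since $u=v_n+a_n$ on a set of density one at $x_0$, the function $u$ is approximately differentiable at $x_0$ with $\nabla u(x_0)=\nabla v_n(x_0)+\nabla a_n$, and ${\rm sym}(\nabla u(x_0))=e(v_n)(x_0)=e(u)(x_0)$. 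This handles all of $B_n$ except for $\omega_n$ plus a null set. Summing over $n$ and using the countability, the uncovered set has measure at most $C\eps$; letting $\eps\to 0$ gives the conclusion $\mathcal L^d$-a.e. Note also that this quantifier order renders your blow-up to approximate continuity (property (c)) unnecessary: once $u=v_n+a_n$ holds on a set of density one at $x_0$ and $v_n+a_n$ is (classically) approximately differentiable there, approximate differentiability of $u$ follows at once.
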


We point out that the result in Lemma \ref{lemma: approx-grad} has already been obtained in \cite{Friedrich:15-4}  for $p=2$, as a consequence of the embedding $GSBD^2(\Omega) \subset (GBV(\Omega))^d$, see \cite[Theorem 2.9]{Friedrich:15-4}.

%If $U$ has Lipschitz boundary, for each $v\in GBD(U)$ the traces on $\partial U$ are well defined  (see~\cite[Theorem~5.5]{DM}),   in the sense that for $\mathcal{H}^{d-1}$-a.e.\ $x \in  \partial U  $ there exists ${\rm tr}(v)(x) \in \R^d$ such that 
%\begin{align}\label{eq: trace}
%\lim_{\eps \to 0}\eps^{-d} \mathcal{L}^d\big(U \cap B_\eps(x)\cap \{|v-{\rm tr}(v)(x)|>\varrho\}\big)=0 \quad \quad \text{ for all $\varrho >0$}.
%\end{align}
% 

\textbf{Korn inequalities in $GSBD^p$:}    We  recall Korn and Poincar\'e inequalities in $GSBD^p$. In what follows, we say that $a\colon \R^d \to \R^d$ is a \emph{rigid motion} if $a$ is affine with $ e(a) = \frac{1}{2}  ( \nabla a + (\nabla a)^{\mathrm{T}})  = 0$. We start by  Korn and Korn-Poincar\'e  inequalities for functions with small jump set, see  \cite[Theorem~1.1, Theorem~1.2]{FinalKorn}.

\begin{theorem}[Korn inequality for functions with small jump set]\label{th: kornSBDsmall}
Let $\Omega \subset \R^d$ be a bounded Lipschitz domain and let $1 < p < +\infty$. Then there exists a constant $c = c(\Omega,p)>0$ such that for all  $u \in GSBD^p(\Omega)$ there  exists  a set of finite perimeter $\omega \subset \Omega$ with 
\begin{align*}
%\label{eq: R2main}
\mathcal{H}^{d-1}(\partial^* \omega) \le c\mathcal{H}^{d-1}(J_u), \ \ \ \ \mathcal{L}^d(\omega) \le c(\mathcal{H}^{d-1}(J_u))^{d/(d-1)}
\end{align*}
and a  rigid motion $a$ such that
\begin{align}\label{eq: main estmain}
\Vert u - a \Vert_{L^{p}(\Omega \setminus \omega)} + \Vert \nabla u - \nabla a \Vert_{L^{p}(\Omega \setminus \omega)}\le c \Vert e(u) \Vert_{L^p(\Omega)}.
\end{align}
Moreover, there exists $v \in W^{1,p}(\Omega;\R^d)$ such that $v= u$ on $\Omega \setminus \omega$ and
\begin{align*}
%\label{eq: main estmain-Sobolev}
\Vert e(v) \Vert_{L^p(\Omega)} \le c \Vert e(u) \Vert_{L^p(\Omega)}.
\end{align*}
\end{theorem}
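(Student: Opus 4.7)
The plan is to follow the stopping-time / dyadic-covering strategy of Cagnetti, Chambolle, and Scardia \cite{FinalKorn}. The overall idea is to isolate the bad region of $\Omega$, where the jump set is densely concentrated, and then to piece together local Korn-type estimates on the complementary good region into a single global rigid motion.

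First, I would fix a scale $r$ of the order $\mathcal{H}^{d-1}(J_u)^{1/(d-1)}$ and partition $\Omega$ by an axis-aligned grid of cubes of sidelength $r$. A cube $Q$ is declared \emph{bad} if $\mathcal{H}^{d-1}(J_u \cap Q) \ge \eta\, r^{d-1}$ for a fixed small $\eta>0$, and \emph{good} otherwise. The exceptional set $\omega$ is taken as (a mild enlargement of) the union of bad cubes. A direct double counting yields $\mathcal{H}^{d-1}(\partial^* \omega) \le C\, \mathcal{H}^{d-1}(J_u)$ (each bad cube has perimeter $\sim r^{d-1}$ and there are at most $\mathcal{H}^{d-1}(J_u)/(\eta r^{d-1})$ of them) as well as $\mathcal{L}^d(\omega) \le C r\, \mathcal{H}^{d-1}(J_u) \le C\,\bigl(\mathcal{H}^{d-1}(J_u)\bigr)^{d/(d-1)}$, producing both quantitative bounds required by the statement.

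Second, on each good cube $Q$ the jump set is sparse relative to $r^{d-1}$, and a \emph{local} Korn-type inequality for $GSBD^p$ functions with small jump set — itself proved in \cite{FinalKorn} by a stopping-time / iterated selection argument building on Chambolle-Conti-Francfort — furnishes a rigid motion $a_Q$ and a further exceptional piece $\omega_Q \subset Q$ (whose perimeter and volume are absorbable into those of $\omega$) such that $\|u-a_Q\|_{L^p(Q\setminus\omega_Q)} + \|\nabla u-\nabla a_Q\|_{L^p(Q\setminus\omega_Q)} \le C\,\|e(u)\|_{L^p(Q)}$.

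Third, and this is the main obstacle, I would splice the local rigid motions $a_Q$ into a single global $a$. For two adjacent good cubes $Q,Q'$ sharing a face, the common good portion has positive measure, on which $a_Q - a_{Q'}$ is $L^p$-small in terms of $\|e(u)\|_{L^p(Q \cup Q')}$; equivalence of norms on the finite-dimensional space of rigid motions converts this into a bound on the coefficients. Propagating along a spanning tree of the adjacency graph of good cubes and updating the $a_Q$'s accordingly yields a single rigid motion $a$ for which \eqref{eq: main estmain} holds globally. The delicate point here is controlling the diameter of the chaining (hence the accumulated error) in terms of the Lipschitz geometry of $\Omega$ and of $\eta$, which requires showing that, after enlarging $\omega$ by a controlled amount, $\Omega\setminus\omega$ is connected with good isoperimetric constants. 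Once $a$ is in hand, the Sobolev function $v$ is built by setting $v=u$ on $\Omega\setminus\omega$ and extending across $\omega$ via a Whitney/reflection construction on the Lipschitz set $\omega$ (which is essentially a union of cubes), with $\|e(v)\|_{L^p(\omega)} \le C\,\|e(u)\|_{L^p(\Omega)}$ following from standard extension theory applied to $u-a$.
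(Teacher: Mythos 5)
The paper does not give a proof of this statement: it is imported verbatim from Cagnetti--Chambolle--Scardia \cite[Theorems~1.1 and 1.2]{FinalKorn}, and the only proof content actually supplied in the paper is the remark that the volume bound $\mathcal{L}^d(\omega)\le c\,(\mathcal{H}^{d-1}(J_u))^{d/(d-1)}$ is not stated in \cite{FinalKorn} but follows from the perimeter bound $\mathcal{H}^{d-1}(\partial^*\omega)\le c\,\mathcal{H}^{d-1}(J_u)$ by the isoperimetric inequality. You do obtain the volume bound, but by a different and more hands-on route (counting bad cubes of volume $r^d$); both routes are fine, though the paper's is the one-liner that was actually intended.

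As an attempt to reconstruct the proof from \cite{FinalKorn}, your sketch captures the right high-level shape (good/bad cubes at a fixed scale, local estimates, gluing), but it has two substantive problems. First, it is circular as written: in Step~2 you invoke ``a local Korn-type inequality for $GSBD^p$ functions with small jump set'' to produce the local rigid motions $a_Q$, which is precisely the statement to be proved, just restricted to a smaller cube. The genuine work in \cite{FinalKorn} is to establish this local estimate from scratch (via an iterated mesoscale/stopping-time construction and a careful passage from $SBD$ to $GSBD$), and that work is not reproduced or even outlined in your proposal. Second, your Step~3 acknowledges but does not resolve the real difficulty: there is no a priori reason that $\Omega\setminus\omega$ is connected, or that rigid motions on adjacent good cubes can be chained with accumulated error controlled by $\|e(u)\|_{L^p(\Omega)}$. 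Saying ``which requires showing that \dots $\Omega\setminus\omega$ is connected with good isoperimetric constants'' names the obstacle without dealing with it; this is exactly the point where a naive cube-chaining argument breaks down and where the reference spends most of its effort. For the purposes of the present paper, the correct answer is simply to cite \cite{FinalKorn} and supply the isoperimetric remark.
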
 
 
 Note that in  \cite[Theorem 1.1]{FinalKorn}  $\mathcal{L}^d(\omega) \le c(\mathcal{H}^{d-1}(J_u))^{d/(d-1)}$ has not been  stated explicitly,  but it readily follows from $\mathcal{H}^{d-1}(\partial^* \omega) \le c\mathcal{H}^{d-1}(J_u)$ by the isoperimetric inequality.

\begin{remark}[Scaling invariance on  cubes]\label{rem: Korn-scaling}
If $\Omega= Q_\rho$ for $\rho >0$, then we find $\omega \subset  Q_\rho$ and a rigid  motion $a$ such that
$$\mathcal{H}^{d-1}(\partial^* \omega) \le \bar{c} \mathcal{H}^{d-1}(J_u), \ \ \ \ \mathcal{L}^d(\omega)\ \le \bar{c}\big(\mathcal{H}^{d-1}(J_u)\big)^{d/(d-1)} $$
and 
$$\Vert u - a \Vert^p_{L^{p}(Q_\rho \setminus \omega)}\le \bar{c}\rho^p \Vert e(u) \Vert^p_{L^p(Q_\rho)}, $$
where $\bar{c} = \bar{c}(p) >0$ is independent of the sidelength $\rho$. This follows by a standard rescaling argument. 
\end{remark}

  Note that the above result is indeed only relevant for functions with sufficiently small jump set, as otherwise one can choose $\omega = \Omega$,  and \eqref{eq: main estmain} trivially holds. In other words, for functions with jump set whose  measure  is comparable to the size of the domain, Theorem \ref{th: kornSBDsmall} might  not give any information. A finer result, yet restricted to the two-dimensional setting, is provided by the following  \emph{piecewise Korn-Poincar\'e inequality}.  (For the definition and properties of Caccioppoli partitions we refer to \cite[Section~4.4]{Ambrosio-Fusco-Pallara:2000}.)

\begin{proposition}[Piecewise Korn-Poincar\'e inequality]\label{th: kornpoin-sharp}
Let $\Omega \subset \R^2$ be an open, bounded set with Lipschitz boundary, and  let  $0 < \theta \le \theta_0$ for some $\theta_0$ sufficiently small. Then, there is some $C_{\theta}=C_{\theta}(\theta)>0$   such that the following holds: for  each $u \in GSBD^2(\Omega)$ we find a (finite) Caccioppoli partition $\Omega = R \cup \bigcup^{J}_{j=1} P_j$, and corresponding rigid motions $(a_j)_{j=1}^J$ such that  
\begin{align}\label{eq: kornpoinsharp2}
{\rm (i)} & \ \  \sum\nolimits_{j=1}^{J}\mathcal{H}^1\big( (\partial^* P_j \cap \Omega) \setminus J_u \big) +\mathcal{H}^1\big( (\partial^* R \cap \Omega )\setminus J_u  \big) \le \theta (\mathcal{H}^1(J_u) + \mathcal{H}^1(\partial \Omega)),\notag\\
{\rm (ii)} & \ \ \mathcal{L}^2(R)   \le \theta (\mathcal{H}^1(J_u)+ \mathcal{H}^1(\partial \Omega))^2, \ \ \ \ \ \  \mathcal{L}^2(P_j) \ge \theta^3  \ \ \ \text{for all $j=1,\ldots,J$},     \notag\\
{\rm (iii)}& \ \ \Vert u - a_j \Vert_{L^\infty(P_j)}  \le C_{\theta} \Vert  e(u) \Vert_{L^2(\Omega)} \ \ \ \quad  \text{for all $j=1,\ldots,J$}.
\end{align}
\end{proposition}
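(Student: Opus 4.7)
The plan is to combine a fine-grid covering with the Korn inequality for functions with small jump set (Theorem~\ref{th: kornSBDsmall}) and a greedy clustering argument specific to dimension two. First, I would fix a scale $h=h(\theta)>0$ small enough to quantitatively exploit Remark~\ref{rem: Korn-scaling}, and cover $\Omega$ by a regular grid of open squares $\{Q_i\}$ of sidelength $h$; a thin boundary tube of width $\sim h$ is discarded immediately into the exceptional set $R$, contributing $\lesssim h\,\mathcal{H}^1(\partial \Omega)$ to $\mathcal{L}^2(R)$. A square $Q_i$ is declared \emph{bad} if $\mathcal{H}^1(J_u \cap Q_i) > \eta h$ for a threshold $\eta=\eta(\theta)$; the total area of bad squares is $\lesssim \eta^{-1} h\, \mathcal{H}^1(J_u)$, absorbed in $R$ by choosing $\eta^{-1} h$ small compared to $\theta$. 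On every \emph{good} square $Q_i$, Theorem~\ref{th: kornSBDsmall} combined with Remark~\ref{rem: Korn-scaling} yields a rigid motion $a_i$ and a set $\omega_i \subset Q_i$ with $\mathcal{H}^1(\partial^*\omega_i)\lesssim\mathcal{H}^1(J_u\cap Q_i)$, $\mathcal{L}^2(\omega_i)\lesssim (\mathcal{H}^1(J_u\cap Q_i))^2$, and $\|u-a_i\|_{L^2(Q_i \setminus \omega_i)}^2 \lesssim h^2 \|e(u)\|_{L^2(Q_i)}^2$. All $\omega_i$ are placed into $R$; by the isoperimetric-type bound of Remark~\ref{rem: Korn-scaling} their total area is $\lesssim (\mathcal{H}^1(J_u))^2$.

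Next, I would form the pieces $P_j$ by a greedy clustering algorithm on the good squares. Starting from a seed, the current cluster iteratively absorbs any neighbouring good square whose rigid motion is \emph{compatible} with the cluster's, in the sense that the two affine maps differ, on the shared edge, by at most a tolerance $\delta = \delta(\theta)$ times $\|e(u)\|_{L^2(\Omega)}$. When two adjacent clusters carry incompatible rigid motions, the interface between them is declared as boundary of the partition \emph{not} counted against $J_u$. The key estimate, yielding (i), is a planar trace-rigidity argument: if the two rigid motions differ by more than $\delta$ on a segment $\Sigma$, then either the trace of $u$ really jumps across $\Sigma$ by an amount $\gtrsim \delta$ (so $\mathcal{H}^1(J_u\cap\Sigma)$ is charged on a proportional length), or a definite portion of $\Sigma$ is shielded by $\omega_i\cup\omega_{i'}$ and thus already paid for. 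Summing over all incompatible interfaces, and balancing $\delta$ against $h$ and $\eta$, one absorbs the complementary perimeter into $\theta(\mathcal{H}^1(J_u)+\mathcal{H}^1(\partial\Omega))$. Clusters of area below $\theta^3$ are discarded into $R$; since each carries at least one interior boundary edge of length $\sim h$ that is not in $J_u$, their number is at most $\theta^{-1}h^{-1}(\mathcal{H}^1(J_u)+\mathcal{H}^1(\partial\Omega))$, so their joint area fits into (ii).

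The $L^\infty$ upgrade in (iii) is obtained as a final step. On each surviving $P_j$, I would apply the Sobolev-type companion of Theorem~\ref{th: kornSBDsmall}: the extension $v\in W^{1,2}(\Omega;\R^2)$ with $v=u$ on $\Omega\setminus\bigcup_i \omega_i$ and $\|e(v)\|_{L^2}\lesssim\|e(u)\|_{L^2}$ is used to control $u-a_j$ in $L^q(P_j)$ for any $q<\infty$ with constant depending on $\theta$ through $\mathcal{L}^2(P_j) \geq \theta^3$; then by Chebyshev, the super-level set $\{|u-a_j|>C_\theta\|e(u)\|_{L^2(\Omega)}\}$ has arbitrarily small area and perimeter comparable to $\theta\mathcal{H}^1(J_u)$, so it can be added to $R$ without violating (i)--(ii). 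The main obstacle is exactly the compatibility/trace step: turning quantitative two-sided Korn on individual $Q_i$ into a \emph{global} piecewise rigidity statement requires the planar interplay between one-dimensional edge traces and rigidity of infinitesimal rotations. This is the step where dimension two is essential, since the isoperimetric accounting that converts a budget on $\mathcal{H}^1(J_u)$ into an area control of $R$, together with the compatibility of rigid motions across shared edges, breaks down when the jump set can enclose regions whose Korn constants are not uniformly bounded by $\mathcal{H}^{d-1}(J_u)$.
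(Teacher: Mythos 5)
The paper's own proof of this proposition is just a citation to Theorem~4.1 of \cite{FriedrichSolombrino} together with a short dictionary between notations; the real work happens in that reference and in \cite{Friedrich:15-4, Friedrich:15-5}, whose technique (emphasised in Remark~\ref{rem: d,p} and illustrated in Appendix~\ref{proof1XXX} for a related lemma) is to \emph{combine different connected components of the jump set by short segments} via an explicit geometric construction. Your proposal --- cover by a grid, apply the small-jump Korn inequality on each good square, then merge squares greedily by compatibility of the resulting local rigid motions --- is a genuinely different route, closer in spirit to piecewise rigidity arguments, and is not what lies behind the cited statement.

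There are at least two real gaps in the proposal. First, the ``planar trace-rigidity'' step on which (i) hinges is not actually carried out, and this is precisely the crux: the rigid motion $a_i$ produced by Theorem~\ref{th: kornSBDsmall} is only determined up to an $L^2$ error on $Q_i\setminus\omega_i$ and an exceptional set of small \emph{area}, neither of which gives direct control on traces along the shared edge $\Sigma$ (an $\mathcal{L}^2$-null set); one would need a Fubini/slicing argument to locate a good one-dimensional line, and in addition one must prevent errors from accumulating as clusters grow, which is exactly the difficulty the explicit constructions of \cite{FriedrichSolombrino, Friedrich:15-4} are designed to handle. Second, the $L^\infty$ upgrade cannot be obtained by Chebyshev. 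Chebyshev controls the \emph{area} of the super-level set $\{|u-a_j|>\lambda\}$, not its \emph{perimeter}; a perimeter bound would require the coarea formula applied to $|u-a_j|$ and hence a bound on the full distributional gradient $|D u|$, which is not available in $GSBD^2$, where only $e(u)$ and $\mathcal{H}^1(J_u)$ are controlled. Moreover, $W^{1,2}$ does not embed into $L^\infty$ in dimension two, so the estimate in (iii) genuinely requires the specific piecewise construction and cannot come out of a generic Sobolev-plus-truncation argument.
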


\begin{proof}
The statement is a slightly simplified version of \cite[Theorem 4.1]{FriedrichSolombrino}.  We briefly explain how the result  can be obtained therefrom.   We define $\theta_0 \le 1/c$, where $c$ is the constant from  \cite[Theorem 4.1]{FriedrichSolombrino} and apply \cite[Theorem 4.1]{FriedrichSolombrino} for $\theta/c$ in place of $\theta$.   Then, \eqref{eq: kornpoinsharp2}(i) follows from \cite[(18)(i)]{FriedrichSolombrino}, where we denote the  component $P_0$ by $R$. Item \eqref{eq: kornpoinsharp2}(ii) follows from \cite[(17)(i), (18)(ii)]{FriedrichSolombrino}, choosing $\theta_0$ sufficiently small such that $C_\Omega \ge  \theta_0$. Finally, \eqref{eq: kornpoinsharp2}(iii) follows from  \cite[(18)(iii)]{FriedrichSolombrino}, where also a corresponding Korn-type estimate has been proved.  
\end{proof}

 To control the affine mappings appearing  in the above results, we will also make use of the following elementary lemma (see \cite[Lemma~3.4]{FM}). 
		
\begin{lemma}\label{lemma: rigid motion}
Let  $G\in \mathbb{R}^{d \times d}$, $b \in \R^d$.  Let $\delta >0$, $R >0$, and let $\psi \colon \R_+\to \R_+$ be a continuous, strictly increasing function with $\psi(0) = 0$. Consider a measurable, bounded set $E \subset \R^d$  with $E \subset B_R(0)$ and $\mathcal{L}^d(E)\ge \delta$. Then there exists a continuous, strictly increasing function $\tau_\psi: \psi(\R_+) \to \R_+$ with $\tau_\psi(0) = 0$ only depending on $\delta$, $R$, and $\psi$ such that 
\begin{align*}
|G| + |b| \le \tau_\psi\Big( \fint_{E} \psi(|G\,x + b|)\, \mathrm{d}x \Big).
\end{align*}
If $\psi(t) = t^p$, $p \in [1,\infty)$, then $\tau_\psi$ can be chosen as $\tau_\psi(t) = ct^{1/p}$ for  $c=c(p,\delta,R)>0$. Moreover, there exists $c_0>0$ only depending on $\delta$,  $d$,  and $R$ such that 
\begin{align}\label{eq: estimate2}
\Vert G\, x + b \Vert_{L^\infty(B_R(0))}  \le c_0  \Vert  G\,x + b\Vert_{L^1(E)}.
\end{align}
\end{lemma}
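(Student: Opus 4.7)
The plan is to first establish the $L^\infty$--$L^1$ comparison \eqref{eq: estimate2} by a soft compactness argument on the finite-dimensional space of affine maps, and then upgrade it to the general $\psi$-estimate through a Chebyshev-type sublevel-set truncation; the power case follows from an explicit inversion.

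\textbf{Step 1: the $L^\infty$--$L^1$ estimate \eqref{eq: estimate2}.}
I would argue by contradiction. Write $A(x) := G x + b$ and suppose there exist sequences $A_n(x) = G_n x + b_n$ and measurable sets $E_n \subset B_R(0)$ with $\mathcal{L}^d(E_n) \ge \delta$ such that, after rescaling, $\|A_n\|_{L^\infty(B_R(0))} = 1$ while $\|A_n\|_{L^1(E_n)} \to 0$. Since $|b_n| \le \|A_n\|_{L^\infty(B_R(0))}$ and $R \|G_n\|_{\rm op} \le 2 \|A_n\|_{L^\infty(B_R(0))}$, the pairs $(G_n, b_n)$ stay in a bounded set of the finite-dimensional space of affine maps, so up to a subsequence $A_n \to A$ uniformly on $B_R(0)$, with $\|A\|_{L^\infty(B_R(0))} = 1$. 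Simultaneously, by Banach--Alaoglu, $\chi_{E_n} \rightharpoonup^\ast \chi$ in $L^\infty(B_R(0))$ for some $0 \le \chi \le 1$ with $\int_{B_R(0)} \chi \, {\rm d}x \ge \delta$. Combining uniform convergence of $A_n$ with the weak-$\ast$ convergence of the indicators, one can pass to the limit in the product to obtain $\int_{B_R(0)} |A| \chi \, {\rm d}x = \lim_n \|A_n\|_{L^1(E_n)} = 0$. Hence $A$ vanishes on $\{\chi > 0\}$, a set of measure at least $\delta$; since $A$ is affine, this forces $A \equiv 0$, a contradiction.

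\textbf{Step 2: the general inequality.}
Set $M := \|A\|_{L^\infty(B_R(0))}$. Elementary estimates ($|A(0)| = |b|$ and $R\|G\|_{\rm op} \le 2M$, together with $|G| \le \sqrt{d}\|G\|_{\rm op}$) give $|G| + |b| \le C_R M$ with $C_R = C_R(R,d)$, so it suffices to control $M$. By \eqref{eq: estimate2}, $\int_E |A| \, {\rm d}x \ge M/c_0$. Choosing $\sigma := M/(2 c_0 \mathcal{L}^d(E))$ and splitting the integral across $\{|A| < \sigma\}$ and $\{|A| \ge \sigma\}$ (using $|A| \le M$ on $B_R(0)$) yields $\mathcal{L}^d(E \cap \{|A| \ge \sigma\}) \ge 1/(2c_0)$. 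The monotonicity of $\psi$, together with $\mathcal{L}^d(E) \le \mathcal{L}^d(B_R(0))$, then gives
\[
\fint_E \psi(|A|) \, {\rm d}x \ge \frac{\psi(\sigma)}{2 c_0 \mathcal{L}^d(E)} \ge \frac{\psi\bigl(M / (2 c_0 \mathcal{L}^d(B_R(0)))\bigr)}{2 c_0 \mathcal{L}^d(B_R(0))} =: F(M),
\]
where $F\colon \R_+ \to \R_+$ is continuous, strictly increasing, with $F(0) = 0$. Setting $\tau_\psi(s) := C_R F^{-1}(s)$ proves the main bound; by construction it depends only on $\delta$, $R$, and $\psi$. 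For $\psi(t) = t^p$ one computes $F^{-1}(s) = (2 c_0 \mathcal{L}^d(B_R(0)))^{(p+1)/p}\, s^{1/p}$, whence $\tau_\psi(s) = c\, s^{1/p}$ with $c = c(p, \delta, R)$.

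The main (mild) obstacle is Step 1: one must combine uniform convergence of $A_n$ on $B_R(0)$ with the merely weak-$\ast$ convergence of the indicators $\chi_{E_n}$ in order to pass to the limit in the product $\int_{B_R(0)} |A_n|\chi_{E_n}\,{\rm d}x$, and then exploit the rigidity of affine maps (vanishing on any set of positive measure implies vanishing identically) to close the contradiction. Once \eqref{eq: estimate2} is in place, Step 2 is a clean Chebyshev-type level-set estimate and the power case is a direct inversion.
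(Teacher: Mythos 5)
The paper does not prove this lemma itself but cites \cite[Lemma~3.4]{FM} for it, so I evaluate your argument on its own terms.

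Your Step~1 (the $L^\infty$--$L^1$ comparison \eqref{eq: estimate2} by compactness plus the rigidity of affine maps) is correct: after discarding $A_n\equiv 0$ one can normalize, $(G_n,b_n)$ is bounded since $|b_n|=|A_n(0)|$ and $R\|G_n\|_{\rm op}\le 2\|A_n\|_{L^\infty}$, and passing to the limit in $\int |A_n|\chi_{E_n}$ works by splitting $\int|A|\chi_{E_n}\to\int|A|\chi$ (weak-$\ast$) and $\int(|A_n|-|A|)\chi_{E_n}=O(\|A_n-A\|_{L^1})\to 0$. The rigidity step (an affine map vanishing on a set of positive $\mathcal{L}^d$-measure is identically zero) is sound.

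Step~2 has a genuine gap when $\psi$ is bounded, and this is precisely the case the paper needs ($\psi(t)=t/(1+t)$ in the proof of Proposition~\ref{lemma: fundamental estimate}, so $\psi(\R_+)=[0,1)$). Your $F$ has range $F(\R_+)=\bigl[0,\,\psi^*/(2c_0\mathcal{L}^d(B_R(0)))\bigr)$ where $\psi^*=\lim_{t\to\infty}\psi(t)$; but \eqref{eq: estimate2} applied to a constant map forces $c_0\ge 1/\mathcal{L}^d(B_R(0))$, hence $2c_0\mathcal{L}^d(B_R(0))\ge 2$, so when $\psi^*<\infty$ one always has $F(\R_+)\subsetneq\psi(\R_+)$. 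Consequently $F^{-1}$ --- and your $\tau_\psi:=C_R\,F^{-1}$ --- is not defined on all of $\psi(\R_+)$, even though the average $\fint_E\psi(|Gx+b|)\,{\rm d}x$ genuinely takes values in $[\sup F,\psi^*)$ (take $G=nI$, $b=0$, $E=B_R(0)$, $n\to\infty$). For such $s$ the bound $F(M)\le s$ you establish is vacuous and gives no control on $M$. The clean fix is to rerun the Step~1 compactness argument directly for general $\psi$: if $M_n:=|G_n|+|b_n|\to\infty$ while $\fint_{E_n}\psi(|A_n|)\le s_0<\psi^*$, normalize $A_n'=A_n/M_n\to A'\ne 0$ uniformly; since $\{A'=0\}$ is $\mathcal{L}^d$-negligible, $\psi(M_n|A_n'|)\to\psi^*$ a.e., and with $\mathcal{L}^d(E_n)\ge\delta$ one deduces $\fint_{E_n}\psi(|A_n|)\to\psi^*$, a contradiction. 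This gives finiteness of $\sup\{|G|+|b|:\fint_E\psi(|Gx+b|)\le s\}$ for every $s<\psi^*$, which one then dominates by a continuous strictly increasing $\tau_\psi$ with $\tau_\psi(0)=0$. Your explicit inversion in the power case is unaffected, since there $\psi^*=\infty$.
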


\textbf{Approximation:} The following result is a special version of \cite[Theorem 5.1]{FinalKorn}.  (For the definition and properties of $GSBV$ functions we refer to \cite[Section~4.5]{Ambrosio-Fusco-Pallara:2000}.) 

\begin{theorem}[Approximation]\label{th: crismale-density2}
Let $\Omega  \subset  \R^d$  be a bounded Lipschitz domain, and  let $1 < p <+\infty$. Let  $u \in  GSBD^p(\Omega)$. Then there exists a sequence  $(u_n)_n \subset  GSBV^p(\Omega;\R^d) \cap L^p(\Omega;\R^d)$ such that
\begin{align*}
{\rm (i)} & \ \ u_n \to  u  \text{ in measure on } \Omega,\notag\\
{\rm(ii)} & \ \ \Vert e(u_n) - e(u) \Vert_{L^p(\Omega)} \to 0,\notag\\
{\rm (iii)} &  \ \  \mathcal{H}^{d-1}(J_{u_n} \triangle J_u) \to 0.
\end{align*}
Moreover, each $u_n$ lies in $W^{1,p}(\Omega \setminus (\Gamma_n \cup  \overline{\tilde{\omega}_n}))$, where $\Gamma_n$ is closed and the finite union of $C^1$-manifolds, and   $\tilde{\omega}_n$  is a finite union of cubes. 
\end{theorem}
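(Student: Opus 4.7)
The strategy is a local-to-global construction built on the Korn inequality for functions with small jump set (Theorem~\ref{th: kornSBDsmall}). I would cover $\Omega$ by a grid of small cubes, distinguish ``good'' cubes where Korn's estimate can be applied nontrivially from ``bad'' cubes on which the jump set is too dense, replace $u$ by a Sobolev function on the good cubes, and retain $u$ on the bad ones. The bad cubes will form $\tilde{\omega}_n$, while $\Gamma_n$ will be obtained as a smoothed version of the union of the exceptional sets provided by Korn on the good cubes.

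More concretely, for each $n$ I would fix a scale $\eps_n \to 0$ and a parameter $\delta_n \to 0$, and cover $\Omega$ by a grid of closed cubes $(Q^n_i)_i$ of side $\eps_n$. Call $Q^n_i$ \emph{good} if $\mathcal{H}^{d-1}(J_u \cap Q^n_i) \le \delta_n \, \eps_n^{d-1}$ and \emph{bad} otherwise. Since $\mathcal{H}^{d-1}(J_u) < \infty$, an elementary double-counting gives that the number of bad cubes times $\eps_n^{d-1}$, and therefore the Lebesgue measure of their union, tends to $0$. On each good cube I would apply the rescaled Korn estimate (Remark~\ref{rem: Korn-scaling}) to obtain a set of finite perimeter $\omega^n_i \subset Q^n_i$ and a function $v^n_i \in W^{1,p}(Q^n_i;\R^d)$ with $v^n_i = u$ on $Q^n_i \setminus \omega^n_i$, $\mathcal{H}^{d-1}(\partial^* \omega^n_i) \le c\,\mathcal{H}^{d-1}(J_u \cap Q^n_i)$, $\mathcal{L}^d(\omega^n_i) \le c(\mathcal{H}^{d-1}(J_u \cap Q^n_i))^{d/(d-1)}$, and $\|e(v^n_i)\|_{L^p(Q^n_i)} \le c \|e(u)\|_{L^p(Q^n_i)}$. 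Summing over $i$ and exploiting the superlinearity of the volume estimate in the local jump measure, one gets $\mathcal{L}^d(\bigcup_i \omega^n_i) \to 0$ while $\sum_i \mathcal{H}^{d-1}(\partial^* \omega^n_i)$ stays bounded by $c\,\mathcal{H}^{d-1}(J_u)$.

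I would then define $u_n := v^n_i$ on each good cube and $u_n := u$ on the bad ones; set $\tilde{\omega}_n$ to be the union of the bad cubes (automatically a finite union of cubes, for $\eps_n$ a rational subdivision), and take $\Gamma_n$ to be a closed, finite union of $C^1$-manifolds obtained by a smooth approximation of $\bigcup_i \partial^* \omega^n_i$ together with the faces of good cubes adjacent to bad ones, with arbitrarily small loss in $\mathcal{H}^{d-1}$-measure via the standard density of smooth sets in the finite-perimeter topology. Convergence (i) then follows from $u_n = u$ outside $\tilde{\omega}_n \cup \bigcup_i \omega^n_i$, whose measure vanishes; for (ii), $e(u_n) - e(u)$ is supported in $\bigcup_i \omega^n_i$, and the Korn bound combined with the absolute continuity of the $L^p$-integral of $e(u)$ yields $\|e(u_n)-e(u)\|_{L^p(\Omega)} \to 0$; for (iii), the symmetric difference $J_{u_n} \triangle J_u$ is contained in $\bigcup_i (\partial^*\omega^n_i \setminus J_u) \cup (J_u \cap \bigcup_i \omega^n_i) \cup (J_u \cap \tilde{\omega}_n) \cup (\text{cube faces absorbed into } \Gamma_n)$, each term vanishing thanks to the perimeter bound and to the absolute continuity of $\mathcal{H}^{d-1}\mres J_u$.

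\textbf{Main obstacle.} The delicate point is to achieve the three convergences (i)--(iii) while simultaneously enforcing the structural requirements: $\tilde{\omega}_n$ a \emph{finite} union of cubes (straightforward) and $\Gamma_n$ a \emph{finite} union of $C^1$-manifolds (not immediate, since Theorem~\ref{th: kornSBDsmall} only provides $\omega^n_i$ of finite perimeter). The latter must be handled by a smoothing step for sets of finite perimeter that preserves the perimeter estimate up to a vanishing correction, and one has to make sure that joining the modifications across good cubes does not create spurious long jumps along the common faces. This can be resolved by enlarging each $\omega^n_i$ to absorb the portion of $\partial Q^n_i$ that meets $J_u$, at the cost of a still controlled perimeter increase, so that the net new discontinuity created by the construction stays of order $\sum_i \mathcal{H}^{d-1}(\partial^*\omega^n_i) + \delta_n \mathcal{H}^{d-1}(J_u) \to 0$.
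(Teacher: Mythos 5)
The paper does not actually prove this statement: it is imported verbatim as ``a special version of \cite[Theorem 5.1]{FinalKorn}'', so there is no internal proof to compare against. Your attempt therefore has to stand on its own, and it has a genuine gap that cannot be patched within the stated strategy.

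The central problem is what happens on the bad cubes. You set $u_n := u$ there, but a generic $u \in GSBD^p$ restricted to a cube is neither in $GSBV^p$ nor in $L^p$, so the resulting $u_n$ fails the very membership $u_n \in GSBV^p(\Omega;\R^d) \cap L^p(\Omega;\R^d)$ that the theorem asserts. One might try to repair this by also replacing $u$ on the bad cubes by a Sobolev (or rigid) function, but that destroys property (iii). Indeed, since $J_u$ is countably $(d-1)$-rectifiable, for $\mathcal{H}^{d-1}$-a.e.\ $x \in J_u$ one has $\mathcal{H}^{d-1}(J_u \cap Q_{\eps}(x)) \sim \eps^{d-1}$ as $\eps \to 0$; hence with any thresholds $\delta_n \to 0$ (or even a fixed small threshold), the cube through such an $x$ is eventually bad, and by dominated convergence $\mathcal{H}^{d-1}(J_u \setminus \tilde{\omega}_n) \to 0$. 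In other words, the bad cubes asymptotically capture essentially all of $J_u$, so the two options --- keep $u$ (lose $GSBV^p\cap L^p$) or flatten it (lose almost all of $J_{u_n}$, hence (iii)) --- are both fatal. A secondary slip is the claim that ``the number of bad cubes times $\eps_n^{d-1}$ tends to zero'': the double-counting gives $N_n \le \mathcal{H}^{d-1}(J_u)/(\delta_n \eps_n^{d-1})$, so $N_n\eps_n^{d-1} \le \mathcal{H}^{d-1}(J_u)/\delta_n$ diverges; only the volume $N_n\eps_n^d$ vanishes (if $\eps_n/\delta_n \to 0$), which is enough for (i) but not for controlling the $\mathcal{H}^{d-1}$-measure of $\partial\tilde\omega_n$ in (iii). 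The actual proof in \cite{FinalKorn} (building on the Chambolle--Crismale density program) is considerably more delicate precisely at this point: the approximating jump set must be built to track $J_u$ in $\mathcal{H}^{d-1}$-measure, not merely to surround a small-volume exceptional region, and this cannot be achieved by discarding or keeping the bad cubes wholesale.
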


\textbf{Compactness:} We recall the following compactness result in $GSBD^p$ (see \cite[Theorem 1.1]{Crismale}).

\begin{theorem}[Compactness]\label{thm: Vito compactness}
 Let $\Omega \subset \R^d$ be open and bounded.  Let $(u_n)_n$ be a sequence in $GSBD^p(\Omega)$ such that 
\begin{align}\label{eq: hp in V.C.}
\sup\nolimits_{n\in \N} \big(  \Vert e(u_n) \Vert_{L^p(\Omega)} + \mathcal{H}^{d-1}(J_{u_n}) \big) < +\infty.
\end{align}
Then, there exists a subsequence, still denoted $(u_n)_n$, such that $G_\infty :=\left\{x\in \Omega:\, |u_n(x)|\rightarrow \infty   \right\}$ has finite perimeter, and  a function  $u\in GSBD^p(\Omega)$ with $u=0$ in  $G_\infty$  such that 
\begin{align}\label{eq: main properties}
{\rm (i)} & \ \ u_n \to u \ \, \mathcal{L}^d\text{-a.e.\ on $\Omega\setminus G_\infty$},\notag\\
{\rm (ii)} & \ \  e(u_n) \rightharpoonup e(u) \textit{ in } L^p(\Omega\setminus G_\infty; \mathbb{R}^{d\times d}_{ \rm sym}),\notag\\
{\rm (iii)} & \ \ \liminf_{n\rightarrow \infty}\mathcal{H}^{d-1}(J_{u_n})\geq \mathcal{H}^{d-1}\big(J_{u}\cup  (\partial^* G_\infty\cap \Omega)\big). 
\end{align}
\end{theorem}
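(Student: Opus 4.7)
\textbf{The approach} I would take is a localization strategy based on the Korn inequality for functions with small jump set (Theorem~\ref{th: kornSBDsmall}), which serves as a substitute for the classical Korn inequality in the presence of cracks. The idea is to reduce, on most of $\Omega$, to a Sobolev situation where standard reflexive compactness applies, and to isolate the points where $|u_n|$ diverges into the set $G_\infty$.

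\textbf{Localization and Sobolev reduction.} I would first cover $\Omega$ with a fine dyadic grid of cubes $\{Q_i^k\}$ of side $2^{-k}$. The uniform bound on $\mathcal{H}^{d-1}(J_{u_n})$ ensures that only a controlled number of cubes carry more jump measure than $\eta \cdot 2^{-k(d-1)}$, for a threshold $\eta>0$ dictated by the constant in Theorem~\ref{th: kornSBDsmall}. On the remaining ``good'' cubes, Theorem~\ref{th: kornSBDsmall} rescaled as in Remark~\ref{rem: Korn-scaling} produces an exceptional set $\omega_n^{i,k}$ of small volume and perimeter, a rigid motion $a_n^{i,k}$, and a Sobolev function $v_n^{i,k} \in W^{1,p}(Q_i^k;\R^d)$ which agrees with $u_n$ outside $\omega_n^{i,k}$ and satisfies $\|\nabla v_n^{i,k} - \nabla a_n^{i,k}\|_{L^p(Q_i^k)} \le C\|e(u_n)\|_{L^p(Q_i^k)}$. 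For each good cube I would then perform a dichotomy on the rigid motions: either $\{a_n^{i,k}\}_n$ stays bounded on $Q_i^k$, and Rellich's theorem together with Lemma~\ref{lemma: rigid motion} yield pointwise a.e.\ convergence of $u_n$ on $Q_i^k\setminus \omega_\infty^{i,k}$ along a subsequence; or $\|a_n^{i,k}\|_{L^\infty(Q_i^k)} \to \infty$, and then $|u_n|\to\infty$ a.e.\ on $Q_i^k$ up to a set of small measure. A standard diagonal argument, letting $k\to\infty$ and exhausting $\Omega$, yields a subsequence converging $\mathcal{L}^d$-a.e.\ on $\Omega\setminus G_\infty$ to a measurable $u$; weak-$L^p$ compactness of $(e(u_n))_n$ combined with the local Sobolev representations identifies the weak limit with $e(u)$, giving (i) and (ii).

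\textbf{Perimeter of $G_\infty$ and jump lower semicontinuity.} The finite perimeter of $G_\infty$ and property (iii) would be obtained simultaneously by a slicing argument: for $\mathcal{H}^{d-1}$-a.e.\ line parallel to a direction $\xi\in\mathbb{S}^{d-1}$, the one-dimensional slices $u_n^{\xi,y}$ are in $SBV^p$, and at every slice point of $\partial^* G_\infty$ the function $u_n^{\xi,y}$ must develop, for $n$ large, an additional jump, since it transitions between a bounded trajectory and one escaping to infinity. Combining this with the slice points converging to those of $J_u^{\xi,y}$, Fatou's lemma applied to the one-dimensional $SBV$ lower semicontinuity, together with the integral-geometric formula for $\mathcal{H}^{d-1}$, yields $\liminf_n \mathcal{H}^{d-1}(J_{u_n}) \ge \mathcal{H}^{d-1}(J_u \cup(\partial^* G_\infty \cap \Omega))$ and, letting $\xi$ vary, the finite perimeter of $G_\infty$.

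\textbf{Main obstacle.} The hardest step is the last one: the slice-by-slice bookkeeping must ensure that no cancellation occurs between the jump points contributing to $J_u$ and those generating $\partial^* G_\infty$, which is subtle precisely because these two parts of $J_{u_n}$ can approach each other as $n\to\infty$. The geometric control on the exceptional sets $\omega_n^{i,k}$ coming from Theorem~\ref{th: kornSBDsmall} is what allows one to keep the ``converging'' and ``diverging'' behaviors separated at every scale, and is therefore the central technical device of the whole argument.
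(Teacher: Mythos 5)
This theorem is not proved in the paper at all; it is imported from \cite[Theorem 1.1]{Crismale}, so there is no in-paper argument to compare your sketch against. Evaluating the sketch on its own merits: the cube-by-cube application of Theorem~\ref{th: kornSBDsmall} and the dichotomy on the rigid motions is a plausible route to (i)--(ii) and is in the spirit of analogous $GSBV^p$ arguments. But you do not address how to extract a single subsequence along which the dichotomy stabilizes simultaneously at every scale, nor how the $n$-dependent exceptional sets $\omega_n^{i,k}$ are prevented from recurring indefinitely at the same locations; both are needed to upgrade convergence in measure to $\mathcal{L}^d$-a.e.\ convergence and, more basically, to make the set $G_\infty$ measurable (the subsequence on which $\{x:|u_n(x)|\to\infty\}$ is a bona fide set of finite perimeter is part of the statement, not a given).

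The genuine gap is in the slicing step for (iii) and for the finite perimeter of $G_\infty$. A one-dimensional slice in direction $\xi\in\mathbb{S}^{d-1}$ records only the scalar $u_n^{\xi,y}(t)=\xi\cdot u_n(y+t\xi)$, whereas $G_\infty$ is defined by divergence of the full vector norm $|u_n|$. The claim that the slice must develop an extra jump when crossing $\partial^* G_\infty$ is false for a single direction $\xi$: the $\xi$-component of $u_n$ may remain bounded while the orthogonal components blow up, so the scalar slice sees neither a jump nor a divergence. Consequently the Fatou/integral-geometric argument as described does not produce either (iii) or the finite perimeter of $G_\infty$. This is precisely where the actual proof of Chambolle and Crismale has to work hardest, controlling $d$ independent slicing directions simultaneously (or arguing by dimensional induction) in order to reconstruct the normal to $\partial^* G_\infty$ from jumps of scalar slices; the control on the Korn exceptional sets that you invoke as the ``central technical device'' is not by itself a substitute for that step. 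A secondary issue: (ii) presupposes that the a.e.\ limit $u$ admits an approximate symmetric gradient on $\Omega\setminus G_\infty$, i.e.\ that $u$ already lies in (a local version of) $GSBD^p$ there; establishing this before identifying the weak limit of $e(u_n)$ with $e(u)$ is not automatic from the local Sobolev representations alone.
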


 A control on \eqref{eq: hp in V.C.} does in general not imply that the sequence converges in measure which is reflected by the presence of the set $G_\infty$. The latter  can be understood as the parts of the domain which are (almost) completely disconnected by the jump set $(J_{u_n})_n$ such that the functions $(u_n)_n$ can take arbitrarily large values on these pieces.  To ensure measure convergence on the \emph{entire domain}, one needs to pass to \emph{modifications} of $(u_n)_n$.

\begin{theorem}[Compactness for modifications]\label{th: comp}
Let $\Omega \subset \Omega' \subset \R^d$ be bounded Lipschitz domains. Let $(\mathcal{E}_n)_n$ be a sequence of functionals of the form \eqref{eq: basic energy} with densities satisfying \eqref{eq: general bound} and \eqref{eq: general bound2-new}. Let $(u^0_n)_n \subset W^{1,p}(\Omega';\R^d)$ be converging in $L^p(\Omega';\R^d)$ to some $u^0 \in W^{1,p}(\Omega';\R^d)$. Consider $(u_n)_n  \subset GSBD^p(\Omega')$ with $u_n = u^0_n$ on $\Omega' \setminus \overline{\Omega}$ and  $\sup_{n \in \N} \mathcal{E}_n(u_n, \Omega')   <+\infty$. 

\noindent Then, we find a  subsequence (not relabeled), modifications $(y_n)_n \subset GSBD^p(\Omega')$ satisfying 
\begin{align}\label{eq:good-en0}
\text{$y_n = u^0_n$ on $\Omega' \setminus \overline{\Omega}$,   \quad \quad \quad  $\mathcal{L}^d\big(\lbrace e(y_n) \neq e(u_n) \rbrace \big) \le \tfrac{1}{n}$}\quad \quad \quad \text{for all $n\in\N$},
\end{align}
and a limiting function  $u \in GSBD^p(\Omega')$ with $u = u^0$ on $\Omega' \setminus \overline{\Omega}$ such that $y_n \to u$ in measure on $\Omega'$ and $e(y_n) \rightharpoonup e(u)$ weakly in $L^p(\Omega';\R^{d\times d}_{\rm sym})$.

\noindent Moreover, if $p=d=2$ and  $(|\nabla u^0_n|^2)_n$ are equiintegrable, then the modifications $(y_n)_n$ can be chosen in such a way that we also have 
\begin{align}\label{eq:good-en}
\mathcal{E}_n(y_n,\Omega') \le \mathcal{E}_n(u_n,\Omega') + \tfrac{1}{n}\quad \quad \quad \text{for all $n\in\N$}.
\end{align}  
\end{theorem}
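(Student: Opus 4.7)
The plan is to combine the compactness Theorem \ref{thm: Vito compactness} with a piecewise Korn-Poincaré decomposition that allows us to cure the possible blow-up set $G_\infty$ by subtracting rigid motions on components where the function diverges, leaving the symmetric gradient unchanged up to a negligible set.

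\textbf{Step 1 (a priori bounds and first compactness).} From $\sup_n \mathcal{E}_n(u_n,\Omega') < +\infty$ and the lower bounds in \eqref{eq: general bound}--\eqref{eq: general bound2-new}, together with $e(u_n) = e(u^0_n)$ on $\Omega' \setminus \overline\Omega$ and the strong $W^{1,p}$-convergence of $(u^0_n)$, I would deduce
$$\sup_n \big( \Vert e(u_n) \Vert_{L^p(\Omega')} + \mathcal{H}^{d-1}(J_{u_n}) \big) < +\infty.$$
Theorem \ref{thm: Vito compactness} then produces a (not relabeled) subsequence, a set $G_\infty$ of finite perimeter, and a function $u \in GSBD^p(\Omega')$ with $u = 0$ on $G_\infty$ and $u_n \to u$ a.e. on $\Omega' \setminus G_\infty$. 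Since $u_n = u^0_n \to u^0$ in $L^p$ on $\Omega' \setminus \overline\Omega$, one has $G_\infty \subset \overline\Omega$ up to a $\mathcal{L}^d$-null set, and $u = u^0$ on $\Omega' \setminus \overline\Omega$.

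\textbf{Step 2 (construction of the modifications).} For each $n$, I would apply a piecewise Korn-Poincaré decomposition of $u_n$ on a suitable Lipschitz domain containing $\overline\Omega$: in the planar case via Proposition \ref{th: kornpoin-sharp} with parameter $\theta = \theta_n \to 0$; in higher dimensions via an iteration of Theorem \ref{th: kornSBDsmall} on subdomains. This yields a Caccioppoli partition $\Omega' = R_n \cup \bigcup_j P_n^j$ with rigid motions $(a_n^j)$ such that $u_n - a_n^j$ is bounded in $L^\infty(P_n^j)$ by $C_{\theta_n} \Vert e(u_n) \Vert_{L^p}$, the remainder satisfies $\mathcal{L}^d(R_n) \le \theta_n (\mathcal{H}^{d-1}(J_{u_n}) + \mathcal{H}^{d-1}(\partial\Omega))^2$, and the newly created interfaces satisfy $\sum_j \mathcal{H}^{d-1}((\partial^* P_n^j \cap \Omega') \setminus J_{u_n}) \le \theta_n (\mathcal{H}^{d-1}(J_{u_n}) + \mathcal{H}^{d-1}(\partial\Omega))$. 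Letting $J_n$ index those components $P_n^j$ that are entirely contained in $\overline\Omega$, I define
$$y_n := u_n - \sum_{j \in J_n} a_n^j \chi_{P_n^j} \quad \text{on } \Omega' \setminus R_n, \qquad y_n := 0 \text{ on } R_n.$$
Since rigid motions have vanishing symmetric gradient and $y_n = u_n$ outside $R_n \cup \bigcup_{j \in J_n} P_n^j$, the only contribution to $\{e(y_n) \ne e(u_n)\}$ comes from $R_n$, whose measure is $\le 1/n$ for a suitable choice of $\theta_n$. The boundary constraint $y_n = u^0_n$ on $\Omega' \setminus \overline\Omega$ is preserved by restricting the sum to $J_n$.

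\textbf{Step 3 (convergence).} By construction $(y_n)$ is bounded on each component $P_n^j$, $(e(y_n))$ is bounded in $L^p(\Omega';\R^{d\times d}_{\mathrm{sym}})$, and $\mathcal{H}^{d-1}(J_{y_n}) \le \mathcal{H}^{d-1}(J_{u_n}) + \theta_n(\mathcal{H}^{d-1}(J_{u_n}) + \mathcal{H}^{d-1}(\partial\Omega))$ remains uniformly bounded. A second application of Theorem \ref{thm: Vito compactness} to $(y_n)$ now has trivial escape set (thanks to the subtracted rigid motions), so up to a further subsequence $y_n \to \tilde u$ in measure on $\Omega'$ and $e(y_n) \rightharpoonup e(\tilde u)$ weakly in $L^p$. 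A direct comparison on $\Omega' \setminus G_\infty$ (where $y_n$ differs from $u_n$ only on components meeting $G_\infty$, of measure going to zero) identifies $\tilde u = u$.

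\textbf{Step 4 (planar energy estimate).} In the case $d=p=2$ with equiintegrability of $(|\nabla u^0_n|^2)$, the bulk energy differs only on $R_n$: since $e(y_n)=0$ there, $\int_{R_n} f_n(x,e(y_n)) \, {\rm d}x \le \beta \mathcal{L}^2(R_n)$, while the surface energy of $y_n$ includes at most the additional jump set $\bigcup_{j \in J_n} \partial^* P_n^j \setminus J_{u_n}$, contributing at most $\beta \theta_n (\mathcal{H}^1(J_{u_n}) + \mathcal{H}^1(\partial\Omega))$. Choosing $\theta_n \to 0$ sufficiently fast yields $\mathcal{E}_n(y_n,\Omega') \le \mathcal{E}_n(u_n,\Omega') + 1/n$. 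The equiintegrability of $|\nabla u^0_n|^2$ is needed to control the bulk energy on a thin neighborhood of $\partial\Omega$ where the boundary components interact with the partition.

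The main obstacle is Step 2 in dimensions $d \ge 3$: since Proposition \ref{th: kornpoin-sharp} is only planar, one has to assemble a corresponding decomposition by iterating Theorem \ref{th: kornSBDsmall} on a fine covering of $\Omega'$ by small cubes and gluing via Caccioppoli-set arguments, while still controlling the perimeter of $R_n$. Getting quantitative control on $\mathcal{L}^d(R_n)$ and on the newly-created interface $\bigcup_j \partial^* P_n^j \setminus J_{u_n}$ requires a careful interplay between the size of the cubes and the parameter $\theta_n$, and it is precisely this quantification that breaks down for the energy inequality outside $d=p=2$.
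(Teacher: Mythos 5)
Your proposal attempts a self-contained construction, whereas the paper's proof is essentially a citation: the case $p=d=2$ is delegated to \cite[Theorem~6.1, Remark~6.3]{FriedrichSolombrino}, and the general case $d\ge 2$, $1<p<+\infty$ to \cite[Theorem~1.1]{newvito}. Your Step 2 for $d=2$ is in the spirit of \cite{FriedrichSolombrino}, but the cited higher-dimensional result uses a \emph{fixed} Caccioppoli partition $(P_j)_j$ (independent of $n$, extracted from the limit structure) and only the rigid motions $a^n_j$ vary with $n$. That construction yields $\mathcal{L}^d(\{e(y_n)\neq e(u_n)\})=0$ exactly, not merely $\le 1/n$, and sidesteps the dimensional obstruction you correctly identify.

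Two concrete gaps in your Steps 2--3. First, you claim that the second application of Theorem~\ref{thm: Vito compactness} to $(y_n)_n$ has a trivial escape set ``thanks to the subtracted rigid motions.'' But Proposition~\ref{th: kornpoin-sharp} only gives $\Vert u_n - a^j_n\Vert_{L^\infty(P^j_n)}\le C_{\theta_n}\Vert e(u_n)\Vert_{L^2}$, and you need $\theta_n\to 0$ to shrink $R_n$; hence $C_{\theta_n}\to\infty$ and you have no uniform $L^\infty$ bound on $y_n$, so the absence of an escape set for $(y_n)_n$ does not follow. Second, your identification $\tilde u = u$ is not justified: you wrote that $y_n$ differs from $u_n$ ``only on components meeting $G_\infty$,'' but your definition subtracts rigid motions on \emph{all} components $P^j_n$ contained in $\overline\Omega$, including components where $u_n$ already converges to $u$. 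On such components $a^j_n$ need not tend to zero, so the limit of $y_n$ there can differ from $u$. To make the argument work one must either restrict the correction to components where $u_n$ genuinely escapes, or use the fixed-partition mechanism where the components and the choice of $a^n_j$ are tied to the limit structure (in particular, $a^n_j=0$ on any component touching $\Omega'\setminus\overline\Omega$). Neither mechanism is present in your draft.

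On the energy inequality \eqref{eq:good-en} for $d=p=2$: the bookkeeping you sketch (extra bulk $\le\beta\mathcal{L}^2(R_n)$, extra surface $\le\beta\theta_n(\mathcal{H}^1(J_{u_n})+\mathcal{H}^1(\partial\Omega))$) is consistent with the statement, but the role of the equiintegrability of $(|\nabla u^0_n|^2)_n$ is asserted rather than used. In the paper's cited proof the equiintegrability enters precisely to control the modification near $\partial\Omega$ where the fixed partition interacts with the Dirichlet region; this is a genuine technical input, not just a ``thin-neighborhood'' heuristic, and needs to be spelled out if you want a self-contained argument.
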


%}\notag\\ 
%(ii)& \ \ \text{$e(y_k)  \rightharpoonup e(u)$   weakly in $L^2(\Omega'; \R^{2\times 2}_{\rm sym})$}.
%\end{align*}
%Moreover,  $\nabla u_k  \rightharpoonup \nabla u$   weakly in $L^p(\Omega'; \R^{m\times d})$, and
%\begin{align}\label{eq: jump charac}
%\mathcal{H}^{d-1}(J_u) \le \liminf_{k \to \infty} \mathcal{H}^{d-1}(J_{y_k}) \le  \liminf_{k \to \infty} \mathcal{H}^{d-1}(J_{u_k}).
%\end{align}

 \begin{proof}
Up to small adaptions, the case $p=d=2$ has been addressed in   \cite[Theorem~6.1, Remark~6.3]{FriedrichSolombrino}. In  \cite{FriedrichSolombrino}, only a single energy and a single boundary datum was considered, but an inspection of the proof shows that the statement can be extended to the above setting. In fact, the crucial point is that the growth conditions \eqref{eq: general bound} and \eqref{eq: general bound2-new} hold uniformly in $n \in \N$. Moreover, the property $\mathcal{L}^2\big(\lbrace e(y_n) \neq e(u_n) \rbrace \big) \le \tfrac{1}{n}$ was not  noted explicitly in \cite{FriedrichSolombrino}, but follows from the construction, see \cite[(65)-(66)]{FriedrichSolombrino}. We also refer to \cite[Theorem 3.1]{Manuel} for an analogous statement in $GSBV^p$. 

The statement for  $d\ge 2$ and $1 < p <+\infty$ can be found in \cite[Theorem 1.1]{newvito}.  The result is weaker than the one in \cite{FriedrichSolombrino} in the sense that \eqref{eq:good-en}  cannot be guaranteed. We briefly explain that \eqref{eq:good-en0}  is satisfied. Indeed, the modifications $(y_n)_n$ are obtained from $(u_n)_n$ by subtracting piecewise rigid motions $(a^n_j)_j$ associated to a fixed Caccioppoli partition $(P_j)_j$, i.e., $y_n = u_n - \sum_j a^n_j \chi_{P_j}$, see \cite[(1.4)]{newvito}. This even yields 
\begin{align}\label{eq: evenbetter!}
\mathcal{L}^d\big(\lbrace e(y_n) \neq e(u_n) \rbrace \big) =0 \quad \quad \text{for all $n \in \N$}.
\end{align}
As $u_n = u^0_n$ on $\Omega' \setminus \overline{\Omega}$ and $u^0_n \to u^0$,  \cite[(1.5b)]{newvito} allows us to  choose $a^n_j = 0$ for all components $P_j$ intersecting $\Omega' \setminus \overline{\Omega}$. This ensures $y_n = u_n = u^0_n  $ on $\Omega' \setminus \overline{\Omega}$.  
  \end{proof}

\textbf{Lower semicontinuity:}   We start with a definition  from \cite{FPM}.

\begin{definition}[Symmetric joint convexity]\label{def: symm-conv}
We say that $\tau\colon\R^d\times \R^d\times \R^d\rightarrow [0,+\infty)$ is a \emph{symmetric jointly convex function} if 
\begin{align*}
%\label{symmetric joint convexity}
\tau(i,j,\nu)=\sup_{h\in\mathbb{N}}   (g_h(i)-g_h(j)) \cdot \nu \quad \quad \text{for all } (i,j,\nu) \in \R^d\times \R^d\times \R^d \quad  \text{with } i \neq j,
\end{align*}
where $g_h\colon \R^d\to \R^d$ is a  uniformly continuous, bounded, and  conservative vector field for every $h\in\mathbb{N}$.  
\end{definition}

The following result can be found in \cite[Theorem 5.1]{FPM}.

\begin{theorem}[Lower semicontinuity of surface integrals in $GSBD^p$]\label{thm: GSBD LSC}
Let $\tau \colon \R^d\times\R^d\times \R^d \rightarrow [0,+\infty)$  be a symmetric jointly convex function. Then, for every sequence $(u_n)_n\subset GSBD^p(\Omega)$, $p>1$, converging in measure to $u\in GSBD^p(\Omega)$, and satisfying the condition
\begin{align*}
%\label{eq: e(u_k) unif bdd}
\sup_{n\in\mathbb{N}} \big( \| e(u_n) \|_{L^p(\Omega)}  + \mathcal{H}^{d-1}(J_{u_n}) \big) <+\infty,
\end{align*}
we have that 
\begin{align*}
%\label{eq: GSBD LSC}
\int_{J_u} \tau(u^+,u^-,\nu_u)\, {\rm d}\mathcal{H}^{d-1} \leq \liminf_{n\to \infty} \int_{J_{u_n}} \tau(u_n^+,u_n^-, \nu_{u_n})\, {\rm d}\mathcal{H}^{d-1}.
\end{align*}
\end{theorem}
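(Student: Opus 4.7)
The plan is to proceed via a three-step strategy: reduction to scalar integrands of the form $(g(i)-g(j))\cdot \nu$, a compactness set-up, and a Gauss--Green / approximation argument that passes to the limit. First, exploiting the supremum representation $\tau(i,j,\nu)=\sup_h (g_h(i)-g_h(j))\cdot\nu$, I would use a standard measurable-selection argument: partition $J_u$ into countably many Borel pieces on each of which a single $g_{h_k}$ realizes the supremum up to $\varepsilon$, and invoke monotone convergence. This reduces the statement to the inequality
\[
\int_{J_u}\psi(x)\,(g(u^+)-g(u^-))\cdot \nu_u\,d\mathcal{H}^{d-1}\le \liminf_{n\to\infty}\int_{J_{u_n}}\psi(x)\,(g(u_n^+)-g(u_n^-))\cdot \nu_{u_n}\,d\mathcal{H}^{d-1}
\]
for an arbitrary fixed bounded, uniformly continuous, conservative $g=\nabla\phi$ and every $\psi\in C_c(\Omega)$ with $0\le\psi\le 1$.

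Next I would set up the compactness framework. Since $u_n\to u$ in measure on all of $\Omega$ and the energy is bounded, Theorem \ref{thm: Vito compactness} (along a subsequence) yields $G_\infty=\emptyset$, so that $e(u_n)\rightharpoonup e(u)$ weakly in $L^p(\Omega;\R^{d\times d}_{\rm sym})$ and $\liminf_n\mathcal{H}^{d-1}(J_{u_n})\ge \mathcal{H}^{d-1}(J_u)$. The jump measures $\mathcal{H}^{d-1}\mres J_{u_n}$ are therefore tight, and, up to a further subsequence, converge weakly$^*$ to a finite Radon measure dominating $\mathcal{H}^{d-1}\mres J_u$. The uniform continuity of $g$ together with measure convergence of $u_n$ will be used to identify one-sided limits $g(u_n^\pm)$ with $g(u^\pm)$ in the limit, up to exceptional sets of arbitrarily small surface measure.

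The third step is the core analytical identity. Writing $g=\nabla\phi$ with $\phi$ Lipschitz (as $g$ is bounded), I would approximate $u_n$ by a sequence $(v_n^k)_k\subset GSBV^p(\Omega;\R^d)\cap L^p(\Omega;\R^d)$ via Theorem \ref{th: crismale-density2}, so that $\phi\circ v_n^k\in SBV\cap L^\infty$ and the scalar Gauss--Green formula applies. The symmetric joint convexity of $\tau$ is what makes this representation possible: it guarantees that, in the resulting identity, the bulk contribution involving the asymmetric part of $\nabla u$ cancels, leaving a quantity controlled only by $\|e(u_n)\|_{L^p}$ together with a jump term of the desired form $(g(u_n^+)-g(u_n^-))\cdot\nu_{u_n}$. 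A diagonal argument in $(n,k)$, combined with the weak $L^p$ convergence of the symmetric gradients and the Borel-partition reduction, yields the claimed lower semicontinuity.

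The main obstacle is that, unlike in $GSBV^p$, a composition $\phi\circ u$ of a $GSBD^p$ function with a Lipschitz map is not automatically in $BV$, since one controls only $e(u)$ and not the full gradient $\nabla u$; the standard chain-rule/Gauss--Green machinery is therefore not directly available. Overcoming this requires exploiting the \emph{symmetric} structure of the integrand so that only $e(u)$ appears effectively in the resulting identity, and using the $GSBV^p$ approximation of Theorem \ref{th: crismale-density2} as a bridge to the $SBV$ theory (where the Ambrosio--Braides lower semicontinuity for jointly convex integrands is available) while keeping the surface terms close along the approximation. Controlling the interaction between the approximation procedure, the weak convergence $e(u_n)\rightharpoonup e(u)$, and the trace behavior of $g(u_n^\pm)$ on the varying jump sets $J_{u_n}$ is the delicate technical point of the argument.
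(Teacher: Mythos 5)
The paper does not prove this theorem: it is imported verbatim from the reference \cite[Theorem~5.1]{FPM}, which the paper cites immediately before the statement. There is therefore no ``paper's own proof'' to compare against, and the best I can do is assess your proposal on its own merits and against what the definition of symmetric joint convexity is evidently designed to enable.

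Your high-level strategy is in the right spirit. The central observation you isolate is correct and is precisely why conservative fields are required in Definition~\ref{def: symm-conv}: if $g=\nabla\phi$ then $\operatorname{div}\big(g\circ v\big)=D^2\phi(v):\nabla v = D^2\phi(v):e(v)$ by symmetry of $D^2\phi$, so only the symmetric gradient enters the bulk term in the chain-rule identity, which is what makes a $GSBD$ argument feasible at all.

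However, the proposal has genuine gaps at the level where the real work has to happen. First, the object you need to integrate by parts is the vector field $g\circ v\colon \Omega\to\R^d$, not the scalar potential $\phi\circ v$; the relevant statement is that $g\circ v$ is a bounded vector field whose distributional divergence is a measure, and you need a Gauss--Green theorem in that divergence-measure setting, not $SBV$ regularity of $\phi\circ v$. For a $GSBD^p$ function $v$, $g\circ v$ is in general \emph{not} in $BV$, since $\nabla(g\circ v)$ involves the full gradient; only its divergence is controlled. Phrasing the step as ``$\phi\circ v_n^k\in SBV\cap L^\infty$ and apply the scalar Gauss--Green'' therefore does not match what is needed. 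Second, the bulk term $\int_\Omega \psi\,D^2\phi(v):e(v)\,{\rm d}x$ is not a priori controlled by $\|e(v)\|_{L^p}$: only $g=\nabla\phi$ is bounded, so $D^2\phi$ may be unbounded (it may not even exist pointwise). To pass to the limit against the weak $L^p$ convergence of $e(u_n)$ you must first mollify $g$ and then quantify the error this introduces in the surface integrals over $J_{u_n}$, using the uniform bound on $\mathcal{H}^{d-1}(J_{u_n})$ and uniform continuity of $g$. This step is entirely absent. Third, once the Gauss--Green identity is in place for a single mollified conservative $g$, the argument actually yields \emph{convergence} of the corresponding surface integrals, not merely a liminf inequality; the asymmetry that produces lower semicontinuity only enters when passing to the supremum over the family $(g_h)_h$. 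Your ``measurable selection plus monotone convergence'' sketch therefore cannot be the end of the reduction: one must localize on disjoint open sets with different indices $h$, pass to the liminf for each piece using the identity, and then optimize, and one must check that this optimization is compatible with the varying jump sets $J_{u_n}$. Finally, Theorem~\ref{th: crismale-density2} is stated on bounded Lipschitz domains whereas the present statement is on a general open $\Omega$, so an exhaustion/localization step is also required before the density result can be invoked. These are not cosmetic issues: they are precisely the points where a $GSBD$ proof departs from the classical $SBV$ argument, and they are left unaddressed in the proposal.
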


In the present paper, we will use that certain densities depending only on the normal are symmetric jointly convex. More precisely, we have the following  result  (see \cite[Proposition 4.11]{FPM}).

\begin{proposition}\label{prop: only nu2} 
A function $\tau\colon \R^d\times \R^d \times  \R^{d}  \to [c,+\infty)$, $c>0$, of the form  $\tau(i,j,\nu) = \psi(\nu)$ for all  $(i,j,\nu) \in \R^d\times \R^d \times  \R^d  $,  $i \neq j$,  is symmetric jointly convex  if $\psi\colon\R^d \to [0,+\infty)$ is even,   positively  $1$-homogeneous,  and convex.
\end{proposition}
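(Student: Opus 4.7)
The strategy is to construct explicitly a countable family of bounded, uniformly continuous, conservative vector fields $(g_h)_{h \in \N}$ such that $\psi(\nu) = \sup_{h} (g_h(i) - g_h(j)) \cdot \nu$ for every $i \neq j$ and every $\nu \in \R^d$, thereby verifying Definition~\ref{def: symm-conv}.

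The first step is to pass to the dual representation of $\psi$. Being convex, positively $1$-homogeneous, and finite on $\R^d$, $\psi$ is the support function of the compact convex set
\[
K := \{\xi \in \R^d : \xi \cdot \mu \le \psi(\mu) \text{ for all } \mu \in \R^d\},
\]
so that $\psi(\nu) = \sup_{\xi \in K} \xi \cdot \nu$. Evenness of $\psi$ gives $K = -K$, and the coercivity $\psi \ge c > 0$ on $\mathbb{S}^{d-1}$ is equivalent, via elementary separation, to $\overline{B}_c(0) \subset K$; in particular $K$ has nonempty interior. Fix a countable dense subset $\{\xi_k\}_{k \in \N} \subset K \setminus \{0\}$ and set $e_k := \xi_k/|\xi_k|$.

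The second step is to define, for $(k, q, n, s) \in \N \times \mathbb{Q} \times \N \times \{-1, +1\}$, the vector field
\[
g_{k, q, n, s}(x) := s\, \tfrac{|\xi_k|}{2}\, \tanh\bigl(n(x \cdot e_k - q)\bigr)\, e_k,
\]
which is the gradient of the smooth scalar potential $s\,(|\xi_k|/(2n))\log\cosh(n(x \cdot e_k - q))$, hence conservative; moreover it is bounded by $\tfrac{1}{2}\max_{\xi \in K}|\xi|$ and Lipschitz, hence uniformly continuous. After enumerating this countable collection as $(g_h)_h$, the upper bound is immediate from $|\tanh| \le 1$ together with $\pm \xi_k \in K$:
\[
\bigl|(g_{k, q, n, s}(i) - g_{k, q, n, s}(j)) \cdot \nu\bigr| \le |\xi_k|\,|e_k \cdot \nu| = |\xi_k \cdot \nu| \le \psi(\nu).
\]

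The matching lower bound is where the main obstacle appears. Fix $(i, j, \nu)$ with $i \neq j$ and $\psi(\nu) > 0$ (the case $\psi(\nu) = 0$ being trivial), and pick $\xi^* \in K$ attaining $\xi^* \cdot \nu = \psi(\nu)$. The construction only approaches $\xi_k \cdot \nu$ when $(i - j) \cdot e_k \neq 0$ (otherwise the tanh difference cannot be driven to $\pm 2$), and the sign factor $s \in \{\pm 1\}$ is essential to produce a positive contribution irrespective of the signs of $(i - j) \cdot e_k$ and $e_k \cdot \nu$. Both issues are handled at once thanks to the nonempty interior of $K$: the hyperplane $(i - j)^\perp$ meets $K$ in a proper lower-dimensional slice, so density of $\{\xi_k\}$ in $K$ produces some $\xi_k$ arbitrarily close to $\xi^*$ with $(i - j) \cdot \xi_k \neq 0$ and $\xi_k \cdot \nu > 0$. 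For such a $\xi_k$ I would pick $q \in \mathbb{Q}$ strictly between $i \cdot e_k$ and $j \cdot e_k$, choose $s$ so that $s\,\mathrm{sgn}((i - j) \cdot e_k) = +1$, and let $n \to \infty$: the tanh difference tends to $\pm 2$ with the right sign, giving
\[
(g_{k, q, n, s}(i) - g_{k, q, n, s}(j)) \cdot \nu \longrightarrow \xi_k \cdot \nu,
\]
which by continuity can be made arbitrarily close to $\xi^* \cdot \nu = \psi(\nu)$. This yields $\sup_h (g_h(i) - g_h(j)) \cdot \nu \ge \psi(\nu)$, concluding the argument.
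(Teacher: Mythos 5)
The paper itself does not prove this proposition; it is quoted verbatim from \cite[Proposition~4.11]{FPM}, so there is no in-paper argument against which to compare. Your proof is nevertheless correct and, as far as I can tell, reproduces the only reasonable strategy for such a statement: write $\psi$ as the support function of its subdifferential $K=\partial\psi(0)$, then realize each linear functional $\xi\cdot\nu$, $\xi\in K$, as a limit of expressions $(g(i)-g(j))\cdot\nu$ for gradient fields $g$ that transition sharply along the direction $\xi/|\xi|$. A couple of remarks on the details. The upper bound $|\xi_k\cdot\nu|\le\psi(\nu)$ genuinely uses that $\psi$ is even (equivalently $K=-K$); the sign parameter $s\in\{\pm1\}$ in your family is what forces you to control the absolute value rather than just $\xi_k\cdot\nu$, and you correctly identify this. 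For the lower bound, the statement ``density of $\{\xi_k\}$ produces some $\xi_k$ close to $\xi^*$ with $(i-j)\cdot\xi_k\ne 0$'' needs one more word: you first move $\xi^*$ slightly into $\mathrm{int}\,K$ (using $0\in\mathrm{int}\,K$, a consequence of $\overline{B}_c(0)\subset K$), then perturb off the hyperplane $(i-j)^\perp$ while preserving $\xi\cdot\nu>0$, and only then invoke density of $\{\xi_k\}$ in $K$ to land in that open slab. As written the phrase compresses three steps into one, but the underlying reasoning is sound. The potential $s\,(|\xi_k|/(2n))\log\cosh(n(x\cdot e_k-q))$ indeed has gradient $g_{k,q,n,s}$, each such field is bounded by $\tfrac12\max_{\xi\in K}|\xi|$ and Lipschitz, and the index set $\N\times\mathbb{Q}\times\N\times\{\pm1\}$ is countable, so the hypotheses of Definition~\ref{def: symm-conv} are verified. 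In short: correct proof, standard technique; only the exposition of the density argument could be tightened.
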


We close   with a lower semicontinuity result for modifications in the setting of Theorem \ref{th: comp}.  It is a special case of \cite[Theorem 1.2]{newvito} for integrands not depending on the jump height. 
  
 \begin{lemma}[Lower semicontinuity of surface integrals for modifications]\label{lemma: vito-lsc}
 Consider the setting of Theorem \ref{th: comp} for a sequence $(u_n)_n$ with corresponding modifications $(y_n)_n$ and limiting function $u$. Furthermore, let $h\colon  \Omega'  \times \mathbb{S}^{d-1}  \to [0,+\infty)$ be a density satisfying \eqref{eq: general bound2-new} such that  $\nu \mapsto h(x,\nu)$ is even and symmetric jointly convex  for all $x \in   \Omega  $.  Moreover, suppose that $h$ is uniformly continuous on $\Omega\times \mathbb{S}^{d-1}$ and that  for $\mathcal{H}^{d-1}$-a.e.\ $x \in \partial \Omega \RRR \cap \Omega'\EEE$ it holds that    $h(x,\nu_\Omega(x)) = \lim_{n\to \infty} h(x_n,\nu_n)$ for sequences $(x_n)_n \subset \Omega$ and $(\nu_n)_n \subset \mathbb{S}^{d-1}$ with $x_n \to x$ and $\nu_n\to \nu_\Omega$, where $\nu_\Omega(x)$ denotes the outer normal at $x \in \partial \Omega \RRR \cap \Omega'\EEE$.  Then, it holds that 
      \begin{align}\label{eq: vitonew2XXX}
\int_{J_u} h(x, \nu_u) \, {\rm d}\mathcal{H}^{d-1} &\le  \liminf_{n \to \infty} \int_{J_{u_n}} h(x, \nu_{u_n}) \, {\rm d}\mathcal{H}^{d-1}.
\end{align}
\end{lemma}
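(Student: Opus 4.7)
The statement is essentially the $x$-dependent version of \cite[Theorem 1.2]{newvito} adapted to the setting of Theorem \ref{th: comp}, and my plan is to derive it from Theorem \ref{thm: GSBD LSC} by localization, after first taking care of the $x$-dependence.

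\textbf{Step 1 (extension of $h$).} First I would extend $h$ continuously to a density $\tilde h\colon \overline{\Omega'}\times \mathbb{S}^{d-1}\to [\alpha,\beta]$ that still satisfies \eqref{eq: general bound2-new}, is even in $\nu$, and such that $\nu\mapsto \tilde h(x,\nu)$ is symmetric jointly convex for every $x\in \overline{\Omega'}$. On $\Omega\times\mathbb{S}^{d-1}$ this is given; on $\partial\Omega\cap\Omega'$ it is determined by the boundary compatibility assumption; and on $(\Omega'\setminus\overline{\Omega})\times \mathbb{S}^{d-1}$ it can be defined by any continuous extension preserving the properties, e.g.\ by positively $1$-homogeneous, convex, and even extension of $\psi_x(\nu)=\tilde h(x,\nu)$ in $\nu$, which by Proposition \ref{prop: only nu2} retains symmetric joint convexity.

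\textbf{Step 2 (localization in $x$).} Using uniform continuity of $\tilde h$ on $\overline{\Omega'}\times \mathbb{S}^{d-1}$, given $\eta>0$ I would partition $\overline{\Omega'}$ into finitely many Borel sets $(Q_i)_{i=1}^{N_\eta}$ of diameter at most $\delta$, choose $x_i\in Q_i$, and exploit
\begin{equation*}
|\tilde h(x,\nu) - \tilde h(x_i,\nu)|\le \omega(\delta),\qquad \forall\, x\in Q_i,\ \nu\in\mathbb{S}^{d-1},
\end{equation*}
where $\omega$ is a modulus of continuity of $\tilde h$. Since $\sup_n \mathcal{H}^{d-1}(J_{u_n})<+\infty$ by the energy bound and the coercivity of $h$, the error produced in replacing $\tilde h(\cdot,\cdot)$ by $\tilde h(x_i,\cdot)$ on each $Q_i$ vanishes as $\delta\to 0$.

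\textbf{Step 3 (lower semicontinuity on modifications).} For fixed $i$, the density $\tau_i(a,b,\nu):=\tilde h(x_i,\nu)$ is symmetric jointly convex in the sense of Definition \ref{def: symm-conv}. Applying Theorem \ref{thm: GSBD LSC} to $(y_n)_n$ on $Q_i$, which converges in measure to $u$ on $\Omega'$ by Theorem \ref{th: comp} and satisfies uniform bounds on $e(y_n)$ and $\mathcal{H}^{d-1}(J_{y_n})$, I obtain
\begin{equation*}
\int_{J_u\cap Q_i}\tilde h(x_i,\nu_u)\,d\mathcal{H}^{d-1}\le \liminf_{n\to\infty}\int_{J_{y_n}\cap Q_i}\tilde h(x_i,\nu_{y_n})\,d\mathcal{H}^{d-1}.
\end{equation*}
Summing over $i$, using Step 2 to replace $\tilde h(x_i,\nu)$ by $\tilde h(x,\nu)$ (with error $\omega(\delta)\sup_n\mathcal{H}^{d-1}(J_{y_n})$), and sending $\delta\to 0$, yields
\begin{equation*}
\int_{J_u}h(x,\nu_u)\,d\mathcal{H}^{d-1}\le \liminf_{n\to\infty}\int_{J_{y_n}}\tilde h(x,\nu_{y_n})\,d\mathcal{H}^{d-1}.
\end{equation*}

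\textbf{Step 4 (passing from $J_{y_n}$ to $J_{u_n}$).} Recall from the proof of Theorem \ref{th: comp} and \eqref{eq: evenbetter!} that the modifications have the form $y_n=u_n-\sum_j a^n_j\chi_{P_j}$ for a \emph{fixed} Caccioppoli partition $(P_j)$ of $\Omega'$ depending only on the subsequence. Hence
\begin{equation*}
J_{y_n}\subset \big(J_{u_n}\cap {\textstyle\bigcup}_j P_j\big)\cup {\textstyle\bigcup}_{j\ne k}(\partial^* P_j\cap \partial^* P_k\cap \Omega').
\end{equation*}
The fundamental property ensured by the construction in \cite{newvito} is that the essential boundaries of the partition pieces are carried by the asymptotic jump set, that is, $\bigcup_j \partial^* P_j\cap \Omega'\subset J_u\cup J_\infty$ up to $\mathcal{H}^{d-1}$-null sets, with $J_\infty$ absorbed into $\liminf_n\mathcal{H}^{d-1}(J_{u_n})$ via the $GSBD$ compactness. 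This is the delicate point: combined with the $L^\infty$-bound on $\tilde h$ and the fact that $h=\tilde h$ on $\Omega$, it allows one to reabsorb the extra contribution on the partition boundaries and conclude
\begin{equation*}
\liminf_{n\to\infty}\int_{J_{y_n}}\tilde h(x,\nu_{y_n})\,d\mathcal{H}^{d-1}\le \liminf_{n\to\infty}\int_{J_{u_n}} h(x,\nu_{u_n})\,d\mathcal{H}^{d-1},
\end{equation*}
which together with Step 3 gives \eqref{eq: vitonew2XXX}. The main obstacle is precisely this last step: one has to quote (or reprove) the fine structural property of the partition in \cite[Theorem 1.1]{newvito}, namely that the partition boundaries are $\mathcal{H}^{d-1}$-absorbable into the jump sets of $(u_n)_n$ in the limit, so that no spurious surface energy is produced by the modification procedure.
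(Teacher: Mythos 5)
Your Step 1 (extension of $h$ to a continuous, even, symmetric jointly convex density on $\Omega'\times\mathbb{S}^{d-1}$) matches the paper's first reduction. After that, however, the paths diverge: the paper simply verifies that the extended density satisfies the hypotheses $(g_1)$--$(g_5)$ of \cite[Theorem 1.2]{newvito} and invokes \cite[Equation (4.3)]{newvito} (a Radon--Nikod\'ym blow-up argument for the weak-$*$ limit of the weighted surface measures of $u_n$, together with the structural property that $\bigcup_j\partial^*P_j\cap\Omega\subset J_u$ up to $\mathcal{H}^{d-1}$-null sets), whereas you attempt a self-contained proof via localization and Theorem~\ref{thm: GSBD LSC} applied to the modified sequence $(y_n)_n$.

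The gap in your proposal is Step 4. Applying Theorem~\ref{thm: GSBD LSC} to $(y_n)_n$ gives
\[
\int_{J_u}\tilde h(x,\nu_u)\,\mathrm{d}\mathcal{H}^{d-1}\le\liminf_{n\to\infty}\int_{J_{y_n}}\tilde h(x,\nu_{y_n})\,\mathrm{d}\mathcal{H}^{d-1},
\]
but converting the right-hand side into $\liminf_n\int_{J_{u_n}}h(x,\nu_{u_n})$ does \emph{not} follow from your ``reabsorption'' argument. Since $y_n=u_n-\sum_j a^n_j\chi_{P_j}$, the symmetric difference $J_{y_n}\triangle J_{u_n}$ is contained in $\bigcup_j\partial^*P_j$, a \emph{fixed} set of positive $\mathcal{H}^{d-1}$-measure. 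On the components of that set where the subtracted affine parts $a^n_j-a^n_k$ diverge, the jump of $y_n$ is created by the modification itself, so $J_{y_n}\setminus J_{u_n}$ can carry a contribution of order $\beta\,\mathcal{H}^{d-1}\bigl(\bigcup_j\partial^*P_j\bigr)$ that simply does not appear in $\int_{J_{u_n}}h$. Hence the claimed inequality $\liminf_n\int_{J_{y_n}}\tilde h\le\liminf_n\int_{J_{u_n}}h$ is not a consequence of the $L^\infty$-bound and the inclusion $\bigcup_j\partial^*P_j\subset J_u$: the fact that the partition boundaries land inside $J_u$ controls the \emph{limit} jump set, not the relation between $J_{y_n}$ and $J_{u_n}$ at fixed $n$. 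Overcoming this is precisely the content of the blow-up argument of \cite{newvito}, which works directly with the weak-$*$ limit of the measures $h(\cdot,\nu_{u_n})\,\mathcal{H}^{d-1}\mres J_{u_n}$ and computes its Radon--Nikod\'ym derivative with respect to $\mathcal{H}^{d-1}\mres J_u$; your detour through $(y_n)_n$ plus a crude comparison does not reproduce that mechanism. (As a minor point, the localization in your Steps 2--3 is also unnecessary: once one commits to citing \cite[Theorem 1.2]{newvito}, the $x$-dependence is already absorbed into the hypotheses there, and $(g_4)$ reduces to Theorem~\ref{thm: GSBD LSC}.)
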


\begin{proof}
 First, we reduce the problem to the case that  $h$ is continuous on the \emph{entire} set $\Omega' \times \mathbb{S}^{d-1}$. We can construct an extension  of $h$ to $\Omega'$,  called  $\tilde h$, \RRR with $\tilde{h} = h$ on $\Omega \times \mathbb{S}^{d-1}$, \EEE  which still satisfies  that  $\nu \mapsto\tilde h(x,\nu)$ is even and symmetric jointly convex  for all $x \in  \Omega' $. This can be done by a local construction,  first extending to the boundary and then  reflecting with respect to $x$ across $\partial \Omega$: clearly, the two properties which hold for fixed $x$ with respect to $\nu$ are preserved.  Moreover, we  have $\tilde{h}(x,\nu_\Omega(x)) =  h(x,\nu_\Omega(x))$ for $\mathcal{H}^{d-1}$-a.e.\ $x \in \partial \Omega \cap \Omega'$. Now it suffices to check \eqref{eq: vitonew2XXX} for $\tilde{h}$ in place of $h$. In fact, as $J_{u_n}, J_u \subset \overline{\Omega} \cap \Omega'$, see  Theorem \ref{th: comp}, it holds that $\nu_{u}(x) = \nu_\Omega(x)$ for $\mathcal{H}^{d-1}$-a.e.\ $x \in J_u \cap \partial \Omega$ (and likewise for $u_n$). 

Now, \eqref{eq: vitonew2XXX} for $\tilde{h}$   can be deduced from \cite[Theorem 1.2]{newvito}: first, observe that $\tilde{h}$ satisfies $(g_1)$--$(g_5)$ therein, where in particular $(g5)$ is trivial as $\tilde{h}$ does not depend on the jump height, and $(g4)$ follows from Theorem \ref{thm: GSBD LSC}. Then, the lower semicontinuity of the surface term follows from \cite[Equation (4.3)]{newvito}, once we clarify the role played by the Caccioppoli partition $(P_j)_j$. To this end, recall that the modifications $(y_n)_n$ are defined as $y_n = u_n - \sum_j a^n_j \chi_{P_j}$, see \cite[(1.4)]{newvito}. By a suitable choice of $(a^n_j)_j$, see \cite[below equation (4.52)]{newvito}, one can ensure that $\bigcup_j \partial^* P_j \cap \Omega \subset J_u$ up to an $\mathcal{H}^{d-1}$-negligible set. Therefore, in our setting, \cite[Equation (4.3)]{newvito} can be simplified to $\frac{{\rm d}\mu}{{\rm d}\mathcal{H}^{d-1}}(x_0) \ge \tilde{h}(x_0,\nu_u(x_0))$ for $\mathcal{H}^{d-1}$-a.e.\ $x_0 \in J_u$. This implies \eqref{eq: vitonew2XXX}.
%Concerning the second part of the statement, we first observe that it is possible to construct a continuous extension of $h(\cdot, \cdot)$ to $\Omega'\setminus \overline{D}$,  called  $\tilde h(\cdot, \cdot)$, which still satisfies  that  $\nu \mapsto\tilde h(x,\nu)$ is even and symmetric jointly convex  for all $x \in  \Omega' $. This can be done by a local construction, reflecting with respect to $x$ across $\partial D$: clearly, the two properties which hold for fixed $x$ with respect to $\nu$ are preserved, while  continuity on $\overline{D}$ guarantees continuity of $\tilde h$ across  $\partial D$.   We then get
%\[
%\int_{J_u} \tilde h(x, \nu_u) \, {\rm d}\mathcal{H}^{d-1} \le  \liminf_{n \to \infty} \int_{J_{u_n}} \tilde h(x, \nu_{u_n}) \, {\rm d}\mathcal{H}^{d-1}=\liminf_{n \to \infty} \int_{J_{u_n}} h(x, \nu_{u_n}) \, {\rm d}\mathcal{H}^{d-1}\,,
%\]
% where the equality holds due to  $J_{u_n} \subset \overline{D}$. This reduces to \eqref{eq: vitonew2XXX}, since, as observed in the proof of Theorem \ref{th: comp}, when $u_n$ have no jump in $\Omega'\setminus \overline{D}$ one can ensure that the modifications $y_n$ satisfy $y_n=u_n$ in $\Omega'\setminus \overline{D}$. Hence, $J_u \subset \overline D$ and the proof is concluded.
\end{proof}

\subsection{$\Gamma$-convergence and integral representation on Sobolev functions and piecewise constant functions}\label{sec: prep intrep}

This subsection is devoted to integral representation formulas for bulk and  surface integrals, respectively.

\textbf{Bulk integrals:} Let $1<p<+\infty$, and let $f_n\colon\Omega  \times \R^{d\times d}  \to [0,+\infty)$ be a sequence of  Carath\'eodory functions satisfying  (\ref{eq: general bound}) for some $\alpha,\beta >0$. Let us consider the functionals $\mathcal{F}_n\colon   L^1(\Omega;\R^d)   \times \mathcal{A}(\Omega)\to [0,  +\infty] $ defined by
\begin{align}\label{eq: bulk part functional}
\mathcal{F}_n(u,A):=\begin{cases}
\int_A f_n(x,e(u)(x))\,{\rm d}x\quad &u\in W^{1,p}(\Omega;\R^d),\\
+\infty\quad & \mbox{otherwise}.
\end{cases}
\end{align}

\begin{proposition}\label{prop: bulk int repr}
There exists $\mathcal{F}\colon L^1(\Omega;\R^d)\times \mathcal{A}(\Omega)\to [0,+\infty)$ such that, up to subsequence (not relabeled), the functionals $\mathcal{F}_n(\cdot,A)$ $\Gamma$-converge  in the strong topology of $L^1(\Omega;\R^d)$ to $\mathcal{F}(\cdot,A)$ for every $A\in\mathcal{A}(\Omega)$. Moreover, we have that
\begin{align*}
\mathcal{F}(u,A)=\int_{A}  f_0  (x,e(u)(x))\, {\rm d}x,
\end{align*}
where for all $x \in \Omega$ and  $\xi \in \R^{d \times d}$ the density  $f_0$  is given by  
\begin{align}\label{eq: bulk repr}
 f_0(x,\xi) =  f_0(x,{\rm sym}(\xi)) = \limsup_{\rho \to 0^+} \frac{\textbf{m}^{1,p}_{\mathcal{F}}\left(\bar{\ell}_{\xi},Q_{\rho}(x)  \right)}{\rho^d}.
\end{align}
 Here, $\bar{\ell}_{\xi}$ is defined in \eqref{eq: specified not} and $\textbf{m}^{1,p}_{\mathcal{F}}$ in \eqref{eq: general minimization2}.  Moreover,   $f_0$ is a Charath\'eodory  function satisfying \eqref{eq: general bound} and   it holds that
\begin{align}\label{eq: bulk repr-new}
f_0(x,\xi) = \limsup_{\rho \to 0^+} \liminf_{n \to \infty }\frac{\textbf{m}^{1,p}_{\mathcal{F}_n}\left(\bar{\ell}_{\xi},Q_{\rho}(x)  \right)}{\rho^d} = \limsup_{\rho \to 0^+} \limsup_{n \to \infty }\frac{\textbf{m}^{1,p}_{\mathcal{F}_n}\left(\bar{\ell}_{\xi},Q_{\rho}(x)  \right)}{\rho^d}.
\end{align}
\end{proposition}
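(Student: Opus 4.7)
\medskip
\noindent\emph{Proof proposal.} The plan is to proceed in three steps: (i) extract a $\Gamma$-converging subsequence by general compactness; (ii) represent the $\Gamma$-limit as an integral functional on $W^{1,p}$; (iii) identify the density also via the direct asymptotic cell formulas built from $(f_n)_n$.

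For step (i), by the classical compactness theorem for $\Gamma$-convergence (see, e.g., \cite{DalMaso:93, Braides-Defranceschi:98}) applied in the $L^1$-topology, we extract a subsequence such that $\mathcal{F}_n(\cdot, A)$ $\Gamma$-converges to some $\mathcal{F}(\cdot, A)$ for all $A$ in a countable dense subfamily of $\mathcal{A}(\Omega)$; an inner regularity argument extends this to every $A \in \mathcal{A}(\Omega)$. The lower bound $f_n(x,\xi) \ge \alpha |{\rm sym}(\xi)|^p$ together with Korn's inequality ensures that a sequence $(u_n) \subset W^{1,p}$ with $\sup_n \mathcal{F}_n(u_n,A) < +\infty$ and $L^1$-convergent limit lies (up to subtracting rigid motions) bounded in $W^{1,p}$; hence $\mathcal{F}(\cdot, A)$ is finite precisely on $W^{1,p}(A;\R^d)$ and satisfies the same two-sided $p$-growth. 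Since each $\mathcal{F}_n$ is invariant under the addition of rigid motions, so is $\mathcal{F}$.

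For step (ii), I would apply a standard integral representation theorem for functionals on $W^{1,p}$ that are local, $L^1$-lower semicontinuous, satisfy the $p$-growth, and are invariant under rigid motions (for instance a Buttazzo--Dal Maso type result, or the specialization of the integral representation in \cite{Crismale-Friedrich-Solombrino} to Sobolev functions). This yields
\[
\mathcal{F}(u,A) = \int_A f_0(x,e(u))\,\mathrm{d}x,
\]
with $f_0$ a Carath\'eodory function satisfying \eqref{eq: general bound}, and given by
\[
f_0(x,\xi) = \limsup_{\rho\to 0^+}\frac{\mathbf{m}^{1,p}_{\mathcal{F}}(\bar\ell_\xi, Q_\rho(x))}{\rho^d}.
\]
The symmetry $f_0(x,\xi) = f_0(x,{\rm sym}(\xi))$ is inherited from the invariance under rigid motions, since $\bar\ell_{\xi+R} - \bar\ell_\xi$ is a rigid motion whenever $R$ is skew-symmetric.

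For step (iii), I would show the matching
\[
\mathbf{m}^{1,p}_{\mathcal{F}}(\bar\ell_\xi, Q_\rho(x)) = \lim_{n\to\infty}\mathbf{m}^{1,p}_{\mathcal{F}_n}(\bar\ell_\xi, Q_\rho(x))
\]
for almost every admissible $\rho$. The $\liminf$ side is immediate from the $\Gamma$-liminf inequality applied to almost-minimizing sequences $(v_n)$, which are equibounded in $W^{1,p}$ thanks to the fixed boundary datum $\bar\ell_\xi$ and Korn's inequality, and thus admit an $L^1$-cluster point that is admissible for $\mathbf{m}^{1,p}_{\mathcal{F}}$. For the $\limsup$, starting from a recovery sequence $(u_n)$ associated to an almost-minimizer $u$ of $\mathbf{m}^{1,p}_{\mathcal{F}}(\bar\ell_\xi, Q_\rho(x))$, one performs a cut-off $w_n = \varphi u_n + (1-\varphi)\bar\ell_\xi$ on a thin shell near $\partial Q_\rho(x)$, using the upper growth bound on $f_n$ together with Sobolev regularity to make the transition energy negligible; this yields a competitor for $\mathbf{m}^{1,p}_{\mathcal{F}_n}(\bar\ell_\xi, Q_\rho(x))$ with asymptotically optimal energy. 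The main (in fact, essentially the only) subtle point is this joining construction; however, in the Sobolev setting it is the elementary cut-off argument, in stark contrast to the far more delicate fundamental estimate in $GSBD^p$ needed in Section~\ref{sec: compactness}. The equality \eqref{eq: bulk repr-new} follows.
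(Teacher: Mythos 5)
Your proposal is correct and follows essentially the same route the paper takes: compactness by localization, integral representation via a global-method argument adapted through Korn--Poincar\'e to the symmetric-gradient growth, and identification of $f_0$ via the $\Gamma$-liminf/$\Gamma$-limsup inequalities together with the cut-off (fundamental estimate) in the Sobolev setting. The only point to flag is that your matching identity for the cell quantities should not be claimed as an exact equality for a fixed admissible $\rho$; as the paper notes, the passage to a slightly smaller cube $Q_{\rho'}(x)$ on the $\liminf$ side is needed so that the boundary datum is inherited by the $L^1$-cluster point in a neighborhood of $\partial Q_\rho(x)$ (and not merely in the trace sense), which then suffices after taking $\limsup_{\rho\to 0^+}$.
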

\begin{proof}
The proof is standard   (see e.g.\ \cite{BFLM, Buttazzo})   and we only provide the main steps. The proof of the  $\Gamma$-convergence part is based on standard \emph{localization techniques}, see  e.g.\ \cite[Sections 18, 19]{DalMaso:93}. The integral representation result  and   \eqref{eq: bulk repr} follow  by adapting the \emph{global method of relaxation} (see \cite[Theorem~2]{BFLM}) to our setting with a weaker growth condition from below in contrast to \cite{BFLM}, as we only control the symmetric part of the gradients. In fact,  by  using  a Korn-Poincar\'e inequality instead of the classical Poincar\'e inequality, one can follow the arguments in the proof of  \cite[Theorem 2]{BFLM}. 

Finally, \eqref{eq: bulk repr-new}  is also well-known and obtained as a consequence of $\Gamma$-convergence: given $x \in \Omega$ and $\xi \in \R^{d\times d}$,  by the $\Gamma$-liminf inequality and the coercivity of the functionals we get  
\begin{align}\label{eq: N5-1}
\textbf{m}^{1,p}_{\mathcal{F}}\left(\bar{\ell}_{\xi},Q_{\rho}(x)  \right) \le \sup_{\rho' \in  (0,\rho)} \liminf_{n\to 0}\textbf{m}^{1,p}_{\mathcal{F}_n}\left(\bar{\ell}_{\xi},Q_{\rho'}(x)  \right)
\end{align}
for all $\rho>0$. (The passage to smaller cubes $Q_{\rho'}(x) $ is necessary to ensure that boundary values at $\partial Q_\rho(x)$ are preserved under convergence.) On the other hand, the $\Gamma$-limsup inequality and the fact that boundary values can be adjusted by the fundamental estimate show
\begin{align}\label{eq: N5-2}
\limsup_{n\to 0}\textbf{m}^{1,p}_{\mathcal{F}_n}\left(\bar{\ell}_{\xi},  Q_{\rho}(x)  \right) \le \textbf{m}^{1,p}_{\mathcal{F}}\left(\bar{\ell}_{\xi},Q_{\rho}(x)  \right)
\end{align} for all $\rho>0$. By combining \eqref{eq: N5-1}--\eqref{eq: N5-2} and using \eqref{eq: bulk repr} we get \eqref{eq: bulk repr-new}. 
\end{proof}

 \textbf{Surface integrals:} We now address the surface part of functionals defined in  (\ref{eq: basic energy}). 
 Consider  the representation formula \eqref{eq: general minimization3} for functions $u=u_{x,\zeta,0,\nu}$ given in \eqref{eq: jump competitor} for   $x \in \Omega$, $\nu \in \mathbb{S}^{d-1}$, and $\zeta\in\R^d$.    First, it is instrumental to   simplify  \eqref{eq: general minimization3} whenever the density $g$ satisfies \eqref{eq: general bound2-new}. Indeed,  by \cite[Theorem A.1]{Chambolle-Giacomini-Ponsiglione:2007}, each $v \in PR(\Omega)$ can be represented as  $v(x) = \sum\nolimits_{k\in \N} (A_k\, x + b_k) \chi_{P_k}(x)$ for $x \in \Omega$, where $ (A_k)_k \subset  \R^{d\times d}_{\rm skew}$, $(b_k)_k \subset \R^d$, and $(P_k)_k$ denotes a Caccioppoli partition of $\Omega$ (see \cite[Section~4.4]{Ambrosio-Fusco-Pallara:2000}).  Therefore, in view of the fact that $g$ does not depend on the jump height (see \eqref{eq: general bound2-new}), one can check that the minimization problem \eqref{eq: general minimization3} can be restricted to functions where the partition consists of exactly two sets.   More precisely,  for $x \in \Omega$, $\nu \in \mathbb{S}^{d-1}$, and $\zeta\in\R^d$,   \eqref{eq: general minimization3} can be rewritten as
\begin{align} \label{eq: general minimization3-better}
 \mathbf{m}^{PR}_{\mathcal{E}}( u_{x,\zeta,0,\nu}  ,A) =  \mathbf{m}^{PC}_{\mathcal{E}}(\bar{u}_{x,\nu},A) :=  \inf_{v \in PC(\Omega)} \  \lbrace \mathcal{E}(v,A)\colon \ v = \bar{u}_{x,\nu} \ \text{ in a neighborhood of } \partial A \rbrace,
 \end{align}
where  $PC(\Omega) = \lbrace u \in L^1(\Omega;\R^d) \colon \, u =   e_1\chi_{T} \colon T \subset \Omega \text{ with $T$ set of finite perimeter} \rbrace$ denotes the space of piecewise constant functions attaining only the values $0$ and $e_1$,  and $\bar{u}_{x,\nu}$ is defined in \eqref{eq: specified not}.   
 
We now address $\Gamma$-convergence and integral representation of functionals  $\mathcal{G}_n\colon PC(\Omega)\times \mathcal{A}(\Omega)\to [0,+\infty)$ defined by
\begin{align}\label{eq: surface functional}
\mathcal{G}_n(u,A):=\int_{J_u\cap A} g_n(x,\nu_u(x))\, {\rm d}\mathcal{H}^{d-1}(x)
\end{align} 
for all $A\in \mathcal{A}(\Omega)$,  $u\in  PC(A)$, where $g_n:\Omega\times \mathbb{S}^{d-1}\to [0,+\infty)$ is a Borel function satisfying (\ref{eq: general bound2-new}).

\begin{proposition}\label{prop: surface int repr}
There exists $\mathcal{G}\colon PC(\Omega)\times  \mathcal{A}(\Omega)\to [0,+\infty)$ such that, up to subsequence (not relabeled), $\mathcal{G}_n(\cdot,A)$ $\Gamma$-converges with respect to the strong $L^1(\Omega;\R^d)$-convergence  to $\mathcal{G}(\cdot, A)$ for all $A\in\mathcal{A}(\Omega)$. Moreover, for all $u \in PC(\Omega)$ we have that 
\begin{align}\label{eq: jump int. rep.}
\mathcal{G}(u, A)= \int_{A\cap J_u}  g_0  (x,\nu_u(x))\, {\rm d}\mathcal{H}^{d-1}(x),
\end{align} 
 where for all $x \in \Omega$ and  $\nu \in \mathbb{S}^{d-1}$ the density $g_0$  is given by   
\begin{align}\label{eq: jump energy density}
 g_0(x,\nu):= \limsup_{\rho\to 0^+}\frac{\mathbf{m}^{PC}_{\mathcal{G}}(\bar{u}_{x,\nu},Q^\nu_{\rho}(x))}{\rho^{d-1}}.
\end{align}
Moreover,  $g_0$ satisfies (\ref{eq: general bound2-new}) and  it holds that
\begin{align}\label{eq: jump energy density-new}
 g_0(x,\nu) = \limsup_{\rho\to 0^+} \liminf_{n\to \infty}\frac{\mathbf{m}^{PC}_{\mathcal{G}_n}(\bar{u}_{x,\nu},Q^\nu_{\rho}(x))}{\rho^{d-1}} =  \limsup_{\rho\to 0^+} \limsup_{n\to \infty}\frac{\mathbf{m}^{PC}_{\mathcal{G}_n}(\bar{u}_{x,\nu},Q^\nu_{\rho}(x))}{\rho^{d-1}}.
\end{align}
\end{proposition}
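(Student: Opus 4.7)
My plan is to follow the scheme of Proposition \ref{prop: bulk int repr} adapted to the setting of piecewise constant functions, which actually turns out to be simpler than the bulk case since the $L^1$-topology and the two-valued structure of $PC(\Omega)$ make the fundamental estimate elementary. The three ingredients are: (i) compactness of $\Gamma$-convergence via the localization method; (ii) an integral representation theorem on partitions; (iii) identification of the cell formula through the global method of relaxation.

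\textbf{Step 1 (Compactness).} I would apply the abstract localization/compactness scheme for $\Gamma$-convergence in $L^1(\Omega;\R^d)$ restricted to $PC(\Omega)$, cf.\ \cite[Chapters 16--19]{DalMaso:93}. The key input is a fundamental estimate for the sequence $(\mathcal{G}_n)_n$ on $PC(\Omega)$. Given $u=e_1\chi_T$, $v=e_1\chi_S$ and nested open sets $A''\subset\subset A'\subset\subset A$, I would construct $w\in PC(\Omega)$ coinciding with $u$ on $A''$ and with $v$ outside $A'$ by choosing an optimal level $t\in(\operatorname{dist}(A'',\partial A')/2,\operatorname{dist}(A'',\partial A'))$ of a distance function, setting $B_t=\{x\colon \operatorname{dist}(x,A'')<t\}$ and $w=e_1\chi_{(T\cap B_t)\cup (S\setminus B_t)}$. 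A slicing/coarea argument on the layer $A'\setminus A''$ bounds the additional jump $\mathcal{H}^{d-1}(\partial B_t\cap (T\triangle S))$ by $c\|u-v\|_{L^1(A'\setminus A'')}/\operatorname{dist}(A'',\partial A')$, which together with the upper bound in \eqref{eq: general bound2-new} yields the required estimate vanishing as $\|u-v\|_{L^1}\to 0$. This ensures inner regularity of the $\Gamma$-limit as a set function, and a standard extraction on a countable dense subfamily of $\mathcal{A}(\Omega)$ produces the desired $\mathcal{G}\colon PC(\Omega)\times\mathcal{A}(\Omega)\to[0,+\infty)$.

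\textbf{Step 2 (Integral representation).} With $\mathcal{G}(u,\cdot)$ proved to be a Borel measure satisfying locality, $L^1$-lower semicontinuity, and the growth conditions inherited from \eqref{eq: general bound2-new}, I would invoke an integral representation theorem on Caccioppoli partitions in the spirit of \cite{BFLM, AmbrosioBraides2}, which guarantees that $\mathcal{G}(u,A)$ can be represented as in \eqref{eq: jump int. rep.} for a Borel density $g_0$ depending only on $x$ and $\nu_u$. The global method then gives the cell formula \eqref{eq: jump energy density}: indeed, $g_0(x,\nu)$ is recovered as the asymptotic minimum density on small cubes oriented by $\nu$ with boundary value $\bar{u}_{x,\nu}$. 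The bounds on $g_0$ in \eqref{eq: general bound2-new} follow from testing with $\bar{u}_{x,\nu}$ itself (upper bound) and the $\Gamma$-liminf inequality combined with the lower bound on $g_n$ (lower bound).

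\textbf{Step 3 (Identification \eqref{eq: jump energy density-new}).} This is entirely analogous to \eqref{eq: bulk repr-new}. The $\Gamma$-liminf inequality applied to minimizing sequences on slightly smaller cubes gives
\[
\mathbf{m}^{PC}_{\mathcal{G}}(\bar{u}_{x,\nu},Q^\nu_\rho(x))\;\le\;\sup_{\rho'<\rho}\liminf_{n\to\infty}\mathbf{m}^{PC}_{\mathcal{G}_n}(\bar{u}_{x,\nu},Q^\nu_{\rho'}(x)),
\]
while the $\Gamma$-limsup combined with a boundary adjustment via the fundamental estimate of Step~1 yields the reverse inequality $\limsup_n\mathbf{m}^{PC}_{\mathcal{G}_n}(\bar{u}_{x,\nu},Q^\nu_\rho(x))\le\mathbf{m}^{PC}_{\mathcal{G}}(\bar{u}_{x,\nu},Q^\nu_\rho(x))$. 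Dividing by $\rho^{d-1}$ and taking $\limsup_{\rho\to 0^+}$ gives \eqref{eq: jump energy density-new}.

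\textbf{Main obstacle.} The only genuinely delicate point is verifying the fundamental estimate within $PC(\Omega)$, since the usual smooth cut-off $w=\varphi u+(1-\varphi)v$ leaves the space of two-valued functions. Replacing it by the level-set selection/averaging argument sketched above is what makes the scheme run; everything else is a faithful transcription of the bulk argument of Proposition~\ref{prop: bulk int repr} to the surface setting in the $L^1$-topology.
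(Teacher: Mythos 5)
Your proposal follows essentially the same route as the paper: the paper's proof is a terse citation of [AmbrosioBraides, Theorem 3.2] for $\Gamma$-compactness and integral representation on partitions, [BFLM, Theorem 3] for the global-method cell formula, and [AmbrosioBraides, Lemma 4.4] for the fundamental estimate on $PC(\Omega)$ needed to obtain \eqref{eq: jump energy density-new}. What you do differently is reconstruct the content of [AmbrosioBraides, Lemma 4.4] explicitly via the level-set/coarea selection of an optimal cut in the transition layer — which is indeed what that lemma does — so your Step 1 is a faithful unpacking of the cited reference rather than a different argument. The one loose end you should be aware of: after choosing $B_t$ and setting $w=e_1\chi_{(T\cap B_t)\cup(S\setminus B_t)}$, the bookkeeping is that $J_w\subset (\partial^*T\cap B_t)\cup(\partial^*S\setminus \overline{B_t})\cup(\partial^*B_t\cap(T\triangle S))$, so the coarea estimate must be applied to $\partial^*B_t\cap(T\triangle S)$ precisely as you sketch, and the resulting quantity must then be inserted into $\mathcal{G}_n$ via the upper bound $\beta$; you state this correctly. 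Both your route and the paper's collapse to the same three inputs, so there is no genuine difference in method, only in how explicitly the fundamental estimate is unfolded.
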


\begin{proof}
 We apply   \cite[Theorem 3.2]{AmbrosioBraides} and \cite[Theorem 3]{BFLM}  to obtain the representation \eqref{eq: jump int. rep.}--\eqref{eq: jump energy density}. Finally, \eqref{eq: jump energy density-new} can be derived by $\Gamma$-convergence as explained in \eqref{eq: N5-1}--\eqref{eq: N5-2}, where we employ the fundamental estimate on $PC(\Omega)$  for the inequality analogous to \eqref{eq: N5-2},  see \cite[Lemma 4.4]{AmbrosioBraides}. (We also refer to \cite[Lemmas 6.3, 7.5]{FM} for similar arguments in $PR(\Omega)$.) 
\end{proof}

\begin{proof}[Proof of Proposition \ref{prop: prep}]
Consider a sequence of functionals $(\mathcal{E}_n)_n$ of the form \eqref{eq: basic energy} for densities $(f_n)$ and $(g_n)$ satisfying  \eqref{eq: general bound}   and \eqref{eq: general bound2-new}, respectively, Then,  with the notation in  \eqref{eq: bulk part functional} and  \eqref{eq: surface functional}, thanks  to \eqref{eq: almost the same1}--\eqref{eq: almost the same2},    for all $A \in \mathcal{A}(\Omega)$ it holds that   $\mathcal{E}_n(u,A) =\mathcal{F}_n(u,A)$ for all $u\in W^{1,p}(\Omega;\R^d)$ and $|\mathcal{E}_n(u,A) - \mathcal{G}_n(u,A)| \le \beta \mathcal{L}^d(A)$ for all   $u\in PR(\Omega)$. Now, the statement of Proposition  \ref{prop: prep} follows immediately from  \eqref{eq: bulk repr-new}, \eqref{eq: general minimization3-better}, and, \eqref{eq: jump energy density-new}. 
\end{proof}

\begin{remark}\label{rem: new rem}
For later reference, we point out that we have shown that $f=f_0$ and $g = g_0$, where the densities $f,g$ are given in Proposition \ref{prop: prep}, and $f_0,g_0$ are defined in \eqref{eq: bulk repr} and \eqref{eq: jump energy density}, respectively.   
\end{remark}

The following result   can be found in \cite[Theorem 3.1]{AmbrosioBraides2}. 

\begin{proposition}[Relaxation]\label{prop: surface int repr-relax}
Consider a continuous density $h$ satisfying (\ref{eq: general bound2-new}) and denote the corresponding functional in \eqref{eq: surface functional} by $\mathcal{S}$. Then, the relaxed functional
\begin{align*}
%\label{eq: relaxed surface functional}
\bar{\mathcal{S}}(u,A):=\inf\left\{\liminf
_{n\to \infty}\mathcal{S}(u_n,A)\colon\, u_n\to u \textit{ in measure on $\Omega$}  \right\},
\end{align*}
for all $u \in PC(\Omega)$ and $A \in \mathcal{A}(\Omega)$ admits an integral representation 
\begin{align*}
%\label{eq: int repr relax surf funct}
\bar{\mathcal{S}}(u,A)=\int_{J_u\cap A}\bar{h}(y,\nu_u(y))\,{\rm d}\mathcal{H}^{d-1}(y),
\end{align*}
where for each $x \in \Omega$, the density $\bar{h}(x,\cdot)$ is the $BV$-elliptic envelope of $h$,   i.e.,  
%density of $\bar{h}$ the $\Gamma$-limit $\mathcal{G}$ in \eqref{eq: jump int. rep.} is characterized by
\begin{align}\label{eq: jump energy density-relax}
 \bar{h}(x,\nu):= \inf_{v \in PC(Q^\nu_1) }  \Big\{ \int_{J_v \cap Q^{\nu}_1}  h(  x,  \nu_v(y)) \, {\rm d}\mathcal{H}^{d-1}(y) \colon \, v =  \bar{u}_{0,\nu} \text{ in a neighborhood of $\partial Q^\nu_1$} \Big\}
\end{align}
for all  $\nu \in \mathbb{S}^{d-1}$, where $\bar{u}_{0,\nu}$ is defined in \eqref{eq: specified not}. 
\end{proposition}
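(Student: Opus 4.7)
The plan is to follow the abstract relaxation strategy for surface functionals on partitions developed in \cite{AmbrosioBraides, AmbrosioBraides2}, splitting the proof into two parts: (a) establishing an abstract integral representation of $\bar{\mathcal{S}}$, and then (b) identifying the resulting density with the $BV$-elliptic envelope $\bar h$ defined in \eqref{eq: jump energy density-relax}. Since the statement is essentially \cite[Theorem 3.1]{AmbrosioBraides2}, one may cite it directly; what follows sketches an independent argument.

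For (a), I would fix $u \in PC(\Omega)$ and study $A \mapsto \bar{\mathcal{S}}(u, A)$ as a set function on $\mathcal{A}(\Omega)$. The upper bound $\bar{\mathcal{S}}(u, A) \le \mathcal{S}(u, A) \le \beta \mathcal{H}^{d-1}(J_u \cap A)$ follows from testing the relaxation with the constant sequence, while the lower bound $\bar{\mathcal{S}}(u, A) \ge \alpha \mathcal{H}^{d-1}(J_u \cap A)$ follows from Theorem \ref{thm: GSBD LSC} applied to the symmetric jointly convex constant density $\alpha$ (see Proposition \ref{prop: only nu2}). Locality and inner regularity are inherited from $\mathcal{S}$, and subadditivity on disjoint opens follows from the fundamental estimate on $PC(\Omega)$ recalled in the proof of Proposition \ref{prop: surface int repr}. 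By De Giorgi--Letta, $\bar{\mathcal{S}}(u, \cdot)$ extends to a Radon measure absolutely continuous with respect to $\mathcal{H}^{d-1}\mres J_u$. Invoking the integral representation theorem for functionals on partitions, as in \cite[Theorem 3.2]{AmbrosioBraides} or \cite[Theorem 3]{BFLM}, yields a Borel density $g_0 = g_0(x, \nu)$, independent of the jump values since $h$ is, such that $\bar{\mathcal{S}}(u, A) = \int_{J_u \cap A} g_0(x, \nu_u)\,{\rm d}\mathcal{H}^{d-1}$, with
\[
g_0(x_0, \nu) = \limsup_{\rho \to 0^+} \frac{\mathbf{m}^{PC}_{\bar{\mathcal{S}}}(\bar u_{x_0, \nu}, Q^\nu_\rho(x_0))}{\rho^{d-1}}.
\]

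For (b), I would prove $g_0(x_0, \nu) = \bar h(x_0, \nu)$ by a double inequality. The bound $g_0 \le \bar h$ is the easier direction: given any competitor $v \in PC(Q^\nu_1)$ admissible in \eqref{eq: jump energy density-relax}, the rescaled function $v_\rho(y) := v((y - x_0)/\rho)$ is admissible in $\mathbf{m}^{PC}_{\mathcal{S}}(\bar u_{x_0, \nu}, Q^\nu_\rho(x_0))$, and a change of variables combined with the continuity of $h$ at $x_0$ gives $\rho^{1-d}\mathcal{S}(v_\rho, Q^\nu_\rho(x_0)) \to \int_{J_v \cap Q^\nu_1} h(x_0, \nu_v)\,{\rm d}\mathcal{H}^{d-1}$; using $\bar{\mathcal{S}} \le \mathcal{S}$ and taking the infimum over $v$ yields the inequality. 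For the reverse bound $\bar h \le g_0$, I would take a sequence $\rho_n \to 0$ realising the $\limsup$ defining $g_0(x_0, \nu)$ together with near-minimizers $v_n$ of $\mathbf{m}^{PC}_{\bar{\mathcal{S}}}(\bar u_{x_0, \nu}, Q^\nu_{\rho_n}(x_0))$, and an approximating sequence $w_{n,k} \to v_n$ realising the relaxed value $\bar{\mathcal{S}}(v_n, Q^\nu_{\rho_n}(x_0))$. After rescaling $z_{n,k}(y) := w_{n,k}(x_0 + \rho_n y)$ to $Q^\nu_1$, the functions $z_{n,k}$ are admissible in \eqref{eq: jump energy density-relax} modulo a boundary adjustment to guarantee the boundary condition $\bar u_{0,\nu}$; such an adjustment is produced by the fundamental estimate on $PC(\Omega)$ at negligible surface cost, while the continuity of $h$ allows one to replace $h(x_0 + \rho_n y, \cdot)$ by $h(x_0, \cdot)$ up to $o(1)$. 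A diagonal extraction $k = k(n)$ then produces a competitor in \eqref{eq: jump energy density-relax} and delivers $\bar h(x_0, \nu) \le g_0(x_0, \nu)$.

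The main obstacle is the diagonal argument in the reverse inequality, which must simultaneously handle (i) the relaxation limit $w_{n,k} \to v_n$ in measure, (ii) the blow-up $\rho_n \to 0$, (iii) the boundary adjustment to match $\bar u_{x_0, \nu}$ on $\partial Q^\nu_{\rho_n}(x_0)$ with vanishing surface error, and (iv) the freezing of $h$ at $x_0$. The continuity of $h$ assumed in the statement is precisely what makes step (iv) uniform; without it, the blow-up would produce at best an averaged integrand, not the pointwise value $h(x_0, \cdot)$. All these ingredients are by now standard, and the full argument is carried out in \cite[Theorem 3.1]{AmbrosioBraides2}.
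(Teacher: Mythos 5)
Your proposal is correct and matches the paper exactly: the paper gives no proof of this proposition, stating only that the result can be found in \cite[Theorem 3.1]{AmbrosioBraides2}, which is precisely the citation you identify as sufficient. The independent sketch you provide (De Giorgi--Letta plus the integral representation theorem for functionals on partitions, followed by a blow-up double-inequality to identify the density with the $BV$-elliptic envelope) is a reasonable outline of the argument in \cite{AmbrosioBraides2} and correctly locates where the continuity of $h$ is used, but it is additional material beyond what the paper itself offers.
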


\begin{corollary}\label{cor: surf}
Suppose  that  $h$   is given  as in Proposition \ref{prop: surface int repr-relax} and suppose that $\nu \mapsto h(x,\nu)$ is even for all $x \in \Omega$.
Then, it holds that
\begin{align}\label{eq: jump energy density-relax2}
 \bar{h}(x,\nu):=  \limsup_{\rho\to 0^+}  \frac{\mathbf{m}^{PC}_{\mathcal{S}}(\bar{u}_{x,\nu}, Q^\nu_{\rho}(x)  )}{\rho^{d-1}}
\end{align}
for all $x \in \Omega$ and $\nu \in \mathbb{S}^{d-1}$. Moreover, for each $x \in \Omega$, the function $\bar{h}(x,\cdot)$ is \RRR even and \EEE symmetric jointly convex, as defined in Definition \ref{def: symm-conv}  (as a function independent of the variables $i$ and $j$).  
\end{corollary}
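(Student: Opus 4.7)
The plan is to derive both assertions from the defining cell formula \eqref{eq: jump energy density-relax} of $\bar h$, combined with the scale invariance of one-homogeneous surface energies and the regularity assumed on $h$. In particular, the formula \eqref{eq: jump energy density-relax2} should follow from a freezing-of-coefficients argument, evenness should be read off from a symmetry of admissible competitors, and symmetric joint convexity should be obtained by feeding evenness plus a convexity property of $\bar h(x,\cdot)$ into Proposition \ref{prop: only nu2}.

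To establish \eqref{eq: jump energy density-relax2}, I would first freeze the density at $x$. Given $\eps>0$, the (local) uniform continuity of $h$ on $\Omega\times \mathbb{S}^{d-1}$ provides $\rho_0>0$ such that $|h(y,\mu)-h(x,\mu)|<\eps$ for all $y\in Q^\nu_\rho(x)$, $\mu\in \mathbb{S}^{d-1}$ and $\rho<\rho_0$. Any admissible competitor $v$ for $\mathbf m^{PC}_{\mathcal S}(\bar u_{x,\nu},Q^\nu_\rho(x))$ whose energy does not exceed that of the trivial competitor $\bar u_{x,\nu}$ itself satisfies $\alpha\mathcal H^{d-1}(J_v\cap Q^\nu_\rho(x))\le \beta\rho^{d-1}$ by \eqref{eq: general bound2-new}, so that
\[
\Big|\int_{J_v\cap Q^\nu_\rho(x)} h(y,\nu_v)\,\mathrm d\mathcal H^{d-1}-\int_{J_v\cap Q^\nu_\rho(x)} h(x,\nu_v)\,\mathrm d\mathcal H^{d-1}\Big|\le \tfrac{\beta}{\alpha}\,\eps\,\rho^{d-1}.
\]
A change of variables $v(y)=\tilde v((y-x)/\rho)$ establishes a bijection between admissible competitors on $Q^\nu_\rho(x)$ with boundary datum $\bar u_{x,\nu}$ and admissible competitors on $Q^\nu_1$ with boundary datum $\bar u_{0,\nu}$, with jump measures scaling by $\rho^{d-1}$. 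The frozen problem therefore equals exactly $\rho^{d-1}\bar h(x,\nu)$ by \eqref{eq: jump energy density-relax}. Passing to the infimum over $v$ and letting $\eps\to 0$ yields that in fact the limit (not only the limsup) of $\mathbf m^{PC}_{\mathcal S}(\bar u_{x,\nu},Q^\nu_\rho(x))/\rho^{d-1}$ exists and coincides with $\bar h(x,\nu)$.

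The evenness $\bar h(x,-\nu)=\bar h(x,\nu)$ follows from \eqref{eq: jump energy density-relax}: the cube $Q^{-\nu}_1$ coincides with $Q^\nu_1$, and the map $v\mapsto e_1-v$ is a bijection between admissible competitors for $\bar u_{0,\nu}$ and for $\bar u_{0,-\nu}$ that preserves the jump set and changes the normal only up to a sign, so the evenness hypothesis on $h(x,\cdot)$ yields the same energy value. For symmetric joint convexity, Proposition \ref{prop: only nu2} reduces the matter to verifying that the positively one-homogeneous extension $\psi(\nu):=|\nu|\,\bar h(x,\nu/|\nu|)$ is convex on $\R^d$, since evenness and positive $1$-homogeneity are then built in. I would deduce this convexity from the fact that $\bar h(x,\cdot)$ is the $BV$-elliptic envelope of $h(x,\cdot)$ in the sense of \cite{AmbrosioBraides2}: for surface densities depending only on the normal, $BV$-ellipticity, equivalently the $L^1$-lower semicontinuity of the induced functional on $PC$ guaranteed by Proposition \ref{prop: surface int repr-relax}, is classically known to be equivalent to convexity of the one-homogeneous extension.

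The main obstacle I anticipate is precisely this last step. The scaling and evenness arguments are essentially bookkeeping, but the passage from the variational definition of $\bar h(x,\cdot)$ to the convexity of its one-homogeneous extension rests on a nontrivial characterization of $BV$-ellipticity. An alternative, more self-contained route would be to produce a representation $\bar h(x,\nu)=\sup_k a_k\cdot \nu$ directly from Proposition \ref{prop: surface int repr-relax} and then choose conservative fields $g_k$ with $g_k(e_1)-g_k(0)=a_k$, which matches Definition \ref{def: symm-conv} verbatim (on the relevant pair $i\neq j$); this is the path I would attempt if an external appeal to \cite{AmbrosioBraides2} were to be avoided.
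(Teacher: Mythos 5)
Your proof is correct, and for the cell formula \eqref{eq: jump energy density-relax2} it takes a genuinely different route from the paper's. The paper obtains \eqref{eq: jump energy density-relax2} by applying the abstract $\Gamma$-convergence integral representation result (Proposition \ref{prop: surface int repr}) to the constant sequence $\mathcal{G}_n=\mathcal{S}$: since the relaxation $\bar{\mathcal{S}}$ is the $\Gamma$-limit of that constant sequence, the density $g_0$ from \eqref{eq: jump energy density}--\eqref{eq: jump energy density-new} must coincide with the density $\bar h$ provided by Proposition \ref{prop: surface int repr-relax}, and \eqref{eq: jump energy density-new} with constant $\mathcal{G}_n=\mathcal{S}$ trivializes to \eqref{eq: jump energy density-relax2}. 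Your freezing-of-coefficients and rescaling argument bypasses that abstract machinery, is more self-contained, and in fact delivers a slightly stronger conclusion: the full limit in $\rho$ exists, not merely the $\limsup$. (One small bookkeeping point: you only need the error bound on competitors near the infimum, which your normalization by the trivial competitor's energy handles; the requisite uniform modulus of continuity for $h$ on a neighbourhood of $\{x\}\times\mathbb{S}^{d-1}$ follows from continuity on the compact set, as you observe.) For evenness the paper merely asserts it is "elementary to check"; your explicit bijection $v\mapsto e_1-v$ is precisely the content of that assertion. For convexity of $\nu\mapsto|\nu|\bar h(x,\nu/|\nu|)$ your approach and the paper's are the same in substance: the paper invokes \cite[Theorem 5.11]{Ambrosio-Fusco-Pallara:2000} (characterization of lower semicontinuity for partition functionals whose density depends only on the normal), and you appeal to the equivalent classical statement that $BV$-ellipticity for normal-only densities, i.e.\ $L^1$-lower semicontinuity of the induced functional on $PC$, is convexity of the one-homogeneous extension. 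Both arguments then conclude via Proposition \ref{prop: only nu2}.
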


\begin{proof}
First, as $\bar{\mathcal{S}}$ is the $\Gamma$-limit of the constant sequence $\mathcal{S}$,  \eqref{eq: jump energy density}--\eqref{eq: jump energy density-new}  imply  
\eqref{eq: jump energy density-relax2}. We fix $x \in \Omega$, and show that $\bar{h}(x,\cdot)$ is symmetric jointly convex. First, since the functional $\bar{\mathcal{S}}$ is lower semicontinuous in $PC(\Omega)$, we get that $\bar{h}(x,\cdot)$ is convex  by \cite[Theorem 5.11]{Ambrosio-Fusco-Pallara:2000}. It is elementary to check that $\bar{h}(x,\cdot)$ is still even. Eventually, $\nu \mapsto |\nu|\bar{h}(x,\nu/|\nu|)$ can be understood as a positively $1$-homogeneous function for $\nu \in \R^d$. Then Proposition \ref{prop: only nu2}  implies that $\bar{h}(x,\cdot)$ is symmetric jointly convex  (as a function independent of the variables $i$ and $j$). 
\end{proof}

\begin{remark}[$BV$- and $BD$-elliptic envelope]\label{rem: envelope}
In view of \eqref{eq: general minimization3-better}, we observe that the characterizations given in  \eqref{eq: envelope} and  \eqref{eq: jump energy density-relax}   coincide, i.e., in the present setting the $BV$- and $BD$-elliptic envelope are the same. 
\end{remark}

\section{Compactness of $\Gamma$-convergence in  $GSBD^p$}\label{sec: compactness}

This section is devoted to the proof of Theorem \ref{th: gamma}. The result is based on the localization method of $\Gamma$-convergence along  with the recent  integral representation result \cite{Crismale-Friedrich-Solombrino}.  For the first part, the main ingredient is a  fundamental estimate  in $GSBD^p$. We  start by  addressing this tool,  and  afterwards we proceed with the proof of Theorem \ref{th: gamma}.

\subsection{Fundamental estimate in $GSBD^p$}

  We use the following convention in the whole subsection: given $A\in\mathcal{A}(\Omega)$, we may regard every $u \in GSBD^p(A)$ as a measurable function on $\Omega$, extended by $u = 0$ on $\Omega \setminus A$. We start by formulating the fundamental estimate.

 \begin{proposition}[Fundamental estimate in $GSBD^p$]\label{lemma: fundamental estimate}
Let $\Omega \subset \R^d$ be  open,  and let $1 < p <+\infty$.  Let $\eta >0$ and  let $A', A, B \in \mathcal{A}(\Omega)$ with $A' \subset \subset  A$.  Assume   that  $A \setminus A'  \subset B$, or that $B$ has Lipschitz boundary.  Then,  there exists a  function    $\Lambda\colon GSBD^p(A) \times GSBD^p(B) \to  [0,+\infty]$ which is lower semicontinuous   with respect to convergence in measure and satisfies  
\begin{align}\label{eq: Lambda0}
\Lambda(z_1,z_2) \to 0 \ \text{ whenever } \   z_1 -z_2 \to 0 \ \ \ \text{in measure on $(A\setminus A')\cap B$}
\end{align}  
such that the following holds: for every functional $\mathcal{E}$ in \eqref{eq: basic energy}  with densities $f$, $g$ satisfying  \eqref{eq: general bound}--\eqref{eq: general bound2} and for every $u \in GSBD^p(A)$, $v \in GSBD^p(B)$ there exists a function $w \in GSBD^p(A' \cup B)$ such that
\begin{align}\label{eq: assertionfund}
{\rm (i)}& \ \ \mathcal{E} ( w, A' \cup B) \le  (1+ \eta)\big(\mathcal{E}(u,A)  + \mathcal{E}(v, B) \big) + \Lambda(u,v) +\eta, \notag \\ 
{\rm (ii)}& \ \   \Vert \min\lbrace |w - u|, |w-v| \rbrace \Vert^p_{L^p(A' \cup B)}  \le \Lambda(u,v) + \eta\big(\mathcal{E}(u,A)  + \mathcal{E}(v, B)\big) + \eta,\notag \\
{\rm (iii)} & \ \  w = u \text{ on } A' \text{ and } w = v \text{ on } B \setminus A.
\end{align}
\end{proposition}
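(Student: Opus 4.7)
The plan is to carry out a classical partition-of-unity construction $w = \varphi u + (1-\varphi)v$ on a narrow transition layer between $A'$ and $\partial A$, modified on a carefully chosen exceptional set so as to circumvent the absence of \emph{a priori} $L^p$ control of $u-v$ in $GSBD^p$. The decisive new ingredient is Theorem \ref{th: kornSBDsmall} (in the rescaled form of Remark \ref{rem: Korn-scaling}) applied to $u-v$ on a covering of the transition region by small cubes: this produces an exceptional set of controlled perimeter outside of which $u-v$ is $L^p$-bounded modulo a rigid motion.

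Concretely, I would fix nested open sets $A' = A_0 \subset\subset A_1 \subset\subset \cdots \subset\subset A_N \subset A$ with $N = N(\eta)$ large, together with cut-offs $\varphi_k \in C_c^\infty(A_k)$ satisfying $\varphi_k \equiv 1$ on $A_{k-1}$ and $|\nabla\varphi_k| \le CN$. For each layer $L_k := A_k \setminus \overline{A_{k-1}}$, cover $L_k$ by finitely overlapping cubes $\{Q^{k,j}_r\}_j$ of sidelength $r \ll 1/N$, and on each cube apply Theorem \ref{th: kornSBDsmall} to $u-v$ to obtain a set $\omega^{k,j} \subset Q^{k,j}_r$ of finite perimeter and a rigid motion $a^{k,j}$ with
\begin{equation*}
\mathcal{H}^{d-1}(\partial^* \omega^{k,j}) \le c\,\mathcal{H}^{d-1}(J_{u-v} \cap Q^{k,j}_r),\qquad \|(u-v)-a^{k,j}\|_{L^p(Q^{k,j}_r \setminus \omega^{k,j})}^p \le c\,r^p\,\|e(u-v)\|_{L^p(Q^{k,j}_r)}^p .
\end{equation*}
Setting $\omega_k := \bigcup_j \omega^{k,j}$ and summing in $j$, an averaging (pigeonhole) argument in $k = 1, \ldots, N$ yields an index $k_\star$ for which $\beta\,\mathcal{H}^{d-1}(\partial^* \omega_{k_\star})$, $\mathcal{L}^d(\omega_{k_\star})$, and $\mathcal{E}(u,L_{k_\star})+\mathcal{E}(v,L_{k_\star})$ are all bounded by $C(1+\mathcal{E}(u,A)+\mathcal{E}(v,B))/N$, hence $\le \eta$ for $N$ chosen accordingly.

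I would then define $w := u$ on $A_{k_\star - 1} \cup (\omega_{k_\star} \cap L_{k_\star})$, $w := v$ on $B \setminus A_{k_\star}$, and $w := \varphi_{k_\star} u + (1-\varphi_{k_\star}) v$ on $L_{k_\star} \setminus \omega_{k_\star}$. Then (iii) holds, $J_w \subset J_u \cup J_v \cup \partial^* \omega_{k_\star}$ so the new jump contributes at most $\beta\,\mathcal{H}^{d-1}(\partial^* \omega_{k_\star}) \le \eta$ to the surface energy, and
\begin{equation*}
e(w) = \varphi_{k_\star}\, e(u) + (1-\varphi_{k_\star})\, e(v) + \mathrm{sym}\bigl(\nabla\varphi_{k_\star} \otimes (u-v)\bigr) \qquad \text{on } L_{k_\star} \setminus \omega_{k_\star}.
\end{equation*}
Combining this with the upper/lower bounds \eqref{eq: general bound} and Young's inequality, the bulk contribution is bounded by $(1+\eta)(\mathcal{E}(u,A)+\mathcal{E}(v,B)) + C_\eta N^p \int_{L_{k_\star}\setminus \omega_{k_\star}} |u-v|^p\,\mathrm{d}x + \eta$. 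To reduce the last integral to a quantity well-behaved under convergence in measure, I argue cube by cube: on each $Q^{k_\star,j}_r$, let $\varepsilon^{k,j}$ be the smallest $\varepsilon>0$ with $|\{|u-v|\le \varepsilon\} \cap Q^{k_\star,j}_r \setminus \omega^{k_\star,j}| \ge \tfrac{1}{2}|Q^{k_\star,j}_r|$. On that set $|a^{k_\star,j}| \le \varepsilon^{k,j} + |(u-v)-a^{k_\star,j}|$, whence Lemma \ref{lemma: rigid motion} gives $\|a^{k_\star,j}\|_{L^\infty(Q^{k_\star,j}_r)} \le C(\varepsilon^{k,j} + \text{Korn remainder})$, while by definition $(\varepsilon^{k,j})^p |Q^{k_\star,j}_r| \le 2\int_{Q^{k_\star,j}_r} \min(|u-v|,1)^p\,\mathrm{d}x$. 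Choosing $r = \eta^{1/p}/(CN)$ and summing over $j$ yields
\begin{equation*}
C_\eta N^p \int_{L_{k_\star}\setminus\omega_{k_\star}} |u-v|^p\,\mathrm{d}x \le \Lambda(u,v) + \eta\bigl(\mathcal{E}(u,A)+\mathcal{E}(v,B)\bigr),\qquad \Lambda(u,v) := C'_\eta \int_{(A\setminus A')\cap B}\min(|u-v|,1)^p\,\mathrm{d}x,
\end{equation*}
which proves (i). The function $\Lambda$ is lower semicontinuous under convergence in measure by Fatou's lemma and vanishes as $z_1-z_2 \to 0$ in measure on $(A\setminus A')\cap B$ by dominated convergence. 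Assertion (ii) follows from the same $L^p$ estimate, since on the sets where $w$ coincides with $u$ or $v$ the quantity $\min\{|w-u|,|w-v|\}$ vanishes, whereas on $L_{k_\star}\setminus\omega_{k_\star}$ it is bounded by $|u-v|$. The case where $B$ has Lipschitz boundary but $A \setminus A' \not\subset B$ is reduced to the previous one by extending $u$ across $\partial B \cap (A\setminus A')$ via the extension result of \cite{Matteo} and applying the construction to the enlarged pair.

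The main obstacle is precisely the cube-dependence of the rigid motions $a^{k_\star,j}$: they vary from cube to cube and cannot be absorbed into a single modification of $u$ or $v$, yet their combined contribution to $u-v$ on the good part of the transition layer must be shown to be controlled by a globally defined, measure-lower-semicontinuous function of $u$ and $v$. Reconciling these two demands—cube-local $L^p$ control modulo a rigid motion from Theorem \ref{th: kornSBDsmall}, and a global, truncated, lower-semicontinuous majorant $\Lambda$—is what the combination of the Korn inequality for small jumps with Lemma \ref{lemma: rigid motion} makes possible.
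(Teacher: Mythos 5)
Your approach matches the paper's in its essentials (cube covering of the transition layers, Korn inequality of Theorem~\ref{th: kornSBDsmall} applied to $u-v$ on each cube, pigeonhole over the layers, and a cut-off modified on the exceptional set), but there are two genuine gaps. First, you never address what happens on cubes where the jump set $(J_u\cup J_v)\cap Q^{k,j}_r$ is a substantial fraction of $r^{d-1}$: on such cubes the exceptional set $\omega^{k,j}$ produced by Theorem~\ref{th: kornSBDsmall} may fill most (or all) of $Q^{k,j}_r$, in which case your $\varepsilon^{k,j}$ is not even defined (there may be no $\varepsilon$ with $|\{|u-v|\le\varepsilon\}\cap Q\setminus\omega|\ge\tfrac12|Q|$), and Lemma~\ref{lemma: rigid motion} cannot be invoked because the set $E$ on which you want to read off the rigid motion is too small. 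The paper handles this by an explicit dichotomy (Step~2 of its proof, condition~\eqref{eq: bad cubes}): cubes are declared \emph{bad} if $\mathcal{H}^{d-1}((J_u\cup J_v)\cap Q^i_j)$ plus the local bulk energy exceeds $\rho^{d-1}$, and for bad cubes one sets $\omega^i_j:=Q^i_j$; the bad-cube criterion then guarantees that the total perimeter contributed this way is still controlled by the energy, and on \emph{good} cubes one has $|\omega^i_j|\le\tfrac12|Q^i_j|$ so that everything you want to do works. Without this dichotomy, your argument simply breaks down on cubes with large jump.

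Second, the inequality $(\varepsilon^{k,j})^p|Q|\le 2\int_Q\min(|u-v|,1)^p\,\mathrm{d}x$ is false when $\varepsilon^{k,j}>1$: the definition of $\varepsilon^{k,j}$ only yields $\min(\varepsilon^{k,j},1)^p|Q|\lesssim\int_Q\min(|u-v|,1)^p$. Since your Lemma~\ref{lemma: rigid motion} bound controls $\|a^{k_\star,j}\|_{L^\infty}$ by $\varepsilon^{k,j}$ itself (not by $\min(\varepsilon^{k,j},1)$), and since your $\Lambda(u,v)=C'_\eta\int\min(|u-v|,1)^p$ is uniformly bounded above by $C'_\eta\mathcal{L}^d((A\setminus A')\cap B)$, assertion~(i) would have to hold unconditionally for all $u,v$, with the bulk/surface energies on the right side but no other escape valve; this fails when $u-v$ is large on the transition region (so that $\int|u-v|^p$ on $L_{k_\star}\setminus\omega$ is huge while $\int\min(|u-v|,1)^p$ and the energies stay bounded). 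The paper sidesteps this exactly as the statement allows, by letting $\Lambda$ take the value $+\infty$ whenever $u-v$ is not close in measure on the transition region (see the definition in~\eqref{eq: lambda-def-2} and the reduction~\eqref{eq: closeuv}); once one may assume $\int\psi(|u-v|)$ is small compared to $\rho^d$, every $\varepsilon^{k,j}$ is automatically small and the estimates close. So your $\Lambda$ must be allowed to be $+\infty$ above a threshold depending on $r$, which then restores $\varepsilon^{k,j}\le 1$ on every (good) cube. With these two corrections the rest of your plan — the choice $r\sim\eta^{1/p}/N$, the pigeonhole in $k$, the cut-off estimate, lower semicontinuity of $\Lambda$ by Fatou, and the alternative Lipschitz reduction via the extension result of~\cite{Matteo} in place of the paper's more elementary ``extend $v$ by zero and account for the extra boundary perimeter'' — is in line with the paper.
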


 \begin{remark}\label{rem: topo} Let us start with some comments on the result:
 
 {\normalfont

(i) A main technique  of the proof is the  Korn inequality for $GSBD^p$-functions with small jump sets, see Theorem \ref{th: kornSBDsmall}. This allows us to establish an $L^p$-control on   $\min\lbrace |w - u|, |w-v| \rbrace$ in  \eqref{eq: assertionfund}(ii).  In contrast, we point out  that each function $u,v,w$ itself might not even be integrable.

(ii) The statement is much easier to prove when \eqref{eq: Lambda0} is replaced by 
$$
\Lambda(z_1,z_2) \to 0 \ \text{ whenever } \   \Vert z_1 -z_2\Vert_{L^p((A\setminus A')\cap B)} \to 0.
$$
This corresponds to a fundamental estimate in $GSBD^p(\Omega) \cap L^p(\Omega;\R^d)$,  see \cite[Lemma 3.7]{Crismale-Friedrich-Solombrino}. The latter in turn is inspired by the original statement in $SBV^p$ formulated in \cite[Proposition~3.1]{Braides-Defranceschi-Vitali}.     The arguments there basically rely on a suitable cut-off construction between the functions $u$ and $v$.  This special case is  not enough for our purposes as it requires $L^p$-integrability of the functions which is not available in our setting.  We also point out that  a truncation argument as \cite[Lemma~3.5]{Braides-Defranceschi-Vitali} is not applicable.  As a remedy,  we use an alternative technique, based on Theorem \ref{th: kornSBDsmall}.
 
(iii)    In contrast to \cite[Lemma 3.7]{Crismale-Friedrich-Solombrino}, we  need the condition $A \setminus A'  \subset   B$,  unless one assumes that $B$ has Lipschitz boundary.  

}
 \end{remark}

 \begin{proof}[Proof of Proposition \ref{lemma: fundamental estimate}]
 We  begin with a short outline of the proof.   We start by partitioning the set  $A\setminus A'$  into `layers'  where we will eventually `join' $u$ and  $v$ by a cut-off construction. These layers are additionally covered by a collection of small  cubes  (Step 1). In each of these cubes, we apply a Korn inequality in $GSBD^p$ (Theorem \ref{th: kornSBDsmall}) on the function $u-v$ (Step 2), and we analyze the corresponding expectional sets (Step 3) and rigid motions (Step 4). Based on this, we introduce modifications of $u$ and $v$ such that their difference lies in $L^p$ (Step 5). Then, we apply a cut-off construction similar to  \cite[Lemma 3.7]{Crismale-Friedrich-Solombrino} or  \cite[Proposition 3.1]{Braides-Defranceschi-Vitali} (Step 6) and obtain the desired function $w$ satisfying \eqref{eq: assertionfund} (Step 7).  

 We will focus on the case where $A \setminus A'  \subset B$. At the end, we will indicate the minor changes to be done when this is not assumed, but instead $B$ has Lipschitz boundary.  To account for this alternative assumption, along the proof  we will write $(A\setminus A')\cap B$ in place of $A\setminus A'$ several times, although the intersection with $B$ is redundant under condition $A \setminus A' \subset B$. 

 \noindent \emph{Step 1: Preliminaries.} Let $\eta >0$, and let the sets  $A', A, B \in \mathcal{A}(\Omega)$ with $A' \subset \subset  A$  and $A \setminus A'   \subset  B$ be  given. In this step, we introduce several parameters and coverings that  we will use throughout the proof.   We fix $k \in \N$ sufficiently large such that 
 \begin{align}\label{eq: k0}
  \frac{12^{p+1} \beta^2 d \bar{c} \, (1+\mathcal{L}^d(A \setminus A'))}{k  \alpha^2 }\le \eta,
\end{align}
where $\bar{c} \ge 1$ denotes the constant from Remark \ref{rem: Korn-scaling}, $d$ the dimension, and  $\alpha, \beta$ are defined in \eqref{eq: general bound}.

Let $A_1,\ldots,A_{k+1}$ be open subsets of $\R^d$ with $A' \subset \subset A_1 \subset \subset \ldots \subset \subset A_{k+1} \subset \subset A$. We also define further open sets $A_i \subset  \subset A_i^+ \subset \subset A_{i+1}^- \subset \subset A_{i+1}$ for $i=1,\ldots,k$, and let 
\begin{align}\label{eq:ST}
 S_i = (A_{i+1} \setminus \overline{A_i})  \cap B, \quad \quad \quad \quad T_i =   (A^-_{i+1} \setminus \overline{A_i^+})  \cap B 
 \end{align}
 for $i=1,\ldots,k$.   As $A \setminus A'  \subset  B$, we get $T_i \subset \subset S_i \subset A \cap B$ for $i=1,\ldots,k$. (The intersection in \eqref{eq:ST}  with $B$ is redundant, but added in order to  highlight that the sets are contained in $A \cap B$.)    Moreover, let  $\varphi_i\in C^\infty(\R^d;  [0,1]  )$ with $\varphi_i=1$ on $A_i^+$ and $\varphi_i = 0 $ on $\R^d \setminus {A_{i+1}^-}$, i.e., 
\begin{align}\label{eq: varohi}
\lbrace 0 < \varphi_i < 1 \rbrace \subset T_i.
\end{align} 

Define  $\psi\colon \R_+ \to [0,1)$ by $\psi(t) := \frac{t}{1+t}$ for $t \ge 0$ and observe that
\begin{align}\label{eq: mes-conv}
\text{$u_n \to u$ in measure on $U \in \mathcal{A}(\Omega)$ if and only if $\int_U \psi(|u_n-u|)\,{\rm d}x \to 0$}.
\end{align}
  We apply  Lemma \ref{lemma: rigid motion} for $\delta = 1/2$ and $R = \sqrt{d}$, and let  $\tau_\psi\colon  [0,1)  \to \R_+$  be the continuous, strictly increasing function with $\tau_\psi(0) = 0$.  As $\tau_\psi$ is uniformly continuous on $[0, 1/2  ]$, we can choose  $\lambda \in  (0,+\infty)$   such that
\begin{align}\label{eq:crazy choice}
{\rm (i)} & \ \   2   \lambda <  1/2, \notag \\  
{\rm (ii)}&\ \ 12^p  \beta \, (1+ \max_{i=1,\ldots,k}  \Vert \nabla \varphi_i \Vert_\infty^p)  \, \mathcal{L}^d(A \setminus A') \max_{t \in [0,1/2]} \big| \tau^p_\psi( t +  2  \lambda)   - \tau^p_\psi( t ) \big| \le \eta/2,
\end{align} 
where here and the following $\tau_\psi^p(\cdot) := (\tau_\psi(\cdot))^p$. The constant $\lambda$ will become relevant later for the definition of $\Lambda$, see in particular \eqref{eq: lambda-def} and \eqref{eq: lambda-def-2}.  Recalling $T_i \subset \subset S_i$,  we pick a further  constant $\rho \in (0,1) $  sufficiently small  such that  
\begin{align}\label{eq: rho choice}
{\rm (i)}  \ \ 2^p \lambda^{-p}  \bar{c}    \rho^{p-1} \le \lambda, \quad  \ \ \           {\rm (ii)} \ \           \rho \le  \min_{i=1\ldots,k} \,  \big\{   {\rm dist}(T_i,S_i),   \Vert  \nabla  \varphi_i\Vert_\infty^{-1} \big\}.
\end{align}
 For each $i=1\ldots,k$, we cover $T_i$ up to set of $\mathcal{L}^d$-negligible measure with a finite number of pairwise disjoint open cubes $\mathcal{Q}^i := (Q^i_j)_j$ with centers $(x^i_j)_j \subset \rho \mathbb{Z}^d \cap T_i$ and sidelength $\rho$.   In view of  \eqref{eq:ST} and  \eqref{eq: rho choice}(ii),  we get
\begin{align}\label{eq: Ball properties}
T_i \subset \bigcup\nolimits_j \overline{Q^i_j} \subset \subset S_i\subset A\cap B.
\end{align} 
We now fix $u \in GSBD^p(A)$ and $v \in  GSBD^p(B)$, where $u$ and $v$ are extended by zero outside $A$ and $B$, respectively. 
 In the following proof, we will assume without loss of generality that
\begin{align}\label{eq: closeuv}
 2  \rho^{-d} \int_{(A\setminus A^\prime)\cap B}\psi(|u-v|) \,  {\rm d}x \le  1/2. 
\end{align}
Indeed, if this  does not hold, we simply set $\Lambda(u, v)=+\infty$ and $w=u\chi_A + v\chi_{B\setminus A}$. Then,  \eqref{eq: assertionfund} is clearly  satisfied.  In the sequel, we will also assume that  $\alpha \le 1 \le \beta$ which is not restrictive.

\noindent \emph{Step 2: Classification of cubes and Korn's inequality.} In this step  of the proof, we distinguish `good and bad cubes' and apply Theorem \ref{th: kornSBDsmall} on good cubes. Corresponding exceptional sets and rigid motions are analyzed in Step 3 and Step 4 below, respectively.  Fix $i =1,\ldots,k$. We define the collection of \emph{bad cubes}, denoted by  $\mathcal{Q}^i_{\rm bad}$, as the subset of cubes $(Q^i_j)_j$ with the property 
\begin{align}\label{eq: bad cubes}
(2\bar{c})^{\frac{d-1}{d}} \mathcal{H}^{d-1}\big( (J_u\cup J_v) \cap Q^i_j\big)    +  \Vert e(u) \Vert^p_{L^p(Q^i_j)} + \Vert e(v) \Vert^p_{L^p(Q^i_j)} \ge \rho^{d-1}.
\end{align}
 We also let $\mathcal{Q}^i_{\rm good} :=  \mathcal{Q}^i \setminus \mathcal{Q}^i_{\rm bad}$.  For each $Q^i_j \in \mathcal{Q}^i_{\rm good}$, we apply Theorem \ref{th: kornSBDsmall} for $u-v$  to obtain  exceptional  sets $\omega^{i}_j$ and rigid motions $a^{i}_j$ such that 
\begin{align}\label{eq: R2main-NNN}
{\rm (i)} & \ \ \mathcal{H}^{d-1}(\partial^* \omega^{i}_j) \le \bar{c}\,\mathcal{H}^{d-1}((J_u \cup J_v) \cap Q^i_j), \ \ \ \ \mathcal{L}^d(\omega^{i}_j)\le \bar{c}\big(\mathcal{H}^{d-1}\big( (J_u \cup J_v) \cap Q^i_j)\big)\big)^{d/(d-1)},\notag\\
{\rm (ii)} & \ \  \Vert u - v - a^{i}_j \Vert^p_{L^{p}(Q^i_j \setminus \omega^{i}_j)}\le \bar{c}\rho^p \, \Vert e(u-v) \Vert^p_{L^p(Q^i_j)}.
\end{align}
(See also Remark \ref{rem: Korn-scaling}.) For each $Q^i_j\in \mathcal{Q}^i_{\rm bad}$, we define the exceptional set $\omega^i_j$ simply by  
\begin{align}\label{eq: exceptional sets}
\omega^i_j  := Q^i_j.
\end{align}

\noindent \emph{Step 3: Korn's inequality and exceptional sets.}
We now show that for each cube $Q^i_j$ we have 
\begin{align}\label{eq: exceptional sets-2}
\mathcal{H}^{d-1}(\partial^* \omega^{i}_j) \le C \big(\mathcal{H}^{d-1}(J_u \cap Q^i_j) + \mathcal{H}^{d-1}(J_v \cap Q^i_j) + \Vert e(u) \Vert^p_{L^p(Q^i_j)} + \Vert e(v) \Vert^p_{L^p(Q^i_j)} \big),
\end{align}
 where  $ C:= 4d\bar{c}$,  and $\bar{c} \ge 1$ denotes again the constant from Remark \ref{rem: Korn-scaling}.  Indeed, for $Q^i_j \in \mathcal{Q}^i_{\rm good}$  this follows directly from \eqref{eq: R2main-NNN}(i). For $Q^i_j \in \mathcal{Q}^i_{\rm bad}$ instead,  we first observe that $\mathcal{H}^{d-1}(\partial^* \omega^i_j) = 2d\rho^{d-1}$, see \eqref{eq: exceptional sets}. Then, by   \eqref{eq: bad cubes} we obtain
\begin{align*}
\frac{1}{ 2d}\mathcal{H}^{d-1}(\partial^* \omega^i_j)&   =  \rho^{d-1} \le  (2\bar{c})^{\frac{d-1}{d}} \mathcal{H}^{d-1}\big( (J_u\cup J_v) \cap Q^i_j\big)  +  \Vert e(u) \Vert^p_{L^p(Q^i_j)} + \Vert e(v) \Vert^p_{L^p(Q^i_j)},
\end{align*}
from which \eqref{eq: exceptional sets-2} follows. Moreover, for $Q^i_j \in \mathcal{Q}^i_{\rm good}$, by  \eqref{eq: bad cubes} and \eqref{eq: R2main-NNN}(i)  we get 
\begin{align}\label{eq: volume good cubes}
\mathcal{L}^d(\omega^i_j) &\le  \bar{c}\big(\mathcal{H}^{d-1}( (J_u \cup J_v) \cap Q^i_j)\big)^{d/(d-1)}    \le \frac{1}{2} \rho^d = \frac{1}{2}\mathcal{L}^{d}(Q^i_j).
\end{align}

\noindent \emph{Step 4: Korn's inequality and rigid motions.} Recall that  $\tau_\psi\colon  [0,1) \to \R_+$  is the function obtained by  Lemma \ref{lemma: rigid motion} for $\delta=1/2$ and $R =  \sqrt{d}$.  For each $z_1 \in  GSBD^p(A)$ and $z_2 \in  GSBD^p(B)$ we define 
\begin{align}\label{eq: lambda-def}
\Lambda_*(z_1,z_2) = 2^p \, \tau^p_\psi\Big(   2  \rho^{-d} \int_{(A\setminus A^\prime)\cap B} \psi(|z_1 - z_2|)\, \mathrm{d}x     +    2  \lambda\Big),
\end{align}
whenever $ 2  \rho^{-d} \int_{(A\setminus A^\prime)\cap B}\psi(|u-v|) \, {\rm d}x \le  1/2 $ and $\Lambda_*(z_1,z_2) = + \infty$ else. (Note that this is well defined by \eqref{eq:crazy choice}(i).  It will lead to a definition of $\Lambda$  in \eqref{eq: lambda-def-2}   that it is consistent with the definition below \eqref{eq: closeuv}.)

The goal of this step is to prove the estimate 
\begin{align}\label{eq:poinca}
\Vert a^{i}_j \Vert^p_{L^p(Q^i_j \setminus \omega^i_j)} \le \mathcal{L}^d(Q^i_j) \,\Lambda_*(u,v)\quad \quad \quad \text{for all } Q^i_j \in \mathcal{Q}^i. 
\end{align}
By definition of $\omega^i_j$, see \eqref{eq: exceptional sets}, it is clear that this needs to be checked only for cubes in $\mathcal{Q}^i_{\rm good}$.  To this end, we first note by \eqref{eq: volume good cubes} that 
\begin{align}\label{eq: rhokdef}
\mathcal{L}^d(Q^i_j \setminus \omega^i_j) \ge \frac{1}{2}\mathcal{L}^d(Q^i_j) = \frac{1}{2}\rho^d.
\end{align}
We write the rigid motions $a^{i}_j$ as $a^{i}_j(x) = A^{i}_j\,x + b^{i}_j$, and denote by $x^i_j$ the center of the cube $Q^i_j$. We can apply Lemma \ref{lemma: rigid motion} for $\delta=1/2$, $R= \sqrt{d}$,  $G = \rho A^i_j$, $b = b^{i}_j + A^i_j\, x^i_j$, and $E = \rho^{-1}(Q^i_j \setminus \omega^i_j - x^i_j)$    to find
\begin{align}\label{eq: estimate1---}
\rho|A^{i}_j| + |b^{i}_j + A^i_j\, x^i_j | & \le  \tau_\psi\Big( \fint_{E} \psi(|G\,x + b|)\, \mathrm{d}x \Big)  = \tau_\psi\Big(  \fint_{Q^i_j \setminus \omega^i_j} \psi(|a^{i}_j |)\, \mathrm{d}x \Big),
\end{align}    
 where in the second step we used a change of variables.  
   We now estimate the integral on the right hand side of  \eqref{eq: estimate1---}. By the triangle inequality, the monotonicity of $\psi$,  and the subadditivity of $\psi$ we get   
$$\int_{Q^i_j \setminus \omega^i_j} \psi(|a^{i}_j| ) \, \mathrm{d}x  \le 
   \int_{Q^i_j \setminus \omega^i_j} \psi(|u -v - a^{i}_j | ) \, \mathrm{d}x  + \int_{Q^i_j \setminus \omega^i_j} \psi(|u  - v| ) \, \mathrm{d}x. $$
    Note that $\psi(t)  = \frac{t}{1+t}  \le \lambda + \lambda^{-p}t^p$ for all $t \ge 0$. Therefore, we get    
 \begin{align}\label{eq: first step-a}
  \int_{Q^i_j \setminus \omega^i_j} \psi(|a^{i}_j| ) \, \mathrm{d}x    & \le  \lambda^{-p}     \Vert u  - v- a^{i}_j \Vert^p_{L^p(Q^i_j \setminus \omega^{i}_j)}  + \lambda\mathcal{L}^d(Q^i_j\setminus \omega^i_j) +   \int_{Q^i_j}   \psi(|u-v|)\, \mathrm{d}x . 
 \end{align}
 For the first addend, we further compute by  \eqref{eq: bad cubes} and \eqref{eq: R2main-NNN}(ii) that
  \begin{align*}
 \Vert u  - v- a^{i}_j \Vert^p_{L^p(Q^i_j \setminus \omega^{i}_j)}  & \le \bar{c} \rho^p \Vert e(u-v)\Vert_{L^p(Q^i_j)}^p \le   2^{p-1}  \bar{c}\rho^p\Big(   \Vert e(u) \Vert^p_{L^p(Q^i_j)} + \Vert e(u) \Vert^p_{L^p(Q^i_j)}\Big)  \\
  & \le   2^{p-1}  \bar{c}\rho^p \rho^{d-1},
 \end{align*}
 where we used that $Q^i_j \in \mathcal{Q}^i_{\rm good}$. This along with \eqref{eq: rho choice}(i), \eqref{eq: rhokdef}, and  \eqref{eq: first step-a} yields
\begin{align}\label{eq: first step-a3}
  \fint_{Q^i_j \setminus \omega^i_j} \psi(|a^{i}_j| ) &\le  \frac{1}{\rho^d/2}    \lambda^{-p} 2^{p-1}  \bar{c} \, \rho^{p + d-1}  + \lambda +    \frac{1}{\rho^d/2}     \int_{Q^i_j}   \psi(|u-v|) \le 2 \rho^{-d}  \int_{Q^i_j}   \psi(|u-v|) + 2\lambda.
\end{align}
A simple calculation also yields
\begin{align}\label{eq: first step-a2}
\Vert a^{i}_j \Vert^p_{L^p(Q^i_j \setminus \omega^i_j)}  &\le  \mathcal{L}^d(Q^i_j) \sup_{x \in Q^i_j} |A^i_j\,x + b^i_j|^p  =   \mathcal{L}^d(Q^i_j) \sup_{x \in  Q_{\rho}} |A^i_j\,(x+x^i_j) + b^i_j|^p \notag \\ &  \le \mathcal{L}^d(Q^i_j) \,  2^{p-1} \, \big((|A^i_j|\rho)^p  + |b^{i}_j + A^i_j\, x^i_j |^p   \big).  
\end{align}
Now we obtain \eqref{eq:poinca} by using that $\tau_\psi$ is increasing and by  combining \eqref{eq: estimate1---}, \eqref{eq: first step-a3}, and \eqref{eq: first step-a2}.

\noindent \emph{Step 5: Modifications of  $u$.}  
 In this step  of the proof, we will modify the function $u$  on $S_i$  (recall \eqref{eq:ST})  such that its  difference to $v$ restricted to  $T_i$ lies in  $L^p$. For each  $i=1,\ldots,k$, we define  $\omega^i = \bigcup_j \omega^i_j$,  and we  note that $\omega^i \subset \bigcup_j Q^i_j \subset \subset S_i$ by \eqref{eq: Ball properties}. We    introduce the function 
\begin{align}\label{eq: defu,v}
u_i = u \chi_{A \setminus \omega^i} + v\chi_{\omega^i} \in GSBD^p(A).
\end{align}
We now prove the estimates
\begin{align}\label{eq: modified u,v}
{\rm (i)} & \ \  \mathcal{E}(u_i, S_i)  \le  \big( 1 + C\beta   \alpha^{-1}   \big) \big( \mathcal{E}(u,S_i) +   \mathcal{E}(v,S_i)\big),\notag\\
{\rm (ii)} & \ \  \Vert u_i- v\Vert^p_{L^p(T_i)} \le   C 4^{p-1} \alpha^{-1} \rho^p \big( \mathcal{E}(u,S_i) +   \mathcal{E}(v,S_i)\big) + 2^{p-1} \mathcal{L}^d(S_i) \Lambda_*(u,v),
\end{align}
 where $C = 4d\bar{c}$ is the constant of \eqref{eq: exceptional sets-2}.  To prove (i), we first use \eqref{eq: general bound2}  to get 
\begin{align}\label{eq: first-to-ui}
\mathcal{E}(u_i,  S_i)  & \le \mathcal{E}(u,  S_i)  + \mathcal{E}(v,S_i) +  \beta\mathcal{H}^{d-1}(\partial^* \omega^i).
\end{align}
By  \eqref{eq: Ball properties}  and  \eqref{eq: exceptional sets-2}  we then compute
\begin{align*}
\beta\mathcal{H}^{d-1}(\partial^* \omega^i) & \le   \beta \sum\nolimits_j  \mathcal{H}^{d-1}(\partial^* \omega^{i}_j)  \\
& \le     C\beta \sum\nolimits_j \Big( \mathcal{H}^{d-1}(J_u \cap Q^i_j) + \mathcal{H}^{d-1}(J_v \cap Q^i_j) + \Vert e(u) \Vert^p_{L^p(Q^i_j)} + \Vert e(v) \Vert^p_{L^p(Q^i_j)} \Big)      \\
& \le  C\beta  \Big(\mathcal{H}^{d-1}(J_u \cap S_i) + \mathcal{H}^{d-1}(J_v \cap S_i) + \Vert e(u) \Vert^p_{L^p(S_i)} + \Vert e(v) \Vert^p_{L^p(S_i)}\Big),
\end{align*}
 where we used that the cubes are pairwise disjoint.  Then, (i) follows from  \eqref{eq: first-to-ui} and  the lower  bound in  \eqref{eq: general bound}--\eqref{eq: general bound2}.   We now address \eqref{eq: modified u,v}(ii).   To this end, for each cube $Q^i_j$, by using \eqref{eq: R2main-NNN}(ii) and   \eqref{eq:poinca} we get
 \begin{align*}
\Vert u - v \Vert^p_{L^p(Q^i_j \setminus \omega^i_j)} &\le 2^{p-1} \Vert u - v - a^{i}_j \Vert^p_{L^p(Q^i_j \setminus \omega^{i}_j)} +  2^{p-1} \Vert a^{i}_j \Vert^p_{L^p(Q^i_j \setminus \omega^{i}_j)}   \\ & \le 
 2^{p-1} \bar{c}\, \rho^p\Vert e(u-v)\Vert^p_{L^p(Q^i_j)} +   2^{p-1} \Vert a^{i}_j \Vert^p_{L^p(Q^i_j \setminus \omega^{i}_j)}  \\
 & \le 4^{p-1} \bar{c}\,\rho^p \Big( \Vert e(u) \Vert^p_{L^p(Q^i_j)} + \Vert e(v) \Vert^p_{L^p(Q^i_j)} \Big) + 2^{p-1}\mathcal{L}^d(Q^i_j) \, \Lambda_*(u,v).
 \end{align*} 
Then, summing over all cubes and using   \eqref{eq: Ball properties} as well as \eqref{eq: defu,v} we derive
 \begin{align*}
\Vert u_i- v\Vert^p_{L^p(T_i)} &\le  \sum\nolimits_j \Vert u - v \Vert^p_{L^p(Q^i_j \setminus \omega^i_j)} \\&\le  4^{p-1}  \bar{c} \rho^p\Big( \Vert e(u) \Vert^p_{L^p(S_i)} + \Vert e(v) \Vert^p_{L^p(S_i)} \Big) + 2^{p-1} \mathcal{L}^d(S_i) \,\Lambda_*(u,v).
 \end{align*}
In view of  \eqref{eq: general bound}  and $C \ge \bar{c}$,  this shows \eqref{eq: modified u,v}(ii), and concludes this step of the proof.

\noindent \emph{Step 6: Cuff-off construction.} We now perform a cut-off to join the functions $u_i$ and $v$.  Recalling  $u_i$ defined in \eqref{eq: defu,v} and the functions $\varphi_i$ introduced before \eqref{eq: varohi}, we  define  the functions  $w_i := \varphi_i u_i + (1-\varphi_i)v \in GSBD^p(A' \cup B)$ for $i=1,\ldots,k$. By \eqref{eq:ST}, $\omega^i \subset \subset S_i$, and  \eqref{eq: defu,v} we get 
\begin{align}\label{eq: wi}
\mathcal{E}(w_i,A' \cup B) &\le \mathcal{E}\big(u_i, (A' \cup B) \cap A_i^+\big) + \mathcal{E}\big(v, B \setminus {A_{i+1}^-}\big)
+ \mathcal{E}(w_i,S_i) \notag \\
&  \le \mathcal{E}(u,A) + \mathcal{E}(u_i,S_i) + \mathcal{E}(v,B) + \mathcal{E}(w_i,S_i).  
\end{align}
The  second   term has already been estimated in \eqref{eq: modified u,v}(i). We now address the last term. By using the upper bounds in \eqref{eq: general bound}--\eqref{eq: general bound2} we  compute  (by $\odot$ we denote the symmetrized vector product)
\begin{align*}
\mathcal{E}(w_i,S_i) &  \le   \int_{S_i} \beta\big(1+|e(w_i)|^p\big)\, {\rm d}x + \beta \mathcal{H}^{d-1}(J_{w_i} \cap S_i) \\
& \le \beta\mathcal{L}^d(S_i) + \beta  \int_{S_i} \big|\varphi_i e(u_i) + (1-\varphi_i)e(v) + \nabla \varphi_i \odot (u_i-v) \big|^p\, {\rm d}x + \beta \mathcal{H}^{d-1}(J_{w_i} \cap S_i) \\
& \le  \beta\mathcal{L}^d(S_i) + 3^{p-1}\beta  \int_{S_i} \big(|e(u_i)|^p + |e(v)|^p + |\nabla \varphi_i|^p |u_i-v|^p \big)\,  {\rm d}x  \\
& \ \ \   + \beta \big(\mathcal{H}^{d-1}(J_{u_i}  \cap S_i) +  \mathcal{H}^{d-1}(J_{v}  \cap S_i) \big).  
\end{align*}
Using the lower bounds \eqref{eq: general bound}--\eqref{eq: general bound2}, \eqref{eq: varohi},  and \eqref{eq: modified u,v}(i) we then get
\begin{align*}
\mathcal{E}(w_i,S_i)& \le 3^{p-1}\beta\alpha^{-1} \big( \mathcal{E}(u_i,S_i) + \mathcal{E}(v,S_i)  \big)  + 3^{p-1}\beta \Vert \nabla \varphi_i \Vert_\infty^p \Vert u_i- v\Vert^p_{L^p(T_i)} +   \beta\mathcal{L}^d(S_i)\\
 &\le 3^{p-1}\beta\alpha^{-1}(2+C\beta\alpha^{-1}) \big( \mathcal{E}(u,S_i) + \mathcal{E}(v,S_i)  \big)  + 3^{p-1}\beta \Vert \nabla \varphi_i \Vert_\infty^p \Vert u_i- v\Vert^p_{L^p(T_i)} +   \beta\mathcal{L}^d(S_i). 
\end{align*}
By   \eqref{eq: rho choice}(ii),  \eqref{eq: modified u,v},  \eqref{eq: wi}, and the fact that $C\beta   \alpha^{-1}  \ge \beta   \alpha^{-1}   \ge 1$  we thus derive  after some computation 
\begin{align}\label{eq: last step}
\mathcal{E}(w_i,A' \cup B) & \le \mathcal{E}(u,A) +   \mathcal{E}(v,B) +  2C\beta   \alpha^{-1}   \big( \mathcal{E}(u,S_i) +   \mathcal{E}(v,S_i)\big)  +   \mathcal{E}(w_i,S_i)\notag\\ 
& \le \mathcal{E}(u,A) +   \mathcal{E}(v,B) +  (2C +  C  3^{p}  + C12^{p-1}) (\beta \alpha^{-1})^2  \big( \mathcal{E}(u,S_i) + \mathcal{E}(v,S_i)  \big) \notag \\ & \ \ \  + 6^{p-1} \beta \Vert \nabla \varphi_i \Vert_\infty^p \mathcal{L}^d(S_i) \Lambda_*(u,v)  +   \beta\mathcal{L}^d(S_i).
\end{align}

\noindent \emph{Step 7: Definition of $w$ and $\Lambda$.} We finally define $w$ and $\Lambda$, and we show estimate \eqref{eq: assertionfund}. Recalling that the sets $(S_i)_{i=1}^k$ are pairwise disjoint and contained in  $(A \setminus A') \cap B  \subset A \cap B $, see \eqref{eq:ST},   we can choose $i_0 \in \lbrace 1, \ldots, k \rbrace$ such that 
\begin{align}\label{eq: miscela}
 \mathcal{E}(u,S_{i_0}) + \mathcal{E}(v,S_{i_0}) + \frac{1}{2}\mathcal{L}^d(S_{i_0})& \le \frac{1}{k} \sum\nolimits_{i=1}^k  \big( \mathcal{E}(u,S_i) + \mathcal{E}(v,S_i) + \frac{1}{2}\mathcal{L}^d(S_i) \big) \notag \\&  \le  \frac{1}{k}  \Big( \mathcal{E}(u,A) + \mathcal{E}(v,B) + \frac{1}{2}\mathcal{L}^d\big( A \setminus A' \big)\Big).
\end{align}
 Recall $C = 4d\bar{c}$.  As $\alpha \le 1$, we have   $12^{p+1}d\bar{c}\beta/(2\alpha^2) \ge 1$.  Then, by \eqref{eq: k0}, \eqref{eq: last step}, and \eqref{eq: miscela} the function  $w := w_{i_0}$ satisfies   
\begin{align}\label{eq: penultimate}
\mathcal{E}(w,A' \cup B )&\le  \mathcal{E}(v,A) +   \mathcal{E}(v,B) +  \frac{12^{p+1}d\bar{c} \beta^2}{\alpha^2} \Big( \mathcal{E}(u,S_{i_0}) + \mathcal{E}(v,S_{i_0}) + \frac{1}{2} \mathcal{L}^d(S_{i_0})  \Big)    + M \Lambda_*(u,v)\notag \\
& \le  (1+\eta)\big( \mathcal{E}(v,A) +   \mathcal{E}(v,B)\big)  + M \Lambda_*(u,v) + \eta/2,
\end{align}
where for shorthand we have defined $M:= 6^{p-1} \beta  (1+ \max_{i=1,\ldots,k} \Vert \nabla \varphi_i \Vert_\infty^p ) \,  \mathcal{L}^d( A \setminus A'  )$.  In a similar fashion,  as $\beta \ge 1$,  by analogous estimates, taking \eqref{eq: modified u,v}(ii),  \eqref{eq: k0}  and  \eqref{eq: miscela} into account,  we get  
\begin{align}\label{eq: difference for later}
\Vert u_{i_0}- v\Vert^p_{L^p(T_{i_0})} \le \eta\big( \mathcal{E}(v,A) +   \mathcal{E}(v,B)\big)  + M \Lambda_*(u,v) + \eta/2.
\end{align}
  We let 
\begin{align}\label{eq: lambda-def-2} 
\Lambda(z_1,z_2) = M 2^p \tau^p_\psi\Big(  2  \rho^{-d} \int_{(A\setminus A^\prime)\cap B} \psi(|z_1 - z_2|)\, \mathrm{d}x \Big), 
\end{align}
whenever $ 2  \rho^{-d} \int_{(A\setminus A^\prime)\cap B}\psi(|u-v|)\,  {\rm d}x  \le  1/2 $ and $\Lambda(z_1,z_2) = + \infty$ else. (Note that this  is consistent with the definition below \eqref{eq: closeuv}.) 
 Then,   $\Lambda$ is  lower semicontinuous by Fatou's lemma and the fact that $\tau_\psi$ is continuous and increasing. Moreover,  in view of \eqref{eq: mes-conv},  we easily check that \eqref{eq: Lambda0} is satisfied since $\tau_\psi(0)=0$, see Lemma \ref{lemma: rigid motion}.
Eventually,  by  \eqref{eq:crazy choice}(ii) and \eqref{eq: lambda-def} we find  
\begin{align}\label{eq: and this}
|\Lambda(u,v) - M\Lambda_*(u,v)| \le \eta/2.
\end{align}
 This along with \eqref{eq: penultimate} yields \eqref{eq: assertionfund}(i).  Recalling that $w = \varphi_{i_0} u_{i_0} + (1-\varphi_{i_0})v$ and that $\lbrace u \neq u_{i_0} \rbrace \subset S_{i_0} \subset  A \setminus A' $ as well as $\varphi_{i_0} =1$ on $A'$ and  $\varphi_{i_0} =0$ outside $A$, we get  \eqref{eq: assertionfund}(iii). Moreover, in view of  \eqref{eq: defu,v}, $w=v$ on $\omega^{i_0}$,  and the fact that $\lbrace 0 < \varphi_{i_0} <1 \rbrace \subset T_{i_0} \subset S_{i_0}$  (see \eqref{eq: varohi}),  we compute
\begin{align*}
 \Vert \min\lbrace |w - u|, |w-v| \rbrace \Vert_{L^p(A' \cup B)} & =   \Vert \min\lbrace |w - u_{i_0}|, |w-v| \rbrace \Vert_{L^p(S_{i_0} \setminus \omega^{i_0})} \le \Vert u_{i_0} - v  \Vert_{L^p(T_{i_0})}. 
\end{align*}
 Finally, by \eqref{eq: difference for later} and \eqref{eq: and this} we get that  \eqref{eq: assertionfund}(ii) holds true. This concludes the proof  whenever $A \setminus A'  \subset  B$.
 
 If $B$ has Lipschitz boundary, the condition $A \setminus A'  \subset  B$ is dispensable. In fact, we can still cover each set $T_i$  defined in \eqref{eq:ST}  with cubes of sidelength $\rho$, see Step 1. These cubes, however, are not necessarily contained in $B$. Still, we can apply Korn's inequality on the cubes in Step 2 by extending $v = 0$ outside $B$, at the expense of an additional term $ C  \mathcal{H}^{d-1}(\partial B \cap Q^i_j)$ on the right hand side of the estimate on $\mathcal{H}^{d-1}(\partial^* w^i_j)$,  see \eqref{eq: exceptional sets-2}. This implies an additional addend $ C\beta \mathcal{H}^{d-1}(\partial B \cap S_i)$ on the right hand side of \eqref{eq: modified u,v}(i). Eventually, in \eqref{eq: miscela}, this leads to an additional addend on the right hand side of the form $\frac{1}{k}\mathcal{H}^{d-1}(\partial B \cap (A \setminus A')   )$. This can be made arbitrarily small for $k$ sufficiently large.   
\end{proof}

\subsection{Proof of Theorem \ref{th: gamma}}

We consider a sequence of functionals $(\mathcal{E}_n)_n$  of the  form \eqref{eq: basic energy}. We start by proving some  properties of the $\Gamma$-liminf and $\Gamma$-limsup with respect to the topology of the convergence in measure. To this end, we define 
\begin{align}\label{eq: liminf-limsup}
\mathcal{E}'(u,A)&:=\Gamma-\liminf_{n \to \infty} \mathcal{E}_n(u,A)   = \inf \big\{ \liminf_{n \to \infty} \mathcal{E}_n(u_n,A):   \  u_n \to  u \hbox{ in measure on }  A  \big\}, \notag \\
\mathcal{E}''(u,A) &:= \Gamma-\limsup_{n \to \infty} \mathcal{E}_n(u,A)  = \inf \big\{  \limsup_{n \to \infty} \mathcal{E}_n(u_n,A):   \ u_n \to  u \hbox{ in measure on }  A \big\}
\end{align} 
for all $u \in GSBD^p(\Omega)$ and $A \in \mathcal{A}(\Omega)$.

\begin{lemma}[Properties of $\Gamma$-liminf and  $\Gamma$-limsup]\label{eq: liminflimsup-prop}
 Let $\Omega \subset \R^d$  be open.  Let $\mathcal{E}_n\colon  GSBD^p(\Omega)   \times \mathcal{A}(\Omega) \to [0,\infty)$ be a sequence of functionals as in (\ref{eq: basic energy}), where we assume that $f_n$ and $g_n$ satisfy  (\ref{eq: general bound}) and (\ref{eq: general bound2}), respectively, for all $n\in\mathbb{N}$. Define $\mathcal{E}'$ and $\mathcal{E}''$ as in \eqref{eq: liminf-limsup}. For brevity, we write  $ \mathcal{I}(u,A):=   \|e(u)\|_{L^p(A)}^p+ \mathcal{H}^{d-1}(J_u \cap A)$. Then we have
\begin{align}\label{eq: infsup0}
{\rm (i)} & \ \ 
\mathcal{E}'(u,A) \le \mathcal{E}'(u,B), \ \ \ \ \ \  \mathcal{E}''(u,A) \le \mathcal{E}''(u,B) \ \ \ \text{ whenever } A \subset B, \notag \\
{\rm (ii)} & \ \ 
 \alpha \mathcal{I}(u,A)  \le \mathcal{E}'(u,A)    \le  \mathcal{E}''(u,A) \le \beta\mathcal{I}(u,A) + \beta\mathcal{L}^d(A), \notag \\
{\rm (iii)} & \ \ 
\mathcal{E}'(u,A)  = \sup\nolimits_{B \subset \subset A} \mathcal{E}'(u,B),  \ \ \ \ \mathcal{E}''(u,A)  = \sup\nolimits_{B \subset \subset A} \mathcal{E}''(u,B)  \ \ \  \text{ whenever } A \in \mathcal{A}(\Omega),  \notag \\
{\rm (iv)} & \ \ 
\mathcal{E}'(u,A\cup B) \le  \mathcal{E}'(u,A) + \mathcal{E}''(u,B), \notag \\
& \ \   \mathcal{E}''(u,A\cup B) \le  \mathcal{E}''(u,A) + \mathcal{E}''(u,B)  \ \ \  \text{ whenever } A,B \in \mathcal{A}(\Omega),
\end{align}
where  $\alpha, \beta$ appear in (\ref{eq: general bound}) and (\ref{eq: general bound2}).
\end{lemma}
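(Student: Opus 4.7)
Property (i) will be immediate: if $A \subset B$ then any sequence $u_n \to u$ in measure on $B$ also converges in measure on $A$, and the pointwise bound $\mathcal{E}_n(u_n, A) \le \mathcal{E}_n(u_n, B)$ (the integrands are non-negative) passes to liminf and limsup. For property (ii), the upper bound follows by testing with the constant sequence $u_n \equiv u$ and applying the growth bounds \eqref{eq: general bound}--\eqref{eq: general bound2}. For the lower bound I would extract a subsequence realizing $\mathcal{E}'(u,A)$, assume it is finite (else nothing to prove), and apply Theorem \ref{thm: Vito compactness}: equi-boundedness of $\mathcal{I}(u_n, A)$ yields $e(u_n) \rightharpoonup e(u)$ weakly in $L^p$ on $A \setminus G_\infty$. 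Since measure convergence forces $G_\infty$ to be $\mathcal{L}^d$-null (hence $\mathcal{H}^{d-1}(\partial^* G_\infty) = 0$), this gives $\|e(u)\|_{L^p(A)}^p \le \liminf \|e(u_n)\|_{L^p(A)}^p$ by weak lower semicontinuity of the norm and $\mathcal{H}^{d-1}(J_u \cap A) \le \liminf \mathcal{H}^{d-1}(J_{u_n} \cap A)$ directly from Theorem \ref{thm: Vito compactness}(iii) (alternatively from Theorem \ref{thm: GSBD LSC} with $\tau(i,j,\nu) = |\nu|$, which is symmetric jointly convex by Proposition \ref{prop: only nu2}).

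For property (iii), the inequality $\sup_{B \subset\subset A} \mathcal{E}^\star(u, B) \le \mathcal{E}^\star(u, A)$ (for $\mathcal{E}^\star \in \{\mathcal{E}', \mathcal{E}''\}$) follows directly from (i). For the reverse, fix $\eta > 0$ and choose $A' \subset\subset A$ so that $\beta\bigl(\mathcal{I}(u, A \setminus \overline{A'}) + \mathcal{L}^d(A \setminus \overline{A'})\bigr) < \eta$; this is possible by inner regularity of the finite Borel measure $\mathcal{I}(u, \cdot) + \mathcal{L}^d$ on $A$. I would take an almost-optimal sequence $u_n \to u$ in measure on $A'$ realizing $\mathcal{E}^\star(u, A')$ and glue it, via Proposition \ref{lemma: fundamental estimate} (with the Lipschitz alternative on a neighborhood of $A \setminus A'$), to the constant sequence $v_n \equiv u$ on an open set $B \supset A \setminus \overline{A'}$ with Lipschitz boundary. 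The glued sequence $w_n$ converges in measure to $u$ on $A$, and since $u_n - v_n \to 0$ in measure on the transition layer, the error $\Lambda(u_n, v_n) \to 0$ by \eqref{eq: Lambda0}. Together with the upper bound from (ii) applied to $v_n \equiv u$ on $B$, this yields $\mathcal{E}^\star(u, A) \le (1+\eta)\mathcal{E}^\star(u, A') + O(\eta)$, and sending $\eta \to 0$ concludes (iii).

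Property (iv) uses Proposition \ref{lemma: fundamental estimate} in its primary role. Given $A, B \in \mathcal{A}(\Omega)$ and $A' \subset\subset A$, take a sequence $u_n \to u$ on $A$ realizing $\mathcal{E}'(u, A)$ and a sequence $v_n \to u$ on $B$ realizing $\mathcal{E}''(u, B)$, with liminfs/limsups attained along a single subsequence. Applying Proposition \ref{lemma: fundamental estimate} with parameter $\eta$ yields $w_n \in GSBD^p(A' \cup B)$ with $w_n \to u$ in measure on $A' \cup B$ (this uses \eqref{eq: assertionfund}(ii): on the overlap $\min\{|w_n - u_n|, |w_n - v_n|\} \to 0$ in $L^p$, combined with $u_n, v_n \to u$ in measure) and
\[
\mathcal{E}_n(w_n, A' \cup B) \le (1+\eta)\bigl(\mathcal{E}_n(u_n, A) + \mathcal{E}_n(v_n, B)\bigr) + \Lambda(u_n, v_n) + \eta.
\]
Taking the liminf gives $\mathcal{E}'(u, A' \cup B) \le (1+\eta)(\mathcal{E}'(u, A) + \mathcal{E}''(u, B)) + \eta$, since $\limsup \Lambda(u_n, v_n) = 0$ by \eqref{eq: Lambda0}. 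Then (iii) lets me pass to $A' \nearrow A$ to replace $A' \cup B$ by $A \cup B$, and sending $\eta \to 0$ proves the first inequality of (iv); the second is identical, with $\liminf$ replaced by $\limsup$ on the left.

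The main obstacle will be the careful coordination of small parameters in (iii) and (iv): the $\eta$ of the fundamental estimate, the annular thinness controlled by the outer regularity of $\mathcal{I}(u, \cdot) + \mathcal{L}^d$, and the diagonal extraction needed to make $\Lambda(u_n, v_n)$ actually vanish along the glued sequence. The lower semicontinuity of $\Lambda$ with respect to measure convergence is crucial, as is the precise quantification \eqref{eq: assertionfund}(ii), since neither the recovery sequences nor the glued sequence lie a priori in $L^p$; without the $L^p$-control of $\min\{|w-u|, |w-v|\}$ (rather than of $w - u$ or $w - v$ individually) one could not propagate measure convergence of $u_n, v_n$ to measure convergence of $w_n$ on $A' \cup B$.
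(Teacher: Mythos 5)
Your treatment of (i) and (ii) matches the paper's standard arguments (constant recovery sequence for the upper bound, Theorem \ref{thm: Vito compactness} with $\mathcal{L}^d(G_\infty)=0$ for the lower bound). The gap is in (iv): you apply Proposition \ref{lemma: fundamental estimate} to the triple $(A',A,B)$ for arbitrary $A,B\in\mathcal{A}(\Omega)$ and some $A'\subset\subset A$, but the proposition requires either $A\setminus A'\subset B$ or that $B$ has Lipschitz boundary. Neither holds for general $A,B$: the set $A\setminus B$ may accumulate at $\partial A$, so no $A'\subset\subset A$ can contain it, and $B$ is just an arbitrary bounded open set. You invoke the Lipschitz alternative explicitly in (iii) but drop it in (iv). The argument can be repaired — replace $B$ by a Lipschitz $B''\subset\subset B$ so the hypothesis is met, observe that $v_n|_{B''}$ is still nearly optimal for $\mathcal{E}''(u,B'')\le\mathcal{E}''(u,B)$, take $A'\nearrow A$, $B''\nearrow B$, and use (iii) together with a finite-cover argument to pass from $\sup_{A',B''}\mathcal{E}'(u,A'\cup B'')$ to $\mathcal{E}'(u,A\cup B)$ — but none of this is in the proposal. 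A secondary issue: for fixed $\eta$, estimate \eqref{eq: assertionfund}(ii) does \emph{not} give $\min\{|w_n-u_n|,|w_n-v_n|\}\to 0$ in $L^p$ as $n\to\infty$; the right-hand side tends only to the $\eta$-dependent remainder. You flag the needed diagonal extraction in $\eta$ at the end, but it must be invoked \emph{before} asserting $w_n\to u$ in measure (as in the paper's $\bar{w}_n:=w_n^{\eta_n}$), not after.

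The paper avoids both issues by a structurally different route: it first proves an auxiliary inequality \eqref{eq: infsup2} for triples $E\subset\subset F\subset\subset D$, where the geometric hypothesis of the fundamental estimate is automatic (taking $A=F$, $B=D\setminus\overline{E}$, $E\subset\subset A'\subset\subset F$ gives $A\setminus A'\subset B$), and performs the $\eta$-diagonal there once and for all. It then derives (iii) directly, and obtains (iv) by choosing $M\subset\subset M'\subset\subset A$ and $N\subset\subset N'\subset\subset B$ with $M'\cap N'=\emptyset$ and small residual $\mathcal{I}(u,(A\cup B)\setminus\overline{M\cup N})$; subadditivity $\mathcal{E}''(u,M'\cup N')\le\mathcal{E}''(u,M')+\mathcal{E}''(u,N')$ is then trivial for disjoint sets, and \eqref{eq: infsup2} controls the thin residual via (ii). This keeps every application of Proposition \ref{lemma: fundamental estimate} in the nested configuration and never needs to glue across a pair of sets whose geometry is uncontrolled.
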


%
%(i) We prove only the first inequality, the second one will follow by similar argument. Observe that, by  the  growth conditions (\ref{eq: general bound}) and (\ref{eq: general bound2}), we have that 
%\begin{align}\label{eq: (i)1}
%\mathcal{E}_n(u,A)\leq \mathcal{E}_n(u,B)\quad \text{for all }  u\in GSBD^p(\Omega) \text{ and every } A,B\in \mathcal{A}(\Omega) \text{ with }  A\subset B.
%\end{align}
%Fix $\eps >0$ and let $(w_n)_n\subset GSBD^p(\Omega)$ such that $(w_n)_n$ be converges in measure to $u$ in $\Omega$ and  $\liminf_{n\rightarrow \infty}\mathcal{E}_n(w_n,B)\leq\mathcal{E}'(u,B)+\eps$.   Then,  by using \eqref{eq: liminf-limsup} and  (\ref{eq: (i)1}) we get
%\begin{align*}
%\mathcal{E}'(u,A)\leq \liminf_{n\rightarrow \infty} \mathcal{E}_n(w_n,A)\leq \liminf_{n\rightarrow \infty} \mathcal{E}_n(w_n,B)\leq \mathcal{E}'(u,B)+\eps.
%\end{align*}
%By the  arbitrariness of  $\eps>0$,  we conclude the proof of (i).

\begin{proof}[Proof of Lemma \ref{eq: liminflimsup-prop}]
 Apart from (iii), the proof is standard. For convenience of the reader, however, we describe the arguments here  to some extent. First, property (i) follows from the fact that all  $\mathcal{E}_n(u,\cdot)$ are measures.   The upper bound  in (ii)   follows  by  (\ref{eq: general bound})--(\ref{eq: general bound2}) and by  taking the constant sequence $u_n = u$ in \eqref{eq: liminf-limsup}. For the lower bound in (ii), let us consider an (almost) optimal sequence $(w_n)_n$ in  \eqref{eq: liminf-limsup}. By the growth conditions (\ref{eq: general bound}) and (\ref{eq: general bound2}) we get  that   \eqref{eq: hp in V.C.} is satisfied,  i.e., we can apply Theorem \ref{thm: Vito compactness}.  Since the sequence $(w_n)_n$ converges in measure to $u$, the set  $G_\infty$  has $\mathcal{L}^d$-negligible measure.  Then,  (\ref{eq: general bound})--(\ref{eq: general bound2}) along with \eqref{eq: main properties}(ii),(iii)   imply the lower bound. 
 
 As a preparation for (iii) and (iv), we show the following:   for all sets $D, E,F \in \mathcal{A}(\Omega)$, $E \subset \subset F \subset \subset D$,  we have
\begin{align}\label{eq: infsup2}
\mathcal{E}'(u,D) \le \mathcal{E}'(u,F) +  \mathcal{E}''  (u,D \setminus \overline{E}), \ \ \ \ \ \ \mathcal{E}''(u,D) \le \mathcal{E}''(u,F) + \mathcal{E}''(u,D \setminus \overline{E}). 
\end{align} 
Indeed, let $(u_n)_n, (v_n)_n \subset GSBD^p(\Omega)$ be sequences converging in measure to $u$ on $F$  and $D \setminus \overline{E}$, respectively, such that
\begin{align}\label{eq: infsup1}
\mathcal{E}''(u,F) = \limsup\nolimits_{n \to \infty} \mathcal{E}_n(u_n,F), \ \ \ \ \ \ \ \mathcal{E}''(u,D \setminus \overline{E}) = \limsup\nolimits_{n \to \infty} \mathcal{E}_n(v_n,D \setminus \overline{E}).
\end{align}
 Fix $\eta >0$.  We apply Proposition \ref{lemma: fundamental estimate}  for   $A = F$, $B =  D  \setminus \overline{E}$,   and some $A' \in \mathcal{A}(\Omega)$ with $E \subset \subset A' \subset \subset F$.  Note that we indeed have $A' \subset \subset A$ and $A \setminus A'  \subset   B$.  We  get a function $ w^\eta_n  \in GSBD^p(D)$ satisfying (see \eqref{eq: assertionfund}(i))
\begin{align}\label{eq: infsup3}
\mathcal{E}_n (  w^\eta_n,  D) \le  (1+ \eta)\big(\mathcal{E}_n(u_n,F)  + \mathcal{E}_n(v_n, D\setminus \overline{E}) \big) +  \Lambda_\eta  (u_n,v_n) +\eta,
\end{align}
 where $\Lambda_\eta$ is the function given in \eqref{eq: Lambda0}. (We include $\eta$ in the notation to highlight the fact that the definition of $\Lambda_\eta$ depends on $\eta$.)  We observe that $u_n-v_n$ tends to $0$ in measure on $F\setminus \overline{E}$.  Hence,  we get $\Lambda_\eta(u_n, v_n)\to 0$  by \eqref{eq: Lambda0}.  By a diagonal argument we can find a sequence $(\eta_n)_n \subset (0,+\infty)$ such that $\eta_n \to 0$ and $\Lambda_{\eta_n}(u_n, v_n)\to 0$ as $n \to \infty$.   We now define   $\bar{w}_n:= w_n^{\eta_n}$ for $n \in \N$.  Recall that  $(u_n)_n$ and $(v_n)_n$  converge in measure to $u$ on $F$  and $D \setminus \overline{E}$, respectively.  In view of  \eqref{eq: assertionfund}(ii),(iii),  we get that $\bar{w}_n$  converges in measure to $u$ on $D$. Then, by using \eqref{eq: liminf-limsup},  \eqref{eq: infsup1}--\eqref{eq: infsup3},  and $\eta_n\to 0$  we obtain 
$$\mathcal{E}''(u,D) \le \limsup\nolimits_{n \to \infty} \mathcal{E}_n (  \bar{w}_n,  D) \le \mathcal{E}''(u,F) + \mathcal{E}''(u,D \setminus \overline{E}). $$
 This implies  the second estimate in   \eqref{eq: infsup2}.  A similar argument  yields the first one.

  Let us now show (iii),  i.e., the inner regularity of  $\mathcal{E}'$ and $\mathcal{E}''$.  By  \eqref{eq: infsup0}(ii) and \eqref{eq: infsup2} we get 
$$
\mathcal{E}''(u,D) \le \mathcal{E}''(u,F) +  \beta \mathcal{I}(u,D\setminus \overline{E}) + \beta\mathcal{L}^d(D \setminus \overline{E}). 
$$
Since we can choose $D$ and $E$ in such a way that  $\mathcal{I}(u,D\setminus \overline{E})$ and $\mathcal{L}^d(D \setminus \overline{E})$   can be taken arbitrarily small, and  $\mathcal{E}''(u,\cdot)$ is an increasing set function, we obtain $\mathcal{E}''(u,D)  \le \sup\nolimits_{F \subset \subset D} \mathcal{E}''(u,F)$. This shows (iii) for  $\mathcal{E}''$. The proof for $\mathcal{E}'$ is similar. 

 Finally, we show (iv).  Observe that the inequalities are clear if $A \cap B = \emptyset$. Let  $A,B \in \mathcal{A}(\Omega)$  with nonempty intersection. Given $\eps >0$, one can choose $M \subset \subset M' \subset \subset A$ and $N \subset \subset N' \subset \subset B$ such that  $M,M',N,N' \in \mathcal{A}(\Omega)$, $M' \cap N' = \emptyset$, and  $\mathcal{I}(u,(A\cup B) \setminus (\overline{M \cup N} ))  + \mathcal{L}^d((A\cup B) \setminus (\overline{M \cup N} ))  \le \eps$,  see \cite[Proof of Lemma 5.2]{AmbrosioBraides} for details. Then using, \eqref{eq: infsup0}(i),(ii) and \eqref{eq: infsup2}  we get 
\begin{align*}
\mathcal{E}''(u,A \cup B) & \le  \mathcal{E}''(u,M' \cup N') + \mathcal{E}''(u, (A\cup B) \setminus \overline{M \cup N} ) \le  \mathcal{E}''(u,M') +  \mathcal{E}''(u,N') + \beta  \eps  \\ & \le \mathcal{E}''(u,A) +  \mathcal{E}''(u,B) + \beta  \eps,
\end{align*}
where we also used $\mathcal{E}''(u,M' \cup N') \le \mathcal{E}''(u,M') +  \mathcal{E}''(u,N')$ which holds due to $M' \cap N' = \emptyset$. Since $\eps$ was arbitrary, the statement follows. The proof for $\mathcal{E}'$ is again  similar. 
\end{proof}

We can now prove Theorem \ref{th: gamma}.

\begin{proof}[Proof of Theorem \ref{th: gamma}]  
We apply a compactness result for $\bar{\Gamma}$-convergence, see \cite[Theorem 16.9]{DalMaso:93}, to find  an increasing sequence of integers $(n_k)_k$ such that  the objects $\mathcal{E}'$ and $\mathcal{E}''$ defined in \eqref{eq: liminf-limsup} with respect to  $(n_k)_k$ satisfy
$$(\mathcal{E}')_-(u,A) = (\mathcal{E}'')_-(u,A)  $$
for all $u \in GSBD^p(\Omega)$ and $A \in \mathcal{A}(\Omega)$, where $(\mathcal{E}')_-$ and $(\mathcal{E}'')_-$ denote the inner regular envelope.   By   \eqref{eq: infsup0}(iii) we know that $\mathcal{E}'$ and $\mathcal{E}''$ are inner regular, and thus they both coincide with their respective inner regular envelopes.   This shows that  the $\Gamma$-limit, denoted by $\mathcal{E}:= \mathcal{E}' = \mathcal{E}''$, exists for all $u \in GSBD^p(\Omega)$ and all $A \in \mathcal{A}(\Omega)$. 

 We now check that $\mathcal{E}$ satisfies the  assumptions  of the integral representation result \cite[Theorem~2.1]{Crismale-Friedrich-Solombrino}. First, the definition in \eqref{eq: liminf-limsup} and the locality of each $\mathcal{E}_n$ show that $\mathcal{E}(\cdot, A)$ is local for any $A \in \mathcal{A}(\Omega)$, i.e.,   $\mathcal{E}(u,A) = \mathcal{E}(v,A)$ for all $u,v \in GSBD^p(\Omega)$ satisfying $u=v$ $\mathcal{L}^d$-a.e.\ in $A$. Moreover, in view of  \cite[Remark 16.3]{DalMaso:93}, $\mathcal{E}(\cdot,A)$ is lower semicontinuous with respect to convergence in measure on $\Omega$ for any $A \in \mathcal{A}(\Omega)$. Next, we check that  $\mathcal{E}(u,\cdot)$ can be extended to a Borel measure for any $u \in  GSBD^p(\Omega)$. Indeed,  $\mathcal{E}$ is increasing, superadditive, and inner regular, see \cite[Proposition 16.12 and Remark 16.3]{DalMaso:93}. Moreover, by \eqref{eq: infsup0}(iv) we find that $\mathcal{E}$ is subadditive. Then,  by De  Giorgi-Letta (see \cite[Theorem 14.23]{DalMaso:93}), $\mathcal{E}(u,\cdot)$ can be extended to a Borel measure. Eventually, by \eqref{eq: infsup0}(ii) we get that 
$$\alpha \bigg(\int_{ B} |e(u)|^p \, {\rm d}x    +   \mathcal{H}^{d-1}(J_u \cap B)\bigg) \le \mathcal{E}(u,B) \le \beta \bigg(\int_{ B} (1 + |e(u)|^p)  \,  {\rm d}x  +   \mathcal{H}^{d-1}(J_u \cap B)\bigg).$$  
for every   $u \in GSBD^p(\Omega)$ and every Borel set  $B \subset \Omega$. Therefore, $\mathcal{E}$ satisfies the  assumptions  of \cite[Theorem 2.1]{Crismale-Friedrich-Solombrino}, and we conclude that $\mathcal{E}$ admits a representation of the form \eqref{eq: representation}--\eqref{eq: g^epsilon_infty}. (The minimization problems \eqref{eq: f^epsilon_infty}--\eqref{eq: g^epsilon_infty} can be formulated both in terms of balls and cubes, see \cite[Theorem 2.1, Remark 2.2(iv)]{Crismale-Friedrich-Solombrino}.) 
\end{proof}

\section{Two approximation results}\label{sec: apprix}

 In this section we present two approximation results for $GSBD^p$ functions which are instrumental for the proof of  Theorem  \ref{thm: main thm d=2},  but also of independent interest.

\subsection{Approximation of $GSBD^p$ functions by $W^{1,p}$}

In the first result we show that a sequence of  $GSBD^p$ functions with asymptotically vanishing jump sets can be approximated by a sequence of equiintegrable Sobolev functions.  The result is a variant in $GSBD^p$ of similar results for Sobolev functions \cite{Fonseca} and $BV$-functions  \cite{Larsen}. It crucially relies  on the recent Korn inequality  stated in  Theorem \ref{th: kornSBDsmall}. Later we will apply this result in the blow-up around points with approximate gradient.

\begin{lemma}[Approximation of $GSBD^p$ functions by $W^{1,p}$]\label{lem: our Larsen}
Let $\Omega\subset \R^d$ be an open, bounded set with Lipschitz boundary, and let $1<p<+\infty$. Let $u\colon\Omega \to \R^d$ be a measurable function, and let $(u_n)_{n}\subset GSBD^p(\Omega)$ be a sequence satisfying
\begin{align}\label{eq: properties-part0}
{\rm (i)} & \ \ \sup_{n\in\mathbb{N}}\| e(u_n)\|_{L^p(\Omega)} <\infty   ,\notag\\
{\rm (ii)} & \ \  \lim_{n\to \infty}\mathcal{H}^{d-1}(J_{u_n})=0   ,\notag\\
{\rm (iii)} & \ \ u_n \to u \text{ in measure on $\Omega$ as $n \to +\infty$.}
\end{align} 
Then, there exists a sequence $(w_n)_n\subset W^{1,p}(\Omega;\R^d)$ such that $(|\nabla w_n|^p)_n$ is equiintegrable, and 
\begin{align}\label{eq: Larsen properties}
{\rm (i)} & \ \  \lim_{n\to \infty} \mathcal{L}^d\left(\left\{w_n\neq u_n \right\}\cup \left\{e(w_n)\neq e(u_n)   \right\}  \right)=0,\notag\\
{\rm (ii)}  & \ \  \lim_{n\to \infty} \Vert w_n - u \Vert_{L^p(\Omega)} =0.  
\end{align}
\end{lemma}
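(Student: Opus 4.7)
The plan is to combine the Korn inequality for functions with small jump set (Theorem \ref{th: kornSBDsmall}) with a Fonseca--M\"uller--Pedregal type truncation to extract equiintegrable Sobolev modifications of the $u_n$. First, I would apply Theorem \ref{th: kornSBDsmall} to each $u_n$: since $\mathcal{H}^{d-1}(J_{u_n}) \to 0$, one obtains sets of finite perimeter $\omega_n \subset \Omega$ with $\mathcal{H}^{d-1}(\partial^* \omega_n)$ and $\mathcal{L}^d(\omega_n)$ vanishing, rigid motions $a_n$ satisfying $\Vert u_n - a_n \Vert_{L^p(\Omega \setminus \omega_n)} \le C$, and Sobolev functions $v_n \in W^{1,p}(\Omega;\R^d)$ with $v_n = u_n$ on $\Omega \setminus \omega_n$ and $\Vert e(v_n) \Vert_{L^p(\Omega)} \le C$. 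This produces the Sobolev approximants, but without yet providing any $L^p$-control on $v_n$ or equiintegrability of the gradients.

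The second step is to show that $(v_n)_n$ is bounded in $W^{1,p}(\Omega;\R^d)$ and converges to $u$ in $L^p$ (so, in particular, $u \in L^p(\Omega;\R^d)$). The argument hinges on controlling the rigid motions $a_n$: since $u$ is finite a.e., there exists a set $E \subset \Omega$ with $\mathcal{L}^d(E) > 0$ on which $u$ is bounded, say $|u| \le M$; on the subset $E_n := E \setminus (\omega_n \cup \{|u_n - u| > 1\})$, which has measure bounded below for $n$ large (by \eqref{eq: properties-part0}(iii) together with $\mathcal{L}^d(\omega_n) \to 0$), both $u_n$ and $u_n - a_n$ are $L^p$-bounded, and hence so is $a_n|_{E_n}$. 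Lemma \ref{lemma: rigid motion} then upgrades this to a uniform $L^\infty$-bound on $a_n$ throughout $\Omega$, and therefore to a uniform bound on $\Vert v_n \Vert_{L^p(\Omega \setminus \omega_n)}$. Combining the latter with $\Vert e(v_n) \Vert_{L^p(\Omega)} \le C$ and the classical Korn--Poincar\'e inequality in $W^{1,p}(\Omega;\R^d)$, paired with a further application of Lemma \ref{lemma: rigid motion} to absorb the associated rigid motion, shows that $(v_n)_n$ is bounded in $W^{1,p}(\Omega;\R^d)$. Since $v_n \to u$ in measure on $\Omega$ (as $v_n = u_n$ off $\omega_n$ and $\mathcal{L}^d(\omega_n) \to 0$), Rellich's theorem upgrades this to $v_n \to u$ strongly in $L^p(\Omega;\R^d)$.

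Finally, I would apply a Lipschitz truncation in the spirit of Acerbi--Fusco and Fonseca--M\"uller--Pedregal to the bounded sequence $(v_n) \subset W^{1,p}(\Omega;\R^d)$: this produces $w_n \in W^{1,p}(\Omega;\R^d)$ with $(|\nabla w_n|^p)_n$ equiintegrable, $\mathcal{L}^d(\{w_n \neq v_n\}) \to 0$, and, upon a careful choice of the truncation level (so that both the gradient mass and the $L^p$-mass of $v_n$ on the exceptional set are controlled), $\Vert w_n - v_n \Vert_{L^p(\Omega)} \to 0$. Since $\{w_n \neq u_n\} \cup \{e(w_n) \neq e(u_n)\} \subset \{w_n \neq v_n\} \cup \omega_n$, property \eqref{eq: Larsen properties}(i) follows from $\mathcal{L}^d(\omega_n) \to 0$, while \eqref{eq: Larsen properties}(ii) is immediate by combining $w_n \to v_n$ and $v_n \to u$ in $L^p$. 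The principal difficulty lies in the second step: without any a priori $L^p$-control on $u_n$ (only the symmetric part of the gradient is bounded), the $W^{1,p}$-boundedness of $v_n$ must be extracted through a delicate analysis of the rigid motions $a_n$ via the convergence in measure of $u_n$ to $u$; once this is in place, the equiintegrable modification is by now classical.
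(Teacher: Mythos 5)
Your proposal is correct and follows the same overall architecture as the paper's proof: apply the Korn inequality for functions with small jump set (Theorem~\ref{th: kornSBDsmall}) to obtain Sobolev functions $v_n$ agreeing with $u_n$ off small exceptional sets $\omega_n$, establish $W^{1,p}$-boundedness of $(v_n)_n$ by controlling the associated rigid motions via Lemma~\ref{lemma: rigid motion}, and then invoke the Fonseca--M\"uller--Pedregal equiintegrable truncation. The one genuine difference is in how you obtain the $\sup_n(|A_n|+|b_n|) < \infty$ bound. The paper invokes \cite[Remark~2.2]{FriedrichSolombrino} to construct an auxiliary concave function $\psi$ dominating the measure-convergent sequence, and then must argue first along a subsequence (via subadditivity of $\psi$, Jensen, H\"older, and Lemma~\ref{lemma: rigid motion}), followed by an Urysohn bootstrap to recover the full sequence. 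You instead exploit directly that the measurable limit $u$ is finite a.e., hence bounded on some fixed set $E$ of positive measure; combining convergence in measure with $\mathcal{L}^d(\omega_n)\to 0$ gives sets $E_n$ of measure bounded below where $u_n$ is uniformly $L^\infty$-bounded, and the Korn estimate $\|u_n - a_n\|_{L^p(\Omega\setminus\omega_n)}\le C$ then bounds $a_n$ on $E_n$, whence Lemma~\ref{lemma: rigid motion} with $\psi(t)=t^p$ bounds $(A_n,b_n)$ for the whole sequence at once. Your route is more elementary (it dispenses with the abstract de la Vall\'ee Poussin--type result from \cite{FriedrichSolombrino}) and avoids the subsequence-then-Urysohn detour; the paper's choice is presumably stylistic, reusing a tool it already has in its toolbox. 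The remainder (Rellich to get $L^p$-convergence, FMP truncation, deducing \eqref{eq: Larsen properties}) matches the paper, so your plan constitutes a complete, correct, and slightly streamlined proof.
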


\begin{proof}
 We first introduce the sequence $(w_n)_n$, and afterwards we study its properties.

\noindent \emph{Step 1: Existence of a bounded sequence in $W^{1,p}$.} Let us  prove that there exists a  bounded  sequence $(v_n)_n\subset W^{1,p}(\Omega;\R^d)$ such that 
\begin{align}\label{eq: step1 Larsen}
\mathcal{L}^d\left(\left\{v_n\neq u_n \right\}\cup \left\{e(v_n)\neq e(u_n)   \right\}  \right)\to 0.
\end{align}
By Theorem \ref{th: kornSBDsmall},  for every $n\in\mathbb{N}$ there exists $v_n \in W^{1,p}(\Omega;\R^d)$ and a set of finite perimeter $\omega_n  \subset \Omega  $ such that $u_n=v_n$ in $\Omega\setminus \omega_n$, and $\mathcal{L}^d(\omega_n)  \le  c (\mathcal{H}^{d-1}(J_{u_n}))^{d/(d-1)}$, where the constant $c$ depends only on $p$, $d$, and $\Omega$. Thus, by construction  and \eqref{eq: properties-part0}(ii),  (\ref{eq: step1 Larsen}) holds true.  We now show that  the sequence $(v_n)_n$ is bounded in $W^{1,p}(\Omega;\R^d)$.   To this end, we first apply Korn's and Poincar\'e's inequality  for Sobolev functions to get
\begin{align}\label{eq: KKKbound}
\| v_n- a_n \|_{L^p(\Omega)} +   \| \nabla v_n- A_n \|_{L^p(\Omega)}  \le  C  \| e(v_n)\|_{L^p(\Omega)}  \le C  \| e(u_n)\|_{L^p(\Omega)},
\end{align}
where each $a_n$  is a rigid motions, i.e.,  $a_n(x):= A_nx + b_n$ for   $A_n\in\R^{d\times d}_{\rm skew}$   and $b_n\in\R^d$. It  now suffices to prove  
\begin{align}\label{A_n e b_n bdd}
\sup\nolimits _{n\in\mathbb{N}}\left(|A_n|+|b_n| \right) < +\infty.
\end{align}
 Indeed,  this also shows $ \sup_{n\in\mathbb{N}} \|a_n\|_{L^p(\Omega)}<\infty$,  and then   along with  \eqref{eq: properties-part0}(i), \eqref{eq: KKKbound}, and the triangle inequality we obtain the boundedness of $(v_n)_n$  in $W^{1,p}(\Omega;\R^d)$.

Let us show \eqref{A_n e b_n bdd}.  Since $(v_n)_n$ converges in measure to $u$   (see \eqref{eq: properties-part0}(iii)),  by \cite[Remark 2.2]{FriedrichSolombrino} there exists  a strictly  increasing concave function  $\psi\colon \R_+ \to \R_+$  with  $\psi(0)=0$  and $\psi(\R_+) = \R_+$,  such that,  up to  passing  to a subsequence  $(v_{n_k})_k$, we have 
\begin{align}\label{eq: step1 Larsen psi}
\sup_{k\in\mathbb{N}}\int_{\Omega}\psi(|v_{n_k}|)\, {\rm d}x \leq 1.
\end{align}
 As $\psi \ge 0$ is increasing and concave, it is also subadditive. Therefore, we get by \eqref{eq: step1 Larsen psi} and the triangle inequality that 
\begin{align*}
\int_{\Omega}\psi(|a_{n_k}|)\, {\rm d}x &\leq  \int_{\Omega}\psi(|v_{n_k}-a_{n_k}|)\, {\rm d}x +  \int_{\Omega}\psi(|v_{n_k}|)\, {\rm d}x \leq \int_{\Omega}\psi(|v_{n_k}-a_{n_k}|)\, {\rm d}x +1. 
\end{align*}
Then, by using Jensen's inequality for concave functions,  H\"older's inequality,  and the monotonicity of $\psi$   we obtain  
\begin{align*}
\fint_{\Omega}\psi(|a_{n_k}|)\, {\rm d}x  \leq \psi \left( \fint_{\Omega}|v_{n_k}-a_{n_k}|\,{\rm d} x  \right)+  \frac{1}{\mathcal{L}^d(\Omega)}  \leq \psi \left(    \big(\mathcal{L}^d(\Omega)\big)^{-\frac{1}{p}}  \|v_{n_k}-a_{n_k}\|_{L^p(\Omega)}  \right)+   \frac{1}{\mathcal{L}^d(\Omega)}.  
\end{align*}
 This along with \eqref{eq: properties-part0}(i) and \eqref{eq: KKKbound} shows that $\sup_{k \in \N} \int_{\Omega}\psi(|a_{n_k}|)\, {\rm d}x < + \infty$.    Thus, Lemma \ref{lemma: rigid motion}  yields \eqref{A_n e b_n bdd}  (for the subsequence), and therefore  $(v_{n_k})_k$  is bounded in $W^{1,p}(\Omega;\R^d)$.  We now show that actually the whole sequence is bounded: by  Rellich's theorem,  \eqref{eq: properties-part0}(iii), and \eqref{eq: step1 Larsen} we get that a further subsequence converges strongly in $L^p(\Omega;\R^d)$ to $u$. By Urysohn's property we deduce that the \emph{whole} sequence $(v_n)_n$ converges to $u$ in $L^p(\Omega;\R^d)$. In particular, $\sup_{n \in \N} \Vert v_n\Vert_{L^p(\Omega)} <+\infty$  which along with \eqref{eq: properties-part0}(i) and \eqref{eq: KKKbound} shows $\sup_{n \in \N}\Vert a_n \Vert_{L^p(\Omega)}<+\infty$. Then Lemma \ref{lemma: rigid motion} applied for $\psi(t) = t^p$ yields \eqref{A_n e b_n bdd} for the whole sequence.   This concludes  Step 1  of the proof. 

%Let us show \eqref{A_n e b_n bdd}.  Since $(v_n)_n$ converges in measure to $u$   (see \eqref{eq: properties-part0}(iii)),  we find 
%\begin{align}\label{eq: step1 Larsen psi}
%\sup_{n\in\mathbb{N}} \fint_{\Omega}\psi(|v_n|)\, {\rm d}x  < 1,  
%\end{align}
% where $\psi(t) = \frac{t}{1+t}$ for $t \ge 0$. As $\psi$ is   subadditive,  we get by \eqref{eq: step1 Larsen psi} and the triangle inequality that 
%\begin{align*}
%\int_{\Omega}\psi(|a_n|)\, {\rm d}x &\leq  \int_{\Omega}\psi(|v_n-a_n|)\, {\rm d}x +  \int_{\Omega}\psi(|v_n|)\, {\rm d}x \leq \int_{\Omega}\psi(|v_n-a_n|)\, {\rm d}x +1. 
%\end{align*}
%Then, by using Jensen's inequality for concave functions,  H\"older's inequality,  and the monotonicity of $\psi$   we obtain  
%\begin{align*}
%\fint_{\Omega}\psi(|a_n|)\, {\rm d}x  \leq \psi \left( \fint_{\Omega}|v_n-a_n|\,{\rm d} x  \right)+  \frac{1}{\mathcal{L}^d(\Omega)}  \leq \psi \left(    \big(\mathcal{L}^d(\Omega)\big)^{-\frac{1}{p}}  \|v_n-a_n\|_{L^p(\Omega)}  \right)+   \frac{1}{\mathcal{L}^d(\Omega)}. 
%\end{align*}
% This along with \eqref{eq: properties-part0}(i) and \eqref{eq: KKKbound} shows that $\sup_{n \in \N} \int_{\Omega}\psi(|a_n|)\, {\rm d}x < + \infty$.    Thus, Lemma \ref{lemma: rigid motion} yields \eqref{A_n e b_n bdd}. This concludes  Step 1  of the proof. 

\emph{Step 2: Equiintegrability  and \eqref{eq: Larsen properties}.}  Thanks   to Step 1, the sequence   $(v_n)_n$ is  bounded in $W^{1,p}(\Omega;\R^d)$. Therefore,   we can apply \cite[Lemma 1.2]{Fonseca}  (see also \cite[Lemma 2.1]{Larsen}), and    we deduce that there exists  a  sequence $(w_n)_n\subset W^{1,p}(\Omega;\R^d)$ such that $(|\nabla w_n|^p)_n$ is equiintegrable, and 
\begin{align}\label{eq: antoher}
\mathcal{L}^d\left(\left\{w_n\neq v_n \right\}\cup \left\{e(w_n)\neq e(v_n)   \right\}  \right)\to 0.
\end{align}
By (\ref{eq: step1 Larsen}) we get that     \eqref{eq: Larsen properties}(i) holds true.  We  recall that  $\sup_{n \in \N} \Vert \nabla w_n \Vert_{L^p(\Omega)} <+\infty$ and $v_n \to u$ in measure on $\Omega$. This along with  Poincar\'e's inequality, Rellich's theorem, and \eqref{eq: antoher} shows that  \eqref{eq: Larsen properties}(ii) holds true.   This concludes the proof  of the lemma.
\end{proof}

%and this implies that $ \sup_{n\in\mathbb{N}} \|a_n\|_{L^p(\Omega)}<\infty$ for every $1\leq p \leq \infty$. Now, 
%\begin{align*}
%\|v_n\|_{L^p(\Omega)}&\leq \sup_{n\in\mathbb{N}}\left( \|v_n-a_n  \|_{L^p(\Omega)} + \|a_n\|_{L^p(\Omega)}  \right)\leq \sup_{n\in\mathbb{N}}\left( \|v_n-a_n  \|_{L^p(\Omega)} \right) + \sup_{n\in\mathbb{N}}\left( \|a_n  \|_{L^p(\Omega)}\right)\\
%&\leq \sup_{n\in\mathbb{N}}\| e(u_n)\|_{L^p(\Omega)}+ \sup_{n\in\mathbb{N}}\left( \|a_n  \|_{L^p(\Omega)}\right),
%\end{align*} 
%Taking the supremum on the left hand side of the above relation, and having in mind our previous estimates, we proved (\ref{eq: step1 Larsen bdd Sobolev 1}). We are left to prove that 
%\begin{align*}
%\sup_{n\in\mathbb{N}}\|\nabla v_n \|_{L^p(\Omega)} <\infty.
%\end{align*} 
%This follows immediately again using Korn inequality together with (\ref{A_n e b_n bdd}). Indeed, 
%\begin{align*}
%\|\nabla v_n \|_{L^p(\Omega)} \leq \|\nabla v_n- A_n \|_{L^p(\Omega)} + \|A_n \|_{L^p(\Omega)}, 
%\end{align*}
%and so taking the supremum first in the right hand side, and then in the left hand side, we conclude.

%%%%%%%%%%%%%%%%%%%%%%%%%%%%%%%%%%

%%%%%%%%%%%%%%%%%%%%%%%%%%%%%%%%%%%%%%%%%

\subsection{Approximation of jump sets by boundary of partitions}

The goal of the subsection is to prove the following result which allows to approximate the jump  sets   of $GSBD^p$ functions by the boundary of partitions.  To this end, we define the sets 
\begin{align}\label{eq: half-cub not}
Q^{\nu,\pm}_1 = Q^\nu_1 \cap \lbrace  x \in \R^d \colon \, \pm   x \cdot \nu    \ge 0\rbrace,
\end{align}
where here and in the following $\pm$ is a placeholder for both $+$ and $-$.  Recall the definition of   $u_{x_0,a,b,\nu}$ in  \eqref{eq: jump competitor}.

\begin{lemma}[Approximation of jump sets by boundary of partitions]\label{lemma: partition}
Let $d= 2$ and $p \ge 2$. Let $\zeta \in \R^2  \setminus \lbrace 0 \rbrace  $ and  $\nu \in \mathbb{S}^1$. Let $(u_n)_n \subset GSBD^p(Q^\nu_1)$ be a sequence satisfying
\begin{align}\label{eq: properties-part}
{\rm (i)} & \ \ \lim_{n \to \infty} \Vert e(u_n) \Vert_{L^p(Q^\nu_1)} =0,\notag\\
{\rm (ii)} & \ \  \sup_{n \in \N} \mathcal{H}^1(J_{u_n}) <+\infty,\notag\\
{\rm (iii)} & \ \ u_n \to u_{0,\zeta,0,\nu} \text{ in measure on $Q^\nu_1$ as $n \to +\infty$.}
\end{align} 
Then, there exists a sequence of neighborhoods $N_n \subset Q^\nu_1$ of $\partial Q_1^\nu$ and pairwise disjoint sets $S^+_n$ and  $S^-_n$ with $\mathcal{L}^2\big(Q^\nu_1 \setminus (S^+_n \cup S^-_n)\big) = 0 $ for all $n \in \N$  such that
\begin{align}\label{eq: properties-part2}
{\rm (i)} & \ \ \lim_{n \to  \infty} \mathcal{L}^2\big( S^\pm_n \triangle Q^{\nu,\pm}_1   \big) = 0,\notag\\
{\rm (ii)} &  \ \  N_n \cap Q^{\nu,\pm}_1 \subset S^\pm_n  \ \ \ \text{for all $n \in \N$},\notag\\
{\rm (iii)} & \ \  \lim_{n \to \infty }\mathcal{H}^1\big( ( \partial^* S^+_n \cap \partial^* S^-_n )           \setminus  J_{u_n}     \big) = 0.
\end{align}
\end{lemma}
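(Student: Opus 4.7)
The plan is to construct the partition $(S_n^+, S_n^-)$ by refining the Caccioppoli partition produced by the two-dimensional piecewise Korn-Poincar\'e inequality (Proposition \ref{th: kornpoin-sharp}), classifying each piece according to the value of its rigid motion, and then performing a boundary-layer adjustment to secure (ii). Since $p \ge 2$ and $\mathcal{L}^2(Q^\nu_1) = 1$, H\"older's inequality gives $\Vert e(u_n)\Vert_{L^2(Q^\nu_1)}\le \Vert e(u_n)\Vert_{L^p(Q^\nu_1)}\to 0$ by hypothesis (i). I pick $\theta_n\to 0^+$ slowly enough that $\eta_n := C_{\theta_n}\Vert e(u_n)\Vert_{L^2(Q^\nu_1)}$ also tends to $0$, and apply Proposition \ref{th: kornpoin-sharp} with $\theta = \theta_n$. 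This yields a Caccioppoli partition $Q^\nu_1 = R_n \cup \bigcup_{j=1}^{J_n} P_j^n$ and rigid motions $a_j^n$ with $\Vert u_n - a_j^n\Vert_{L^\infty(P_j^n)}\le \eta_n$, $\mathcal{L}^2(R_n)\to 0$, and, using the uniform bound on $\mathcal{H}^1(J_{u_n})$ from (ii), the interface estimate
\begin{equation*}
\mathcal{H}^1\Big(\Big(\partial^* R_n \cup \bigcup\nolimits_{j=1}^{J_n}\partial^* P_j^n\Big)\cap Q^\nu_1 \setminus J_{u_n}\Big)\le \theta_n\big(\mathcal{H}^1(J_{u_n}) + \mathcal{H}^1(\partial Q^\nu_1)\big)\longrightarrow 0.
\end{equation*}

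I then classify each piece by whether the rigid motion evaluated at its barycenter is closer to $\zeta$ or to $0$: set $\mathcal{P}_n^+ := \{j : |a_j^n(c_j^n)-\zeta| \le |a_j^n(c_j^n)|\}$ and $\mathcal{P}_n^- := \{1,\dots,J_n\}\setminus \mathcal{P}_n^+$, where $c_j^n$ denotes the barycenter of $P_j^n$. Define
\begin{equation*}
S_n^+ := \bigcup\nolimits_{j \in \mathcal{P}_n^+} P_j^n, \qquad S_n^- := Q^\nu_1 \setminus S_n^+.
\end{equation*}
Because pieces are kept whole, the common essential boundary is contained in $\partial^* R_n \cup \bigcup_j \partial^* P_j^n$, and (iii) is immediate from the displayed estimate.

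To prove (i), I will exploit that $a_j^n(c_j^n)$ equals the mean value of the affine map $a_j^n$ on $P_j^n$, and hence lies within $\eta_n$ of $\fint_{P_j^n} u_n$. Measure convergence of $u_n$ to $u_{0,\zeta,0,\nu}$ together with $\mathcal{L}^2(R_n)\to 0$ imply that the mean of $u_n$ on a generic $P_j^n$ is asymptotically $\zeta\,\mathcal{L}^2(P_j^n\cap Q^{\nu,+}_1)/\mathcal{L}^2(P_j^n)$, so the centroid-based classification reduces, up to a vanishing error, to assigning $j$ to $\mathcal{P}_n^+$ exactly when the $+$-share of $P_j^n$ exceeds one half. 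Summing the wrong-side overlaps $\sum_{j\in \mathcal{P}_n^\pm}\mathcal{L}^2(P_j^n\cap Q^{\nu,\mp}_1)$ and combining with $\mathcal{L}^2(R_n)\to 0$ yields $\mathcal{L}^2(S_n^\pm\triangle Q^{\nu,\pm}_1)\to 0$.

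For (ii), I adjust $S_n^\pm$ within a thin boundary layer. Let $\Sigma_\delta := \{x\in Q^\nu_1 : \dist(x,\partial Q^\nu_1) < \delta\}$. From
\begin{equation*}
\int_{\alpha_n}^{2\alpha_n} \mathcal{H}^1\big(\partial \Sigma_s \cap Q^\nu_1 \cap (S_n^+ \triangle Q^{\nu,+}_1)\big)\,{\rm d}s \le \mathcal{L}^2(S_n^+ \triangle Q^{\nu,+}_1),
\end{equation*}
choosing $\alpha_n\to 0^+$ slowly enough that $\mathcal{L}^2(S_n^+\triangle Q^{\nu,+}_1)/\alpha_n\to 0$ and invoking Fubini produces $\delta_n\in(\alpha_n,2\alpha_n)$ for which the inner-boundary discrepancy has $\mathcal{H}^1$-measure $o(1)$. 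Redefining $\tilde S_n^\pm := (S_n^\pm\setminus \Sigma_{\delta_n}) \cup (\Sigma_{\delta_n}\cap Q^{\nu,\pm}_1)$ and setting $N_n := \Sigma_{\delta_n/2}$ secures (ii); the extra interface introduced lies either along $\Sigma_{\delta_n}\cap\{x\cdot \nu = 0\}$, of length $O(\delta_n)$, or along $\partial \Sigma_{\delta_n}\cap Q^\nu_1$, of length $o(1)$ by the Fubini selection, so (iii) is preserved. The chief obstacle is the classification step: linking the centroid value $a_j^n(c_j^n)$ to the measure-location of $P_j^n$ crucially rests on the uniform closeness $\Vert u_n - a_j^n\Vert_{L^\infty(P_j^n)}\le \eta_n$ provided by Proposition \ref{th: kornpoin-sharp}, whose derivation is intrinsically planar; this is the sole reason the lemma is restricted to $d=2$, while $p\ge 2$ enters only to embed the hypotheses into the $GSBD^2$ framework of that proposition.
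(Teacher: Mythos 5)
Your construction shares the paper's backbone (reduce to $GSBD^2$, invoke the piecewise Korn--Poincar\'e inequality, assemble $S_n^\pm$ from the resulting Caccioppoli pieces, then repair condition (ii) by a boundary layer), but your assignment rule differs in an interesting way: you keep each $P_j^n$ whole and classify it by the centroid value of its rigid motion, whereas the paper \emph{cuts} the components across a thin band $V_\theta$ around $\{x\cdot\nu=0\}$ so that each cut piece is contained in $Q_1^{\nu,\pm}\cup V_\theta$. Your Fubini-selected shell $\Sigma_{\delta_n}$ and the redefinition $\tilde S_n^\pm$ for achieving (ii) are sound and simpler than the paper's explicit $N_\theta\cap V_\theta$ surgery. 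However, your argument for (i) has a real gap, and it is the crux of the whole lemma.

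You assert that measure convergence of $u_n$ together with $\mathcal{L}^2(R_n)\to 0$ forces $\fint_{P_j^n}u_n$ to be asymptotically $\zeta\,\mathcal{L}^2(P_j^n\cap Q^{\nu,+}_1)/\mathcal{L}^2(P_j^n)$, and that the centroid rule therefore classifies $j$ as ``$+$'' exactly when the $+$-share exceeds $\tfrac12$. Two problems. First, convergence in measure by itself controls no integral averages; the estimate on $\fint_{P_j^n}u_n$ has to go through the $L^\infty$ bound $\Vert u_n-a_j^n\Vert_{L^\infty(P_j^n)}\le\eta_n$, and even then you need to bound $\Vert a_j^n\Vert_{L^\infty(P_j^n)}$ itself, which is not free. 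Second, and more seriously, even granting the mean-value formula, summing the wrong-side overlaps $\sum_{j\in\mathcal{P}_n^+}\mathcal{L}^2(P_j^n\cap Q_1^{\nu,-})$ does not tend to zero unless you first establish a \emph{dichotomy}: that each piece has essentially all of its mass in one half-cube, i.e.\ $\mathcal{L}^2(P_j^n\cap B_n^+)=0$ or $\mathcal{L}^2(P_j^n\cap B_n^-)=0$ where $B_n^\pm$ are the sublevel sets of $|u_n|$ and $|u_n-\zeta|$. If a piece had $+$-share close to $\tfrac12$, it would be placed whole on one side and contribute $\approx\tfrac12\mathcal{L}^2(P_j^n)$ to the symmetric difference, and since the lower volume bound $\mathcal{L}^2(P_j^n)\ge\theta^6$ only bounds the number of pieces by $\theta^{-6}$, the sum of wrong-side overlaps need not vanish. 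The paper's Step~1 proves exactly this dichotomy, and it is not a consequence of the $L^\infty$ closeness alone: one must combine $\mathcal{L}^2(P_j^n\cap B_n^\pm)\ge\theta^6/4$ with the rigidity estimate of Lemma~\ref{lemma: rigid motion} (in particular \eqref{eq: estimate2}) to conclude that an affine map which is small on a subset of measure $\gtrsim\theta^6$ is uniformly small on the whole cube, after which the incompatibility with $|\zeta|>0$ rules out a large overlap with the other set $B_n^\mp$. Once this dichotomy is in hand, the centroid classification becomes essentially automatic (and is in fact the same as classifying by $+$-share), so the extra layer of the centroid rule does not save you the key estimate; it just relocates where the gap sits. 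You should insert the overdetermination argument via Lemma~\ref{lemma: rigid motion} before performing your classification; with that added, the rest of your construction goes through.
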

Later we will apply this result in the blow-up around jump points. Indeed, assumptions \eqref{eq: properties-part} are satisfied in the blow-up around $\mathcal{H}^1$-a.e.\ jump point. The result states that,  up to an asymptotically small set,  the jump set  covers the boundary of a partition  which consists of two sets  approximating the upper and lower half cubes $Q^{\nu,+}_1$ and $Q^{\nu,-}$, see \eqref{eq: properties-part2}(i),(iii). Condition \eqref{eq: properties-part2}(ii) will ensure that the piecewise constant functions $v_n := \zeta \chi_{S_n^+}$ satisfy  $v_n = u_{0,\zeta,0,\nu}$ in a neighborhood of $\partial Q^\nu_1$.  

%We emphasize that, although the results mainly deals with jump sets and boundary of partitions, the assumption on smallness of elastic energies \eqref{eq: properties-part}(i) is fundamental.

\begin{remark}[Dimension $d=2$, exponent $p$]\label{rem: d,p}
(i) We emphasize that the result holds in \emph{dimension two only} due to the application of a piecewise Korn-Poincar\'e inequality, see Proposition \ref{th: kornpoin-sharp} and \cite{FriedrichSolombrino}, which has been derived only for $d=2$. The result, crucially based on similar results of this type \cite{Friedrich:15-5, Friedrich:15-4}, heavily relies on combining different components of the jump set by segments via an explicit construction: a technique whose extension to higher dimension seems to be very difficult. 

(ii) Let us also mention that, for simplicity, Proposition \ref{th: kornpoin-sharp} has been derived  in $GSBD^2$ and not in $GSBD^p$ for general $1 < p< +\infty$. Therefore, we can derive Lemma \ref{lemma: partition} only for $p \ge 2$. (For $p>2$ we can resort to $GSBD^2$ via H\"older's inequality.) Although a generalization of Proposition~\ref{th: kornpoin-sharp} to general $1 < p < + \infty$ would be possible without significant changes in the proof, we refrain from entering into such minor issues and prefer to address the problem in this slightly less general fashion.  
\end{remark}

Besides the actual proof   of Lemma \ref{lemma: partition}, we also give a simplified proof in Appendix \ref{proof1XXX}  which works under the assumption that each $J_{u_n}$ consists of a \emph{bounded number of closed, continuous curves}. Our motivation to present this simpler version of the  proof is twofold. On the one hand, it provides elementary self-contained arguments  avoiding the deep and complicated result from  \cite{FriedrichSolombrino}. On the other hand, the construction of combining different parts of the jump set  represents  (in a simplified way) a main technique used in \cite{Friedrich:15-4, Friedrich:15-5, FriedrichSolombrino} being essential in the proof of  Proposition~\ref{th: kornpoin-sharp}.  We now proceed with the  proof of Lemma \ref{lemma: partition}, and refer the reader to Appendix \ref{proof1XXX} for the simplified proof.

\begin{proof}[Proof of Lemma \ref{lemma: partition}]
Let $(u_n)_n  \subset  GSBD^p(Q^\nu_1)$ for $p \ge 2$ be given. By H\"older's inequality we clearly have $(u_n)_n \subset GSBD^2(Q^\nu_1)$ with 
\begin{align}\label{eq: l2-nnound}
\lim_{n\to \infty} \Vert e(u_n) \Vert_{L^2(Q^\nu_1)} = 0
\end{align}
due to \eqref{eq: properties-part}(i). We define 
\begin{align}\label{eq: bound-NNN}
C_0 :=     \sup\nolimits_{n \in \N} \mathcal{H}^1(J_{u_n}) + \mathcal{H}^1(\partial Q^\nu_1)< + \infty.   
\end{align}
Our strategy relies on applying Proposition \ref{th: kornpoin-sharp}  for  $u_n$ and for fixed $0< \theta <\min\lbrace \theta_0,\frac{1}{2}\rbrace$, where $\theta_0$ is the constant from  Proposition \ref{th: kornpoin-sharp}. At the end of the proof, we pass to the limit $\theta \to 0$ and perform a diagonal argument. For simplicity, we do not indicate the $\theta$-dependence of the objects explicitly in the notation.  We start by using \eqref{eq: properties-part}(iii) to find a sequence $(\eta_n)_n \subset (0,+\infty)$ with $\eta_n \to 0$ and some $n_\theta \in \N$ depending on $\theta$    such that the sets  
\begin{align}\label{eq: Bdef-NNN}
B_n^- = \lbrace x\in Q^{\nu,-}_1 \colon \, |u_n(x)| < \eta_n\rbrace, \quad \quad \quad \quad B_n^+ = \lbrace x\in Q^{\nu,+}_1 \colon \, |u_n(x) - \zeta | < \eta_n\rbrace, 
\end{align}
satisfy  for all $n \ge n_\theta$ that 
\begin{align}\label{eq: bound2-NNN}
\mathcal{L}^2\big( Q^{\nu,\pm}_1 \setminus B_n^\pm \big) \le \theta^8/4.
\end{align}
By Proposition \ref{th: kornpoin-sharp} for $\theta^2$ in place of $\theta$  and by \eqref{eq: bound-NNN}  we obtain partitions $Q^\nu_1 = R_{n} \cup \bigcup^{J_{n}}_{j=1} P^{n}_j$ and  corresponding rigid motions $(a^{n}_j)_{j=1}^{J_{n}}$ such that
\begin{align}\label{eq: kornpoinsharp-NNN}
{\rm (i)} & \ \ \mathcal{H}^1\big( (\partial^* R_{n} \cap Q^\nu_1 )\setminus J_{u_n}  \big) +   \sum\nolimits_{j=1}^{J_{n}}\mathcal{H}^1\big( (\partial^* P^{n}_j \cap Q^\nu_1) \setminus J_{u_n} \big) \le  C_0\theta^2,\notag\\
{\rm (ii)} & \ \ \mathcal{L}^2(R_{n})  \le C_0^2 \theta^2, \ \ \ \ \ \  \mathcal{L}^2(P_j^{n}) \ge \theta^6  \ \ \ \text{for all $j=1,\ldots,J_{n}$}.      \notag\\
{\rm (iii)}& \ \    \max_{1\le j \le J_{n}}\Vert u_n - a^{n}_j \Vert_{L^\infty(P^{n}_j)}  \le C_{\theta^2} \Vert e(u_n) \Vert_{L^2(Q^\nu_1)}.
\end{align}
\emph{Step 1: Components essentially contained in half-cubes.} In this step we show that, by possibly passing to a larger  $n_\theta \in \N$ only depending on $\theta$, for $n \ge n_\theta$ we have for all $j=1,\ldots,J_n$ that 
\begin{align}\label{eq: step1}
\mathcal{L}^2(P^n_j \cap B^-_n)=0 \quad \quad \text{or} \quad \quad \mathcal{L}^2(P^n_j \cap B^+_n)=0.
\end{align}

To see this, by  \eqref{eq: bound2-NNN} and  \eqref{eq: kornpoinsharp-NNN}(ii), for each $j = 1,\ldots,J_n$ we get 
 \begin{align}\label{eq: bound3-NNN}
\max\big\{ \mathcal{L}^2\big( B_n^+ \cap P^n_j \big),  \mathcal{L}^2\big(  B_n^- \cap P^n_j \big)     \big\} \ge \theta^6/4.
\end{align}
We now first assume that the maximum is attained for $B_n^-$ and show $\mathcal{L}^2(P_j^n \cap B_n^+) = 0$. Afterwards, we briefly indicate the changes if the maximum is attained for $B_n^+$. Writing $a^n_j$ as $a^n_j(x) = A^n_j \, x + b^n_j$, we get by Lemma \ref{lemma: rigid motion} for   $\delta =\theta^6/4$  and $R = \sqrt{2}/2$,  see  \eqref{eq: estimate2}, that 
\begin{align}\label{eq: bound3-NNNnewbo}
\Vert A^n_j \, x +  b^n_j \Vert_{L^\infty(Q^\nu_1)}  \le c_\theta  \Vert  A^n_j  \, x +b^n_j \Vert_{ L^1  (B^-_n \cap P^n_j )},
\end{align} 
 where $c_\theta>0$ is a constant depending on $\theta$. In view of \eqref{eq: Bdef-NNN} and  \eqref{eq: kornpoinsharp-NNN}(iii),  this implies 
 \begin{align}\label{eq: bound3-NNNNN}
 \Vert a^n_j \Vert_{L^\infty(Q^\nu_1)} & \le c_\theta  \Vert a^n_j \Vert_{L^\infty(B^-_n \cap P^n_j )} \le c_\theta \big( \Vert a^n_j - u_n \Vert_{L^\infty(P^n_j )} +  \Vert u_n \Vert_{L^\infty(B^-_n)}\big) \notag\\
 & \le   c_\theta \big(C_{\theta^2} \Vert e(u_n) \Vert_{L^2(Q^\nu_1)} + \eta_n\big).
 \end{align}
Now, suppose by contradiction that  $\mathcal{L}^2(P_j^n \cap B_n^+) > 0$. Then  \eqref{eq: Bdef-NNN},  \eqref{eq: kornpoinsharp-NNN}(iii), and \eqref{eq: bound3-NNNNN} give
\begin{align*}
|\zeta| - \eta_n \le \Vert u_n \Vert_{L^\infty(B_n^+ \cap P^n_j)} \le  \Vert u_n - a^n_j \Vert_{L^\infty(P^n_j)} +  \Vert a^n_j \Vert_{L^\infty(Q^\nu_1)} \le  (c_\theta+1) \big(C_{\theta^2} \Vert e(u_n) \Vert_{L^2(Q^\nu_1)} + \eta_n\big).
\end{align*}
By \eqref{eq: l2-nnound} and the fact that $\eta_n \to 0$ this yields a contradiction for $n$ sufficiently large only depending on $\theta$. If the maximum in \eqref{eq: bound3-NNN} is attaind for $B_n^+$ instead, we can show $\mathcal{L}^2(P_j^n \cap  B_n^-)  = 0$ by a similar argument, where we repeat the argument in \eqref{eq: bound3-NNN}--\eqref{eq: bound3-NNNNN} for $A^n_j \, x +  (b^n_j -\zeta)$ instead of $A^n_j \, x +  b^n_j$. We omit the details. 

\noindent \emph{Step 2: Cutting of components.}
We define the set
\begin{align}\label{eq: Vtheta}
V_\theta = \lbrace x \in Q^\nu_1 \colon | x \cdot \nu | \le \theta \rbrace.
\end{align} 
Let $n \ge n_\theta$.  In this step, up to sets of  negligible $\mathcal{L}^2$-measure,  we cut the components $R_n$ and $(P^n_j)^{J_n}_{j=1}$ into sets
$$R_n = R^+_n \cup R^-_n, \quad \quad \quad P^n_j = P^{n,+}_j \cup P^{n,-}_j \quad \text{for all }  j=1,\ldots,J_n,$$
such that 
\begin{align}\label{eq: cutting}
{\rm (i)} & \ \  R_n^\pm \cup \bigcup\nolimits_{j=1}^{J_n} P^{n,\pm}_j \subset Q^{\nu,\pm}_1 \cup V_\theta, \notag \\     
{\rm (ii)} &   \ \   \sum\nolimits_{j=1}^{J_n}  \mathcal{H}^1(\partial^* P^{n,\pm}_j \setminus \partial^* P_j^n) + \mathcal{H}^1(\partial^* R_n^\pm \setminus \partial^* R_n) \le (C_0^2 + 1) \theta. 
\end{align}

To this end, let us fix $P^n_j$. We can assume without restriction that $\mathcal{L}^2(P^n_j \cap B^+_n) = 0$ by Step 1, see \eqref{eq: step1}. (The other case can be treated in a similar fashion.) By Fubini's theorem,  $\theta \le \frac{1}{2}$,  and \eqref{eq: bound2-NNN} we find that 
$$\int_0^\theta \mathcal{H}^1\big( P_j^n    \cap  L(s)\big)\, {\rm d}s \le \int_0^{1/2} \mathcal{H}^1\big( (Q^{\nu,+}_1 \setminus B_n^+)    \cap  L(s)\big)\, {\rm d}s \le \mathcal{L}^2(Q^{\nu,+}_1 \setminus B_n^+) \le \theta^8/4,$$
where $L(s) := \lbrace x\in Q^\nu_1\colon  x\cdot \nu    = s \rbrace$. Therefore, we find $s^n_j \in (0,\theta)$ such that  the sets $P^{n,+}_j = P^n_j \cap \lbrace x\in Q^\nu_1\colon x \cdot \nu  > s^n_j \rbrace $ and $P^{n,-}_j = P^n_j \cap \lbrace x\in Q^\nu_1\colon  x \cdot \nu   < s^n_j \rbrace $ satisfy
$$\mathcal{H}^1\big(\partial^* P^{n,\pm}_j \setminus \partial^* P_j^n  \big) = \mathcal{H}^1\big( P_j^n    \cap  L(s^n_j)\big) \le \theta^{-1} \theta^8/4 \le \theta^7.$$
Clearly, by construction we also have $P^{n,\pm}_j \subset Q^{\nu,\pm}_1 \cup V_\theta$. We repeat this construction for each $P^n_j$. As $\ J_n \le \theta^{-6}$, see \eqref{eq: kornpoinsharp-NNN}(ii), we then get 
$$  \sum\nolimits_{j=1}^{J_n}  \mathcal{H}^1\big(\partial^* P^{n,\pm}_j \setminus \partial^* P_j^n\big) \le \# J_n \,  \theta^7 \le \theta.$$
By a similar construction we can define $R_n = R_n^- \cup R_n^+$ such that $R_n^\pm \subset Q^{\nu,\pm}_1 \cup V_\theta$ and 
$$\mathcal{H}^1(\partial^* R_n^\pm \setminus \partial^*R_n) \le \theta^{-1} \mathcal{L}^2(R_n) \le C_0^2 \theta, $$ 
where the last step follows from \eqref{eq: kornpoinsharp-NNN}(ii). This shows \eqref{eq: cutting}  and concludes the proof of Step 2.

\noindent \emph{Step 3: Definition of $S_n^+$ and $S_n^-$.} We now define the sets  $S_n^+$ and $S_n^-$ and establish \eqref{eq: properties-part2}. For each $0 < \theta <   \min\lbrace \theta_0,\frac{1}{2}\rbrace $ and each $n \ge n_\theta$, we  first define the sets
$$\hat{S}_{n,\theta}^- = \bigcup\nolimits_{j=1}^{J_n} P^{n,-}_j \cup R_n^-, \quad \quad \quad \quad \hat{S}_{n,\theta}^+ = \bigcup\nolimits_{j=1}^{J_n} P^{n,+}_j \cup  R_n^+.  $$
(We include $\theta$ in the notation to highlight the dependence of the definition on $\theta$.) By \eqref{eq: kornpoinsharp-NNN}(i) and \eqref{eq: cutting}(ii) we note that
\begin{align}\label{eq: combining crack}
\mathcal{H}^1\big(  (\partial^*  \hat{S}_{n,\theta}^- \cap  \partial^*  \hat{S}_{n,\theta}^+  ) \setminus J_{u_n} \big) & \le  \sum\nolimits_{j=1}^{J_n}  \mathcal{H}^1(\partial^* P^{n,+}_j \setminus \partial^* P_j^n)  + \mathcal{H}^1(\partial^* R_n^+ \setminus \partial^* R_n)\notag \\ & \ \ \  + \mathcal{H}^1\big( (\partial^* R_{n} \cap Q^\nu_1 )\setminus J_{u_n}  \big) +   \sum\nolimits_{j=1}^{J_{n}}\mathcal{H}^1\big( (\partial^* P^{n}_j \cap Q^\nu_1) \setminus J_{u_n} \big)\notag \\  & 
\le (C_0^2 + 1) \theta +  C_0 \theta^2 \le (C_0 + 1)^2 \theta.
\end{align}
Define also the neighborhood $N_\theta = Q^{\nu}_1 \setminus Q^{\nu}_{1-\theta}$. The sets $\hat{S}_{n,\theta}^-$ and  $\hat{S}_{n,\theta}^+$  do  possibly not satisfy  \eqref{eq: properties-part2}(ii),  and therefore we introduce the sets     
\begin{align}\label{eq: Sdef-NNN}
S_{n,\theta}^+  = \big(\hat{S}_{n,\theta}^+    \cup  \big(N_\theta \cap  Q^{\nu,+}_1\cap V_\theta\big) \big) \setminus  \big(N_\theta \cap  Q^{\nu,-}_1\cap V_\theta\big), \notag \\  S_{n,\theta}^-  = \big( \hat{S}_{n,\theta}^-    \cup  \big(N_\theta \cap  Q^{\nu,-}_1 \cap V_\theta\big) \big)\setminus  \big(N_\theta \cap  Q^{\nu,+}_1 \cap V_\theta\big).
\end{align}
Clearly, we have  $\mathcal{L}^2\big(Q^\nu_1 \setminus (S^+_{n,\theta} \cup S^-_{n,\theta})\big) = 0$ and we can check that   
      \begin{align}\label{eq: properties-part3-NNN}
{\rm (i)} & \ \  \mathcal{L}^2\big( S^\pm_{n,\theta} \triangle Q^{\nu,\pm}_1   \big) \le 2\theta,\notag\\
{\rm (ii)} &  \ \  N_\theta \cap  Q^{\nu,\pm}_1  \subset S^\pm_{n,\theta},\notag\\
{\rm (iii)} & \ \  \mathcal{H}^1\big( ( \partial S^+_{n,\theta} \cap \partial S^-_{n,\theta} )           \setminus  J_{u_n}     \big) \le c\theta,
\end{align}
for a constant $c>0$ independent of $\theta$ and $n$.  In fact,  (i) and (ii) follow from \eqref{eq: Vtheta},  \eqref{eq: cutting}(i), and \eqref{eq: Sdef-NNN}. To see  (iii), we observe that   \eqref{eq: Sdef-NNN}  implies    
$$\mathcal{H}^1\big( ( \partial S^+_{n,\theta} \cap \partial S^-_{n,\theta} )           \setminus  J_{u_n}     \big) \le \mathcal{H}^1\big( ( \partial \hat{S}^+_{n,\theta} \cap \partial \hat{S}^-_{n,\theta} )           \setminus  J_{u_n}     \big)  +   \sum\nolimits_{j=\pm} \mathcal{H}^1(\partial(N_\theta \cap  Q^{\nu, j}_1\cap V_\theta)),$$
Since $\mathcal{H}^1(\partial(N_\theta \cap  Q^{\nu, \pm}_1\cap V_\theta)) \le c\theta$ for a universal $c>0$,  the estimate then follows from \eqref{eq: combining crack}.

Finally, we obtain the  sets $S^+_{n}$ and $S^-_n$ satisfying \eqref{eq: properties-part2} from  \eqref{eq: properties-part3-NNN} by a  diagonal argument.   
\end{proof}

\section{Identification of the $\Gamma$-limit}\label{sec: identi}

This section is devoted to the proof of the results announced Subsection \ref{sec: ident}. To prove  Theorem~\ref{thm: main thm d=2}, we need to recover the estimates  \eqref{eq: f_infty=f}--\eqref{eq: g_infty=g-relax} (the implications about $\Gamma$-convergence then follow immediately by Theorem \ref{th: gamma}).  This we will we do in Subsections \ref{sec: sub2}-- \ref{sec: sub4} below. Eventually, in Subsection \ref{sec:cor} we prove the corollaries of Theorem   \ref{thm: main thm d=2}.

\begin{remark}\label{rem: recov}
In the proof of \eqref{eq: f_infty=f}--\eqref{eq: g_infty=g-relax}, we will use the following   general property of recovery sequences: if $(u_n)_n$ is a recovering sequence for $u$ with respect to $\mathcal{E}_n(\cdot, \Omega)$, then $(u_n)_n$ is optimal for $u$ with respect to $\mathcal{E}_n(\cdot, A)$ for every $A\in \mathcal{A}(\Omega)$ such that $\mathcal{E}(u,\partial A)=0$.  This follows from the fact that $\mathcal{E}(u,\cdot)$ is a Radon measure for each $u \in  GSBD^p(\Omega)$. 
\end{remark}

\subsection{Bulk part: Proof of  \eqref{eq: f_infty=f}}\label{sec: sub2}
We start with the bulk density  by   showing two inequalities.

\noindent \emph{Step 1: $f_\infty(x,e(u)(x)) \leq f(x,e(u)(x))$ for $\mathcal{L}^d$-a.e.\ $x\in\Omega$.}   First,   in view of  \eqref{eq: general minimization}  and \eqref{eq: general minimization2}, we get $\mathbf{m}_{\mathcal{E}}(\bar{\ell}_{ \xi },Q_{\rho}(x)) \le \mathbf{m}^{1,p}_{\mathcal{E}}(\bar{\ell}_{ \xi },Q_{\rho}(x))$  for all $\xi \in \R^{d \times d}$,  where we recall the notation $\bar{\ell}_{\xi}(y) = \xi y $ for $y \in \R^d$ in \eqref{eq: specified not}.  Then (\ref{eq: f^epsilon_infty-new}) implies 
\begin{align}\label{es: step1-1}
f_\infty(x,{\rm sym}(\xi))&=\limsup_{\rho\to 0^+}\frac{\mathbf{m}_{\mathcal{E}}(\bar{\ell}_{\xi}, Q_{\rho}(x))}{\rho^d}  \le \limsup_{\rho\to 0^+}\frac{\mathbf{m}^{1,p}_{\mathcal{E}}(\bar{\ell}_{\xi},Q_{\rho}(x))}{\rho^d}. 
\end{align}
On the other hand,  by Remark \ref{rem: new rem} we get that the density $f$ in \eqref{eq: bulk assumption} coincides with $f_0$ given in \eqref{eq: bulk repr}.    Thus, by \eqref{eq: bulk repr}  and \eqref{eq: almost the same1}  we find 
\begin{align}\label{es: step1-2}
f(x,{\rm sym}(\xi)) =   \limsup_{\rho \to 0^+}   \frac{\mathbf{m}^{1,p}_{\mathcal{E}}(\bar{\ell}_{\xi},Q_{\rho}(x))}{\rho^d}. 
\end{align} 
By combining \eqref{es: step1-1}--\eqref{es: step1-2} we obtain $f_\infty(x, {\rm sym}(\xi)  ) \leq f(x, {\rm sym}(\xi) )$.  This concludes Step 1.

\noindent  \emph{Step 2: $f_\infty(x,e(u)(x)) \geq f(x,e(u)(x))$ for $\mathcal{L}^d$-a.e.\ $x\in\Omega$.} By Remark \ref{rem: invariance}(i) and the Radon-Nikod\'ym Theorem we have for $\mathcal{L}^d$-a.e.\ $x \in \Omega$ that
\begin{align}\label{eq: our (4.5) G.P.}
f_\infty(x,e(u)(x))=\lim_{\rho\to 0^+ }\frac{\mathcal{E}(u,Q_\rho(x))}{\rho^d}<\infty.
\end{align}
Let $(u_n)_n$ be a recovering sequence for $\mathcal{E}(u,\Omega)$. This along with the growth condition \eqref{eq: general bound2-new} yields that the sequence $(\mathcal{H}^{d-1}(J_{u_n}))_n$ is uniformly bounded. Thus,  up to a subsequence (not relabeled),  there exists a finite positive Radon measure $\mu$ such that
\begin{align}\label{eq: our (4.6) G.P.}
\mu_n:=\mathcal{H}^{d-1}\mres J_{u_n}\rightharpoonup \mu\quad \textit{\emph{ weakly$^*$
 in the sense of measures}}.
\end{align}
Let us notice that for $\mathcal{L}^d$-a.e.\ $x \in \Omega$ we have that 
\begin{align}\label{eq: blow-up measure}
\limsup_{\rho \to 0^+}\frac{\mu\big(\overline{Q_\rho(x)}\big)}{\rho^{d-1}}=0.
\end{align}
Indeed, by contradiction we suppose that there exists a Borel set $B\subset \Omega$ with $\mathcal{L}^d(B)>0$ and $t >0$ such that 
$$
\limsup_{\rho \to 0^+}\frac{\mu\big(\overline{Q_\rho(x)}\big)}{\rho^{d-1}}> t,
$$
for all $x\in B$. Then, as a consequence of \cite[Theorem 2.56]{Ambrosio-Fusco-Pallara:2000} we would get
$\mu \mres B\geq t \mathcal{H}^{d-1}\mres B$
implying that $\mu(B)=\infty$. But this is a contradiction since $\mu$ is finite. Since $u\in GSBD^p(\Omega)$, the approximate gradient $\nabla u (x)  $ exists for $\mathcal{L}^d$-a.e.\ $x\in\Omega$, see Lemma \ref{lemma: approx-grad}. As the main inequality needs to hold for $\mathcal{L}^d$-a.e.\ $x\in\Omega$, we can suppose that \eqref{eq: our (4.5) G.P.} and \eqref{eq: blow-up measure} are satisfied at $x$ and that the approximate gradient $\nabla u (x)$ exists. Since $\mathcal{E}(u,\cdot)$ is a Radon measure, there exists a subsequence $(\rho_i)_i\subset (0,\infty)$ with $\rho_i \searrow 0$ as $i\to \infty$ such that $\mathcal{E}(u,\partial Q_{\rho_i}(x))=0$  for every $i\in\mathbb{N}$, and  such that  the representation formula \eqref{eq: bulk assumption} holds along $(\rho_i)_i$, i.e., 
\begin{align}\label{eq: 4.20 GP-bulki}
 f(x, e(u)(x) ):= \lim_{ i\to \infty}   \limsup_{n \to \infty}    \frac{1}{\rho_i^{d}}{\mathbf{m}^{1,p}_{\mathcal{E}_n}( \bar{\ell}_{\nabla u(x)},Q_{\rho_i}(x))}.
\end{align}
By  Remark \ref{rem: recov}, for every $i\in\mathbb{N}$ there exists $n_i\in\mathbb{N}$ such that for every $n\geq n_i$, we get
\begin{align}\label{eq: our (4.8) G.P.}
\frac{\mathcal{E}(u,Q_{\rho_i}(x))}{\rho^d_i} &\geq \frac{\mathcal{E}_n(u_n,Q_{\rho_i}(x))}{\rho^d_i}-\frac{1}{i}.
\end{align}
We define the functions 
\begin{align*}
\tilde{u}^i_n(y) := \frac{u_n(x+\rho_iy)- u_n(x)  }{\rho_i} \quad \quad \text{and} \quad \quad  \tilde{u}^i(y): =  \frac{u(x+\rho_iy)-u(x)}{\rho_i}  \quad \quad \text{ for $y \in Q_1$, }
\end{align*}
and note that $\tilde{u}^i_n\to   \tilde{u}^i$ in measure on $Q_1$ as $n \to \infty$  since $u_n \to u$ in measure on $\Omega$.  We now show   by a diagonal argument that, up to passing to larger $n_i \in  \mathbb{N}$, the sequence $v_i := \tilde{u}^i_{n_i}$  satisfies 
\begin{align}\label{eq: our (4.9) G.P.}
v_i \to  \bar{\ell}_{\nabla u(x)} \quad \text{in measure on $Q_1$},   
\end{align}
(recall  that $\bar{\ell}_{\nabla u(x)}(y) = \nabla u(x) y$ for $y \in \R^d$, see   \eqref{eq: specified not}) and 
 \begin{align}\label{eq: our (4.10) G.P.}
\lim_{i\to\infty}\mathcal{H}^{d-1}(J_{v_i})=0.
\end{align}
Indeed, recall that $\tilde{u}^i_n\to   \tilde{u}^i$ in measure on $Q_1$ as $n \to \infty$. Since $u$ is approximately differentiable at $x$, we also have $\tilde{u}^i \to \bar{\ell}_{\nabla u(x)}$ in measure on $Q_1$ as $i \to \infty$, cf.\ Lemma \ref{lemma: approx-grad}. Consequently, \eqref{eq: our (4.9) G.P.} can be achieved. Moreover,  by  a change of variables   and by recalling (\ref{eq: our (4.6) G.P.}) we get 
\begin{align*}
\limsup_{n\to\infty}  \mathcal{H}^{d-1}(J_{\tilde{u}^i_{n}})= \limsup_{n\to\infty} \frac{\mathcal{H}^{d-1}(J_{u_n}\cap Q_{\rho_i}(x))}{\rho_i^{d-1}}\leq   \limsup_{n\to \infty} \frac{\mu_n\big( \overline{Q_{\rho_i}(x)}  \big)}{\rho_i^{d-1}}  \le  \frac{ \mu \big( \overline{Q_{\rho_i}(x)} \big)}{\rho_i^{d-1}}.
\end{align*}
Thus, by  \eqref{eq: blow-up measure} we get
\begin{align}\label{eq: ce serve per applicare Larsen ii)}
\limsup_{i\to\infty}\,\limsup_{n\to\infty}\mathcal{H}^{d-1}(J_{\tilde{u}^i_{n}}) =0.
\end{align}
Then, by a diagonal argument, (\ref{eq: our (4.10) G.P.})  can be ensured. Note by \eqref{eq: 4.20 GP-bulki}  that  we can choose $(n_i)_i$  such that additionally  we have 
\begin{align}\label{eq: 4.20 GP-better-bulki}
 f(x, e(u)(x)):= \lim_{ i\to \infty}   \frac{1}{\rho_i^{d}}{\mathbf{m}^{1,p}_{\mathcal{E}_{n_i}}( \bar{\ell}_{\nabla u(x)},Q_{\rho_i}(x))}.
\end{align}
By \eqref{eq: basic energy},  \eqref{eq: our (4.8) G.P.},  and  by the change of variables $y' = x+\rho_i y$   we get 
\begin{align}\label{eq: our (4.8) G.P.-extra}
\int_{Q_1}f_{n_i}\big(x+\rho_i y,e(v_i)(y)\big)\,{\rm d}y = \frac{\int_{Q_{\rho_i}(x)}f_{n_i}( y',  e(u_{n_i})( y'  ))\,{\rm d}  y'  }{\rho^d_i} \le 
\frac{\mathcal{E}(u,Q_{\rho_i}(x))}{\rho^d_i}  +\frac{1}{i}.
\end{align}
In addition, taking into consideration (\ref{eq: our (4.5) G.P.})   we get
\begin{align}\label{eq: our (4.11) G.P.}
\limsup_{i\to\infty}\int_{Q_1}f_{ n_i}\big(x+\rho_i y, e(v_i)( y )\big)\,{\rm d}y \le \lim_{i\to\infty}\frac{\mathcal{E}(u,Q_{\rho_i}(x))}{\rho_i^d} = f_\infty(x,e(u)(x)).
\end{align}
Let  us  observe that the sequence $(v_i)_i \subset GSBD^p(  Q_1) $ satisfies the assumptions of Lemma \ref{lem: our Larsen}. Indeed, by  \eqref{eq: our (4.11) G.P.} and  the growth condition (\ref{eq: general bound}) we have that (\ref{eq: properties-part0})(i) holds true. Thanks to \eqref{eq: our (4.10) G.P.}, we  get (\ref{eq: properties-part0})(ii), while condition (\ref{eq: properties-part0})(iii) is a consequence of  (\ref{eq: our (4.9) G.P.}). Thus, by Lemma \ref{lem: our Larsen} applied to the sequence $(v_i)_i \subset GSBD^p( Q_1) $ there exists a sequence $(w_i)_i\subset  W^{1,p}(Q_1;\R^d) $ such that (\ref{eq: Larsen properties}) holds true, and $(|\nabla w_i |^p)_i$ is equiintegrable. In particular, from this latter fact,  (\ref{eq: Larsen properties})(i), and \eqref{eq: general bound} we get
\begin{align}\label{ce serve sicuro}
\limsup_{i\to\infty}\int_{Q_1}f_{n_i}\big(x+\rho_i y, e(v_i)(  y  )\big)\,{\rm d}y= \limsup_{i\to\infty}\int_{Q_1}f_{n_i}\big(x+\rho_i y, e(w_i)(y  )\big)\,{\rm d}y.
\end{align} 
Moreover, by (\ref{eq: Larsen properties})(ii) and \eqref{eq: our (4.9) G.P.} we get that
\begin{align}\label{eq: strong conv of w_i to l}
\lim_{i\to \infty}\|w_i - \bar{\ell}_{\nabla u(x)}  \|_{L^p(Q_1)} = 0. 
\end{align}
%
%
%Thus, by using the same argument used in the proof of Lemma \ref{lem: our Larsen} we deduce that up to subsequence, $\sup_{i\in\mathbb{N}}\| w_i \|_{L^p(Q_1)}< \infty$. Thanks to this last fact, and using the Vitali converging Theorem, we conclude that (\ref{eq: strong conv of w_i to l}) holds true.
We now modify the sequence $(w_i)_i$ in such a way  that  it will attain the boundary datum of the function $\bar{\ell}_{\nabla u(x)}$  in  a neighborhood of $\partial Q_1$  (see \cite[Proof of Theorem 5.2(b), Step 2]{Sto lavoro GSBV} for a similar argument). By \cite[Theorem 19.4]{DalMaso:93} we know that the family of functionals $\tilde{\mathcal{F}}_i$  defined by
\begin{align}\label{eq: tildF}
\tilde{\mathcal{F}}_i(u,A) = \int_A  f_{n_i}(x+\rho_i y, e(u)(y) ) \, {\rm d}y 
\end{align}
for $A \in \mathcal{A}(Q_1)$ and  $u \in W^{1,p}(A;\R^d)$  satisfies uniformly the Fundamental Estimate (see \cite[Chapter~18]{DalMaso:93}):  for fixed $0<\eps<1$ there exists a constant $C(\eps)>0$, and a sequence $ (z_i)_i  \subset W^{1,p}(Q_1;\R^d)$ with $z_i = \bar{\ell}_{\nabla u(x)}$ in a neighborhood of $\partial Q_1$ for all $i \in \N$ such that
\begin{align*}
%\label{eq: fund est Maso}
\tilde{\mathcal{F}}_i(z_i,Q_1)\leq (1+\eps)\left(\tilde{\mathcal{F}}_i(w_i,Q_1) + \tilde{\mathcal{F}}_i(\bar{\ell}_{\nabla u(x)},Q_1\setminus  \overline{Q_{1-\eps}} )  \right) + C(\eps)\|w_i - \bar{\ell}_{\nabla u(x)}  \|^{ p }_{L^p( Q_1  )} + \eps
\end{align*}
(more precisely, $z_i:= \varphi w_i + (1-\varphi) \bar{\ell}_{\nabla u(x)}$, where $\varphi$ lies in a finite collection of cut-off functions $\varphi\in C^{\infty}_c(Q_1;  [0,1]  )$,  with   $\varphi=1$ in $Q_{1-\eps}$).  Since $\mathcal{L}^d(Q_1\setminus Q_{1-\eps})\leq d\, \eps$, thanks to the growth condition (\ref{eq: general bound}) and  (\ref{eq: strong conv of w_i to l}),  we get that 
\begin{align}\label{eq: fund est Maso2}
 \limsup\nolimits_{i\to \infty}  \tilde{\mathcal{F}}_i(z_i,Q_1)\leq (1+\eps) \limsup\nolimits_{i\to \infty} \tilde{\mathcal{F}}_i(w_i,Q_1) + d\eps(1+\eps)\beta(1+|e(u)(x)|^p) + \eps.
\end{align}
Then, by (\ref{eq: our (4.11) G.P.}), (\ref{ce serve sicuro}), \eqref{eq: tildF}, and \eqref{eq: fund est Maso2} we derive
\begin{align}\label{eq: piece on the right}
\limsup\nolimits_{i\to \infty} \tilde{\mathcal{F}}_i(z_i,Q_1) \leq (1+\eps) f_{\infty}(x,e(u)(x)) +   d\eps(1+\eps)\beta(1+|e(u)(x)|^p) + \eps.
\end{align}
On the other hand, by a change of  variables  we have that 
\begin{align}\label{eq: ce serve a sinistra}
\tilde{\mathcal{F}}_{i}(z_i,Q_1)= \int_{Q_1}f_{ n_i} (x+\rho_i y', e(z_i)(y'))\, {\rm d}  y'  = \frac{1}{\rho_i^d}\int_{Q_{\rho_i}(x)}f_{n_i} ( y,  e(\tilde{z}_i)(y))\, {\rm d}y,
\end{align}
 where $ \tilde{z}_i \in W^{1,p}(Q_{\rho_i}(x);\R^d)$ is defined by $\tilde{z}_i(y):= \rho_iz_i((y-x)/\rho_i)+\bar{\ell}_{\nabla u(x)}x$ for $y\in Q_{\rho_i}(x)$. Since $z_i = \bar{\ell}_{\nabla u(x)}$ in a neighborhood of $\partial Q_1$, we get  $\tilde{z}_i =\bar{\ell}_{\nabla u(x)}$  in a neighborhood of $\partial Q_{\rho_i}(x)$. Thus,    \eqref{eq: general minimization2},  \eqref{eq: almost the same1},  and  \eqref{eq: ce serve a sinistra} imply 
$$\frac{1}{\rho_i^d}\textbf{m}^{1,p}_{ \mathcal{E}_{n_i}}\left(\bar{\ell}_{ \nabla u(x)  },Q_{\rho}(x)  \right) \le \frac{1}{\rho_i^d}\int_{Q_{\rho_i}(x)}f_{n_i} (y, e(\tilde{z}_i)(y))\, {\rm d}y = \tilde{\mathcal{F}}_{i}(  {z}_i,  Q_1). $$
This along with (\ref{eq: piece on the right}) and the arbitrariness of $\eps$ yields 
\begin{align*}
\limsup_{i\to\infty}\frac{1}{\rho_i^d}\textbf{m}^{1,p}_{\mathcal{E}_{n_i}}\left(\bar{\ell}_{ \nabla u(x)},Q_{\rho}(x)  \right) \leq f_{\infty}(x,e(u)(x)).
\end{align*}
Thanks to \eqref{eq: 4.20 GP-better-bulki},  we obtain the desired inequality $f(x,e(u)(x))\leq f_{\infty}(x,e(u)(x))$. This concludes the proof.

%%%%%%%%

\subsection{Surface part in Theorem \ref{thm: main thm d=2}: Proof of \eqref{eq: g_infty=g}}\label{sec: sub3} 

The proof is again split into two inequalities. The first one is obtained similarly as in Subsection \ref{sec: sub2}.  The other inequality is the only point where we need to restrict ourselves to dimension $d=2$, due to the application of Lemma  \ref{lemma: partition}.

\noindent \emph{Step 1: $g_\infty(x,[u](x),\nu_u(x)) \leq g(x,\nu_u(x))$ for $\mathcal{H}^{d-1}$-a.e.\ $x\in J_u$.}  With the notation in \eqref{eq: jump competitor}, we set $\hat{u}^x := {u}_{x,[u](x),0,\nu_u(x)}$ for brevity.    First, in view of \eqref{eq: general minimization}  and   \eqref{eq: general minimization3},  we get  $\mathbf{m}_{\mathcal{E}}(  \hat{u}^x,  Q^{\nu_u(x)}_{\rho}(x)) \le \mathbf{m}^{PR}_{\mathcal{E}}(\hat{u}^x,Q^{\nu_u(x)}_{\rho}(x))$. Then (\ref{eq: f^epsilon_infty-new}) entails  
\begin{align}\label{es: step1-1XXXX}
%\label{es: step3-1}
g_\infty(x,[u](x),\nu_u(x))&=\limsup_{\rho\to 0^+}\frac{\mathbf{m}_{\mathcal{E}}(\hat{u}^x,Q^{\nu_u(x)}_{\rho}(x))}{\rho^{d-1}}  \le \limsup_{\rho\to 0^+}\frac{\mathbf{m}^{PR}_{\mathcal{E}}(\hat{u}^x,Q^{\nu_u(x)}_{\rho}(x))}{\rho^{d-1}}. 
\end{align}
On the other hand,  by Remark \ref{rem: new rem} we get that the density $g$ in \eqref{eq: surface assumption} coincides with $g_0$ given in \eqref{eq: jump energy density}.   Therefore,  \eqref{eq: almost the same2}, \eqref{eq: general minimization3-better}, and  \eqref{eq: jump energy density} yield   
\begin{align}\label{es: step1-2XXXX}
%\label{es: step3-2}
g(x,\nu_u(x)) =   \limsup_{\rho \to 0^+}   \frac{\mathbf{m}^{PC}_{\mathcal{E}}(\bar{u}_{x,\nu_u(x)},Q^{\nu_u(x)}_{\rho}(x))}{\rho^{d-1}} =  \limsup_{\rho \to 0^+}   \frac{\mathbf{m}^{PR}_{\mathcal{E}}(\hat{u}^{x},Q^{\nu_u(x)}_{\rho}(x))}{\rho^{d-1}}, 
\end{align} 
where we used the notation in \eqref{eq: specified not}.    By  \eqref{es: step1-1XXXX}--\eqref{es: step1-2XXXX}  we obtain $g_\infty(x,[u](x),\nu_u(x)) \leq g(x,\nu_u(x))$.

\noindent \emph{Step 2: $g_\infty(x,[u](x),\nu_u(x)) \geq g(x,\nu_u(x))$ for $\mathcal{H}^{d-1}$-a.e.\ $x\in J_u$.}  We recall that in this step we explicitly use that $d=2$.  Still, for later purposes in Subsection \ref{sec: sub4}, we write $d$ instead of $2$ whenever the arguments hold in every dimension.  In view of \eqref{eq: funci-old}  and the Radon-Nikod\'ym Theorem,  we get  for $\mathcal{H}^{ d-1}$-a.e.\ $x \in J_u$ that 
\begin{align}\label{eq: Radon Nykodim step 4}
g_\infty(x,[u](x),\nu_u(x))= \lim_{\rho\to 0^+}\frac{1}{\rho^{d-1}}{\mathcal{E}(u,Q^{\nu_u(x)}_\rho(x))}<+\infty.
\end{align}
Moreover, we choose  $(\rho_i)_i \subset (0,\infty)$ such that $\rho_i \searrow 0$, $\mathcal{E}(u,\partial Q^{\nu_u(x)}_{\rho_i}(x))=0$ for every $i\in\mathbb{N}$, and such that  the representation formula \eqref{eq: surface assumption} holds along $(\rho_i)_i$, i.e.,   
\begin{align}\label{eq: 4.20 GP}
 g(x,\nu_u  (x)  ):= \lim_{ i\to \infty  }   \limsup_{n \to \infty}   \frac{1}{\rho_i^{d-1}}{\mathbf{m}^{PR}_{\mathcal{E}_n}( \bar{u}_{x,\nu_u(x)},Q^{\nu_u(x)}_{\rho_i}(x))}.
\end{align}
 In what follows,  since  the main inequality needs to hold for $\mathcal{H}^{d-1}$-a.e.\ $x \in J_u$, we can fix $x \in J_u$  such that  the above properties hold.  We  write $\bar{\nu}$ in place of $\nu_u(x)$ for notational simplicity.   

Let $(u_n)_n\subset GSBD^p(\Omega)$ be a recovery sequence for $\mathcal{E}(u,\Omega)$.  As  $\mathcal{E}(u,\partial Q^{\bar{\nu}}_{\rho_i}(x))=0$,  by  Remark~\ref{rem: recov}  we get   that for every $i \in \N$ there exists $n_i \in \N$ such that for all $n \ge n_i$ it holds that     
\begin{align}\label{eq: 4.21 GP-old}
\frac{1}{\rho_i^{d-1}}{\mathcal{E}(u,Q^{\bar{\nu}}_{\rho_i}(x))}&\ge \frac{1}{\rho_i^{d-1}}{\mathcal{E}_n(u_n,Q^{\bar{\nu}}_{\rho_i}(x))} - \frac{1}{i}.
\end{align}
 We define $\tilde{u}^i_n(y)=u_{n}(x+\rho_i y )$ and $\tilde{u}^i(y)=u(x+\rho_i y )$  for $y \in Q^{\bar{\nu}}_1$ and note that $\tilde{u}^i_n \to \tilde{u}^i$ in measure on $ Q^{\bar{\nu}}_1$ since $u_n \to u$ in measure on $\Omega$. Since $x\in J_u$ is an approximate jump point, we find $\tilde{u}^i \to u_{0,u^+(x),u^-(x), \bar{\nu}}$ in measure for $i \to \infty$, see \eqref{0106172148}  and recall notation \eqref{eq: jump competitor}.  Thus, by a diagonal argument, up to passing to larger $n_i \in \N$, we can suppose that $n_i \to +\infty$ as $i\to \infty$ and
\begin{align}\label{eq: 4.21 GP-new}
v_i \to  u_{0,u^+(x),u^-(x),\bar{\nu} } \quad \text{ in measure on $Q^{\bar{\nu}}_1$},
\end{align}
where we define $v_i \in GSBD^p(Q^{\bar{\nu}}_1)$ by $v_i = \tilde{u}^i_{n_i}$. Note that  by \eqref{eq: 4.20 GP}  we can choose $(n_i)_i$  such that additionally  
\begin{align}\label{eq: 4.20 GP-better}
 g(x, \bar{\nu}  ):= \lim_{ i\to \infty  }   \frac{1}{\rho_i^{ d-1}}{\mathbf{m}^{PR}_{\mathcal{E}_{n_i}}( \bar{u}_{x,\bar{\nu}},Q^{\bar{\nu}}_{\rho_i}(x))}
\end{align}
holds.  By a  change of variables, and by using   \eqref{eq: basic energy}  and \eqref{eq: 4.21 GP-old}   we get  
\begin{align}\label{eq: 4.21 GP}
\int_{J_{v_i}\cap Q^{\bar{\nu}}_{1}}g_{ n_i  }(x+\rho_iy,\nu_{v_i} (y)  )\,{\rm d}\mathcal{H}^{d-1}(y)& =  \frac{1}{\rho_i^{d-1}}{\int_{J_{u_{n_i}}\cap Q^{\bar{\nu}}_{\rho_i}(x)}g_{n_i}(z,\nu_{u_{n_i}} (z)  )\,{\rm d}\mathcal{H}^{d-1}(z)} \notag\\
&  \le \frac{1}{\rho_i^{d-1}}{\mathcal{E}(u,Q^{\bar{\nu}}_{\rho_i}(x))} + \frac{1}{i}.
\end{align}
 We also define    $\tilde{v}_i(y):= v_i(y)-u^-(x)$ for $y \in Q_1^{\bar{\nu}}$.  We check that $(\tilde{v}_i)_i$   satisfies the assumptions of Lemma \ref{lemma: partition}.  First, to see \eqref{eq: properties-part}(ii), we use \eqref{eq: general bound2-new}, \eqref{eq: Radon Nykodim step 4}, and \eqref{eq: 4.21 GP} to find 
\begin{align}\label{eq: sup on jump} 
\sup_{i \in \N} \mathcal{H}^{d-1}(J_{\tilde{v}_i} \cap Q^{\bar{\nu}}_{1}) = \sup_{i \in \N} \mathcal{H}^{d-1}(J_{{v}_i} \cap Q^{\bar{\nu}}_{1})  \le \frac{1}{\alpha} \sup_{i \in \N} \Big(  \frac{1}{ \rho_i^{d-1}}{\mathcal{E}(u,Q^{\bar{\nu}}_{\rho_i}(x))} + \frac{1}{i}\Big) < +\infty.
\end{align}
  Now we show  \eqref{eq: properties-part}(i), namely $\lim_{i\to \infty}\|e(\tilde{v}_i) \|_{L^p(Q_1^{\bar{\nu}})}=0$. Indeed, using the growth condition (\ref{eq: general bound}),  \eqref{eq: basic energy},   and a  change of variables   we get  
\begin{align*}
\int_{Q^{\bar{\nu}}_1}|e(\tilde{v}_i)(y) |^p\,{\rm d}y &= \frac{\rho_{i}^p}{\rho_{i}^d}\int_{Q^{\bar{\nu}}_{\rho_{i}}(x)}|e(u_{n_i})( z ) |^p\,{\rm d}  z  \leq \frac{\rho_i^{p-1} }{\alpha} \frac{1}{\rho_i^{d-1}}  {\mathcal{E}_{n_i}\big(u_{n_i},Q^{\bar{\nu}}_{\rho_{i}}(x)\big)}.
\end{align*} 
 Then, by \eqref{eq: Radon Nykodim step 4},  \eqref{eq: 4.21 GP-old}, and the fact that $\rho_i \to 0$ as $i \to +\infty$ we conclude $\lim_{i\to \infty}\|e(\tilde{v}_i) \|_{L^p(Q_1^{\bar{\nu}})}=0$. Finally,  \eqref{eq: properties-part}(iii)   holds   for $\zeta = u^+(x)-u^-(x)$  by the definition of $\tilde{v}_i$ and \eqref{eq: 4.21 GP-new}.  Thus, thanks to Lemma \ref{lemma: partition},  we can define a sequence of piecewise constant functions   $(w_i)_i \subset PC(Q^{\bar{\nu}}_1)$   by
\begin{align*}
w_i(y):= 
\begin{cases}
0 \quad &y\in S_i^-,\\
 e_1  \quad &y\in S_i^+,
\end{cases}
\end{align*}
where $S_i^{\pm}$ are given in Lemma \ref{lemma: partition},  and $e_1 = (1,0)$. (From now on, we explicitly set $d=2$.)   Let us observe  some  properties of the sequence $(w_i)_i$. First of all, by (\ref{eq: properties-part2})(i) we get that $w_i\to \bar{u}_{0,\bar{\nu}}$   strongly in $L^1(Q_1^{ \bar{\nu}};\R^2)$.   Thanks to property (\ref{eq: properties-part2})(ii), we have that $w_i = \bar{u}_{0,\bar{\nu}}$ in a neighborhood of $\partial Q^{\bar{\nu}}_1$. Finally, by property (\ref{eq: properties-part2})(iii)  it holds that 
\begin{align*}
%\label{eq: 4.23 GP}
\mathcal{H}^{1}(J_{w_i}\setminus J_{\tilde{v}_i}) = \mathcal{H}^{1}(J_{w_i}\setminus J_{{v}_i})  \leq  \eta_i, 
\end{align*}
with $\lim_{i\to \infty} \eta_i=0$. This along with  (\ref{eq: general bound2-new}) implies
\begin{align}\label{eq: 4.24 GP}
 \int_{ J_{w_i} \cap  Q^{\bar{\nu}}_1} g_{n_i}(x+\rho_{i} y,\nu_{w_i})\, {\rm d}\mathcal{H}^{1}(y) \le  \int_{J_{v_i} \cap Q^{\bar{\nu}}_1} g_{n_i}(x+\rho_{i} y,\nu_{v_i})\, {\rm d}\mathcal{H}^{1}(y)  +   \beta {\eta}_i. 
\end{align}
Due to (\ref{eq: Radon Nykodim step 4}), (\ref{eq: 4.21 GP}), and (\ref{eq: 4.24 GP})  we get
\begin{align}\label{eq: 4.24XXX GP}
\limsup_{i \to \infty} \int_{ J_{w_i} \cap  Q^{\bar{\nu}}_1} g_{n_i}(x+\rho_{i} y,\nu_{w_i})\, {\rm d}\mathcal{H}^{1}(y) \le \limsup_{i \to \infty}  \frac{1}{\rho_i}{\mathcal{E}(u,Q^{\bar{\nu}}_{\rho_i}(x))} =  g_\infty(x,[u](x),\bar{\nu}).
\end{align}
By defining $ \tilde{w}_i(z)  = w_i(  (z-x)/\rho_i   )  $ for $z \in Q_{\rho_{i}}^{\bar{\nu}}(x)$ and rescaling back to $Q_{\rho_{i}}^{\bar{\nu}}(x)$ we obtain
$${\int_{ J_{w_i} \cap  Q^{\bar{\nu}}_1} g_{n_i}(x+\rho_{i} y,\nu_{w_i}  (y)   )\, {\rm d}  \mathcal{H}^{1}(y)   = \frac{1}{\rho_i} \int_{ J_{\tilde{w}_i} \cap  Q_{\rho_{i}}^{\bar{\nu}}(x)} g_{n_i}(z,\nu_{\tilde{w}_i}  (z)   )\,  {\rm d}\mathcal{H}^{1}(z).}$$
 Then \eqref{eq: 4.24XXX GP} yields 
\begin{align}\label{eq: 4.24XXX GP-last}
 \limsup_{i \to \infty}  \frac{1}{\rho_i} \int_{ J_{\tilde{w}_i} \cap  Q_{\rho_{i}}^{\bar{\nu}}(x)} g_{n_i}(z,\nu_{\tilde{w}_i}  (z)  )\, {\rm d}\mathcal{H}^{1}(z) \le g_\infty(x,[u](x),\bar{\nu}). 
 \end{align}
Observe that by construction $\tilde{w}_i= u_{x,\bar{\nu}}$ in a neighbourhood of $\partial Q_{\rho_i}^{ \bar{\nu}}(x)$. Therefore, by   \eqref{eq: general minimization3}  and \eqref{eq: almost the same2}   we find
$$  \mathbf{m}^{PR}_{\mathcal{E}_{n_i}}(u_{x,\bar{\nu}},  Q_{\rho_{i}}^{\bar{\nu}}(x) )  \le \int_{ J_{\tilde{w}_i} \cap  Q_{\rho_{i}}^{\bar{\nu}}(x)} g_{n_i}(z,\nu_{\tilde{w}_i}  (z)  )\, {\rm d}\mathcal{H}^{1}(z) + \beta \rho_i^2. $$
This along with \eqref{eq: 4.24XXX GP-last} shows 
$$ \limsup_{i \to \infty}  \frac{1}{\rho_i}  \mathbf{m}^{PR}_{\mathcal{E}_{n_i}}(u_{x,\bar{\nu}},  Q_{\rho_{i}}^{\bar{\nu}}(x) ) \le  g_\infty(x,[u](x),\bar{\nu}). $$
By \eqref{eq: 4.20 GP-better}, writing again  $\nu_u(x)$ in place of $\bar{\nu}$, we conclude $ g(x,\nu_u(x)) \le  g_\infty(x,[u](x),\nu_u(x))$. \eop

\subsection{Surface part in Theorem \ref{thm: main thm d=2}: Proof of \eqref{eq: g_infty=g-relax}}\label{sec: sub4}
We proceed  with the proof of  \eqref{eq: g_infty=g-relax}. Recall that $g_n = h$ for all $n \in \N$, and therefore $\mathcal{E}_n$ takes the form 
$$ \mathcal{E}_n(u,A) = \int_{A}  f_n\big(x, e(u)(x) \big) \, {\rm d}x + \int_{J_u \cap A} h\big(x,\nu_u(x)\big) \, {\rm d} \mathcal{H}^{d-1}. $$
We  argue  along the lines of the proof in Subsection \ref{sec: sub3} and replace the application of Lemma~\ref{lemma: partition} by Proposition \ref{prop: surface int repr-relax} and the lower semicontinuity result in  Theorem \ref{thm: GSBD LSC}.

\noindent\emph{Step 1: $g_\infty\big(x,[u](x),\nu_u(x)\big) \le \bar{h} (x,\nu_u(x))$ for $\mathcal{H}^{d-1}$-a.e.\ $x\in J_u$.}   We proceed exactly as in Step~1 in  Subsection \ref{sec: sub3}  with the only difference that in place of \eqref{eq: surface assumption} and Proposition \ref{prop: surface int repr}  we use \eqref{eq: jump energy density-relax2}.

\noindent \emph{Step 2: $g_\infty\big(x,[u](x),\nu_u(x)\big) \ge \bar{h} (x,\nu_u(x))$ for $\mathcal{H}^{d-1}$-a.e.\ $x\in J_u$.} We first follow the lines of Step~2 in Subsection \ref{sec: sub3}. We may suppose that $x \in J_u$ satisfies 
\begin{align}\label{eq: Radon Nykodim step 4-relax}
g_\infty(x,[u](x),\nu_u(x))= \lim_{\rho\to 0^+}\frac{1}{\rho^{d-1}}{\mathcal{E}(u,Q^{\nu_u(x)}_\rho(x))}<+\infty.
\end{align}
By $(u_n)_n$ we denote a recovery sequence for $\mathcal{E}(u,\Omega)$. In a similar fashion to \eqref{eq: 4.21 GP-old}--\eqref{eq: 4.21 GP-new}, we find  sequences  $(\rho_i)_i$ and    $(n_i)_i$    with $\rho_i \to 0$  and  $n_i \to +\infty$ as $i \to \infty$ such that 
\begin{align}\label{eq: 4.21 GP-old-relax}
\frac{1}{\rho_i^{d-1}}{\mathcal{E}(u,Q^{\bar{\nu}}_{\rho_i}(x))}&\ge \frac{1}{\rho_i^{d-1}}{\mathcal{E}_{n_i}(u_{n_i},Q^{\bar{\nu}}_{\rho_i}(x))} - \frac{1}{i},
\end{align}
where we again use the shorthand $\bar{\nu} = \nu_u(x)$, and 
\begin{align}\label{eq: 4.21 GP-new-relax}
v_i \to  u_{0,u^+(x),u^-(x),\bar{\nu} } \quad \text{ in measure on $Q^{\bar{\nu}}_1$},
\end{align}
where $v_i(y) := u_{n_i}(x + \rho_i y)$ for $y \in Q^{\bar{\nu}}_1$.  
%Moreover, in view of  \eqref{eq: almost the same2} and  \eqref{eq: jump energy density-relax2},  the sequence $(\rho_i)_i$ can be chosen such that also
%\begin{align*}
%%\label{eq: 4.20 GP-better-relax}
% \bar{h}(x, \bar{\nu}):= \lim_{i\to \infty  }   \frac{1}{\rho_i^{d-1}}{\mathbf{m}^{PC}_{\mathcal{E}_{ n_i}}( \bar{u}_{x,\bar{\nu}},Q^{\bar{\nu}}_{\rho_i}(x))}
%\end{align*}
%holds.   
 By  \eqref{eq: 4.21 GP-old-relax} and a  change of variables  we get 
\begin{align*}
%\label{eq: 4.21 GP relaxed}
\frac{1}{\rho_i^{d-1}}{\mathcal{E}(u,Q^{\bar{\nu}}_{\rho_i}(x))} &  \ge  \frac{1}{\rho_i^{d-1}}{\int_{J_{u_{n_i}}\cap Q^{\bar{\nu}}_{\rho_i}(x)} h(z,\nu_{u_{n_i}}  (z)  )\,{\rm d} \mathcal{H}^{d-1}(z)}-\frac{1}{i}\nonumber \\
&= \int_{J_{v_i}\cap Q^{\bar{\nu}}_{1}}h(x+\rho_i y,\nu_{v_i}(y))\,{\rm d}\mathcal{H}^{d-1}(y)-\frac{1}{i}.
\end{align*}
 As in \eqref{eq: sup on jump},  by using that $h$ satisfies \eqref{eq: general bound2-new},  we have $\sup_{i \in \N} \mathcal{H}^{d-1}(J_{v_i}\cap Q^{\bar{\nu}}_{1}) < +\infty$. By continuity of the function $h$,  we thus find a sequence $(\eta_i)_i \subset (0,\infty)$  with $\eta_i \to 0$ such that 
\begin{align}\label{eq: this cont is needed}
\frac{1}{\rho_i^{d-1}}{\mathcal{E}(u,Q^{\bar{\nu}}_{\rho_i}(x))} \ge \int_{J_{v_i}\cap Q^{\bar{\nu}}_{1}}h(x,\nu_{v_i}(y))\,{\rm d}\mathcal{H}^{d-1}(y)-\eta_i.
\end{align}
 As $\bar{h}(x,\cdot) \le {h}(x,\cdot)$ by \eqref{eq: jump energy density-relax},  we further get
\begin{align}\label{eq: LLL}
\frac{1}{\rho_i^{d-1}}{\mathcal{E}(u,Q^{\bar{\nu}}_{\rho_i}(x))} \ge \int_{J_{v_i}\cap Q^{\bar{\nu}}_{1}}\bar{h}(x,\nu_{v_i}(y))\,{\rm d}\mathcal{H}^{d-1}(y)-\eta_i.
\end{align}
Since $v_i$ converges in measure to $\hat{u} :=  u_{0,u^+(x),u^-(x),\bar{\nu} }$  on $Q^{\bar{\nu}}_1$ by  \eqref{eq: 4.21 GP-new-relax} and the density  $\bar{h}(x,\cdot)$  is symmetric jointly convex (see Corollary \ref{cor: surf}),  Theorem \ref{thm: GSBD LSC} along with \eqref{eq: LLL} and the fact that $\eta_i \to 0$ yields
$$\liminf_{i \to \infty} \frac{1}{\rho_i^{d-1}}{\mathcal{E}(u,Q^{\bar{\nu}}_{\rho_i}(x))} \ge \liminf_{i \to \infty}  \int_{J_{v_i}\cap Q^{\bar{\nu}}_{1}}  \bar{h}  (x,\nu_{v_i}(y))\,{\rm d}\mathcal{H}^{d-1}(y) \ge   \int_{J_{\hat{u} }   \cap Q^{\bar{\nu}}_{1}}  \bar{h} (x,\nu_{\hat{u}}(y)) \, {\rm d}\mathcal{H}^{d-1}(y). $$
The definition  of $\hat{u}$ implies   $\int_{J_{\hat{u} }   \cap Q^{\bar{\nu}}_{1}} \bar{h}  (x,\nu_{\hat{u}}(y)) \, {\rm d}\mathcal{H}^{d-1}(y) =  \bar{h}  (x,\bar{\nu})$. Now, by recalling the notation $\bar{\nu} = \nu_u(x)$ and by using \eqref{eq: Radon Nykodim step 4-relax} we conclude $g_\infty\big(x,[u](x),\nu_u(x)\big) \ge \bar{h} (x,\nu_u(x))$. \eop

With the results of Subsections \ref{sec: sub2}--\ref{sec: sub4}, Theorem \ref{thm: main thm d=2} is now completely proved. We close this subsection with the proof of Remark \ref{rem: not-cont}.

\begin{proof}[Proof of Remark \ref{rem: not-cont}]
First, as $h$ is continuous on $D$, identity \eqref{eq: g_infty=g-relax} clearly holds for $\mathcal{H}^{d-1}$-a.e.\ $x \in J_u \cap D$. Since $J_u \subset \overline{D} \cap \Omega$, it remains to address the case $x \in \partial D\cap  \Omega$. As $D$ has Lipschitz boundary, the outer normal $\nu_D(x)$ exists for $\mathcal{H}^{d-1}$-a.e.\ $x\in \partial D$. Then, as $J_u \subset \overline{D}$, we deduce that $\nu_u(x) = \nu_D(x) =: \nu_x$ for $\mathcal{H}^{d-1}$-a.e.\ $x \in J_u \cap \partial D$, i.e., we need to show
\begin{align*}
g_\infty\big(x,[u](x),\nu_x \big) = \bar{h} (x,\nu_x) \quad \quad \text{for $\mathcal{H}^{d-1}$-a.e.\ $x \in J_u \cap \partial D$}.
\end{align*}
As before, we split the proof into two inequalities.

\noindent\emph{Step 1: $g_\infty\big(x,[u](x),\nu_x \big) \le \bar{h} (x,\nu_x)$.} We first set $\hat{u}^x := {u}_{x,[u](x),0,\nu_x}$ (recall the notation in \eqref{eq: jump competitor}). In view of \eqref{eq: general minimization}  and   \eqref{eq: general minimization3}, we get  $\mathbf{m}_{\mathcal{E}}(  \hat{u}^x, Q^{\nu_x}_{\rho}(x)) \le \mathbf{m}^{PR}_{\mathcal{E}}(\hat{u}^x,Q^{\nu_x}_{\rho}(x))$. Then (\ref{eq: f^epsilon_infty-new}) gives  
\begin{align}\label{es: step1-1XXXX---}
%\label{es: step3-1}
g_\infty(x,[u](x),\nu_x)&=\limsup_{\rho\to 0^+}\frac{\mathbf{m}_{\mathcal{E}}(\hat{u}^x,Q^{\nu_x}_{\rho}(x))}{\rho^{d-1}}  \le \limsup_{\rho\to 0^+}\frac{\mathbf{m}^{PR}_{\mathcal{E}}(\hat{u}^x,Q^{\nu_x}_{\rho}(x))}{\rho^{d-1}}. 
\end{align}
At the end of the step we will show that, for given $\eps>0$, we can find $\bar{x} \in D$ such that 
\begin{align}\label{eq: main point}
{\rm (i)} \ \ & \Big| \limsup_{\rho \to 0+} \frac{\mathbf{m}^{PR}_{\mathcal{E}}(\hat{u}^x,Q^{\nu_x}_{\rho}(x))}{\rho^{d-1}}   - \limsup_{\rho \to 0+} \frac{\mathbf{m}^{PR}_{\mathcal{E}}(\hat{u}^{\bar{x}},Q^{\nu_x}_{\rho}(\bar{x}))}{\rho^{d-1}} \Big| \le C\eps \alpha^{-1} \Vert h \Vert_\infty, \\ \notag
{\rm (ii)} \ \ & |\bar{h}(x,\nu_x) - \bar{h}(\bar{x},\nu_x)| \le  C\eps \alpha^{-1} \Vert h \Vert_\infty 
\end{align} 
for some universal $C>0$. Then we conclude as follows:  by    \eqref{eq: almost the same2}, \eqref{eq: general minimization3-better},  \eqref{eq: jump energy density-relax2},  and the fact that $\bar{x} \in D$  we have 
\begin{align}\label{es: step1-2XXX}
%\label{es: step3-2}
\bar{h}(\bar{x},\nu_x) =   \limsup_{\rho \to 0^+}   \frac{\mathbf{m}^{PC}_{\mathcal{E}}(\bar{u}_{\bar{x},\nu_x},Q^{\nu_x}_{\rho}(\bar{x}))}{\rho^{d-1}} =  \limsup_{\rho \to 0^+}   \frac{\mathbf{m}^{PR}_{\mathcal{E}}(\hat{u}^{\bar{x}},Q^{\nu_x}_{\rho}(\bar{x}))}{\rho^{d-1}}.
\end{align} 
By  \eqref{es: step1-1XXXX---}, \eqref{eq: main point}(i), and \eqref{es: step1-2XXX}  we obtain $g_\infty(x,[u](x),\nu_x) \leq \bar{h}(\bar{x},\nu_x) + C\eps \alpha^{-1} \Vert h \Vert_\infty$. Then \eqref{eq: main point}(ii) yields $g_\infty(x,[u](x),\nu_x)   \le \bar{h}(x,\nu_x) + 2C\eps \alpha^{-1} \Vert h \Vert_\infty$. Since $\eps$ was arbitrary, we obtain the desired inequality.

To conclude this step, we need to show \eqref{eq: main point}. We only prove \eqref{eq: main point}(i) as  \eqref{eq: main point}(ii) follows along similar lines. For convenience, as in Proposition \ref{prop: surface int repr-relax}, we denote the surface integral with density $h$ by $\mathcal{S}$.  Let $\eps >0$ and suppose without restriction that $1/\eps \in \N$. By uniform continuity of $h$ in $D$ we can choose $\bar{x} \in D$ and $\delta = \delta(\eps) >0$ such that  
\begin{align}\label{h:cont}
\Vert h(y,\cdot) - h(\bar{y},\cdot) \Vert_\infty \le \eps \quad \quad \text{for all $y \in Q^{\nu_x}_{ \delta } (x) \cap D$, \  $\bar{y} \in Q^{\nu_x}_{\delta}(\bar{x}) \cap D$}.
\end{align}
Pick $0< \rho_0\le  \delta $ sufficiently  small such that $Q^{\nu_x}_{\rho_0}(\bar{x}) \subset D$ and choose $v \in PC(Q^{\nu_x}_{\rho_0}(\bar{x}))$ with $v = \bar{u}_{\bar{x},\nu_x}$ in a neighborhood of $\partial Q^{\nu_x}_{\rho_0}(\bar{x})$ such that    
\begin{align}\label{eq: minprob}
\mathcal{S}(v, Q^{\nu_x}_{\rho_0}(\bar{x})  ) \le \mathbf{m}^{PC}_{\mathcal{S}}(\bar{u}_{\bar{x},\nu_x},Q^{\nu_x}_{\rho_0}(\bar{x}))+ \rho^d_0.
\end{align}
Let us observe that
\begin{align}\label{eq: minprob2}
\mathcal{H}^{d-1}(J_v) \le  \alpha^{-1} \Vert h \Vert_\infty\rho_0^{d-1}.
\end{align}
Indeed, using $\bar{u}_{\bar{x},\nu_x}$ as a competitor, we get  $\mathbf{m}^{PC}_{\mathcal{S}}(\bar{u}_{\bar{x},\nu_x},Q^{\nu_x}_{\rho_0}(\bar{x})) \le \Vert h \Vert_\infty \rho_0^{d-1}$. This along with the lower bound in \eqref{eq: general bound2-new} shows \eqref{eq: minprob2}.

We now construct a competitor for $ \mathbf{m}^{PC}_{\mathcal{S}}(\bar{u}_{x,\nu_x},Q^{\nu_x}_{\rho}(x))$ for all $\rho \le \rho'$, where $0<\rho' \le  \delta$ is chosen sufficiently small such that 
\begin{align}\label{eq: Lippi}
 D \cap Q^{\nu_x}_{\rho}(x) \supset  \lbrace y \in Q^{\nu_x}_\rho  (x)  \colon \,   (y-x) \cdot \nu_x \le -\eps \rho  \rbrace.
  \end{align}
Now, we define $v^\rho \in PC(Q_\rho^{\nu_x}(x))$ by $v^\rho =  \bar{u}_{{x},\nu_x}$  on $Q^{\nu_x}_{\rho}(x) \setminus Q^{\nu_x}_{\rho(1-\eps)}(x)$ and on $Q^{\nu_x}_{\rho(1-\eps)}(x)$ we set  
\begin{align*}
v^\rho(y) = \begin{cases}
e_1 & \text{if } (y-x) \cdot \nu_x  > -\eps \rho,\\
v( \bar{x} +  \rho_0 (\eps \rho)^{-1}   (y-x_n)) & \text{if }  -2\eps\rho <   (y-x) \cdot \nu_x < - \eps\rho  \text{ and }  y  \in Q_n,\\
0 & \text{if } (y-x) \cdot \nu_x < -2\eps\rho,
\end{cases}
\end{align*}
where $(Q_n)_n$ denotes a partition of the set $\lbrace y\in Q^{\nu_x}_{\rho(1-\eps)}(x)\colon -2\eps\rho <   (y-x) \cdot \nu_x < - \eps\rho \rbrace$ consisting of $\eps^{1-d}$ cubes with sidelength $\eps\rho$,  and $x_n$ indicates the center of $Q_n$. Since $v = \bar{u}_{\bar{x},\nu_x}$ in a neighborhood of $\partial Q^{\nu_x}_{\rho_0}(\bar{x})$,  the functions $v^\rho$ have  $\mathcal{H}^{d-1}$-negligible jump on $\partial Q_n$.  Hence, we get
\begin{align*}
\mathcal{S}(v^\rho,Q^{\nu_x}_{\rho}(x))& \le   \sum\nolimits_{n=1}^{\eps^{1-d}}  \mathcal{S}(v^\rho,Q_n) + \Vert h \Vert_\infty\mathcal{H}^{d-1}\big(J_{v^\rho} \cap (Q^{\nu_x}_{\rho}(x) \setminus Q^{\nu_x}_{\rho(1-\eps)}(x))\big).
\end{align*}
Then,  due to $J_{v^\rho} \cap Q^{\nu_x}_{\rho(1-\eps)}(x) \subset  D $ (see \eqref{eq: Lippi}), a scaling argument and \eqref{h:cont}   imply
\begin{align*}
\mathcal{S}(v^\rho,Q^{\nu_x}_{\rho}(x))  \le  \rho^{d-1} \rho_0^{-(d-1)} \big( \mathcal{S}(v, Q^{\nu_x}_{\rho_0}(\bar{x})  )  + \eps \mathcal{H}^{d-1}(J_v ) \big)  + C\Vert h\Vert_\infty \eps \rho^{d-1},
\end{align*}
where   $C>0$ is a universal constant. This along  with \eqref{eq: minprob2} shows
 \begin{align*}
\mathcal{S}(v^\rho,Q^{\nu_x}_{\rho}(x))  \le  \rho^{d-1} \rho_0^{-(d-1)} \mathcal{S}(v, Q^{\nu_x}_{\rho_0}(\bar{x})  )   + C\eps\alpha^{-1}\Vert h\Vert_\infty \rho^{d-1}.
\end{align*}
By \eqref{eq: minprob} and  the fact that $v^\rho = \bar{u}_{x,\nu_x}$ in a neighborhood of $\partial Q^{\nu_x}_{\rho}(x)$  we conclude
$$ \limsup_{\rho \to 0+} \frac{\mathbf{m}^{PC}_{\mathcal{S}}(\bar{u}_{x,\nu_x},Q^{\nu_x}_{\rho}(x))}{\rho^{d-1}} \le  \limsup_{\rho \to 0+} \frac{\mathbf{m}^{PC}_{\mathcal{S}}(\bar{u}_{\bar{x},\nu_x},Q^{\nu_x}_{\rho}(\bar{x}))}{\rho^{d-1}} + C\eps\alpha^{-1}\Vert h\Vert_\infty.$$
On the other hand, \eqref{eq: h-inequaly} and \eqref{h:cont} immediately yield the converse inequality  
$$ \limsup_{\rho \to 0+} \frac{\mathbf{m}^{PC}_{\mathcal{S}}(\bar{u}_{x,\nu_x},Q^{\nu_x}_{\rho}(x))}{\rho^{d-1}} \ge  \limsup_{\rho \to 0+} \frac{\mathbf{m}^{PC}_{\mathcal{S}}(\bar{u}_{\bar{x},\nu_x},Q^{\nu_x}_{\rho}(\bar{x}))}{\rho^{d-1}} - C\eps\alpha^{-1}\Vert h\Vert_\infty.$$
This along with \eqref{eq: almost the same2} and  \eqref{eq: general minimization3-better} shows \eqref{eq: main point}(i). In view of the characterization \eqref{eq: jump energy density-relax}, the proof of \eqref{eq: main point}(ii) can be obtained along similar lines. (Indeed, it is even easier since instead of \eqref{h:cont} we only need $\Vert h(x,\cdot) - h(\bar{x},\cdot) \Vert_\infty \le \eps$.)

\noindent\emph{Step 2: $g_\infty\big(x,[u](x),\nu_x \big) \ge \bar{h} (x,\nu_x)$.}  We follow Step 2 of the previous proof by employing a slightly different continuity argument in \eqref{eq: this cont is needed}. More precisely, by the uniform continuity of $h$ in $D$ and property \eqref{eq: h-inequaly}, for given $\eps >0$, we can choose $\bar{x} \in D$ such that \eqref{eq: main point}(ii) holds and  \eqref{eq: this cont is needed} is replaced by
$$
\liminf_{i \to \infty}\frac{1}{\rho_i^{d-1}}{\mathcal{E}(u,Q^{ {\nu}_x}_{\rho_i}(x))} \ge \liminf_{i \to \infty} \int_{J_{v_i}\cap Q^{{\nu}_x}_{1}}h(\bar{x},\nu_{v_i}(y))\,{\rm d}\mathcal{H}^{d-1}(y)-\eps.
$$
Since $\bar{x} \in D$, we can proceed as in the previous proof to find, $g_\infty\big(x,[u](x),\nu_x\big) \ge \bar{h} (\bar{x},\nu_x) -\eps$. This along with \eqref{eq: main point}(ii) and the arbitrariness of $\eps$ shows $g_\infty\big(x,[u](x),\nu_x\big) \ge \bar{h} (x,\nu_x)$. 
\end{proof}

\begin{remark}\label{rem: lastone}
The proof shows that $\bar{h}$ is uniformly continuous on $D$ and that for $\mathcal{H}^{d-1}$-a.e.\ $x \in \partial D \RRR \cap \Omega\EEE$ it holds that  $\bar{h}(x,\nu_D(x)) = \lim_{n\to \infty} \bar{h}(x_n,\nu_n)$ for sequences $(x_n)_n \subset D$ and $(\nu_n)_n \subset \mathbb{S}^{d-1}$ with $x_n \to x$ and $\nu_n\to \nu_D$. 
\end{remark}

\subsection{Proof of Corollaries \ref{cor: hom} and \ref{cor: relax}.}\label{sec:cor}
We deduce the announced corollaries of Theorem   \ref{thm: main thm d=2}.

\begin{proof}[Proof of Corollary \ref{cor: hom}]
 We only need to show that the limits in \eqref{eq: hom-lim} coincide  with $f$ and $g$ in \eqref{eq: bulk assumption}--\eqref{eq: surface assumption}. Then the result follows from Theorem \ref{thm: main thm d=2}.  The argument used to prove this property is standard and for this reason we omit it.  For instance, we can follow closely the proof of  \cite[Theorem 3.11]{Sto lavoro GSBV}. 
\end{proof}

\begin{proof}[Proof of Corollary \ref{cor: relax}]
Thanks to   Theorem~\ref{thm: main thm d=2}(iii),  applied to the constant sequence of functionals  $\mathcal{E}_n=\mathcal{E}$  given by (\ref{eq: basic energy}) corresponding to $f$ and $g$, we get that 
\begin{align*}
\bar{\mathcal{E}}(u,A)=\int_A  \tilde{f}  \big(x,e(u)(x)\big)\,{\rm d}x + \int_{J_u\cap A}\bar{g}\big(x,\nu_u(x)\big)\, {\rm d}\mathcal{H}^{d-1}(x)
\end{align*}
for all $u\in GSBD^p(\Omega)$ and $A\in\mathcal{A}(\Omega)$, where  $\tilde{f}$  is defined as in (\ref{eq: bulk assumption}), and $\bar{g}$  is  the $BD$-elliptic envelope of $g$ defined in \eqref{eq: envelope}.  By Remark \ref{rem: new rem} we get that $\tilde{f}$ coincides with the density $f_0$ in Proposition \ref{prop: bulk int repr}. Since in the case of a constant sequence  the $\Gamma$-limit $\mathcal{F}$ is simply the relaxation of the integral functional with density $f$, it turns out that $f_0 = \tilde{f}$ is the quasiconvex envelope (with respect to the second variable) of $f$, see \cite[Theorem 9.8]{Dacorogna}. 
\end{proof}

%%%%%%%%%%%%%%%%%%%%%%%%%%%%%%%%%%

%%%%%%%%%%%%%%%%%%%%%%%%%%%%%%%%%%

\section{Minimization problems for given boundary data}\label{sec: mini-proof}

 This section is devoted to the proofs of the results announced in Subsection \ref{sec: mini}.  Before coming to the proof of Proposition \ref{lemma: gamma bdy}, we state an auxiliary lower semicontinuity result.

\begin{lemma}[Lower semicontinuity]
In the setting of Proposition \ref{lemma: gamma bdy}, consider a sequence $(v_n)_n\subset GSBD^p(\Omega')$ with  $\sup_{n \in \N}(\Vert e(v_n) \Vert_{L^p(\Omega')} + \mathcal{H}^{d-1} (J_{v_n}))<+\infty$ such that $v_n \to v$ in measure  on $\Omega'$ for some $v \in GSBD^p(\Omega')$. Then, for all $A \in \mathcal{A}(\Omega')$ it holds that  
\begin{align}\label{eq: separation}
{\rm (i)} & \ \ \int_A f'(x, e(v)(x)) \, {\rm d}x \le  \liminf_{n \to \infty} \int_A f'_n(x, e(v_n)(x)) \, {\rm d}x, \notag \\
{\rm (ii)}& \ \  \int_{J_v \cap A} g'(x,  \nu_v) \, {\rm d}\mathcal{H}^{d-1}  \le  \liminf_{n \to \infty} \int_{J_{v_n} \cap A} g'_n(x,  \nu_{v_n}) \, {\rm d}\mathcal{H}^{d-1}.
\end{align} 

\end{lemma}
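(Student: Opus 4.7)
My plan is to prove (i) and (ii) by two parallel blow-up arguments carried out on the bulk and surface parts of the energy separately; the key observation is that these are exactly the ``lower-bound half'' of the identification performed in the proofs of Theorem~\ref{thm: main thm d=2}(i) in Subsection~\ref{sec: sub2} and of Theorem~\ref{thm: main thm d=2}(ii),(iii) in Subsections~\ref{sec: sub3}--\ref{sec: sub4}, but applied to an \emph{arbitrary} sequence $(v_n)_n$ rather than to a recovery sequence. Accordingly, I would first extract (up to a subsequence realising the liminf in each statement) weak$^*$ limits of the measures
\begin{equation*}
\mu_n := f'_n(\cdot,e(v_n))\,\mathcal{L}^d\mres\Omega', \qquad \sigma_n := g'_n(\cdot,\nu_{v_n})\,\mathcal{H}^{d-1}\mres(J_{v_n}\cap\Omega').
\end{equation*}
The uniform bound on $\Vert e(v_n)\Vert_{L^p(\Omega')}+\mathcal{H}^{d-1}(J_{v_n})$ together with the upper growth assumptions on $f'_n$ and $g'_n$ ensures that these are bounded in total mass, so along a further subsequence $\mu_n\rightharpoonup^*\mu$ and $\sigma_n\rightharpoonup^*\sigma$ on $\Omega'$. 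By Radon--Nikod\'{y}m, to prove (i) and (ii) it suffices to establish the pointwise density bounds
$$
\frac{d\mu}{d\mathcal{L}^d}(x_0)\ge f'(x_0,e(v)(x_0))\ \text{for }\mathcal{L}^d\text{-a.e.\ }x_0\in A, \quad \frac{d\sigma}{d\mathcal{H}^{d-1}\mres J_v}(x_0)\ge g'(x_0,\nu_v(x_0))\ \text{for }\mathcal{H}^{d-1}\text{-a.e.\ }x_0\in J_v\cap A.
$$

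For the bulk bound, I would follow the scheme of Step~2 of Subsection~\ref{sec: sub2}. Fix $x_0$ such that $v$ is approximately differentiable at $x_0$ (Lemma~\ref{lemma: approx-grad}), the bulk derivative above exists, and the $(d-1)$-density of $\sigma$ at $x_0$ vanishes; all three hold $\mathcal{L}^d$-a.e., the last one by a standard density argument analogous to \eqref{eq: blow-up measure}. Choose $\rho_i\downarrow 0$ with $\mu(\partial Q_{\rho_i}(x_0))=0$ and rescale $\tilde v^i_n(y):=\rho_i^{-1}(v_n(x_0+\rho_i y)-v_n(x_0))$. A diagonal extraction $v_i:=\tilde v^i_{n_i}$ gives $v_i\to\bar\ell_{\nabla v(x_0)}$ in measure on $Q_1$ with $\mathcal{H}^{d-1}(J_{v_i})\to 0$, while
$$
\limsup_{i\to\infty}\int_{Q_1} f'_{n_i}(x_0+\rho_i y,e(v_i)(y))\,dy = \frac{d\mu}{d\mathcal{L}^d}(x_0).
$$
Lemma~\ref{lem: our Larsen} then provides equi-integrable Sobolev approximants $w_i\in W^{1,p}(Q_1;\R^d)$ coinciding with $v_i$ outside a vanishing set, and a Sobolev fundamental-estimate cut-off (in the spirit of \cite[Chapter~18]{DalMaso:93}) adjusts the boundary datum of $w_i$ to $\bar\ell_{\nabla v(x_0)}$ on a thin annulus with asymptotically negligible cost. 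The resulting functions are admissible in $\mathbf{m}^{1,p}_{\mathcal{E}'_{n_i}}(\bar\ell_{\nabla v(x_0)},Q_{\rho_i}(x_0))$, and assumption \eqref{eq: bulk assumption} for $(\mathcal{E}'_n)_n$ yields the target lower bound $f'(x_0,e(v)(x_0))$ after passing to the limit.

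For the surface bound, I would mirror this at jump points, following Step~2 of Subsection~\ref{sec: sub3} in dimension $d=2$ and Step~2 of Subsection~\ref{sec: sub4} in higher dimension (which is covered by the assumption $g'_\infty=g'$ of Proposition~\ref{lemma: gamma bdy}). I pick $x_0\in J_v$ where \eqref{0106172148} holds, the surface derivative of $\sigma$ exists, and the $d$-density of $\mu$ vanishes (a property true $\mathcal{H}^{d-1}$-a.e.\ on $J_v$, since $\mu$ is finite), so that the bulk contribution is negligible on the scale $\rho^{d-1}$. Rescaling in $Q^{\nu_v(x_0)}_{\rho_i}(x_0)$ and diagonalising yields functions converging in measure to $u_{0,v^+(x_0),v^-(x_0),\nu_v(x_0)}$ with vanishing $L^p$-norm of the symmetrised gradient and uniformly bounded jump. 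In $d=2$, Lemma~\ref{lemma: partition} produces piecewise rigid (equivalently, piecewise constant) competitors for $\mathbf{m}^{PR}_{\mathcal{E}'_{n_i}}(\bar u_{x_0,\nu_v(x_0)},Q^{\nu_v(x_0)}_{\rho_i}(x_0))$, and \eqref{eq: surface assumption} gives the bound $g'(x_0,\nu_v(x_0))$. For $d\ge 2$ in the constant-density setting, one concludes instead by the symmetric joint convexity of $g'(x_0,\cdot)$ (Corollary~\ref{cor: surf}) and the lower-semicontinuity result Theorem~\ref{thm: GSBD LSC}.

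The main obstacle is the simultaneous decoupling of bulk and surface contributions throughout the blow-up: the radii $\rho_i$ and the diagonal index $n_i$ must be chosen so that the rescaled profiles both converge to the correct affine or jump target and exhibit the vanishing of the ``wrong'' measure (the surface measure in the bulk blow-up, the bulk measure in the surface blow-up). This is achieved via the density properties of $\mu$ and $\sigma$ mentioned above, combined with a careful diagonal extraction--exactly as in the corresponding steps of Section~\ref{sec: identi}.
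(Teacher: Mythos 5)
Your argument is correct in spirit but takes a genuinely different route from the paper's. The paper does not redo any blow-up: it observes that, by linearity of the asymptotic cell formulas \eqref{eq: bulk assumption}--\eqref{eq: surface assumption}, the identification $f'_\infty = f'$ and $g'_\infty = g'$ carries over to the scaled densities $kf'_n$ and $lg'_n$ for any $k,l \in \N$, so the energies $\mathcal{E}^{k,l}_n$ with these densities $\Gamma$-converge to the functional with densities $kf'$ and $lg'$. Applying the $\Gamma$-liminf inequality along the given sequence $v_n \to v$ then yields
\begin{align*}
\int_A kf'(x, e(v)) \,{\rm d}x &+ \int_{J_v \cap A} lg'(x, \nu_v)\,{\rm d}\mathcal{H}^{d-1} \\ &\le \liminf_{n\to\infty} \Big(\int_A kf'_n(x, e(v_n))\,{\rm d}x + \int_{J_{v_n}\cap A} lg'_n(x,\nu_{v_n})\,{\rm d}\mathcal{H}^{d-1}\Big),
\end{align*}
and dividing by $k$ and sending $k\to\infty$ (the surface term being controlled by $\beta\sup_n\mathcal{H}^{d-1}(J_{v_n})<\infty$) isolates (i), while dividing by $l$ and letting $l\to\infty$ gives (ii). This is essentially three lines. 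Your proposal instead re-runs the blow-up arguments of Subsections~\ref{sec: sub2}--\ref{sec: sub4} directly on the arbitrary sequence $(v_n)_n$, taking weak$^*$ limits $\mu$ and $\sigma$ of the bulk and surface measures and deriving pointwise lower bounds for their Radon--Nikod\'ym derivatives against $\mathcal{L}^d$ and $\mathcal{H}^{d-1}\mres J_v$. This works and is more self-contained, but it is considerably longer and has to repeat all the measure-theoretic and diagonal-extraction care already invested in Section~\ref{sec: identi}, which the paper's scaling trick reuses wholesale. One small imprecision in your surface step: at the chosen jump point $x_0$ you ask that ``the $d$-density of $\mu$ vanishes'' to make the bulk contribution negligible on the scale $\rho^{d-1}$. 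What you actually need --- and what holds $\mathcal{H}^{d-1}$-a.e.\ on $J_v$ because the total limit measure is finite --- is that the upper $(d-1)$-density $\limsup_{\rho\to 0^+}\rho^{-(d-1)}\mu(Q^\nu_\rho(x_0))$ is finite; the extra factor $\rho^{p-1}\to 0$ appearing when rescaling $\|e(\tilde v_i)\|_{L^p}^p$ then kills the bulk contribution without any vanishing of density being required.
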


\begin{proof}
The statement follows by  repeating the argument in  \cite[Proposition 4.3]{GP} which we detail here for the convenience of the reader.    First, we notice that by Theorem \ref{thm: main thm d=2}(i) the bulk density $f'_\infty$ of $\mathcal{E}'$ agrees with the function $f'$ given by \eqref{eq: bulk assumption}. Moreover, the surface density $g'_\infty$ agrees with $g'$ given in \eqref{eq: surface assumption} by assumption in Proposition \ref{lemma: gamma bdy}.  As $f'$ and $g'$ are characterized by  \eqref{eq: bulk assumption} and \eqref{eq: surface assumption}, respectively, we find that the energies $\mathcal{E}^{k,l}_n$ with densities $kf_n'$ and $lg_n'$ for $k,l \in \N$ $\Gamma$-converge to $\mathcal{E}^{k,l}$ with densities $kf'$ and $lg'$. Therefore, we obtain
\begin{align*}
\int_A  kf'(x, e(v)(x)) \, {\rm d}x &+ \int_{J_v \cap A} lg'(x,  \nu_v) \, {\rm d}\mathcal{H}^{d-1} \\ & \le \liminf_{n \to \infty}  \Big( \int_A kf'_n(x, e(v_n)(x)) \, {\rm d}x + \int_{J_{v_n} \cap A} lg'_n(x,  \nu_{v_n}) \, {\rm d}\mathcal{H}^{d-1} \Big).
\end{align*}    
In view of \eqref{eq: general bound}, \eqref{eq: general bound2-new},  and   $\sup_{n\in\N}(\Vert e(v_n) \Vert_{L^p(\Omega')} + \mathcal{H}^{d-1} (J_{v_n}))<+\infty$,   by dividing the estimate by $k$ and sending $k \to + \infty$, we obtain \eqref{eq: separation}(i). In a similar fashion, \eqref{eq: separation}(ii) follows by dividing first by $l$ and by letting   $l \to + \infty$. 
\end{proof}

\begin{proof}[Proof of Proposition \ref{lemma: gamma bdy}]
We follow the lines of similar results in the $GSBV^p$ setting, see \cite[Lemma 7.1]{GP} and \cite[Lemma~4.3]{Manuel}. Our focus lies on the adaptations necessary to our $GSBD^p$ setting, including more delicate constructions for extensions and fundamental estimates. (In particular, besides Proposition \ref{lemma: fundamental estimate},  we use the recent extension result  \cite{Matteo} and approximations from \cite{FinalKorn}.) To keep the exposition self-contained, however, we provide all details of the proof.   

We start by noticing that the $\Gamma$-liminf is immediate from the $\Gamma$-convergence of $\mathcal{E}'_n$ to $\mathcal{E}'$ and the fact that the boundary condition on $\Omega' \setminus \overline{\Omega}$ is preserved under the convergence in measure. We now address the  $\Gamma$-limsup.   As  $\mathcal{E}'_n$ $\Gamma$-converges to $\mathcal{E}'$, there exists a recovery sequence $(u_n)_n$ for $u$, i.e., $u_n \to u$ in measure on $\Omega'$   and $\lim_{n\to \infty} \mathcal{E}'_n(u_n) = \mathcal{E}'(u)$. By Remark \ref{rem: recov} we get that $\mathcal{E}'_n(u_n,A) \to \mathcal{E}'(u,A)$ for each  $A \in \mathcal{A}(\Omega')$  with $\mathcal{E}'(u,\partial A) = 0$. This along with  \eqref{eq: separation} shows  
\begin{align}\label{eq: separation-recovery}
{\rm (i)}& \ \ \int_A f'(x, e(u)(x)) \, {\rm d}x  =  \lim_{n \to \infty} \int_A f'_n(x, e(u_n)(x)) \, {\rm d}x, \notag \\
{\rm (ii)} & \ \  \int_{J_u \cap A} g'(x, \nu_u) \, {\rm d}\mathcal{H}^{d-1}  =  \lim_{n \to \infty} \int_{J_{u_n} \cap A} g'_n(x, \nu_{u_n}) \, {\rm d}\mathcal{H}^{d-1},
\end{align}
whenever $\mathcal{E}'(u,\partial A) = 0$.

\noindent \emph{Step 1: Definition of the recovery sequence.} We need to modify the sequence $(u_n)_n$ to ensure that the boundary conditions are satisfied on $\Omega' \setminus \overline{\Omega}$. This is subject of this step. In Step 2 we will estimate the energy of the modified sequence to see that it is indeed a recovery sequence. At the end of the proof in Step 3, we will check the following auxiliary properties: 
\begin{align}\label{eq: good approx}
{\rm (i)} & \ \ u_n - u^0_n  \to 0 \text{ in measure on $\Omega' \setminus \overline{\Omega}$},  \notag \\
{\rm (ii)} & \ \ e(u_n) - e(u^0_n)  \to 0 \text{ strongly in } L^p(\Omega' \setminus \overline{\Omega};\R^{d \times d}_{\rm sym}), \notag\\
{\rm (iii)}& \ \  \mathcal{H}^{d-1}\big(J_{u_n} \cap (\Omega' \setminus {\Omega}) \big) \to 0. 
\end{align}
For the moment, we suppose that \eqref{eq: good approx} holds.  We can find a  Lipschitz  neighborhood $U \supset\supset \Omega' \setminus \overline{\Omega}$ and an extension $(y_n)_n\subset GSBD^p(U)$ satisfying $y_n =u_n - u^0_n $ on $\Omega' \setminus \overline{\Omega}$ such that for $n \to \infty$ 
\begin{align}\label{eq: good approx2}
 {\rm (i)} \ \   \Vert e(y_n) \Vert_{L^p(U)} + \mathcal{H}^{d-1}(J_{y_n} \cap U) \to 0, \quad \quad \quad  {\rm (ii)} \ \ y_n \to 0  \text{ in measure on $U$}. 
\end{align}
Indeed, by the extension result \cite[Theorem 1.1]{Matteo} we can choose $(y_n)_n\subset GSBD^p(U)$ with  $y_n|_{U \cap \Omega} \in W^{1,p}(U\cap \Omega;\R^d)$  such that  $y_n =u_n - u^0_n $ on $\Omega' \setminus \overline{\Omega}$ and 
$$\Vert e(y_n) \Vert_{L^p(U)} + \mathcal{H}^{d-1}(J_{y_n} \cap U) \le C\Vert e(u_n- u^0_n) \Vert_{L^p(\Omega' \setminus \overline{\Omega})} + C\mathcal{H}^{d-1}(J_{u_n} \cap (\Omega' \setminus \overline{\Omega}) ),$$
 where $C>0$ depends only on $\Omega,\Omega'$ and $p$.   Then, \eqref{eq: good approx2}(i) follows from \eqref{eq: good approx}(ii),(iii).  We now show \eqref{eq: good approx2}(ii). To this end, we use \eqref{eq: good approx2}(i) and apply Theorem \ref{th: kornSBDsmall} on $(y_n)_n$ to  find sets $(\omega_n)_n \subset U$ with $\mathcal{L}^d(\omega_n) \to 0$ and rigid motions $(a_n)_n$ such that $\Vert y_n - a_n \Vert_{L^p(U\setminus \omega_n)} \to 0$. By \eqref{eq: good approx}(i) and $y_n =u_n -  u^0_n$ on $\Omega' \setminus \overline{\Omega}$  we get $a_n \to 0$ in measure on $\Omega' \setminus \overline{\Omega}$. As $a_n$ is affine, this also yields $a_n \to 0$ in measure on $U$. This along with $\Vert y_n - a_n \Vert_{L^p(U\setminus \omega_n)} \to 0$ and $\mathcal{L}^d(\omega_n) \to 0$ shows \eqref{eq: good approx2}(ii).

%
%
%In view of  \eqref{eq: good approx}(i), it now suffices to prove  $y_n \to 0$  in measure on $U \cap \Omega$.  To this end, we use the Poincar\'e-Korn inequality in $W^{1,p}$ to find  rigid motions $(a_n)_n$ such that
%\begin{align}\label{eq: LLL}
%\Vert y_n - a_n \Vert_{L^p( U\cap \Omega)} \le C\Vert e(y_n) \Vert_{L^p(U\cap\Omega)} \to 0.
%\end{align}
% Moreover, by  \eqref{eq: good approx2}(i) and  Theorem \ref{th: kornSBDsmall} applied on $(y_n)_n$ we then find sets $(\omega_n)_n \subset U$ with $\mathcal{L}^d(\omega_n) \to 0$ and rigid motions $(a'_n)_n$ such that $\Vert y_n - a'_n \Vert_{L^p(U\setminus \omega_n)} \to 0$. By \eqref{eq: good approx}(i) and $y_n =u_n - h_n$ on $\Omega' \setminus \overline{\Omega}$  we thus get $a_n' \to 0$ in measure on $\Omega' \setminus \overline{\Omega}$. As $a_n'$ is affine, this also yields $a_n' \to 0$ in measure on $U$.  This along with $\Vert y_n - a'_n \Vert_{L^p(U\setminus \omega_n)} \to 0$ $\mathcal{L}^d(\omega_n) \to 0$, and \eqref{eq: LLL} shows $\Vert a_n \Vert_{L^p(U \setminus \overline{\Omega})} \to 0$. Then the result follows from \eqref{eq: LLL}.   

\begin{figure}
\centering
\def\svgwidth{14cm}
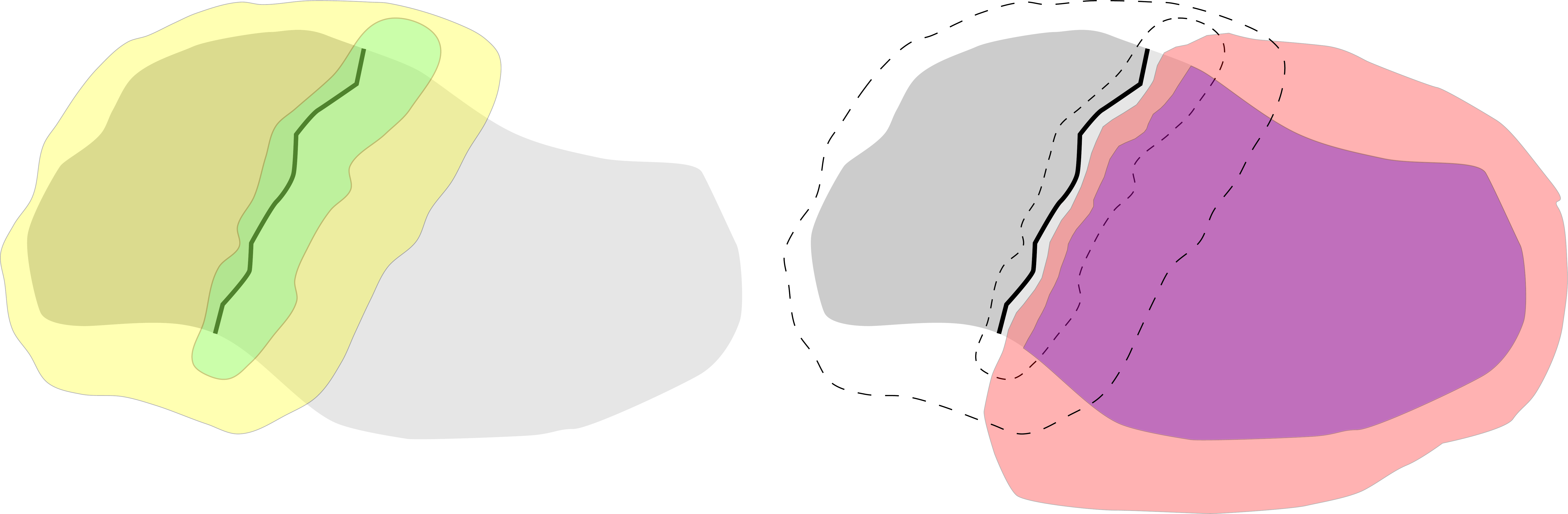
\caption{Illustration of the  various sets used in Step 1 of the proof.}
\label{fig:disegno}
\end{figure}

Let $\eps > 0$ and choose $V$ open with $V \supset \overline{\partial_D\Omega}$, $V \subset U$, $\mathcal{E}'(u,\partial (V \cap \Omega')) = 0$, $\mathcal{L}^d(V)\le \eps$, and $\int_{V\cap \Omega'} f'(x,e(u)(x))\, {\rm d}x< \eps$. (Here,  $\partial_D \Omega = \Omega' \cap \partial \Omega$.) Then by \eqref{eq: separation-recovery}(i) we also obtain  
\begin{align}\label{eq: limiting eps}
 \lim_{n \to \infty}  \int_{V \cap \Omega'} f'_n(x,e(u_n)(x)) \, {\rm d}x < \eps.
\end{align}
Our goal is to apply  the fundamental estimate. To this end, choose  $A \subset \R^d$ such that $A \cap \Omega' \subset \Omega$ and $A \supset \supset \Omega \setminus {V}$, and choose  $A' \subset \subset A$ such that $\Omega \setminus {V} \subset A' \subset \Omega$ (see for instance Picture \ref{fig:disegno}).  Moreover, we let $B = U \cap \Omega'$, and we note that $A' \cup B = \Omega'$ since $V \subset U$. Define the functional $\mathcal{I}$ by  $\mathcal{I}(w,A) = \Vert e(w) \Vert^p_{L^p(A)} + \mathcal{H}^{d-1}(J_w \cap A)$  for all $w \in GSBD^p(\Omega')$ and $A \in \mathcal{A}(\Omega')$. We apply Proposition \ref{lemma: fundamental estimate} and Remark \ref{rem: topo}(iii)  for $\eta>0$ and $\mathcal{I}$, for the functions $u \equiv 0$ and $v = y_n$, to  find a sequence $(\varphi^\eta_n)_n \subset GSBD^p(\Omega')$ with $\varphi^\eta_n =  y_n$ on $\Omega' \setminus \overline{\Omega} \subset B \setminus A$ and $\varphi^\eta_n = 0$ on $\Omega \setminus V \subset A'$  (see \eqref{eq: assertionfund}(iii)) such that by \eqref{eq: assertionfund}(i) we have 
$${\mathcal{I}(\varphi^\eta_n,\Omega') \le (1+\eta)\big(\mathcal{I}( 0,  A) + \mathcal{I}(y_n,B) \big) + \Lambda(u,y_n) + \eta.}$$
As   $ \Lambda( 0,  y_n) \to 0$ for $n \to \infty$ by \eqref{eq: good approx2}(ii) and \eqref{eq: Lambda0}, $\mathcal{I}(0,A) = 0$, and $\mathcal{I}(y_n,B) \to 0$ by \eqref{eq: good approx2}(i), we find $\limsup_{n \to \infty}\mathcal{I}(\varphi^\eta_n,\Omega') \le \eta$. By sending $\eta \to 0$ we can  choose a suitable diagonal sequence $(\varphi_n)_n$, still satisfying $\varphi_n =  y_n$ on $\Omega' \setminus \overline{\Omega}$ and $\varphi_n = 0$ on $\Omega \setminus V$ such that  $\mathcal{I}(\varphi_n,\Omega') \to 0$. This in turn implies   
\begin{align}\label{eq: last g2-nochmal}
\Vert e(\varphi_n) \Vert_{L^p(\Omega')} + \mathcal{H}^{d-1}(J_{\varphi_n} \cap \Omega') \to 0.
\end{align}
In a similar fashion, by \eqref{eq: assertionfund}(ii) and \eqref{eq: good approx2}(ii) we find 
\begin{align}\label{eq: last g2}
 \varphi_n \to 0 \quad \text{in measure on $\Omega'$}. 
\end{align} 
Now, we let $\tilde{u}_n  := u_n - \varphi_n$. Then $\tilde{u}_n = u_n - y_n$ on  $\Omega' \setminus \overline{\Omega}$,  and therefore $\tilde{u}_n =  u^0_n$ on $\Omega' \setminus \overline{\Omega}$ as $y_n = u_n -  u^0_n$ on $\Omega' \setminus \overline{\Omega}$.  Moreover, $\tilde{u}_n = u_n$ on $\Omega \setminus V$ as $\varphi_n = 0 $ on $\Omega \setminus V$. We also observe that $\tilde{u}_n \to u$ in measure on $\Omega'$ by \eqref{eq: last g2} and the fact that ${u}_n \to u$ in  measure on $\Omega'$. 

\noindent \emph{Step 2: Estimate on the energy.} To conclude that $(\tilde{u}_n)_n$ is a recovery sequence, it   remains to  estimate the energy $\tilde{\mathcal{E}}_n'(\tilde{u}_n)$. As $\mathcal{H}^{d-1}(J_{\varphi_n} \cap \Omega') \to 0$  by \eqref{eq: last g2-nochmal},  we find by \eqref{eq: general bound2-new} that  
\begin{align}\label{eq: last g}
\limsup_{n \to \infty} \int_{J_{\tilde{u}_n}} g'_n(x, \nu_{\tilde{u}_n}) \,{\rm d} \mathcal{H}^{d-1} \le \limsup_{n \to \infty} \int_{J_{{u}_n}} g'_n(x,\nu_{{u}_n}) \, {\rm d} \mathcal{H}^{d-1}.  
\end{align}
By  \eqref{eq: f ext}  and $\tilde{u}_n = u_n$ on $\Omega \setminus V$ we further get   
\begin{align*}
\int\limits_{\Omega'} |f'_n(x,e(u_n)) - f'_n(x,e(\tilde{u}_n)) |\, {\rm d}x \le  \int\limits_{V \cap \Omega} \big(f_n(x,e(u_n)) + f_n(x,e(\tilde{u}_n)) \big)  + \alpha \hspace{-0.1cm} \int\limits_{\Omega' \setminus \Omega} ||e(u_n)|^p -  |e(u_n^0)|^p|.
\end{align*}
The rightmost term converges to zero for $n \to \infty$ by \eqref{eq: good approx}(ii). By  \eqref{eq: general bound}, \eqref{eq: limiting eps}--\eqref{eq: last g2-nochmal},   $\mathcal{L}^d(V)\le \eps$,  and the definition $\tilde{u}_n  := u_n - \varphi_n$ we derive 
 \begin{align*}
\limsup_{n \to \infty}  \int_{V \cap \Omega} \big(f_n(x,e(u_n)) + f_n(x,e(\tilde{u}_n))\big)  \, {\rm d}x  &\le \beta \mathcal{L}^d( V) +  2^{p-1}\beta \limsup_{n \to \infty}  \int_{V\cap \Omega} |e(\varphi_n)|^p \, {\rm d}x \\ &  \ \ \ + (1+2^{p-1}\beta \alpha^{-1}) \limsup_{n \to \infty}  \int_{V\cap \Omega} f_n(x,e(u_n)) \, {\rm d}x \\
& \le \beta\eps + (1+2^{p-1}\beta \alpha^{-1})\eps.
  \end{align*}
Thus, by   \eqref{eq: last g}  and the fact that $ \tilde{\mathcal{E}}_n'(u_n) =  \mathcal{E}_n'(u_n) \to \mathcal{E}'(u) = \tilde{\mathcal{E}}'(u)$, we then conclude
$$ \limsup_{n \to \infty}  \tilde{\mathcal{E}}'_n(\tilde{u}_n) \le \limsup_{n \to \infty} \mathcal{E}'_n( {u}_n)  +  \beta\eps + (1+2^{p-1}\beta \alpha^{-1})\eps \le \tilde{\mathcal{E}}'(u) +   \beta\eps + (1+2^{p-1}\beta \alpha^{-1})\eps. $$  
 Since $\eps$ was arbitrary, we obtain the $\Gamma$-limsup inequality  by using a diagonal argument. 

\noindent
\emph{Step 3:  Proof of \eqref{eq: good approx}.}  To conclude, it remains to show \eqref{eq: good approx}. First, as $(u_n)_n$ is a recovery  sequence, we find $u_n \to  u^0$ in measure on $\Omega' \setminus \overline{\Omega}$.  This along with $ u^0_n \to  u^0$ in $L^p(\Omega';\R^d)$ shows (i). To see (ii), we consider $A \in \mathcal{A}(\Omega')$, with $\overline{A} \subset \Omega' \setminus \overline{\Omega}$ and $\mathcal{E}'(u,\partial A) = 0$.   Then  \eqref{eq: f ext} and \eqref{eq: separation-recovery}(i) yield 
\begin{align}\label{eq: first convergence}
e(u_n) \to e( u^0) \ \ \text{ in } \ L^p(A;\R^{d \times d}_{\rm sym}).
\end{align}
 For $\eps >0$, we consider $V$ open with $V \supset \overline{\partial_D\Omega}$ such that  $\mathcal{E}'(u,\partial (V\cap \Omega')) = 0$, $\mathcal{L}^d(V)< \eps$,  and 
\begin{align}\label{eq: LL7}
\int_{V \cap \Omega'} f'(x,e(u)(x)) \, {\rm d}x < \eps, \ \ \ \ \int_{V \cap \Omega'} f'(x,e( u^0_n)(x))\,{\rm d}x <\eps \ \ \ \ \text{for all } n \in \N. 
\end{align}
(The second estimate is achieved  by  \eqref{eq: general bound} and the fact that $e( u^0_n) \to e( u^0)$ strongly in $L^p(\Omega';\R^{d\times d}_{\rm sym})$.)  
For $n$ large enough, we also get  $\int_{V \cap \Omega'} f'_n(x,e(u_n)(x)) \, {\rm d}x < \eps$ by \eqref{eq: separation-recovery}(i). Then, by  \eqref{eq: general bound} we obtain 
\begin{align*}
\int_{\Omega' \setminus \overline{\Omega}} |e(u_n) - e( u^0_n)|^p\,{\rm d}x    & = \int_{\Omega' \setminus (\Omega \cup V)}|e(u_n) - e( u^0_n)|^p\,{\rm d}x    + \int_{V \cap (\Omega' \setminus \overline{\Omega})}|e(u_n) - e( u^0_n)|^p \,{\rm d}x   \\
& \le  \int\limits_{\Omega' \setminus (\Omega \cup V)}|e(u_n) - e( u^0_n)|^p\, {\rm d}x   + \frac{2^{p-1}}{\alpha}    \int\limits_{V \cap \Omega'}\hspace{-0.1cm} (f'_n(x,e(u_n)) +  f'(x,e( u^0_n))).
\end{align*}
By \eqref{eq: first convergence}, \eqref{eq: LL7},  and the fact that $\Vert e( u^0_n) - e( u^0) \Vert_{L^p(\Omega')} \to 0$ we conclude  
$$\limsup_{n \to \infty}\int_{\Omega' \setminus \overline{\Omega}} |e(u_n) - e( u^0_n)|^p \, {\rm d}x \le 2^p \alpha^{-1} \eps. $$ Since $\eps$ was arbitrary, we obtain (ii). We finally prove (iii). Up to a subsequence we have
$$\mu_n := \mathcal{H}^{d-1}|_{J_{u_n} \cap (\Omega' \setminus {\Omega})} \stackrel{*}{\rightharpoonup} \mu  \ \ \ \text{weakly$^*$ in the sense of measures.}$$
By \eqref{eq: general bound2-new} and \eqref{eq: separation-recovery}(ii) we get $\mathcal{H}^{d-1}(J_{u_n} \cap U) \to 0$ for all $U \in \mathcal{A}(\Omega')$ with $\overline{U} \subset \Omega' \setminus \overline{\Omega}$ and $\mathcal{E}'(u,\partial U) = 0$. Consequently, to conclude the proof of (iii), it suffices to show $\mu(\partial_D\Omega) = 0$. We argue by contradiction. Let us assume that $\mu(\partial_D\Omega) > 0$. Then there exists a cube $Q_{2\rho}(x)$ with $x \in \partial_D\Omega$ such that $Q_{2\rho}(x) \subset \Omega'$,  $\mathcal{E}'(u,\partial Q_{2\rho}(x)) = 0$,  and   $\mu(Q_{2\rho}(x)) > \sigma > 0$. For notational simplicity, we write $Q^\rho = Q_{2\rho}(x)$ and also let $\tilde{Q}^\rho = Q_{8\rho}(x)$. We may also assume  that $\tilde{Q}^\rho  \subset \Omega'$. For $n$ large enough we get 
\begin{align}\label{eq: sigma}
\mathcal{H}^{d-1}( J_{u_n} \cap (Q^\rho \setminus {\Omega}) )  = \mu_n(Q^\rho ) > \sigma >0. 
\end{align}
Our goal is now to modify the sequence $(u_n)_n$ by a reflection method and  to  move the jump set inside $\Omega$. This will lead to a contradiction as we assumed that $(u_n)_n$ is a recovery sequence, but inside $\Omega$ the surface energy is much less than in $\Omega' \setminus {\Omega}$, cf.\ \eqref{eq: g ext}. The reflection method is a bit more delicate compared to \cite{GP, Manuel} since we deal with $GSBD^p$ instead of $GSBV^p$.    Possibly after passing to a smaller $\rho$ (not relabeled), we can assume that in a suitable coordinate system
$$\Omega \cap \tilde{Q}^\rho = \lbrace (x',y): \ x' \in (-4\rho,4\rho)^{d-1}, \ y \in (-4\rho,\tau(x')) \rbrace$$
for a Lipschitz function $\tau$ with $\Vert \tau \Vert_\infty \le \rho$.  We choose $\eta \in (2\rho,3\rho)$ such that 
$$V_\rho := \lbrace (x',y): \ x' \in (-\rho,\rho)^{d-1}, \ y \in (\tau(x')-\eta,\tau(x')+\eta) \rbrace$$ satisfies $\mathcal{E}'(u,\partial V_{\rho}) = 0 $. Note that $Q^\rho \subset V_\rho \subset \tilde{Q}^\rho \subset \Omega'$ since $\eta \in (2\rho,3\rho)$.  Let $\hat{u}$ be the function defined on $V_{\rho}$ by reflecting $u$ at $\tau(x')$, $x' \in (-\rho,\rho)^{d-1}$, i.e.,
\begin{align*}
\hat{u}(x',y) = \begin{cases} 
u(x',y)  &  \text{if } y > \tau(x'),   \\  u(x', 2\tau(x') - y) & \text{if }  y < \tau(x'). 
\end{cases}
\end{align*}
Clearly $\hat{u}\in W^{1,p}(V_{\rho};\R^d)$ as $u \in W^{1,p}(\Omega' \setminus \overline{\Omega};\R^d)$. In a similar fashion, we need to reflect the sequence $(u_n)_n$. As a preliminary step, we apply Theorem \ref{th: crismale-density2} for $ \Omega  =\tilde{Q}^\rho$ to obtain a sequence $(v_n)_n \subset GSBV^p(\tilde{Q}^\rho;\R^d) \cap L^p(\tilde{Q}^\rho;\R^d)$ such that 
\begin{align}\label{eq:new-apprxo}
v_n - u_n \to 0 \ \ \text{in measure on $\tilde{Q}^\rho$ for $n \to \infty$}, \quad   \mathcal{H}^{d-1}\big( (J_{u_n} \triangle J_{v_n})    \cap \tilde{Q}^\rho \big )\le  \frac{1}{n}  \ \ \text{for all $n \in \N$}.
\end{align}
We define $\hat{u}_n \in GSBV^p(V_{\rho};\R^d)$ by
\begin{align*}
\hat{u}_n(x',y) = \begin{cases} 
v_n(x',y)  & \text{if } y > \tau(x') -\lambda_n,  \\  v_n(x', 2(\tau(x')-\lambda_n) - y) &  \text{if } y < \tau(x')-\lambda_n, 
\end{cases}
\end{align*}
where $0 < \lambda_n \le 1/n$ is chosen such that
\begin{align}\label{eq: etk}
\mathcal{H}^{d-1} \Big(\Big\{ (x',y) \in J_{v_n}: \, x' \in (-\rho,\rho)^{d-1}, \ y \in (\tau(x')-\lambda_n,\tau(x')) \Big\}\Big) \le \frac{1}{n}.
\end{align} 
Observe that  the functions are well defined  since  $\Vert \tau \Vert_\infty \le \rho$  and $\eta < 3\rho$.  We now introduce the sequence 
$$w_n:= v_n + \hat{u} - \hat{u}_n  \in   GSBV^p(V_\rho;\R^d).$$
By \eqref{eq:new-apprxo}, $\lambda_n \to 0$, and the fact that $u_n \to u$ in measure on $\Omega'$, we get that  $w_n \to u$ in measure on $V_{\rho}$. By letting $\Gamma_n := J_{w_n} \cap J_{v_n}$, we further find 
\begin{align}\label{eq: 3 prop}
{\rm (i)}& \ \  \mathcal{H}^{d-1}( J_{w_n} \cap (V_\rho \setminus {\Omega}) ) = 0,\notag  \\ 
{\rm (ii)} &  \ \   \mathcal{H}^{d-1}(J_{w_n} \setminus \Gamma_n)  \le\mathcal{H}^{d-1} \big(\big\{ (x',y) \in V_\rho \cap J_{v_n}: \ y > \tau(x')-\lambda_n \rbrace\big).  
\end{align}
Here, the essential point is that the jump of $w_n$ lies completely inside ${\Omega}$.  By \eqref{eq: general bound2-new} and \eqref{eq: 3 prop}(i) we now get
$$G(w_n):=  \int_{J_{w_n} \cap V_\rho} g'_n(x, \nu_{w_n}) \, {\rm d}\mathcal{H}^{d-1} \le \int_{J_{v_n} \cap \Gamma_n} g'_n(x, \nu_{v_n}) \, {\rm d} \mathcal{H}^{d-1} + \beta \mathcal{H}^{d-1}(J_{w_n} \setminus \Gamma_n).  $$
Then by \eqref{eq: general bound2-new}, \eqref{eq:new-apprxo}, \eqref{eq: etk},  and  \eqref{eq: 3 prop}(ii) we derive
\begin{align*}
G(w_n)& \le \int_{J_{v_n} \cap \Gamma_n} g'_n(x, \nu_{v_n}) \, {\rm d}\mathcal{H}^{d-1} + \beta \mathcal{H}^{d-1}( J_{v_n} \cap (V_\rho \setminus {\Omega}) ) + \beta/n\\
& \le \int_{J_{u_n} \cap (V_\rho \cap \Omega)} g'_n(x, \nu_{u_n}) \, {\rm d}\mathcal{H}^{d-1} + \beta\mathcal{H}^{d-1}( J_{u_n} \cap (V_\rho \setminus {\Omega}) ) + 3\beta/n.
\end{align*}
In the second step, we also used $\Gamma_n \subset V_\rho \cap \Omega$ by \eqref{eq: 3 prop}(i).  
Now, by \eqref{eq: g ext}, \eqref{eq: sigma},  and $ Q^\rho  \subset V_\rho$ we get 
\begin{align}\label{eq: to contra}
G(w_n) &\le \int_{J_{u_n} \cap (V_\rho \cap \Omega)} g'_n(x, \nu_{u_n}) \, {\rm d}\mathcal{H}^{d-1} +3\beta/n + (\beta+1)\mathcal{H}^{d-1}( J_{u_n} \cap (V_\rho \setminus {\Omega}) ) - \sigma \notag\\
& \le  \int_{J_{u_n} \cap V_\rho} g'_n(x, \nu_{u_n}) \, {\rm d}\mathcal{H}^{d-1} +3\beta/n - \sigma.
\end{align}
On the other hand, recalling that $w_n \to u$ in measure on $V_\rho$, we get  by \eqref{eq: separation}(ii)  
$$ \int_{J_u \cap V_\rho} g'(x, \nu_u) \, {\rm d}\mathcal{H}^{d-1}  \le  \liminf_{n \to \infty} \int_{J_{w_n} \cap V_\rho} g'_n(x,\nu_{w_n}) \, {\rm d}\mathcal{H}^{d-1} = \liminf_{n \to \infty}  G(w_n).$$
Moreover, as $(u_n)_n$ is a recovery sequence for $u$ and $\mathcal{E}'(u,\partial V_\rho) = 0$, \eqref{eq: separation-recovery}(ii) yields
$$ \int_{J_u \cap V_\rho} g'(x, \nu_u) \, {\rm d}\mathcal{H}^{d-1}  =  \lim_{n\to \infty} \int_{J_{u_n} \cap V_\rho} g'_n(x, \nu_{u_n}) \, {\rm d}\mathcal{H}^{d-1}.
$$
 The previous two equations contradict \eqref{eq: to contra}. This concludes the proof of (iii). 
\end{proof}

\begin{proof}[Proof of Theorem \ref{th: Gamma existence}]
The statement follows in the spirit of  the fundamental theorem of $\Gamma$-convergence, see \cite[Theorem 1.21]{Braides:02}. As we employ nonstandard compactness results, we indicate the details for both cases (i) and (ii).

(i) Given $(u_n)_n \subset GSBD^2(\Omega')$ satisfying \eqref{eq: eps control}, we apply Theorem \ref{th: comp} on the functionals $(\mathcal{E}'_n)_n$ and find a subsequence (not relabeled),  $(y_n)_n\subset GSBD^2(\Omega')$ with $\mathcal{L}^d(\lbrace e(y_n) \neq e(u_n) \rbrace) \to 0$ and 
$$ 
 \liminf_{n \to \infty} \tilde{\mathcal{E}}'_n(y_n) = \liminf_{n \to \infty} {\mathcal{E}}'_n(y_n) \le \liminf_{n \to \infty} {\mathcal{E}}'_n(u_n) = \liminf_{n \to \infty} \tilde{\mathcal{E}}'_n(u_n) = \liminf_{n \to \infty}  \inf_{v \in GSBD^2(\Omega')} \tilde{\mathcal{E}}'_n(v).$$
 Here, the first equality holds as $y_n =  u_n^0$ on $\Omega' \setminus \overline{\Omega}$,  and the first inequality follows from \eqref{eq:good-en}.  By Theorem \ref{th: comp} we also find $u \in GSBD^2(\Omega')$ with $u =  u^0$ on $\Omega' \setminus \overline{\Omega}$ such that $y_n \to u$ in measure on $\Omega'$. Therefore, by the $\Gamma$-liminf inequality in Lemma \ref{lemma: gamma bdy} we derive
 \begin{align}\label{eq: last1}
 \tilde{\mathcal{E}}'(u) \le  \liminf_{n \to \infty} \tilde{\mathcal{E}}'_n(y_n) \le  \liminf_{n \to \infty} \tilde{\mathcal{E}}'_n(u_n) \le \liminf_{n \to \infty} \inf_{v \in GSBD^2(\Omega')} \tilde{\mathcal{E}}'_n(v).\end{align}
 By using Lemma \ref{lemma: gamma bdy} once more, for each $w \in GSBD^2(\Omega')$ with $w = u^0$ on $\Omega' \setminus \overline{\Omega}$ we find a recovery sequence $(w_n)_n$ converging to $w$ in measure satisfying $\lim_{n\to \infty} \tilde{\mathcal{E}}_n'(w_n) = \tilde{\mathcal{E}}'(w)$.  This yields 
  \begin{align}\label{eq: last2}
\limsup_{n \to \infty}  \inf_{v \in GSBD^2(\Omega')} \tilde{\mathcal{E}}'_n(v) \le \lim_{n \to \infty}  \tilde{\mathcal{E}}'_n(w_n) =  \tilde{\mathcal{E}}'(w).
  \end{align}
By combining \eqref{eq: last1}--\eqref{eq: last2} we derive 
  \begin{align}\label{eq: last3}
 \tilde{\mathcal{E}}'(u) \le \liminf_{n \to \infty} \inf_{v \in GSBD^2(\Omega')} \tilde{\mathcal{E}}'_n(v) \le  \limsup_{n \to \infty} \inf_{v \in GSBD^2(\Omega')} \tilde{\mathcal{E}}'(v) \le \tilde{\mathcal{E}}'(w).
  \end{align}
  Since $w$ was arbitrary, we get that $u$ is a minimizer of $\tilde{\mathcal{E}}'$. The statement follows from \eqref{eq: last1} and \eqref{eq: last3} with $w=u$. In particular, the limit in \eqref{eq: eps control2} does not depend on the specific choice of the subsequence and thus \eqref{eq: eps control2} holds for the whole sequence.
  
  (ii)   Given $(u_n)_n \subset GSBD^p(\Omega')$ satisfying \eqref{eq: eps control}, we apply   Theorem \ref{th: comp}   on the functionals $(\mathcal{E}'_n)_n$ and find a subsequence (not relabeled),  $(y_n)_n\subset GSBD^p(\Omega')$ with $\mathcal{L}^d(\lbrace e(y_n) \neq e(u_n) \rbrace) = 0$  (see \eqref{eq: evenbetter!})  and  $u \in  GSBD^p(\Omega')$ with  $u =  u^0$ on $\Omega' \setminus \overline{\Omega}$    such that $y_n \to u$ in measure on $\Omega'$ and $e(y_n)  \rightharpoonup e(u)$   weakly in $L^p(\Omega'; \R^{d\times d}_{\rm sym})$. Now, by   \eqref{eq: separation}(i)  and the fact that $e(y_n) = e(u_n)$ $\mathcal{L}^d$-a.e.\ in $\Omega'$ we find
\begin{align}\label{eq: vitonew1}
\int_{\Omega'} f'(x, e(u)(x)) \, {\rm d}x &\le  \liminf_{n \to \infty} \int_{\Omega'} f'_n(x, e(y_n)(x)) \, {\rm d}x = \liminf_{n \to \infty} \int_{\Omega'} f'_n(x, e(u_n)(x)) \, {\rm d}x.
\end{align}
 Denote the extension of $\hat{g}$ defined in \eqref{eq: g ext} by $\hat{g}'$. The surface density $g'$  of the $\Gamma$-limit $\mathcal{E}'$  satisfies $g' \le \hat{g}'$.  Moreover, as $\hat{g}$ is continuous on $\Omega \times \mathbb{S}^{d-1}$, we get  that $g'(x_0,\cdot)$   is \RRR even and \EEE symmetric jointly convex for all $x_0 \in  \Omega$ by Corollary \ref{cor: surf}.  Note that $g'$ satisfies the properties stated in Remark \ref{rem: lastone} \RRR (with $\Omega'$ in place of $\Omega$ and $D=\Omega$). \EEE  Therefore,  we  can apply Lemma \ref{lemma: vito-lsc} on $g'$. Hence, we obtain 
     \begin{align}\label{eq: vitonew2}
\int_{J_u} g'(x, \nu_u) \, {\rm d}\mathcal{H}^{d-1} &\le  \liminf_{n \to \infty} \int_{J_{u_n}} g'(x, \nu_{u_n}) \, {\rm d}\mathcal{H}^{d-1} \le \liminf_{n \to \infty} \int_{J_{u_n}} \hat{g}'(x, \nu_{u_n}) \, {\rm d}\mathcal{H}^{d-1}.
\end{align}
By combining \eqref{eq: vitonew1}--\eqref{eq: vitonew2} we find $\mathcal{E}'(u) \le \liminf_{n \to \infty} \mathcal{E}'(u_n)$. Clearly, as $u = u^0 $ on $\Omega' \setminus \overline{\Omega}$, this also yields $\tilde{\mathcal{E}}'(u) \le \liminf_{n \to \infty} \tilde{\mathcal{E}}'(u_n)$. Now we can proceed as in (i) below \eqref{eq: last1} to conclude the proof  (with the only difference that we do not get $\lim_{n \to \infty} \tilde{\mathcal{E}}'_n(y_n) = \tilde{\mathcal{E}}'(u)$.) 
\end{proof}

\section*{Acknowledgements} 
This work was supported by the DFG project FR 4083/1-1 and  by the Deutsche Forschungsgemeinschaft (DFG, German Research Foundation) under Germany's Excellence Strategy EXC 2044 -390685587, Mathematics M\"unster: Dynamics--Geometry--Structure. 
The work of Francesco Solombrino is part of the project “Variational methods for stationary and evolution problems with singularities and interfaces” PRIN 2017 financed by the Italian Ministry of Education, University, and Research.

\appendix

\section{Proof of Lemma \ref{lemma: partition} under simplifying assumption}\label{proof1XXX}

\begin{proof}
Here, we present a simplified proof where we assume that each $J_{u_n}$ consists of a bounded number of closed, continuous curves, denoted by $\gamma^n_1,\ldots,\gamma^n_{G_n}$, where $\sup_{n\in \N} G_n < + \infty$.  To simplify notation, we suppose that $\nu = e_2$.  This is not restrictive since we  can first prove the statement for the functions $w_n(x):=R^T u_n(Rx)$  for $x \in Q_1^{e_2} = Q_1$,  where $R$ is a rotation with  $Re_2 = \nu$,  and then rotate back the obtained partition  onto $Q_1^\nu$.  In view of  \eqref{eq: properties-part}(ii),   we define 
\begin{align}\label{eq: bound}
C_0 :=    \max_{n \in \N} G_n + \sup_{n \in \N} \mathcal{H}^1(J_{u_n}) < + \infty.   
\end{align}
For  $k \in \N$ fixed, we define  
\begin{align}\label{eq: Vk}
U_k = Q_1 \cap  \big\{ |   x \cdot e_2   | <  \tfrac{1}{2} k^{-1}  \big\}, \quad \quad \quad \quad V_k =  \overline{Q_1} \cap \big\{ | x \cdot e_2| \le  k^{-1/4} \big\}. 
\end{align}
 For each $k \in \N$,  our strategy will lie in combining different components of the jump set $J_{u_n}$ inside $V_k$.  Then, we will obtain the sets $S_n^+$ and $S_n^-$ in the statement   by a diagonal argument.  We introduce some further notation: we cover  $U_k$    up to a set of negligible $\mathcal{L}^2$-measure by $k$ pairwise disjoint cubes of sidelength $1/k$, denoted by $Q_1^k, \ldots, Q_{k}^k$ with corresponding centers $x_1^k,\ldots, x_{k}^k$. 
 
We will first show that each of these cubes necessarily intersects the jump set $J_{u_n}$ (Step 1). Based on this, we will combine different connected components of the  jump  set  $J_{u_n}$ with small segments of length at most $\sqrt{5}/k$ (Step 2). This will eventually allow us to define the sets $S_n^+$ and $S_n^-$  satisfying \eqref{eq: properties-part2} (Step 3).

\noindent \emph{Step 1: $J_{u_n} \cap Q_j^k \neq \emptyset$ for all $j=1\ldots,k$ and each $n \ge n_k$,  where $n_k \in \N$ depends on $k$.} We suppose by contradiction that the statement  were  wrong, i.e., we find $Q_j^k$  with $J_{u_n} \cap Q_j^k = \emptyset$ for some $n \ge n_k$, where $n_k \in \N$ depending on $k$  is specified below  (see \eqref{eq: bound2-5} and \eqref{eq: bn}).   First, we observe that then $u_n$ would be a Sobolev function when restricted to $Q_j^k$. Thus,  the Korn-Poincar\'e inequality  implies 
\begin{align}\label{eq: korn}
 \Vert u_n - a_n \Vert_{L^1(Q_j^k)} \le C_k \Vert e(u_n) \Vert_{L^p(Q_j^k)} \le C_k \Vert e(u_n) \Vert_{L^p(Q_1)}  
 \end{align} 
     for a constant $C_k$ only depending on $k$, where $a_n$ defined by $a_n(x) = A_n\, x + b_n$ for $x \in \R^2$  is a suitable rigid motion.   As $u_n \to u_{0,\zeta,0,e_2}$ in measure on $Q_1$, see \eqref{eq: properties-part}(iii),  we find a sequence $(\eta_n)_n \subset (0,+\infty)$ with $\eta_n \to 0$ and $n_k \in \N$ sufficiently large such that the sets  (recall \eqref{eq: half-cub not}) 
\begin{align}\label{eq: Bdef}
B_n^- = \lbrace x\in Q^{e_2,-}_1 \colon \, |u_n(x)| < \eta_n\rbrace, \quad \quad \quad \quad B_n^+ = \lbrace x\in Q^{e_2,+}_1 \colon \, |u_n(x) - \zeta | < \eta_n\rbrace 
\end{align}
fulfill for each $n \ge n_k$ that
\begin{align}\label{eq: bound2-5}
\mathcal{L}^2\big( Q^{e_2,\pm}_1 \setminus B_n^\pm \big) \le \tfrac{1}{4}k^{-2}.
\end{align}
As $\mathcal{L}^2(Q^{e_2,\pm}_1 \cap Q_j^k) = \frac{1}{2} k^{-2}$, this implies 
\begin{align}\label{eq: bound2}
\mathcal{L}^2 \big( B^\pm_n \cap Q_j^k  \big) \ge  \tfrac{1}{4}k^{-2}.   
\end{align}
Now, we define $b_n^- = b_n$ and $b_n^+  = b_n - \zeta$.  By Lemma \ref{lemma: rigid motion} for $\psi(t) = t$, $R= \sqrt{2}/2$,  $\delta =1/4$, $G =  A_n/k$,  $b=b^\pm_n  + A_nx^k_j $,  and  $E = k ((B^\pm_n \cap Q_j^k) - x_j^k)$,  and by a change of variables  we get
\begin{align}\label{eq: bound3}
|A_n|/k + |b^\pm_n + A_nx^k_j|   \le   c  \fint_{E} |G \, x +b| \, {\rm d}x  =  \frac{c}{\mathcal{L}^2(B^\pm_n \cap Q_j^k)}  \int_{B^\pm_n \cap Q_j^k} |A_n \, x +b^\pm_n| \, {\rm d}x,
\end{align} 
 where $c>0$ is a universal constant.  (Here, we used that $\mathcal{L}^2(E) \ge 1/4$.)   As $b_n^+ - b_n= -\zeta$ and  $b_n^- - b_n = 0$, we derive  by \eqref{eq: korn} and \eqref{eq: Bdef} that 
  \begin{align*}
 \Vert A_n\, x + b^\pm_n  \Vert_{L^1(B^\pm_n \cap Q_j^k)} & =  \Vert a_n + b^\pm_n  -b_n \Vert_{L^1(B^\pm_n \cap Q_j^k)} \\ 
 & \le  \Vert a_n - u_n \Vert_{L^1(Q_j^k)}  + \Vert u_n + b^\pm_n  -b_n \Vert_{L^1(B^\pm_n \cap Q_j^k)}   \\&
\le  C_k \Vert e(u_n) \Vert_{L^p(Q_1)}   + \eta_n\mathcal{L}^2(B^\pm_n \cap Q_j^k). 
  \end{align*}
This along with \eqref{eq: bound2}--\eqref{eq: bound3} yields $|b_n^\pm+ A_nx^k_j| \le 4ck^2 C_k \Vert e(u_n) \Vert_{L^p(Q_1)} + c\eta_n$. Therefore, by \eqref{eq: properties-part}(i) and $\eta_n \to 0$ we find 
\begin{align}\label{eq: bn}
|b_n^\pm+ A_nx^k_j| \le |\zeta|/3 \quad \quad \text{for all } n \ge n_k,
\end{align}
where $n_k \in \N$ depends on $k$. Now,  however, \eqref{eq: bn} contradicts the fact that $|b_n^+ - b_n^-| = |\zeta|$. This shows that $Q_j^k$ necessarily intersects $J_{u_n}$, and the first step of the proof is concluded.

\noindent \emph{Step 2: Combining  components of the jump set with segments.} Recall the definition of $U_k$ and   $V_k$ in \eqref{eq: Vk}. Let $k \in \N$ and $n \ge n_k$.  The goal of this step is to construct a closed set $\Gamma_n^k \subset V_k$ such that $\Gamma_n^k$ contains the points   $(-\tfrac{1}{2},0)$ and $(\tfrac{1}{2},0)$,  is path connected,  and satisfies
\begin{align}\label{eq: Gammaest}
\mathcal{H}^1\big(\Gamma_n^k \setminus J_{u_n}\big)  \le ck^{-1/2},
\end{align}   
where $c$ depends only on $C_0$ in \eqref{eq: bound}. 

 Let us  construct $\Gamma_n^k$.  We denote  by   $\tau^n_1,\ldots,\tau^n_{T_n}$ the connected components of $J_{u_n} \cap V_k$, which intersect $U_k$. Note that each of these components is a closed, continuous curve, and that we have
\begin{align}\label{eq: incid}
J_{u_n} \cap U_k = \bigcup\nolimits_{j=1}^{T_n} \tau_j^n  \cap U_k.    
\end{align} 
  We now estimate the number $T_n$ of these connected components. To this end, we decompose the components of the original jump set $J_{u_n}$ into the  index  sets
$$\mathcal{T}^{\rm small}_n = \big\{ j=1\ldots,G_n\colon\, \mathcal{H}^1(\gamma^n_j) \le \tfrac{1}{2} k^{-1/4}\big\} \ \ \ \ \text{ and  } \ \ \ \  \mathcal{T}_n^{\rm large} = \lbrace 1, \ldots, G_n\rbrace \setminus \mathcal{T}_n^{\rm small}.$$
Then, as each component in $(\tau_j^n)_{j=1}^{T_n}$ intersects $U_k$ and each curve in $\mathcal{T}_n^{\rm large}$ is split into different curves of   $(\tau_j^n)_{j=1}^{T_n}$ we obtain by \eqref{eq: bound}
\begin{align}\label{eq: Mn}
T_n \le \# \mathcal{T}_n^{\rm small} +  \sum\nolimits_{j \in \mathcal{T}_n^{\rm large} }   \Big\lfloor \frac{\mathcal{H}^1(\gamma^n_j)}{ {\rm dist}(U_k,\partial V_k \cap Q_1) }    \Big\rfloor \le G_n + 2k^{1/4}  \sum\nolimits_{j=1}^{G_n} \mathcal{H}^1(\gamma_j^n) \le 2C_0 \,  k^{1/4}.
\end{align}
Here, in the second step we used that $ {\rm dist}(U_k,\partial V_k \cap Q_1)  =  k^{-1/4} - \frac{1}{2}k^{-1} \ge \frac{1}{2}k^{-1/4}$.

We now add two additional elements to the family of curves, namely the segments 
\begin{align}\label{eq: extra paths}
\tau_{T_n+1}^n =  [-\tfrac{1}{2},-\tfrac{1}{2} + 1/k]  \times \lbrace 0 \rbrace \quad \quad \text{and} \quad \quad \tau_{T_n+2}^n =  [\tfrac{1}{2}-1/k, \tfrac{1}{2}]  \times \lbrace 0 \rbrace.
\end{align}
We connect  the  different components $(\tau_j^n)_{j=1}^{T_n+2}$ with segments: for each pair $(\tau_{j_1}^n, \tau_{j_2}^n)$, $j_1 \neq j_2$, with ${\rm dist}(\tau_{j_1}^n, \tau_{j_2}^n) \le \sqrt{5}/k$, we choose a closed segment of length at most $\sqrt{5}/k$ contained in $V_k$ which connects  $\tau_{j_1}^n$ with $\tau_{j_2}^n$. Denote the union of the components $(\tau_j^n)_{j=1}^{T_n+2}$ with these segments by $\Gamma_n^k$. 

We show that $\Gamma_n^k$ has the desired properties. First, $\Gamma_n^k \subset V_k$ by definition. By construction,  $\Gamma_n^k$ contains the points  $(-\tfrac{1}{2},0)$ and $(\tfrac{1}{2},0)$,  see \eqref{eq: extra paths}. To see that $\Gamma_n^k$ is path connected, we first note that  $\Gamma_n^k$ intersects each cube $Q_j^k$, $j=1,\ldots,k$, by Step 1 and by \eqref{eq: incid}. Then, each component in $(\tau_j^n)_{j=1}^{T_n+2}$ intersecting a cube $Q_j^k$ is connected to all components intersecting the cube $Q_j^k$ or the adjacent cubes $Q_{j-1}^k$ and $Q_{j+1}^k$ (if existent), for the maximal distance of two points in adjacent cubes is $\sqrt{5}/k$. This shows that $\Gamma_n^k$ is path connected. Finally,  since $ \bigcup_{j=1}^{T_n}  \tau^n_j \subset J_{u_n}$, we get by \eqref{eq: Mn} 
that 
$${\mathcal{H}^1\big(\Gamma_n^k \setminus  (J_{u_n} \cup \tau_{T_n+1}^n \cup  \tau_{T_n+2}^n)  \big) \le  \sqrt{5}k^{-1} \frac{ (T_n+2) (T_n+1)}{2} \le   c  k^{-1} (C_0 \,  k^{1/4})^2 \le ck^{-1/2},}$$
 where $c$ depends on $C_0$. As $\mathcal{H}^1(\tau_{T_n+j}^n) \le k^{-1}$ for $j=1,2$,   this shows \eqref{eq: Gammaest} and concludes Step 2 of the proof.

\noindent \emph{Step 3: Definition of $S_n^+$ and $S_n^-$.} We now define the sets  $S_n^+$ and $S_n^-$ and establish \eqref{eq: properties-part2}. For each $k \in \N$ and each $n \ge n_k$, we denote the (possibly countably many) connected components of the open set $Q_1 \setminus \Gamma_n^k$ by $(P_j^{n,k})_{j=1}^J$, $J \in \N \cup \lbrace + \infty \rbrace$. As $\Gamma^k_n$ is path connected and contained in $V_k$, as well as  $(-\tfrac{1}{2},0),(\tfrac{1}{2},0) \in \Gamma^k_n$,  we observe that each set $P_j^{n,k}$ intersects at most one of the sets $Q^{e_2,+}_1 \setminus V_k$ and $Q^{e_2,-}_1 \setminus V_k$. We now define $\hat{S}_{n,k}^-$ as the union of components $(P^{n,k}_j)_{j=1}^J$ which do not intersect     $Q^{e_2,+}_1 \setminus V_k $. We  also  let $\hat{S}_{n,k}^+ := (Q_1 \setminus \Gamma_n^k) \setminus  \hat{S}_{n,k}^-$,  and note that $\hat{S}_{n,k}^+$ does not intersect  $Q^{e_2,-}_1 \setminus V_k $.  We define  the neighborhoods $N_k = Q_1 \setminus Q_{1-k^{-1/4}}$ and observe that the sets  $\hat{S}_{n,k}^\pm$  will possibly not satisfy \eqref{eq: properties-part2}(ii). Therefore, we introduce the  sets    
\begin{align}\label{eq: Sdef}
S_{n,k}^+  =\big( \hat{S}_{n,k}^+    \cup  \big(N_k \cap  Q^{e_2,+}_1\cap V_k\big) \big) \setminus  \big(N_k \cap  Q^{e_2,-}_1\cap V_k\big), \notag \\  S_{n,k}^-  = \big(\hat{S}_{n,k}^-    \cup  \big(N_k \cap  Q^{e_2,-}_1 \cap V_k\big) \big) \setminus  \big(N_k \cap  Q^{e_2,+}_1 \cap V_k\big).
\end{align}
Clearly, by definition we have $\mathcal{L}^2\big(Q_1 \setminus (S^+_{n,k} \cup S^-_{n,k})\big) = 0$ for all $k \in \N$ and $n \ge n_k$, and 
\begin{align}\label{eq: inclusion}
Q^{e_2,\pm}_1  \setminus V_k  \subset S^\pm_{n,k} \subset Q^{e_2,\pm}_1 \cup V_k.
\end{align}
    We can now check that  \begin{align}\label{eq: properties-part3}
{\rm (i)} & \ \  \mathcal{L}^2\big( S^\pm_{n,k} \triangle Q^{e_2,\pm}_1   \big) \le 2k^{-1/4},\notag\\
{\rm (ii)} &  \ \  N_k \cap Q^{e_2,\pm}_1 \subset S^\pm_{n,k},\notag\\
{\rm (iii)} & \ \  \mathcal{H}^1\big( ( \partial S^+_{n,k} \cap \partial S^-_{n,k} )           \setminus  J_{u_n}     \big) \le ck^{-1/4},
\end{align}
for  $c>0$  depending on $C_0$.   Indeed,  (i) is a consequence of  \eqref{eq: Vk} and  \eqref{eq: inclusion}.  By    \eqref{eq: Sdef} and \eqref{eq: inclusion}   we get (ii). Finally, we show (iii). First,  \eqref{eq: Sdef} and the definition of $N_k$ imply   
$$\mathcal{H}^1\big( ( \partial S^+_{n,k} \cap \partial S^-_{n,k} )           \setminus  J_{u_n}     \big) \le \mathcal{H}^1\big( ( \partial \hat{S}^+_{n,k} \cap \partial \hat{S}^-_{n,k} )           \setminus  J_{u_n}     \big)   + \sum\nolimits_{j=\pm}\mathcal{H}^1(\partial(N_k \cap  Q^{e_2,j}_1\cap V_k)).$$
Then,  as $\partial \hat{S}^+_{n,k} \cap \partial \hat{S}^-_{n,k} \subset \Gamma^k_n$  and $\mathcal{H}^1(\partial(N_k \cap  Q^{e_2,\pm}_1\cap V_k)) \le ck^{-1/4}$,  (iii) follows from \eqref{eq: Gammaest}.

Finally, we obtain the desired sets $S^+_{n}$ and $S^-_n$ satisfying \eqref{eq: properties-part2} from \eqref{eq: properties-part3}   by a suitable diagonal argument. This concludes the proof.  
\end{proof}

\typeout{References}

\end{document}